\newcommand{\doubletilde}[1]{
  \tilde{{\tilde{#1}}}}
\providecommand{\cref}[1]{\zcref{#1}}
\providecommand{\Cref}[1]{\zcref{#1}}
\NewDocumentCommand{\bwcrefrange}{mm}{\bw_crefrange:nn{#1}{#2}}
\providecommand{\crefrange}[2]{\bwcrefrange{#1}{#2}}
\newif\ifanindex
\newcommand{\mybox}{\collectbox{\setlength{\fboxsep}{1pt}\fbox{\BOXCONTENT}}}
\DeclareMathOperator*{\bigboxtimes}{\scalerel*{\boxtimes}{\sum}}
\newcounter{dummy}
\begin{document}

\ifanindex
\else 
\newcommand{\notation}[2]{}
\fi
\newtheoremstyle{all}{11pt}{11pt}{\slshape}{}{\bfseries}{}{.5em}{}

\NewDocumentCommand{\NewTheoremWithZref}{ m o m m }{%
  \IfNoValueTF{#2}{%
    \newtheorem{#1}{#3}%
    \zcRefTypeSetup{#1}{%
      name-sg=#3, Name-sg=#3, name-pl=#4, Name-pl=#4%
    }%
  }{%
    \newaliascnt{#1}{#2}%
    \newtheorem{#1}[#1]{#3}%
    \aliascntresetthe{#1}%
    \zcRefTypeSetup{#1}{%
      name-sg=#3, Name-sg=#3, name-pl=#4, Name-pl=#4%
    }%
  }%
}

\theoremstyle{all}
\newtheorem{itheorem}{Theorem}
\newtheorem{theorem}{Theorem}[section]
\newtheorem*{theoremfourfive}{Theorem 4.5}
\newtheorem*{proposition*}{Proposition}
\NewTheoremWithZref{proposition}[theorem]{Proposition}{Propositions}
\NewTheoremWithZref{corollary}[theorem]{Corollary}{Corollaries}
\NewTheoremWithZref{lemma}[theorem]{Lemma}{Lemmas}
\NewTheoremWithZref{definition}[theorem]{Definition}{Definitions}
\NewTheoremWithZref{ques}[theorem]{Question}{Questions}
\NewTheoremWithZref{conj}[theorem]{Conjecture}{Conjectures}

\theoremstyle{remark}
\NewTheoremWithZref{remark}[theorem]{Remark}{Remarks}
\NewTheoremWithZref{example}[theorem]{Example}{Examples}

\zcRefTypeSetup{itheorem}{
  name-sg=theorem, Name-sg=Theorem, name-pl=theorems, Name-pl=Theorems
}
\zcRefTypeSetup{theorem}{
  name-sg=theorem, Name-sg=Theorem, name-pl=theorems, Name-pl=Theorems
}

\newcommand{\nc}{\newcommand}
\newcommand{\renc}{\renewcommand}
\newcounter{subeqn}
\renewcommand{\thesubeqn}{\theequation\alph{subeqn}}
\newcommand{\subeqn}{\refstepcounter{subeqn}\tag{\thesubeqn}}\makeatletter
\@addtoreset{subeqn}{equation}
\newcommand{\newseq}{\refstepcounter{equation}}

\def\eps{{\varepsilon}}
\renc{\aa}{u}
\nc{\bb}{v}
\nc{\cc}{w}
  \nc{\kac}{\kappa^C}
  \newcommand{\pwr}{\gamma}
\nc{\alg}{T}
\nc{\km}{\kappa}
\nc{\Lco}{L_{\la}}
\nc{\qD}{q^{\nicefrac 1D}}
\nc{\ocL}{M_{\la}}
\nc{\excise}[1]{}
\nc{\Dbe}{D^{\uparrow}}
\nc{\Dfg}{D^{\mathsf{fg}}}
\nc{\op}{\operatorname{op}}
\nc{\MaxSpec}{\operatorname{MaxSpec}}

\nc{\tr}{\operatorname{tr}}
\newcommand{\Mirkovic}{Mirkovi\'c\xspace}
\nc{\tla}{\mathsf{t}_\la}
\nc{\llrr}{\langle\la,\rho\rangle}
\nc{\lllr}{\langle\la,\la\rangle}
\nc{\K}{\Bbbk}
\nc{\Stosic}{Sto{\v{s}}i{\'c}\xspace}
\nc{\cd}{\mathcal{D}}
\nc{\cT}{\mathcal{T}}
\nc{\vd}{\mathbb{D}}
\nc{\R}{\mathbb{R}}
\renc{\wr}{\operatorname{wr}}
  \nc{\Lam}[3]{\La^{#1}_{#2,#3}}
  \nc{\Lab}[2]{\La^{#1}_{#2}}
  \nc{\Lamvwy}{\Lam\Bv\Bw\By}
  \nc{\Labwv}{\Lab\Bw\Bv}
  \nc{\nak}[3]{\mathcal{N}(#1,#2,#3)}
  \nc{\hw}{highest weight\xspace}
  \nc{\al}{\alpha}
\numberwithin{equation}{section}
\renc{\theequation}{\arabic{section}.\arabic{equation}}
  \nc{\be}{\beta}
  \nc{\bM}{\mathbf{m}}
  \nc{\Bu}{\mathbf{u}}

  \nc{\bkh}{\backslash}
  \nc{\Bi}{\mathbf{i}}
  \nc{\Ba}{\mathbf{a}}
  \nc{\Bm}{\mathbf{m}}
  \nc{\Bj}{\mathbf{j}}
 \nc{\Bk}{\mathbf{k}}

\nc{\bd}{\mathbf{d}}
\nc{\D}{\mathcal{D}}
\nc{\mmod}{\operatorname{-mod}}  
\newcommand{\red}{\mathfrak{r}}

\nc{\RAA}{R^\A_A}
  \nc{\Bv}{\mathbf{v}}
  \nc{\Bw}{\mathbf{w}}
\nc{\Id}{\operatorname{Id}}
  \nc{\By}{\mathbf{y}}
\nc{\eE}{\EuScript{E}}
  \nc{\Bz}{\mathbf{z}}
  \nc{\coker}{\mathrm{coker}\,}
  \nc{\C}{\mathbb{C}}
  \nc{\ch}{\mathrm{ch}}
  \nc{\de}{\delta}
  \nc{\ep}{\epsilon}
  \nc{\Rep}[2]{\mathsf{Rep}_{#1}^{#2}}
  \nc{\Ev}[2]{E_{#1}^{#2}}
  \nc{\fr}[1]{\mathfrak{#1}}
  \nc{\fp}{\fr p}
  \nc{\fq}{\fr q}
  \nc{\fl}{\fr l}
  \nc{\fgl}{\fr{gl}}
\nc{\rad}{\operatorname{rad}}
\nc{\ind}{\operatorname{ind}}
  \nc{\GL}{\mathrm{GL}}
\newcommand{\arxiv}[1]{\href{http://arxiv.org/abs/#1}{\tt arXiv:\nolinkurl{#1}}}
  \nc{\Hom}{\operatorname{Hom}}
  \nc{\im}{\mathrm{im}\,}
  \nc{\La}{\Lambda}
  \nc{\la}{\lambda}
  \nc{\mult}{b^{\mu}_{\la_0}\!}
  \nc{\mc}[1]{\mathcal{#1}}
  \nc{\om}{\omega}
\nc{\gl}{\mathfrak{gl}}
  \nc{\cF}{\mathcal{F}}
\nc{\cC}{\mathcal{C}}
  \nc{\Mor}{\mathsf{Mor}}
  \nc{\Ob}{\mathsf{Ob}}
  \nc{\Vect}{\mathsf{Vect}}
\nc{\gVect}{\mathsf{gVect}}
  \nc{\modu}{\mathsf{-mod}}
\nc{\pmodu}{\mathsf{-pmod}}
  \nc{\qvw}[1]{\La(#1 \Bv,\Bw)}
  \nc{\van}[1]{\nu_{#1}}
  \nc{\Rperp}{R^\vee(X_0)^{\perp}}
  \nc{\si}{\sigma}
  \nc{\croot}[1]{\al^\vee_{#1}}
\nc{\di}{\mathbf{d}}
  \nc{\SL}[1]{\mathrm{SL}_{#1}}
  \nc{\Th}{\theta}
  \nc{\vp}{\varphi}
  \nc{\wt}{\mathsf{wt}}
\nc{\te}{\tilde{e}}
\nc{\tf}{\tilde{f}}
\nc{\hwo}{\mathbb{V}}
\nc{\soc}{\operatorname{soc}}
\nc{\cosoc}{\operatorname{cosoc}}
 \nc{\Q}{\mathbb{Q}}

  \nc{\Z}{\mathbb{Z}}
  \nc{\Znn}{\Z_{\geq 0}}
  \nc{\ver}{\EuScript{V}}
  \nc{\Res}[2]{\operatorname{Res}^{#1}_{#2}}
  \nc{\edge}{\EuScript{E}}
  \nc{\Spec}{\mathrm{Spec}}
  \nc{\tie}{\EuScript{T}}
  \nc{\ml}[1]{\mathbb{D}^{#1}}
  \nc{\fQ}{\mathfrak{Q}}
        \nc{\fg}{\mathfrak{g}}
  \nc{\Uq}{U_q(\fg)}
        \nc{\bom}{\boldsymbol{\omega}}
\nc{\bla}{{\underline{\boldsymbol{\la}}}}
\nc{\bmu}{{\underline{\boldsymbol{\mu}}}}
\nc{\bal}{{\boldsymbol{\al}}}
\nc{\bet}{{\boldsymbol{\eta}}}
\nc{\rola}{X}
\nc{\wela}{Y}
\nc{\fM}{\mathfrak{M}}
\nc{\fX}{\mathfrak{X}}
\nc{\fH}{\mathfrak{H}}
\nc{\fE}{\mathfrak{E}}
\nc{\fF}{\mathfrak{F}}
\nc{\fI}{\mathfrak{I}}
\nc{\qui}[2]{\fM_{#1}^{#2}}
\nc{\cL}{\mathcal{L}}
\nc{\ca}[2]{\fQ_{#1}^{#2}}
\nc{\cat}{\mathcal{V}}
\nc{\cata}{\mathfrak{V}}
\nc{\catf}{\mathscr{V}}
\nc{\hl}{\mathcal{X}}
\nc{\hld}{\EuScript{X}}
\nc{\hldbK}{\EuScript{X}^{\bla}_{\bar{\mathbb{K}}}}

\nc{\pil}{{\boldsymbol{\pi}}^L}
\nc{\pir}{{\boldsymbol{\pi}}^R}
\nc{\cO}{\mathcal{O}}
\nc{\Ko}{\text{\Denarius}}
\nc{\Ei}{\fE_i}
\nc{\Fi}{\fF_i}
\nc{\fil}{\mathcal{H}}
\nc{\brr}[2]{\beta^R_{#1,#2}}
\nc{\brl}[2]{\beta^L_{#1,#2}}
\nc{\so}[2]{\EuScript{Q}^{#1}_{#2}}
\nc{\EW}{\mathbf{W}}
\nc{\rma}[2]{\mathbf{R}_{#1,#2}}
\nc{\Dif}{\EuScript{D}}\nc{\MDif}{\EuScript{E}}
\renc{\mod}{\mathsf{mod}}
\nc{\modg}{\mathsf{mod}^g}
\nc{\fmod}{\mathsf{mod}^{fd}}
\nc{\id}{\operatorname{id}}
\nc{\DR}{\mathbf{DR}}
\nc{\End}{\operatorname{End}}
\nc{\Fun}{\operatorname{Fun}}
\nc{\Ext}{\operatorname{Ext}}
\nc{\tw}{\tau}
\nc{\A}{\EuScript{A}}
\nc{\Loc}{\mathsf{Loc}}
\nc{\eF}{\EuScript{F}}
\nc{\LAA}{\Loc^{\A}_{A}}
\nc{\perv}{\mathsf{Perv}}
\nc{\gfq}[2]{B_{#1}^{#2}}
\nc{\qgf}[1]{A_{#1}}
\nc{\qgr}{\qgf\rho}
\nc{\tqgf}{\tilde A}
\nc{\Tr}{\operatorname{Tr}}
\nc{\Tor}{\operatorname{Tor}}
\nc{\cQ}{\mathcal{Q}}
\nc{\st}[1]{\Delta(#1)}
\nc{\cst}[1]{\nabla(#1)}
\nc{\ei}{\mathbf{e}_i}
\nc{\Be}{\mathbf{e}}
\nc{\Hck}{\mathfrak{H}}
\renc{\P}{\mathbb{P}}
\nc{\bbB}{\mathbb{B}}
\nc{\ssy}{\mathsf{y}}
\nc{\cI}{\mathcal{I}}
\nc{\cG}{\mathcal{G}}
\nc{\cH}{\mathcal{H}}
\nc{\coe}{\mathfrak{K}}
\nc{\pr}{\operatorname{pr}}
\nc{\bra}{\mathfrak{B}}
\nc{\rcl}{\rho^\vee(\la)}
\nc{\tU}{\mathcal{U}}
\nc{\htU}{\widehat{\tU}}
\nc{\tUdot}{\tU_{\bullet}}
\nc{\htUdot}{\widehat{\tUdot}}
\nc{\dU}{{\stackon[8pt]{\tU}{\cdot}}}
\nc{\zw}{\beta}
\nc{\poly}{\mathcal{P}}

\nc{\RHom}{\mathrm{RHom}}
\nc{\tcO}{\tilde{\cO}}
\nc{\Yon}{\mathscr{Y}}
\nc{\sI}{{\mathsf{I}}}
\def\h{\mathbb{O}}
\newcommand{\Ccat}{\mathcal{C}}

\newcommand{\cev}[1]{\reflectbox{\ensuremath{\vec{\reflectbox{\ensuremath{#1}}}}}}
\def\bull{{\scriptstyle\bullet}}
\def\dt{{\color{white}\bullet}\!\!\!\circ}

\def\up{\uparrow}
\def\down{\downarrow}

\def\anticlock{\begin{tikzpicture}[baseline=-.9mm]
\filldraw[white] (0,0) circle (1.72mm);
\draw[-] (0,-0.18) to[out=180,in=-102] (-.178,0.02);
\draw[-] (-0.18,0) to[out=90,in=180] (0,0.18);
\draw[-] (0.18,0) to[out=-90,in=0] (0,-0.18);
\draw[<-] (0,0.18) to[out=0,in=90] (0.18,0);
\end{tikzpicture}\,}
\def\thickanticlock{\begin{tikzpicture}[baseline=-.9mm]
\draw[-,thick] (0,-0.18) to[out=180,in=-102] (-.178,0.02);
\draw[-,thick] (-0.18,0) to[out=90,in=180] (0,0.18);
\draw[-,thick] (0.18,0) to[out=-90,in=0] (0,-0.18);
\draw[<-,thick] (0,0.18) to[out=0,in=90] (0.18,0);
\end{tikzpicture}\,}
\def\anticlockplus{\begin{tikzpicture}[baseline=-.9mm]
\filldraw[white] (0,0) circle (1.72mm);
\draw[-] (0,-0.18) to[out=180,in=-102] (-.178,0.02);
\draw[-] (-0.18,0) to[out=90,in=180] (0,0.18);
\draw[-] (0.18,0) to[out=-90,in=0] (0,-0.18);
\draw[<-] (0,0.18) to[out=0,in=90] (0.18,0);
   \node at (0,0) {$+$};
\end{tikzpicture}\,}
\def\anticlockminus{\begin{tikzpicture}[baseline=-.9mm]
\filldraw[white] (0,0) circle (1.72mm);
\draw[-] (0,-0.18) to[out=180,in=-102] (-.178,0.02);
\draw[-] (-0.18,0) to[out=90,in=180] (0,0.18);
\draw[-] (0.18,0) to[out=-90,in=0] (0,-0.18);
\draw[<-] (0,0.18) to[out=0,in=90] (0.18,0);
   \node at (0,0) {$-$};
\end{tikzpicture}\,}
\def\anticlocki{\begin{tikzpicture}[baseline=-.9mm]
\filldraw[white] (0,0) circle (1.72mm);
\draw[-,thick] (0,-0.18) to[out=180,in=-102] (-.178,0.02);
\draw[-,thick] (-0.18,0) to[out=90,in=180] (0,0.18);
\draw[-,thick] (0.18,0) to[out=-90,in=0] (0,-0.18);
\draw[<-,thick] (0,0.18) to[out=0,in=90] (0.18,0);
   \node at (0,0) {$\scriptstyle{i}$};
\end{tikzpicture}\,}
\def\anticlockj{\begin{tikzpicture}[baseline=-.9mm]
\filldraw[white] (0,0) circle (1.72mm);
\draw[-,thick] (0,-0.18) to[out=180,in=-102] (-.178,0.02);
\draw[-,thick] (-0.18,0) to[out=90,in=180] (0,0.18);
\draw[-,thick] (0.18,0) to[out=-90,in=0] (0,-0.18);
\draw[<-,thick] (0,0.18) to[out=0,in=90] (0.18,0);
   \node at (-0.02,-0.02) {$\scriptstyle{j}$};
\end{tikzpicture}\,}

\def\clock{\begin{tikzpicture}[baseline=-.9mm]
\filldraw[white] (0,0) circle (1.72mm);
\draw[-] (0,-0.18) to[out=180,in=-90] (-.18,0);
\draw[->] (-0.18,0) to[out=90,in=180] (0,0.18);
\draw[-] (-0.02,0.178) to[out=12,in=90] (0.18,0);
\draw[-] (0.18,0) to[out=-90,in=0] (0,-0.18);
\end{tikzpicture}\,}
\def\clockplus{\begin{tikzpicture}[baseline=-.9mm]
\filldraw[white] (0,0) circle (1.72mm);
\draw[-] (0,-0.18) to[out=180,in=-90] (-.18,0);
\draw[->] (-0.18,0) to[out=90,in=180] (0,0.18);
\draw[-] (-0.02,0.178) to[out=12,in=90] (0.18,0);
\draw[-] (0.18,0) to[out=-90,in=0] (0,-0.18);
   \node at (0,0) {$+$};
\end{tikzpicture}\,}
\def\clockminus{\begin{tikzpicture}[baseline=-.9mm]
\filldraw[white] (0,0) circle (1.72mm);
\draw[-] (0,-0.18) to[out=180,in=-90] (-.18,0);
\draw[->] (-0.18,0) to[out=90,in=180] (0,0.18);
\draw[-] (-0.02,0.178) to[out=12,in=90] (0.18,0);
\draw[-] (0.18,0) to[out=-90,in=0] (0,-0.18);
   \node at (0,0) {$-$};
\end{tikzpicture}\,}
\def\clocki{\begin{tikzpicture}[baseline=-.9mm]
\filldraw[white] (0,0) circle (1.72mm);
\draw[-,thick] (0,-0.18) to[out=180,in=-90] (-.18,0);
\draw[->,thick] (-0.18,0) to[out=90,in=180] (0,0.18);
\draw[-,thick] (-0.02,0.178) to[out=12,in=90] (0.18,0);
\draw[-,thick] (0.18,0) to[out=-90,in=0] (0,-0.18);
   \node at (0,0) {$\scriptstyle{i}$};
\end{tikzpicture}\,}
\def\clockj{\begin{tikzpicture}[baseline=-.9mm]
\filldraw[white] (0,0) circle (1.72mm);
\draw[-,thick] (0,-0.18) to[out=180,in=-90] (-.18,0);
\draw[->,thick] (-0.18,0) to[out=90,in=180] (0,0.18);
\draw[-,thick] (-0.02,0.178) to[out=12,in=90] (0.18,0);
\draw[-,thick] (0.18,0) to[out=-90,in=0] (0,-0.18);
   \node at (0,0) {$\scriptstyle{j}$};
\end{tikzpicture}\,}
\def\red#1{{\color{red} #1}}
\def\blue#1{{\color{blue} #1}}
\def\green#1{{\color{green} #1}}

\def\smallclock{\begin{tikzpicture}
\filldraw[white] (0,0) circle (1mm);
\draw[-,thin] (0,-0.1) to[out=180,in=-90] (-.1,0);
\draw[-,thin] (-0.1,0) to[out=90,in=180] (0,0.1);
\draw[->,thin] (0,0.1) to[out=0,in=90] (0.09,-0.04);
\draw[-,thin] (0.09,-0.02) to[out=-96,in=0] (0,-0.1);
\end{tikzpicture}}

\def\smallanticlock{\begin{tikzpicture}
\filldraw[white] (0,0) circle (1mm);
\draw[-,thin] (0,-0.1) to[out=180,in=-84] (-.09,-.02);
\draw[<-,thin] (-0.09,-0.04) to[out=90,in=180] (0,0.1);
\draw[-,thin] (0.1,0) to[out=-90,in=0] (0,-0.1);
\draw[-,thin] (0,0.1) to[out=0,in=90] (0.1,0);
\end{tikzpicture}}

\def\clockright{\begin{tikzpicture}[baseline=-.9mm]
\filldraw[white] (0,0) circle (1.72mm);
\draw[-,thin] (0,-0.18) to[out=180,in=-90] (-.18,0);
\draw[-,thin] (-0.18,0) to[out=90,in=180] (0,0.18);
\draw[->,thin] (0,0.18) to[out=0,in=90] (0.18,0);
\draw[-,thin] (0.178,0.02) to[out=-78,in=0] (0,-0.18);
\end{tikzpicture}}

\def\clockplus{\begin{tikzpicture}[baseline=-.9mm]
\filldraw[white] (0,0) circle (1.72mm);
\draw[-,thin] (0,-0.18) to[out=180,in=-90] (-.18,0);
\draw[-,thin] (-0.18,0) to[out=90,in=180] (0,0.18);
\draw[->,thin] (0,0.18) to[out=0,in=90] (0.18,0);
\draw[-,thin] (0.178,0.02) to[out=-78,in=0] (0,-0.18);
\node at (0,0.01) {$+$};
\end{tikzpicture}}

\def\clockminus{\begin{tikzpicture}[baseline=-.9mm]
\filldraw[white] (0,0) circle (1.72mm);
\draw[-,thin] (0,-0.18) to[out=180,in=-90] (-.18,0);
\draw[-,thin] (-0.18,0) to[out=90,in=180] (0,0.18);
\draw[->,thin] (0,0.18) to[out=0,in=90] (0.18,0);
\draw[-,thin] (0.178,0.02) to[out=-78,in=0] (0,-0.18);
\node at (0,0.01) {$-$};
\end{tikzpicture}}

\def\clockplusminus{\begin{tikzpicture}[baseline=-.9mm]
\filldraw[white] (0,0) circle (1.72mm);
\draw[-,thin] (0,-0.18) to[out=180,in=-90] (-.18,0);
\draw[-,thin] (-0.18,0) to[out=90,in=180] (0,0.18);
\draw[->,thin] (0,0.18) to[out=0,in=90] (0.18,0);
\draw[-,thin] (0.178,0.02) to[out=-78,in=0] (0,-0.18);
\node at (0,0.01) {$\pm$};
\end{tikzpicture}}

\def\clocktop{\begin{tikzpicture}[baseline=-.9mm]
\filldraw[white] (0,0) circle (1.72mm);
\draw[-,thin] (0,-0.18) to[out=180,in=-90] (-.18,0);
\draw[->,thin] (-0.18,0) to[out=90,in=180] (0,0.18);
\draw[-,thin] (-0.02,0.178) to[out=12,in=90] (0.18,0);
\draw[-,thin] (0.18,0) to[out=-90,in=0] (0,-0.18);
\end{tikzpicture}}

\def\anticlockright{\begin{tikzpicture}[baseline=-.9mm]
\filldraw[white] (0,0) circle (1.72mm);
\draw[-,thin] (0,-0.18) to[out=180,in=-90] (-.18,0);
\draw[-,thin] (-0.18,0) to[out=90,in=180] (0,0.18);
\draw[-,thin] (0,0.18) to[out=0,in=78] (0.178,-0.02);
\draw[<-,thin] (0.18,0) to[out=-90,in=0] (0,-0.18);
\end{tikzpicture}}

\def\anticlockleft{\begin{tikzpicture}[baseline=-.9mm]
\filldraw[white] (0,0) circle (1.72mm);
\draw[-,thin] (0,-0.18) to[out=180,in=-102] (-.178,0.02);
\draw[<-,thin] (-0.18,0) to[out=90,in=180] (0,0.18);
\draw[-,thin] (0.18,0) to[out=-90,in=0] (0,-0.18);
\draw[-,thin] (0,0.18) to[out=0,in=90] (0.18,0);
\end{tikzpicture}}

\def\anticlockplus{\begin{tikzpicture}[baseline=-.9mm]
\filldraw[white] (0,0) circle (1.72mm);
\draw[-,thin] (0,-0.18) to[out=180,in=-102] (-.178,0.02);
\draw[<-,thin] (-0.18,0) to[out=90,in=180] (0,0.18);
\draw[-,thin] (0.18,0) to[out=-90,in=0] (0,-0.18);
\draw[-,thin] (0,0.18) to[out=0,in=90] (0.18,0);
\node at (0,0.01) {$+$};
\end{tikzpicture}}

\def\anticlockminus{\begin{tikzpicture}[baseline=-.9mm]
\filldraw[white] (0,0) circle (1.72mm);
\draw[-,thin] (0,-0.18) to[out=180,in=-102] (-.178,0.02);
\draw[<-,thin] (-0.18,0) to[out=90,in=180] (0,0.18);
\draw[-,thin] (0.18,0) to[out=-90,in=0] (0,-0.18);
\draw[-,thin] (0,0.18) to[out=0,in=90] (0.18,0);
\node at (0,0.01) {$-$};
\end{tikzpicture}}

\def\anticlockplusminus{\begin{tikzpicture}[baseline=-.9mm]
\filldraw[white] (0,0) circle (1.72mm);
\draw[-,thin] (0,-0.18) to[out=180,in=-102] (-.178,0.02);
\draw[<-,thin] (-0.18,0) to[out=90,in=180] (0,0.18);
\draw[-,thin] (0.18,0) to[out=-90,in=0] (0,-0.18);
\draw[-,thin] (0,0.18) to[out=0,in=90] (0.18,0);
\node at (0,0.01) {$\pm$};
\end{tikzpicture}}
\tikzset{ centerzero/.style={>=To,baseline={([yshift=-0.5ex](#1))}},
    centerzero/.default={0,0}
}
\newcommand\singdot[2][white]{\filldraw[fill=#1, draw=black] (#2) circle (1.5pt)
}
 \newcommand\dotstrand[1][white]{    \begin{tikzpicture}[centerzero]
        \draw (0,-0.2) -- (0,0.2);
        \singdot[#1]{0,0};
    \end{tikzpicture}
}

\def\thickdown{\pmb\downarrow}
\def\thickup{\pmb\uparrow}
\newcommand{\darkg}{\color{green!70!black}}
\tikzset{darkg/.style={green!70!black}}
\def\signs{\sigma}

\setcounter{tocdepth}{1}
\newcommand{\thetitle}{Unfurling Khovanov-Lauda-Rouquier algebras}

\renc{\theitheorem}{\Alph{itheorem}}

\excise{
\newenvironment{block}
\newenvironment{frame}
\newenvironment{tikzpicture}
\newenvironment{equation*}
}

\baselineskip=1.1\baselineskip

 \usetikzlibrary{decorations.pathreplacing,backgrounds,decorations.markings,shapes.geometric}
\tikzset{wei/.style={draw=red,double=red!40!white,double distance=1.5pt,thin}}
\tikzset{awei/.style={draw=blue,double=blue!40!white,double distance=1.5pt,thin}}
\tikzset{bdot/.style={fill,circle,color=blue,inner sep=3pt,outer
    sep=0}}
\tikzset{dir/.style={postaction={decorate,decoration={markings,
    mark=at position .8 with {\arrow[scale=1.3]{<}}}}}}
\tikzset{rdir/.style={postaction={decorate,decoration={markings,
    mark=at position .8 with {\arrow[scale=1.3]{>}}}}}}
\tikzset{edir/.style={postaction={decorate,decoration={markings,
    mark=at position .2 with {\arrow[scale=1.3]{<}}}}}}\begin{center}
\noindent {\large  \bf Unfurling Khovanov-Lauda-Rouquier algebras}
\medskip

\noindent {\sc Ben Webster}\footnote{Supported by the NSF under Grant
  DMS-1151473. This research was supported in part by Perimeter Institute for Theoretical Physics. Research at Perimeter Institute is supported by the Government of Canada through the Department of Innovation, Science and Economic Development Canada and by the Province of Ontario through the Ministry of Research, Innovation and Science.
}\\  
Department of Pure Mathematics\\ University of Waterloo \&\\
Perimeter Institute for Mathematical Physics\\
Waterloo, ON, Canada\\
Email: {\tt ben.webster@uwaterloo.ca}
\end{center}
\bigskip
{\small
\begin{quote}
\noindent {\em Abstract.}
One famous difficulty of studying algebras and categories presented with explicit generators and relations is the difficulty of checking that they have the ``expected size,'' for example, that an obvious spanning set is in fact a basis.  One particular example which has resisted a simple proof of a basis theorem is the 2-category $\mathcal{U}(\mathfrak{g})$ categorifying a symmetrizable Kac-Moody algebra $\mathfrak{g}$, defined by Khovanov, Lauda, and Rouquier.  Khovanov and Lauda gave a conjectured basis for the 2-morphism spaces in this category, as well as a conjectured isomorphism of its Grothendieck group to $\dot{U}(\mathfrak{g})$, the idempotented universal enveloping algebra, in 2008.  

In this paper, we give a proof of the conjectures above, exploiting the technique of deforming representations of this 2-category;  by showing that these representations have the expected size at the generic point, we can confirm that they are not smaller at the special point of this deformation.
We achieve this by a more general study of the behavior of categorical actions of a Lie
algebra $\mathfrak{g}$ under the deformation of their spectra.  We give conditions
under which the general point of a family of categorical actions of
$\mathfrak{g}$ carry an action of a larger Lie algebra
$\mathfrak{\tilde{g}}$, which we call an {\bf unfurling} of
$\mathfrak{g}$.  This is closely related to the folding of Dynkin
diagrams, but to avoid confusion, we think it is better to use a
different term.
\end{quote}
}

\section{Introduction}
\label{sec:introduction}

Categorification of Lie algebras and their representations has proven to be a rich and
durable subject over the past decade.  This
theory produces a 2-category $\tU$ depending on a Cartan
datum and a choice of parameters; a representation of this 2-category
is called a {\bf categorical Lie algebra action} of the Kac-Moody
algebra $\fg$ corresponding to the Cartan datum.  Many
interesting categories carry categorical Lie algebra actions, although
most well-understood examples are for Cartan data of
(affine) type A.  

However, since the 2-quantum group $\tU$ was first defined by
Khovanov-Lauda 
\cite{khovanovCategorificationQuantum2010} and Rouquier \cite{Rou2KM}, one
serious issue has remained: because $\tU$ is presented by generators and relations,
it is difficult to show that it is not smaller than expected.  Specifically,
\cite{khovanovCategorificationQuantum2010} proves that there is a surjective map from the
modified quantum group $\mathbf{\dot U}$ to the Grothendieck group of
$\tU$, and that the dimension of 2-morphism spaces
between 1-morphisms in $\tU$ is bounded above by a variation on Lusztig's
bilinear form on $\mathbf{\dot U}$.  If
equality holds in this bound, then $\mathbf{\dot U}$ is isomorphic to the
Grothendieck group, and we call the corresponding
categorification {\bf non-degenerate}.  

As a general rule, proving non-degeneracy depends on constructing
appropriate representations in which one can show that no unexpected
relations exist between 2-morphisms in $\tU$.  This is done for
$\mathfrak{sl}_n$ in \cite{khovanovCategorificationQuantum2010}, using an action on the cohomology
of flag varieties.  The next major step was independent proofs by
Kang-Kashiwara \cite{kangCategorificationHighest2012} and the author \cite{Webmerged} that the
simple highest weight representations of $\fg$ possess
categorifications which are non-degenerate in an appropriate sense.  This should be sufficient to prove non-degeneracy for finite-type Cartan data, though as far as the author can tell, this was not carried out in generality beyond $\mathfrak{sl}_n$ (despite the author claiming that he would in \cite{Webmerged}\footnote{This seems to be an artifact of a long and complex revision process. The author can only apologize for the mistakes of his misspent youth.}).

This was significant progress, but it shares an unfortunate limitation with many other
techniques (such as connections to quiver varieties studied in
\cite{cautisCoherentSheaves2013,rouquierQuiverHecke2012,Webqui}).  Recall
that the {\it open Tits cone} of a Cartan datum consists of the elements 
in the orbit of a dominant weight under the Weyl group; for example, if
$\fg$ is affine, then the open Tits cone is the set of weights
of positive level.  This set is convex and
every weight of a highest integrable representation
lies inside the open Tits cone.  Similarly, weights of lowest weight
representations lie in the negative of the Tits cone.  
If the Cartan datum is of infinite type, then no information about a weight $\la$
outside the open Tits cone and its negative (in the affine case, these
are level 0 representations) is contained in any of these
representations, since the corresponding idempotent
$\mathbf{1}_\la\in \mathbf{\dot U}$ kills any integrable highest weight representation.

Thus, if we are to understand non-degeneracy for weights outside the
open Tits cone and its negative, we must have access to representations which are not
highest or lowest weight.  For our purposes, the most promising are
those given by a tensor product of highest and lowest weight
representations (perhaps many of each type).  A construction of such
categorifications $\hl^{\lambda,-\mu}$ of the tensor product of the simple modules with highest weight $\lambda$ and lowest weight $-\mu$ was given in \cite{WebCB}, but the non-degeneracy proof given there is only valid for weights inside the Tits cone.  Thus, to access
these other weights, we must give a new argument for the
non-degeneracy of these tensor product categorifications, which will
then imply the non-degeneracy of $\tU$.  In particular, we prove that:
\begin{itheorem}[\cref{nondegenerate}]\label{thm:main}
Fix any commutative ring $\K'$ where the integers $d_i$ are invertible and consider any Cartan datum $(I,\langle-,-\rangle)$ and choice of the polynomials $Q_{ij}(u,v)\in \K'[u,v]$ which is homogeneous (in the sense discussed in \cref{sec:klr-algebra}).  The associated 2-quantum group
  $\tU$ is non-degenerate and the Grothendieck group of $\tU$ is $\mathbf{\dot U}$.
\end{itheorem}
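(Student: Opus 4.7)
The plan is to leverage the unfurling machinery advertised in the abstract together with the tensor product categorifications of \cite{WebCB}. The overall strategy mirrors the usual approach to non-degeneracy: one needs lower bounds on $\dim \Hom$-spaces between 2-morphisms in $\tU$ matching Lusztig's pairing, and one obtains these by exhibiting faithful representations. The obstacle in infinite type is that highest/lowest weight representations only see idempotents $\mathbf{1}_\la$ with $\la$ in the open Tits cone, so we must genuinely work with mixed tensor products of highest and lowest weight modules and find some mechanism to upgrade the partial non-degeneracy proved in \cite{WebCB} to a full one.

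First I would set up a flat family of categorical $\fg$-actions, parameterized by a deformation of the weighting/KLR parameters, whose special fibre is the tensor product categorification constructed in \cite{WebCB} for a sequence of dominant and antidominant weights chosen so as to detect a given idempotent $\mathbf{1}_\la$. The point of the deformation is that, at the generic fibre, the separation of the spectra causes extra blocks to appear, and these blocks assemble into a categorical action for a larger Kac-Moody algebra $\tilde{\fg}$, namely an unfurling of $\fg$ in the sense discussed earlier in the paper. The combinatorics is modelled on the folding picture: weights of $\fg$ are recovered from $\tilde{\fg}$-weights by averaging over the relevant orbit of symmetries.

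Next I would arrange the unfurling so that, from the point of view of $\tilde{\fg}$, every weight we care about lies inside the open Tits cone of $\tilde{\fg}$, and so that $\tilde{\fg}$ is a Kac-Moody algebra for which the required tensor product categorifications are already known to be non-degenerate. Concretely, one unfurls the imaginary directions until the weights $\la$ outside the Tits cone of $\fg$ become regular dominant (for some Weyl chamber) after translation in $\tilde{\fg}$. The 2-morphism spaces in the generic-fibre categorification then have dimensions computed by Lusztig's pairing for $\tilde{\fg}$, which by the folding identification matches Lusztig's pairing for $\fg$ on the original idempotents. Upper semicontinuity of the dimensions of the (finite-dimensional) $\Hom$-spaces along the deformation forces the special fibre to have at least as large $\Hom$-spaces; combined with the matching upper bound of \cite{KLIII}, equality holds and $\tU$ is non-degenerate.

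The main obstacle I anticipate is the middle step: showing that the generic fibre actually carries a genuine 2-representation of $\tilde{\tU}$ rather than merely a block decomposition, and that the associated 2-functor is faithful in a strong enough sense to detect every idempotent of $\mathbf{\dot U}$. This amounts to checking that the unfurling functor on the level of KLR algebras (as developed in the body of the paper) really intertwines the two actions at the generic point, including on the level of higher relations such as the nil-Hecke and $Q_{ij}$ relations; once this is in hand, combined with the non-degeneracy of $\tilde{\fg}$'s tensor product categorifications and semicontinuity, the theorem follows.
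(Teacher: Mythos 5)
Your overall skeleton — deform the tensor product categorification $\hl^\bla$ by letting the dot spectra separate, observe that the generic fibre carries a categorical $\tilde{\fg}$-action for an unfurling $\tilde{\fg}$, and close by upper semicontinuity against the Khovanov--Lauda upper bound — matches the paper's strategy, and you correctly flag the need to check that the unfurling functor respects higher relations (Proposition \ref{prop:nu-iso} and Theorem \ref{thm:deform-action} do this work). However, there is a genuine gap in the middle step. You propose reducing to ``tensor product categorifications for $\tilde{\fg}$ that are already known to be non-degenerate,'' but no such result is in stock: non-degeneracy of tensor product categorifications for a general Kac-Moody algebra is precisely what is being proven, so as stated your reduction is circular. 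The Tits cone framing you offer (``unfurl until $\la$ becomes regular dominant after translation'') is not what makes the argument go; the unfurling construction does not translate or dominantize a fixed weight.

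The actual mechanism, which your proposal misses, is structural: after passing to the generic fibre $\hldbK$, the unfurled graph $\tilde{I}$ becomes \emph{disconnected}, with connected components indexed by the tensor factors (because each deformation parameter $z_m$ is algebraically independent of the others, so each $u\in U_i$ is algebraically tied to exactly one $z_m$). Consequently the $\tilde{\fg}$-weight $\tilde\la$ decomposes as $\tilde\la_1 + \cdots + \tilde\la_\ell$ with each piece supported on a different component, and the generic-fibre categorification is generated by a single signed-highest-weight object for $\tilde{\fg}$ — i.e., it is the categorification of an \emph{irreducible} (not tensor product) module, Morita equivalent via \cite[3.25]{Webmerged} to a deformed cyclotomic quotient $\check{R}^{\tilde\la}$. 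The non-degeneracy input is therefore the known result for simple highest-weight categorifications (\cite{KK,Webmerged}), not a tensor product result. Without this irreducibility observation, you have no base case to feed into the semicontinuity argument. (A secondary point: the paper's final deduction does not compare dimensions weight-by-weight; it shows a specific spanning set $D$ in the deformed tricolore 2-category is linearly independent over $\K[\Bz]$ by verifying independence after base change to $\bar{\mathbb{K}}$, then pulls this back through the 2-functor $\tU\to\cT$, which is cleaner than a raw semicontinuity argument but amounts to the same thing.)
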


While this theorem is interesting in its own right, the techniques
used to prove it are also independently useful.  A simple, but often underused, method for this kind
of non-degeneracy argument is upper semicontinuity of dimension
under deformation.  Perhaps calling this ``underused'' is
unfair, since it is certainly a
well-known and much-used trick, but at least this author wishes
he had learned to exploit it systematically much earlier.

Thus, much of this paper is devoted to an exploration of the
behavior of categorical actions under deformation.  Let $R$ be the KLR
algebra of the Cartan datum $(I,\langle-,-\rangle)$ (defined in
\cref{sec:klr-algebra}).  We wish to
consider quotients of this algebra where the dots (the elements usually denoted $y_k\in R$) have a fixed
spectrum; of course, all of these quotients can be packaged together
into a completion $\hat{R}$. Most often, people have studied
representations where the elements $y_k$ act nilpotently (all
gradable finite-dimensional representations have this property), but we can
also have them act with certain fixed non-zero eigenvalues. Given a choice of spectrum for the dots, we have an
associated graph with the vertex set $\tilde{I}$ and associated Cartan datum.
There is a natural map $\tilde{I} \to I$, which one can informally think of as a ``branched
cover'' of the Cartan datum $(I,\langle-,-\rangle)$.

This is closely related to the
phenomenon of {\bf folding} of Dynkin diagrams, but due to some
technical differences, we think it would be misleading to use the term
``folding'' here.  Thus, we call $\tilde{I}$ (with its induced graph structure) an {\bf unfurling} of $I$
(and $I$ a {\bf furling} of $\tilde{I}$). Note that while $I$
is not necessarily symmetric as a Cartan datum (it may have roots of
different lengths), we define $\tilde{I}$ in such a way that it is symmetric.  To give the reader a
sense of this operation, let us discuss some examples:
\begin{itemize}
    \item  If $I$ is simply laced, then $\tilde{I}$ will be a topological cover of $I$, such as an $A_\infty$ graph covering an $n$-cycle or the trivial cover $\tilde I \cong I\times U$ for some set $U$.  A relevant example is discussed in \cref{example1}. 
\item If $I$ is not simply laced, we will obtain a symmetric Cartan datum $\tilde{I}$, which in many examples is simply-laced, with
  an isomorphism $\fg\cong \tilde{\fg}^\sigma$ for some diagram automorphism $\sigma$.  If $\tilde{I}$ is simply-laced, this means that $I$ is the Langlands dual of what is usually called a folding of $\tilde{I}$ for the automorphism $\sigma$.  A relevant example is discussed in \cref{example2}. 
\end{itemize}
At present, it is unclear to the author what, if any, the
relationship between this work and that of McNamara \cite{mcnamaraFoldingKLR2019} and
Elias \cite{eliasFoldingSoergel2017},
which also combine the ideas of categorification and folding.  This
would be an interesting topic for future consideration.
 
We always have a map of Lie algebras $\fg\hookrightarrow \tilde{\fg}$ (after appropriate completion if $\tilde{I}$ is infinite),
and this map has a categorical analogue, proven in \cref{lem:g-action,equivalence}:
\begin{itheorem}\label{th:B}
	We have a functor $\tU(\fg)\to \htU(\tilde{\fg})$, the completion of the additive closure of $\tU(\tilde{\fg})$ where dots are nilpotent.  We can always choose $\tilde{I}$ so that the categorification $\hl^{\lambda,-\mu}$ flatly deforms to the pullback of the categorification of a simple under this functor. 
\end{itheorem}

Since we know a basis theorem for morphism spaces in categorified simple modules over $\tU(\tilde{\fg})$, this allows us to prove a similar theorem for $\hl^{\lambda,-\mu}$ (\cref{tricolore-nondegenerate}).

We first give the most direct proof of \cref{thm:main} based on this result, and then proceed to complementary results. In particular, \cref{th:B} has a converse: we can define a $\tU(\tilde{\fg})$-action on a category with a $\tU(\fg)$-action where the dots are not nilpotent (\cref{thm:deform-action}).  

Finally, we apply these results to the categorifications of tensor products introduced in \cite[\S 4]{WebCB}.  The theory developed there is ultimately dependent on certain non-degeneracy results that we were only able to prove using the deformation techniques of this paper.  

We also include an appendix discussing the theory of valued graphs and how it relates to the graph structures on the sets $\tilde{I}$.  This material is not strictly required in the main body of the paper, but it connects some of the combinatorics we use with existing constructions.  

Since this paper has been substantially rewritten since it was first posted on the arXiv, we include a brief explanation of how this version differs from earlier ones.  The ideas at the center of the first version of this paper ultimately led to a very fruitful collaboration with Jon Brundan and Alistair Savage \cite{brundanDegenerateHeisenberg2023,brundanDefinitionQuantum2020,brundanHeisenbergKacMoody2020,brundanQuantumFrobenius2022,brundanFoundationsFrobenius2021}.  
In particular, closely analogous ideas were central to the new proof of the basis theorem for Heisenberg categories in \cite[Th. 6.4]{brundanDegenerateHeisenberg2023} and to the relation between Heisenberg and Kac-Moody categorifications in \cite[Th. A]{brundanHeisenbergKacMoody2020}.  The aim of this revision is to apply similar ideas, replacing Heisenberg categorifications with Kac-Moody categorifications.  Thus, \cref{sec:proof} is largely new, and heavily based on \cite{brundanDegenerateHeisenberg2023,brundanHeisenbergKacMoody2020}. This has created a slightly tangled relationship: those papers were inspired by the original version of this paper, but the underlying ideas and calculations were refined substantially in the process of writing them (mostly due to the positive influence of my coauthors), and I apply those improvements here to reorganize this paper.  In particular, there is a strong analogy between:
\begin{enumerate}
  \item \cref{thm:deform-action} and \cite[Th. A]{brundanHeisenbergKacMoody2020}.  Both take an action of a 2-category (Kac-Moody or Heisenberg), apply a spectral decomposition to the functors, and find a new, more ``homogeneous'' 2-category action.
	\item \cref{th:B} and \cite[Th. 5.4]{brundanDegenerateHeisenberg2023} (and also \cite[Th. 5.22]{brundanHeisenbergKacMoody2020}).  Both define a functor from one 2-category into a localization of another, allowing for the construction of ``large'' modules over the source 2-category via pullback.

 	\item \cref{thm:main} and \cite[Th. 6.4]{brundanDegenerateHeisenberg2023}.  Both use the ``large'' modules constructed via the pullback above to check a basis theorem for the morphisms in the 2-category.  
\end{enumerate}

\subsection*{Acknowledgements}
\label{sec:acknowledgements}

As discussed in the introduction above, the current version of this paper would have been impossible without what I have learned through the wisdom and hard work of my collaborators Jon Brundan and Alistair Savage.  A mathematician could not wish for finer colleagues.  
I would also like to thank Eric Vasserot for pointing out to me the
difficulties which arise from the Tits cone; Ben Elias for
some very helpful comments; Chris Leonard for
pointing out a very silly mistake; and all the people
(Wolfgang Soergel, Raphael Rouquier, and Catharina Stroppel among them)
who taught me the importance of deforming things.  A number of referees also gave extremely helpful comments on the paper, which have improved it greatly.

\section{Background}
\label{sec:background}

\notation{$I, \mathfrak{g}$}{The indexing set of simple roots in a Cartan datum and associated Kac-Moody algebra.}
Throughout, we fix a (possibly infinite) set $I$, and a Cartan datum on this set.  
That is, the free abelian group
generated by the simple roots $\al_i$ for $i\in I$ carries a symmetric bilinear form $\langle
-,-\rangle$ such that $\langle \al_i,\al_i\rangle \in 2\Z_{>0}$ and $C=\left(c_{ij}=2\frac{\langle\al_i,\al_j
  \rangle}{\langle\al_i,\al_i \rangle}\right)$ is a symmetrizable locally finite generalized Cartan matrix.     By {\it locally finite}, we mean that for $i\in I$, we have that $c_{ij}\neq 0$ for only finitely many $j$; that is, the associated Dynkin diagram is a locally finite graph. Note that
$d_i=\langle\al_i,\al_i \rangle/2$ are symmetrizing coefficients for
this Cartan matrix---we have an equality $d_ic_{ij}=d_jc_{ji}=\langle\al_i,\al_j\rangle$
for all $i,j$. Before we turn to a more detailed discussion of constructing Kac-Moody algebras from this data, let us reassure the reader: these details are not important and will not play a central role in our proofs.  Rather, we work in the greatest possible generality to show that our results do not depend on fussy details of the construction of Kac-Moody algebras, as we show below in \cref{lem:different-realizations}.  
\notation{$c_{ij},C$}{The entries $c_{ij}=\al_i^{\vee}(\al_j)=2\frac{\langle\al_i,\al_j
  \rangle}{\langle\al_i,\al_i \rangle}$ of the Cartan matrix $C$.}
 \notation{$d_i$}{The symmetrizing coefficients $d_i=\langle\al_i,\al_i \rangle/2$ satisfying  $d_ic_{ij}=d_jc_{ji}=\langle\al_i,\al_j\rangle$.}

\notation{$\mathbb{F}$}{The base field of characteristic 0 for the Lie algebra $\fg$.} Let
$\mathbb{F}$ be a field of characteristic 0. We will choose a
realization of the Cartan datum discussed above over $\mathbb{F}$.
That is, we choose:
\begin{enumerate}
  \item An
$\mathbb{F}$-vector space $\mathfrak{h}$ (possibly of infinite
dimension).

\item Elements $\al_i^{\vee}\in
\mathfrak{h}$.  Let $\mathfrak{w}=\mathfrak{h}^\star$ be the restricted dual
of $\mathfrak{h}$, the subspace of the dual of $\mathfrak{h}$
consisting of functions which vanish on all but finitely many $\al^{\vee}_i\in
I$.
\item Elements $\al_i\in \mathfrak{w}$ for $i\in I$ such that $\al_i^{\vee}(\al_j)=c_{ij}$.  \end{enumerate}
As usual, we can define a Kac-Moody Lie algebra $\fg$ with Cartan $\mathfrak{h}$ generated by formal symbols $E_i$ and $F_i$ satisfying $[E_i,F_i]=\al_i^{\vee}$ and the Serre relations for the Cartan matrix.

For a given Cartan matrix, there are 4 canonical choices of realization (up to isomorphism), which we can derive from the free vector spaces $\mathfrak{h}_0=\mathbb{F}^I$ and $\mathfrak{w}_0=\mathbb{F}^I$.  The Cartan matrix defines a natural pairing between these spaces $\mathfrak{c}(h,w)=\sum_{i,j\in I}h_ic_{ij}w_j$. Let $e^{\vee}_i,e_j$ be the coordinate unit vectors in $\mathfrak{h}_0,\mathfrak{w}_0$.

\begin{enumerate}[wide]
	\item We can take 
 \begin{align*}
 	\mathfrak{h}&=\mathfrak{h}_0/\{h\in \mathfrak{h}_0\mid \mathfrak{c}(h,w)=0\quad \forall w\in \mathfrak{w}_0\} & e_i^{\vee}&\mapsto \al_i^{\vee} \\
 	\mathfrak{w}&=\mathfrak{w}_0/ \{w\in \mathfrak{w}_0\mid \mathfrak{c}(h,w)=0 \quad\forall h\in \mathfrak{h}_0\} & e_i &\mapsto \al_i.
 \end{align*}
  This corresponds to a Kac-Moody algebra with no grading elements (so weight multiplicities may be infinite) and trivial center. The sets $\al_i^{\vee}$ and $\al_i$ span $\mathfrak{h}$ and $\mathfrak{w}$, but if $C$ is degenerate, they are not linearly independent.  An affine example is the loop algebra $\fg[t,t^{-1}]$ for finite-dimensional $\fg$.  
	\item  We can take $\mathfrak{h}=\mathfrak{h}_0$ with $\al_i^{\vee}=e_i^{\vee}$; we must take $\al_i$ to be the function defined by $h\mapsto \mathfrak{c}(h,e_i)$.    
   If $C$ is degenerate, the roots $\al_i$ fail both to span and to be linearly independent.  This is the algebra obtained by naively extending the Serre presentation of finite-dimensional simple Lie algebras to more general Cartan matrices.  
  An affine example is the universal central extension of $\fg[t,t^{-1}]$.  
	
  We will see below that this realization is often the most convenient; since it will be useful to refer to it later, we call it the {\bf universal derived} Lie algebra for this Cartan matrix. One useful property is that it is initial among all realizations over $\mathbb{F}$---if $\fg$ is the Kac-Moody algebra with this realization, and $\fg'$ is any other Kac-Moody algebra for the same Cartan matrix, then there is a canonical homomorphism $\fg\to \fg'$ whose image is $[\fg',\fg']$.  
	 \item We can reverse the roles of roots and coroots, and take $\mathfrak{h}=\mathfrak{w}_0^{\star}$ with $\al_i=e_i$, which similarly requires that $\al_i^{\vee}$ be the function $w\mapsto \mathfrak{c}(e_i^{\vee},w)$.  
  In this case, the coroots $\al_i^{\vee}$ might fail both to span and to be linearly independent.   An affine example is $\mathbb{F}^{\times}\ltimes \fg[t,t^{-1}]$, the loop algebra with a grading element added to the Cartan.
	\item We can require that both $\al_i$ and $\al_i^{\vee}$ are linearly independent at the cost that neither will span their respective spaces if $C$ is degenerate.  This is
 most often the version of the Kac-Moody algebra considered.  An affine example is the universal central extension of $\mathbb{F}^{\times}\ltimes \fg[t,t^{-1}]$.
\end{enumerate}
Note that when $C$ has finite rank and is non-degenerate, these realizations are all the same.  In particular, these differences matter only for infinite-dimensional Kac-Moody algebras.  
\begin{definition}
	The {\bf weight lattice} $\wela=\{\la\in \mathfrak{w}\mid \al_i^{\vee}(\la)\in \Z\}$ of $\fg$ is the subgroup of $\mathfrak{w}$ on which $\al_i^{\vee}$ has integer value.  Elements of $\wela$ are called {\bf weights}.\notation{$\wela$}{The weight lattice $\{\la\in\mathfrak{w}\mid \al_i^{\vee}(\la)\in\Z\}$ of $\fg$.}
\end{definition}
If the coroots $\al_i^{\vee}$ span $\mathfrak{h}$, then the map $\al^{\vee}\colon \wela\to \Z^I$ sending $\la\mapsto (\al_i^{\vee}(\la))$ is injective, and so $\wela$ is a free abelian group.  On the other hand, if this map is not injective, its kernel is a positive-dimensional $\mathbb{F}$-vector space.  As a prelude to letting the reader completely forget the discussion above, the abelian group $\wela$ together with the map $\al^{\vee}$ will be the only aspect of the Kac-Moody algebra above and the corresponding realization of the Cartan matrix that are relevant for the construction of the categorification below.  

\notation{$\K$}{The base field for KLR algebras, which is algebraically closed of characteristic coprime to
all $d_i$.}
Our construction of KLR algebras and categorified quantum groups depends on the Cartan matrix $C$ and weight lattice $\wela$ above, a field $\K$ which we assume is algebraically closed of characteristic coprime to
all $d_i$, and, for each $i\neq j\in I$, a polynomial $Q_{ij}(x,y)=Q_{ji}(y,x)\in \K[x,y]$ which is homogeneous
of degree $-2\langle\al_i,\al_j\rangle= -2d_ic_{ij}=-2d_jc_{ji}$ when
$x$ has degree $2d_i$ and $y$ has degree $2d_j$. Despite the assumption on $\K$ here, we can extend \cref{thm:main} to a general commutative ring in which the integers $d_i$ are invertible by standard commutative algebra arguments; see page \pageref{reduction} for details.
We assume
throughout that $Q_{ij}(1,0)$ is a unit for all $i\neq j$.
\notation{$Q_{ij}$}{The polynomials that appear in the definition of the KLR algebra.}

\notation{$P_{ij}$}{Polynomials
$P_{ij}(x,y)\in \K[x,y]$ such that $Q_{ij}(x,y)=P_{ij}(x,y)P_{ji}(y,x)$.}
Choose polynomials
$P_{ij}(x,y)\in \K[x,y]$ such that $Q_{ij}(x,y)=P_{ij}(x,y)P_{ji}(y,x)$.  
Let
$p_{ij}=P_{ij}(1,0)$ and $t_{ij}=Q_{ij}(0,1)^{-1}$.  

This homogeneity gives a sensible notion of the order of vanishing of $Q_{ij}(x,y)$ or $P_{ij}(x,y)$
  at $x=u,y=u'$ for $u,u'\in \K\setminus 0$.  This is the same as the order of vanishing of $Q_{ij}(u,y)$ at $y=u'$ or $Q_{ij}(x,u')$ at $x=u$.  

Fix a subset (finite or infinite) $U_i\subset \K\setminus \{0\}$ for each $i\in I$.  \begin{definition}\label{def:tilde-I}
  Let $\tilde{I}$ be the set of pairs $\{(i,u)\in I\times \K\mid u\in U_i\}$.  Define an oriented graph with vertex set $\tilde{I}$ by taking the number of edges oriented from $(i,u)$ to $(j,u')$ to be the order of vanishing of $P_{ij}(x,y)$ at $x=u,y=u'$, as defined above. This order of vanishing for $Q_{ij}(x,y)$ is the total number of edges, with both orientations, joining $(i,u)$ and $(j,u')$.
Let $\tilde{\fg}$ be the Kac-Moody algebra associated to this graph.  
\notation{$U_i$}{A subset of $\K$ attached to a vertex $i\in I$.  Typically the spectrum of a natural transformation of an associated functor (\cref{def:tilde-I}).}
\notation{$\tilde{I},\tilde{\fg}$}{The set of pairs $(i,u)$ with $i\in I$ and $u\in U_i\subset \K$, together with the induced graph structure and the attached Kac-Moody algebra.}
  
  We call a choice of the sets $U_i$ {\bf complete} if, whenever $Q_{ij}(u,u')=0$ for $u\in U_i$, then $u'\in U_j$. 
\end{definition}

\begin{example}\label{example2}
The first non-trivial case is when $I=\{1,2\}$ and the underlying Cartan matrix is $[\begin{smallmatrix}
	2 & -2 \\ -1 & 2
\end{smallmatrix}]$, so we have type $B_2$.  If $Q_{12}(x,y)=x^2-y$ and
$d_1=1,d_2=2$,  the number of edges joining $(1,x)$ to
$(2,y)$ is given by the number of solutions to $x^2=y$.
Thus, every component of $\tilde{I}$ is a subgraph of an $A_3$ formed
by $(1,x) \to (2,x^2) \leftarrow (1,-x)$ (since by assumption $1\neq -1$).  

More examples are covered in \cref{example1,example3}. 
\end{example}

When the choice of the sets $U_i$ is complete, we call $\tilde{I}$ (with this induced graph structure) an {\bf unfurling} of $I$. We will show that there is a natural homomorphism from $U(\fg)$ to a suitable completion $\hat{U}(\tilde{\fg})$ (\cref{prop:furling-homomorphism}) which sends:
\begin{equation}
F_i\mapsto \sum_{u\in U_i} F_{i,u}\qquad E_i\mapsto \sum_{u\in U_i} E_{i,u}\qquad H_i\mapsto \sum_{u\in U_i} H_{i,u}. \label{eq:furling-homomorphism}	
\end{equation}
We can think of \cref{th:B} as a categorification of this result, but first we must introduce the relevant categories.  

\subsection{The KLR algebra}
\label{sec:klr-algebra}

Let $\K$ and $Q_{ij}$ be as above.  

\begin{definition}
  Let $R_n$ denote  the KLR algebra with generators given by:
  \notation{$R_n$}{The KLR algebra attached to the Cartan datum $I$ and polynomials $Q_{ij}$ over $\K$.}
\begin{itemize}
\item The idempotent $e_{\Bi}$ given by straight lines labeled with
  $(i_1,\dots, i_n)\in I^n$.

\item The element $y_k^\Bi$ given by straight lines with a dot on
the $k$th strand.

\item The element $\psi_k^\Bi$ which is a crossing of the $k$th and $(k+1)$st strands.  
\end{itemize}

\begin{equation*}
    \tikz[<-]{
      \node[label=below:{$e_{\Bi}$}] at (-4.5,0){ 
        \tikz[very thick,xscale=1.2]{
          \draw (-.5,-.5)-- (-.5,.5) node[below,at start]{$i_1$};
          \draw (0,-.5)-- (0,.5) node[below,at start]{$i_2$};
          \draw (1.5,-.5)-- (1.5,.5) node[below,at start]{$i_n$};
          \node at (.75,0){$\cdots$};
        }
      };
      \node[label=below:{$y_k^\Bi$}] at (0,0){ 
        \tikz[very thick,xscale=1.2]{
          \draw (-.5,-.5)-- (-.5,.5) node[below,at start]{$i_1$};
          \draw (.5,-.5)-- (.5,.5) node [midway,fill=black,circle,inner
          sep=2pt]{} node[below,at start]{$i_j$};
          \draw (1.5,-.5)-- (1.5,.5) node[below,at start]{$i_n$};
          \node at (1,0){$\cdots$};
          \node at (0,0){$\cdots$};
        }
      };
      \node[label=below:{$\psi_k^\Bi$}] at (4.5,0){ 
        \tikz[very thick,xscale=1.2]{
          \draw (-.5,-.5)-- (-.5,.5) node[below,at start]{$i_1$};
          \draw (.1,-.5)-- (.9,.5) node[below,at start]{$i_j$};
          \draw (.9,-.5)-- (.1,.5) node[below,at start]{$i_{j+1}$};
          \draw (1.5,-.5)-- (1.5,.5) node[below,at start]{$i_n$};
          \node at (1,0){$\cdots$};
          \node at (0,0){$\cdots$};
        }
      };
    }
  \end{equation*}
  and  relations:  
  \newseq
  \begin{align}\subeqn\label{QHA1}
\mathord{
\begin{tikzpicture}[baseline = 1mm]
	\draw[->,thick] (0.25,.55) to (-0.25,-.15);
	\draw[<-,thick] (0.25,-.15) to (-0.25,.55);
  \node at (-0.25,-.3) {$\scriptstyle{j}$};
   \node at (0.25,-.3) {$\scriptstyle{i}$};
      \node at (-0.13,0.01) {$\bull$};
\end{tikzpicture}
}
-
\mathord{
\begin{tikzpicture}[baseline = 1mm]
	\draw[->,thick] (0.25,.55) to (-0.25,-.15);
	\draw[<-,thick] (0.25,-.15) to (-0.25,.55);
  \node at (-0.25,-.3) {$\scriptstyle{j}$};
   \node at (0.25,-.3) {$\scriptstyle{i}$};
      \node at (0.13,0.38) {$\bull$};
\end{tikzpicture}
}
&=
\mathord{
\begin{tikzpicture}[baseline = 1mm]
 	\draw[->,thick] (0.25,.55) to (-0.25,-.15);
	\draw[<-,thick] (0.25,-.15) to (-0.25,.55);
  \node at (-0.25,-.3) {$\scriptstyle{j}$};
   \node at (0.25,-.3) {$\scriptstyle{i}$};
      \node at (-0.13,0.38) {$\bull$};
\end{tikzpicture}
}
-
\mathord{
\begin{tikzpicture}[baseline = 1mm]
	\draw[->,thick] (0.25,.55) to (-0.25,-.15);
	\draw[<-,thick] (0.25,-.15) to (-0.25,.55);
  \node at (-0.25,-.3) {$\scriptstyle{j}$};
   \node at (0.25,-.3) {$\scriptstyle{i}$};
      \node at (0.13,0.01) {$\bull$};
\end{tikzpicture}
}
=
\delta_{i,j}
\mathord{
\begin{tikzpicture}[baseline = -.5mm]
 	\draw[<-,thick] (0.08,-.3) to (0.08,.4);
	\draw[<-,thick] (-0.28,-.3) to (-0.28,.4);
   \node at (-0.28,-.45) {$\scriptstyle{j}$};
   \node at (0.08,-.45) {$\scriptstyle{i}$};
\end{tikzpicture}
},\\\subeqn\label{QHA2}
\mathord{
\begin{tikzpicture}[baseline = 2.5mm]
	\draw[-,thick] (0.28,.4) to[out=90,in=-90] (-0.28,1.1);
	\draw[-,thick] (-0.28,.4) to[out=90,in=-90] (0.28,1.1);
	\draw[<-,thick] (0.28,-.3) to[out=90,in=-90] (-0.28,.4);
	\draw[<-,thick] (-0.28,-.3) to[out=90,in=-90] (0.28,.4);
  \node at (-0.28,-.45) {$\scriptstyle{j}$};
  \node at (0.28,-.45) {$\scriptstyle{i}$};
\end{tikzpicture}
}
&=
\left\{
\begin{array}{ll}
0\hspace{40mm}&\text{if $j=i$,}\\
Q_{ji}\left(\mathord{
\begin{tikzpicture}[baseline = -.5mm]
	\draw[<-,thick] (0.08,-.3) to (0.08,.4);
	\draw[<-,thick] (-0.28,-.3) to (-0.28,.4);
   \node at (-0.28,-.45) {$\scriptstyle{j}$};
   \node at (0.08,-.45) {$\scriptstyle{i}$};
      \node at (-0.28,0.05) {$\bull$};
\end{tikzpicture}
},\mathord{
\begin{tikzpicture}[baseline = -.5mm]
	\draw[<-,thick] (0.08,-.3) to (0.08,.4);
	\draw[<-,thick] (-0.28,-.3) to (-0.28,.4);
   \node at (-0.28,-.45) {$\scriptstyle{j}$};
   \node at (0.08,-.45) {$\scriptstyle{i}$};
     \node at (0.08,0.05) {$\bull$};
\end{tikzpicture}
}\right)
&\text{if $i\neq j$,}\\
\end{array}
\right. \\\subeqn\label{QHA3}
\mathord{
\begin{tikzpicture}[baseline = .5mm]
	\draw[->,thick] (0.45,.8) to (-0.45,-.4);
	\draw[<-,thick] (0.45,-.4) to (-0.45,.8);
        \draw[<-,thick] (0,-.4) to[out=90,in=-90] (-.45,0.2);
        \draw[-,thick] (-0.45,0.2) to[out=90,in=-90] (0,0.8);
   \node at (-0.45,-.6) {$\scriptstyle{k}$};
   \node at (0,-.6) {$\scriptstyle{j}$};
  \node at (0.45,-.6) {$\scriptstyle{i}$};
\end{tikzpicture}
}
\!\!-
\!\!\!
\mathord{
\begin{tikzpicture}[baseline = .5mm]
	\draw[->,thick] (0.45,.8) to (-0.45,-.4);
	\draw[<-,thick] (0.45,-.4) to (-0.45,.8);
        \draw[<-,thick] (0,-.4) to[out=90,in=-90] (.45,0.2);
        \draw[-,thick] (0.45,0.2) to[out=90,in=-90] (0,0.8);
   \node at (-0.45,-.6) {$\scriptstyle{k}$};
   \node at (0,-.6) {$\scriptstyle{j}$};
  \node at (0.45,-.6) {$\scriptstyle{i}$};
\end{tikzpicture}
}
&=
\left\{
\begin{array}{ll}
\tilde{Q}_{ij}\left(\mathord{\displaystyle
\begin{tikzpicture}[baseline = -.5mm]
	\draw[<-,thick] (0.44,-.3) to (0.44,.4);
	\draw[<-,thick] (0.08,-.3) to (0.08,.4);
	\draw[<-,thick] (-0.28,-.3) to (-0.28,.4);
   \node at (-0.28,-.5) {$\scriptstyle{i}$};
   \node at (0.08,-.5) {$\scriptstyle{j}$};
   \node at (0.44,-.5) {$\scriptstyle{i}$};
     \node at (-0.28,0.05) {$\bull$};
\end{tikzpicture}
},
\mathord{\displaystyle
\begin{tikzpicture}[baseline = -0.5mm]
	\draw[<-,thick] (0.44,-.3) to (0.44,.4);
	\draw[<-,thick] (0.08,-.3) to (0.08,.4);
	\draw[<-,thick] (-0.28,-.3) to (-0.28,.4);
   \node at (-0.28,-.5) {$\scriptstyle{i}$};
   \node at (0.08,-.5) {$\scriptstyle{j}$};
   \node at (0.44,-.5) {$\scriptstyle{i}$};
     \node at (0.08,0.05) {$\bull$};
\end{tikzpicture}
},
\mathord{\displaystyle
\begin{tikzpicture}[baseline = -.5mm]
	\draw[<-,thick] (0.44,-.3) to (0.44,.4);
	\draw[<-,thick] (0.08,-.3) to (0.08,.4);
	\draw[<-,thick] (-0.28,-.3) to (-0.28,.4);
   \node at (-0.28,-.5) {$\scriptstyle{i}$};
   \node at (0.08,-.5) {$\scriptstyle{j}$};
   \node at (0.44,-.5) {$\scriptstyle{i}$};
     \node at (0.44,0.05) {$\bull$};
\end{tikzpicture}
}
\right)&\text{if $i= k$,}\\
0&\text{otherwise,}
\end{array}\right.\end{align}
\text{where} \qquad \[\tilde{Q}_{ij}(u,v,w)=\frac{Q_{ij}(u,v)-Q_{ij}(w,v)}{u-w}.\]
 \end{definition} 

Following Rouquier \cite[Prop. 3.12]{Rou2KM}, we have a polynomial action of $R_n$ on a sum of polynomial rings $\K[Y_1,\dots, Y_n]$ in $n$ variables, one for each $\Bi\in I^n$.  
Since the idempotents $e_{\Bi}$ act by projection to the different summands, we can write this sum as \[\poly =\bigoplus_{\Bi\in I^n}\K[Y_1,\dots, Y_n]e_{\Bi},\]  with the action defined by 
\newseq
\begin{align*}
	\label{eq:poly-action1}\subeqn
		y_kf(Y_1,\dots, Y_n)e_{\Bi}&=Y_kf(Y_1,\dots, Y_n)e_{\Bi}\\ \subeqn\label{eq:poly-action2}\psi_kf(Y_1,\dots, Y_n)&=\begin{cases}
  P_{i_k i_{k+1}}(Y_{k+1},Y_k)f(Y_1,\dots, Y_{k+1},Y_k,\dots, Y_n)e_{s_k\Bi}&i_k\neq i_{k+1}\\
	\frac{f(Y_1,\dots, Y_{k+1},Y_k,\dots, Y_n)-f(Y_1,\dots, Y_k,Y_{k+1},\dots, Y_n)}{Y_{k+1}-Y_k}e_{\Bi}&i_k=i_{k+1}.
	\end{cases}
\end{align*}

\subsection{Categorical actions}
\label{sec:appl-categ-acti}

We have considered the KLR algebra $R$ with an eye toward studying
categorical actions of Lie algebras.  By ``a categorical action of a
Lie algebra'' we mean a representation of a specific 2-category $\tU$
defined by Khovanov-Lauda and Rouquier (the
equivalence of these 2-categories is proven in \cite{brundanDefinitionKac2016}).  
We follow the conventions of \cite{brundanDefinitionKac2016}: in the notation of that paper,
	\[Q_{ij}(u,v)=t_{ij}^{-1}t_{ji}^{-1}(t_{ij}u^{-c_{ij}}+t_{ji}v^{-c_{ji}}+\sum_{p,q} s^{pq}_{ji}u^pv^q).\] 
The {\em Kac-Moody 2-category}
 $\tU(\fg)$
is the strict $\K$-linear 2-category
whose:
\begin{enumerate}[label=(\roman*)]
\item  Objects are formal direct sums (with arbitrary indexing set) of the elements of $\wela$.
\item 
Generating
1-morphisms are 
$\eE_i 1_\lambda = \substack{\thickup \\ {\scriptscriptstyle i}}{\scriptstyle \color{gray}\lambda}:\lambda \rightarrow \lambda+\alpha_i$ and
$\eF_i 1_\lambda= \substack{{\scriptscriptstyle
    i}\\\thickdown}{\scriptstyle \color{gray}\lambda}:\lambda \rightarrow \lambda-\alpha_i$ for
$i\in I$ and $\lambda \in \wela$; at times, for simplicity of notation, we use $\eE_{-i}$ to denote $\eF_i$, and we use $\Bi$ to represent the monomial $\eE_{\Bi}=\eE_{i_1}\cdots \eE_{i_n}$.
\item 
Generating
2-morphisms
\notation{ $\tU(\fg)$}{The Kac-Moody 2-category categorifying $\dot{\mathbf{U}}(\fg)$.}
\begin{align}\label{QHgens}
\mathord{
\begin{tikzpicture}[baseline = -2]
	\draw[<-,thick] (0.08,-.15) to (0.08,.3);
      \node at (0.08,0.05) {$\bull$};
   \node at (0.08,-.3) {$\scriptstyle{i}$};
\end{tikzpicture}
}
{\color{gray}\scriptstyle\lambda}
&:\eF_i 1_\lambda \Rightarrow \eF_i 1_\lambda,
&
\mathord{
\begin{tikzpicture}[baseline = 0]
	\draw[<-,thick] (0.3,0.2) to[out=-90, in=0] (0.1,-0.1);
	\draw[-,thick] (0.1,-0.1) to[out = 180, in = -90] (-0.1,0.2);
    \node at (-0.1,.35) {$\scriptstyle{i}$};
\end{tikzpicture}
}\:\,
{\color{gray}\scriptstyle\lambda}
&:1_\lambda \Rightarrow \eF_i \eE_i 1_\lambda,
&
\mathord{
\begin{tikzpicture}[baseline = -2]
	\draw[<-,thick] (0.3,-0.1) to[out=90, in=0] (0.1,0.2);
	\draw[-,thick] (0.1,0.2) to[out = 180, in = 90] (-0.1,-0.1);
    \node at (-0.1,-.25) {$\scriptstyle{i}$};
\end{tikzpicture}
}
\:\,{\color{gray}\scriptstyle\lambda}
&:\eE_i \eF_i 1_\lambda \Rightarrow 1_\lambda,\\
\mathord{
\begin{tikzpicture}[baseline = -2]
	\draw[<-,thick] (0.18,-.15) to (-0.18,.3);
	\draw[<-,thick] (-0.18,-.15) to (0.18,.3);
   \node at (-0.18,-.3) {$\scriptstyle{j}$};
   \node at (0.18,-.3) {$\scriptstyle{i}$};
\end{tikzpicture}
}
{\color{gray}\scriptstyle\lambda}
&:\eF_j \eF_i 1_\lambda \Rightarrow \eF_i \eF_j 1_\lambda,&
\mathord{
\begin{tikzpicture}[baseline = 0]
	\draw[-,thick] (0.3,0.2) to[out=-90, in=0] (0.1,-0.1);
	\draw[->,thick] (0.1,-0.1) to[out = 180, in = -90] (-0.1,0.2);
    \node at (0.3,.35) {$\scriptstyle{i}$};
\end{tikzpicture}
}\!
{\color{gray}\scriptstyle\lambda}
&:1_\lambda \Rightarrow \eE_i \eF_i 1_\lambda,
&
\mathord{
\begin{tikzpicture}[baseline = -2]
	\draw[-,thick] (0.3,-0.1) to[out=90, in=0] (0.1,0.2);
	\draw[->,thick] (0.1,0.2) to[out = 180, in = 90] (-0.1,-0.1);
    \node at (0.3,-.25) {$\scriptstyle{i}$};
\end{tikzpicture}
}
\!{\color{gray}\scriptstyle\lambda}
&\eF_i \eE_i 1_\lambda \Rightarrow 1_\lambda.
\end{align}
\end{enumerate}
\begin{remark}\label{rem:0-object-sum}
	One would usually just take the set of 0-morphisms to be $\rola$ itself, but it will be more convenient later to consider direct sums of these objects.  As usual, a 1-morphism between direct sums is a column-finite matrix of 1-morphisms, and 2-morphisms between these are defined entrywise.
	
  Note that a 2-functor from our version of $\tU$ into a 2-category with direct sums of objects (such as $\K$-linear categories $\mathsf{Cat}_{\K}$) can be constructed canonically from a functor in the version where objects are just $\rola$, using the direct sum in the target 2-category.
\end{remark}

Taking $t_{ii}=1$ for all $i$, the sideways crossings and upward dots and crossings are defined by 
\begin{align}\label{sideways}
\mathord{
\begin{tikzpicture}[baseline = 0]
	\draw[<-,thick] (0.28,-.3) to (-0.28,.4);
	\draw[->,thick] (-0.28,-.3) to (0.28,.4);
   \node at (-.3,-.43) {$\scriptstyle{j}$};
   \node at (-.3,.55) {$\scriptstyle{i}$};
   \node at (0.5,0.05) {$\color{gray}\scriptstyle{\lambda}$};
\end{tikzpicture}
}
&:=t_{ij}\mathord{
\begin{tikzpicture}[baseline = 0]
	\draw[->,thick] (0.3,.5) to (-0.3,-.5);
	\draw[-,thick] (-0.2,.2) to (0.2,-.3);
        \draw[-,thick] (0.2,-.3) to[out=130,in=180] (0.5,-.5);
        \draw[->,thick] (0.5,-.5) to[out=0,in=270] (0.9,.5);
        \draw[-,thick] (-0.2,.2) to[out=130,in=0] (-0.6,.5);
        \draw[-,thick] (-0.6,.5) to[out=180,in=-270] (-0.9,-.5);
   \node at (-0.3,-.65) {$\scriptstyle{j}$};
   \node at (.9,.65) {$\scriptstyle{i}$};
   \node at (1.1,0) {$\color{gray}\scriptstyle{\lambda}$};
\end{tikzpicture}
}\:,&
\mathord{
\begin{tikzpicture}[baseline = 0]
	\draw[->,thick] (0.28,-.3) to (-0.28,.4);
	\draw[<-,thick] (-0.28,-.3) to (0.28,.4);
   \node at (.3,-.43) {$\scriptstyle{j}$};
   \node at (.3,.55) {$\scriptstyle{i}$};
   \node at (0.5,0.05) {$\color{gray}\scriptstyle{\lambda}$};
\end{tikzpicture}
}
&:=
\mathord{
\begin{tikzpicture}[baseline = 0]
	\draw[<-,thick] (0.3,-.5) to (-0.3,.5);
	\draw[-,thick] (-0.2,-.2) to (0.2,.3);
        \draw[-,thick] (0.2,.3) to[out=50,in=180] (0.5,.5);
        \draw[-,thick] (0.5,.5) to[out=0,in=90] (0.9,-.5);
        \draw[-,thick] (-0.2,-.2) to[out=230,in=0] (-0.6,-.5);
        \draw[->,thick] (-0.6,-.5) to[out=180,in=-90] (-0.9,.5);
   \node at (.3,-.65) {$\scriptstyle{j}$};
   \node at (-.9,.65) {$\scriptstyle{i}$};
   \node at (1.1,0) {$\color{gray}\scriptstyle{\lambda}$};
\end{tikzpicture}
}\:,
\end{align}
\begin{align}
\mathord{\begin{tikzpicture}[baseline=0mm]
  \draw[->,thick] (0,-0.4) to (0,.4);
   \node at (0,.55) {$\scriptstyle{i}$};
   \node at (0,0){$\bull$};
   \node at (0.2,0) {$\color{gray}\scriptstyle{\lambda}$};
\end{tikzpicture}
}=
\mathord{
\begin{tikzpicture}[baseline = 0mm]
  \draw[-,thick] (0.3,0) to (0.3,-.4);
	\draw[-,thick] (0.3,0) to[out=90, in=0] (0.1,0.4);
	\draw[-,thick] (0.1,0.4) to[out = 180, in = 90] (-0.1,0);
	\draw[-,thick] (-0.1,0) to[out=-90, in=0] (-0.3,-0.4);
	\draw[-,thick] (-0.3,-0.4) to[out = 180, in =-90] (-0.5,0);
  \draw[->,thick] (-0.5,0) to (-0.5,.4);
   \node at (-0.5,.55) {$\scriptstyle{i}$};
   \node at (0.5,0) {$\color{gray}\scriptstyle{\lambda}$};
   \node at (-0.1,0){$\bull$};
\end{tikzpicture}
}
\label{adjdot1}\end{align}
There are negatively dotted bubbles defined by 
\begin{align*}
\mathord{
\begin{tikzpicture}[baseline = 1.25mm]
  \draw[->,thick] (0.2,0.2) to[out=90,in=0] (0,.4);
  \draw[-,thick] (0,0.4) to[out=180,in=90] (-.2,0.2);
\draw[-,thick] (-.2,0.2) to[out=-90,in=180] (0,0);
  \draw[-,thick] (0,0) to[out=0,in=-90] (0.2,0.2);
   \node at (0.2,0.2) {$\bull$};
   \node at (1,0.2) {$\scriptstyle{n-\al_i^{\vee}(\lambda)-1}$};
   \node at (-0.05,-.15) {$\scriptstyle{i}$};
   \node at (-.38,0.2) {$\color{gray}\scriptstyle{\lambda}$};
\end{tikzpicture}}
&:=
\left\{
\begin{array}{ll}
(-1)^n 
\det\left(
\:\mathord{
\begin{tikzpicture}[baseline = 1.25mm]
  \draw[<-,thick] (0,0.4) to[out=180,in=90] (-.2,0.2);
  \draw[-,thick] (0.2,0.2) to[out=90,in=0] (0,.4);
 \draw[-,thick] (-.2,0.2) to[out=-90,in=180] (0,0);
  \draw[-,thick] (0,0) to[out=0,in=-90] (0.2,0.2);
   \node at (-0.2,0.2) {$\bull$};
   \node at (-.95,0.2) {$\scriptstyle{r-s+\al_i^{\vee}(\lambda)}$};
   \node at (0.05,-.15) {$\scriptstyle{i}$};
   \node at (.38,0.2) {$\color{gray}\scriptstyle{\lambda}$};
\end{tikzpicture}
}\:
\right)_{r,s=1,\dots,n}
&\text{if $\al_i^{\vee}(\lambda) \geq n > 0$},\\
1_{1_\lambda}&\text{if $\al_i^{\vee}(\lambda) \geq n  = 0$,}\\
0&\text{if $\al_i^{\vee}(\lambda) \geq n < 0$,}
\end{array}\right.
\\\mathord{
\begin{tikzpicture}[baseline = 1.25mm]
  \draw[<-,thick] (0,0.4) to[out=180,in=90] (-.2,0.2);
  \draw[-,thick] (0.2,0.2) to[out=90,in=0] (0,.4);
 \draw[-,thick] (-.2,0.2) to[out=-90,in=180] (0,0);
  \draw[-,thick] (0,0) to[out=0,in=-90] (0.2,0.2);
   \node at (-0.2,0.2) {$\bull$};
   \node at (-1,0.2) {$\scriptstyle{n+\al_i^{\vee}(\lambda)-1}$};
   \node at (0.05,-.15) {$\scriptstyle{i}$};
   \node at (.38,0.2) {$\color{gray}\scriptstyle{\lambda}$};
\end{tikzpicture}
}&:=
\left\{\begin{array}{ll}
(-1)^n 
\det\left(
\mathord{
\begin{tikzpicture}[baseline = 1.25mm]
  \draw[->,thick] (0.2,0.2) to[out=90,in=0] (0,.4);
  \draw[-,thick] (0,0.4) to[out=180,in=90] (-.2,.2);
\draw[-,thick] (-.2,0.2) to[out=-90,in=180] (0,0);
  \draw[-,thick] (0,0) to[out=0,in=-90] (0.2,0.2);
   \node at (0.2,0.2) {$\bull$};
   \node at (.95,0.2) {$\scriptstyle{r-s-\al_i^{\vee}(\lambda)}$};
   \node at (-0.05,-.15) {$\scriptstyle{i}$};
   \node at (-.38,0.2) {$\color{gray}\scriptstyle{\lambda}$};
\end{tikzpicture}
}\right)_{r,s=1,\dots,n}\:\:\,&\text{if $-\al_i^{\vee}(\lambda) \geq n > 0$,}\\1_{1_\lambda}&\text{if $-\al_i^{\vee}(\lambda) \geq n=0$,}\\
0&\text{if $-\al_i^{\vee}(\lambda) \geq n < 0$.}
\end{array}\right.
\end{align*}
The generating 2-morphisms are subject to the 
relations \crefrange{QHA1}{QHA3} (which agree with \cite[(2.6-8)]{brundanDefinitionKac2016}) and the additional relations:
\begin{align}
\mathord{
\begin{tikzpicture}[baseline = -1mm]
  \draw[->,thick] (0.3,0) to (0.3,.4);
	\draw[-,thick] (0.3,0) to[out=-90, in=0] (0.1,-0.4);
	\draw[-,thick] (0.1,-0.4) to[out = 180, in = -90] (-0.1,0);
	\draw[-,thick] (-0.1,0) to[out=90, in=0] (-0.3,0.4);
	\draw[-,thick] (-0.3,0.4) to[out = 180, in =90] (-0.5,0);
  \draw[-,thick] (-0.5,0) to (-0.5,-.4);
   \node at (-0.5,-.55) {$\scriptstyle{i}$};
   \node at (0.5,0) {$\color{gray}\scriptstyle{\lambda}$};
\end{tikzpicture}
}
&=
\mathord{\begin{tikzpicture}[baseline=-1mm]
  \draw[->,thick] (0,-0.4) to (0,.4);
   \node at (0,-.55) {$\scriptstyle{i}$};
   \node at (0.2,0) {$\color{gray}\scriptstyle{\lambda}$};
\end{tikzpicture}
},\qquad\quad\qquad
\mathord{
\begin{tikzpicture}[baseline = 0mm]
  \draw[->,thick] (0.3,0) to (0.3,-.4);
	\draw[-,thick] (0.3,0) to[out=90, in=0] (0.1,0.4);
	\draw[-,thick] (0.1,0.4) to[out = 180, in = 90] (-0.1,0);
	\draw[-,thick] (-0.1,0) to[out=-90, in=0] (-0.3,-0.4);
	\draw[-,thick] (-0.3,-0.4) to[out = 180, in =-90] (-0.5,0);
  \draw[-,thick] (-0.5,0) to (-0.5,.4);
   \node at (-0.5,.55) {$\scriptstyle{i}$};
   \node at (0.5,0) {$\color{gray}\scriptstyle{\lambda}$};
\end{tikzpicture}
}
=
\mathord{\begin{tikzpicture}[baseline=0mm]
  \draw[<-,thick] (0,-0.4) to (0,.4);
   \node at (0,.55) {$\scriptstyle{i}$};
   \node at (0.2,0) {$\color{gray}\scriptstyle{\lambda}$};
\end{tikzpicture}
},\label{rightadj}\\
\mathord{
\begin{tikzpicture}[baseline = 0mm]
  \draw[-,thick] (0.3,0) to (0.3,-.4);
	\draw[-,thick] (0.3,0) to[out=90, in=0] (0.1,0.4);
	\draw[-,thick] (0.1,0.4) to[out = 180, in = 90] (-0.1,0);
	\draw[-,thick] (-0.1,0) to[out=-90, in=0] (-0.3,-0.4);
	\draw[-,thick] (-0.3,-0.4) to[out = 180, in =-90] (-0.5,0);
  \draw[->,thick] (-0.5,0) to (-0.5,.4);
   \node at (0.3,-.55) {$\scriptstyle{i}$};
   \node at (0.5,0) {$\color{gray}\scriptstyle{\lambda}$};
\end{tikzpicture}
}
&=
\mathord{\begin{tikzpicture}[baseline=0mm]
  \draw[->,thick] (0,-0.4) to (0,.4);
   \node at (0,-.55) {$\scriptstyle{i}$};
   \node at (0.2,0) {$\color{gray}\scriptstyle{\lambda}$};
\end{tikzpicture}
},\qquad\quad\qquad
\mathord{
\begin{tikzpicture}[baseline = -1mm]
  \draw[-,thick] (0.3,0) to (0.3,.4);
	\draw[-,thick] (0.3,0) to[out=-90, in=0] (0.1,-0.4);
	\draw[-,thick] (0.1,-0.4) to[out = 180, in = -90] (-0.1,0);
	\draw[-,thick] (-0.1,0) to[out=90, in=0] (-0.3,0.4);
	\draw[-,thick] (-0.3,0.4) to[out = 180, in =90] (-0.5,0);
  \draw[->,thick] (-0.5,0) to (-0.5,-.4);
   \node at (0.3,.55) {$\scriptstyle{i}$};
   \node at (0.5,0) {$\color{gray}\scriptstyle{\lambda}$};
\end{tikzpicture}
}
=
\mathord{\begin{tikzpicture}[baseline=-1mm]
  \draw[<-,thick] (0,-0.4) to (0,.4);
   \node at (0,.55) {$\scriptstyle{i}$};
   \node at (0.2,0) {$\color{gray}\scriptstyle{\lambda}$};
\end{tikzpicture}
},\label{leftadj}\end{align}
\begin{equation}
\mathord{
\begin{tikzpicture}[baseline = 0mm]
  \draw[-,thick] (0.3,0) to (0.3,-.4);
	\draw[-,thick] (0.3,0) to[out=90, in=0] (0.1,0.4);
	\draw[-,thick] (0.1,0.4) to[out = 180, in = 90] (-0.1,0);
	\draw[-,thick] (-0.1,0) to[out=-90, in=0] (-0.3,-0.4);
	\draw[-,thick] (-0.3,-0.4) to[out = 180, in =-90] (-0.5,0);
  \draw[->,thick] (-0.5,0) to (-0.5,.4);
   \node at (0.5,-.55) {$\scriptstyle{i}$};
   \node at (0.5,0) {$\color{gray}\scriptstyle{\lambda}$};
   \node at (-0.1,0){$\bull$};
\end{tikzpicture}
}
=
\mathord{
\begin{tikzpicture}[baseline = -1mm]
  \draw[->,thick] (0.3,0) to (0.3,.4);
	\draw[-,thick] (0.3,0) to[out=-90, in=0] (0.1,-0.4);
	\draw[-,thick] (0.1,-0.4) to[out = 180, in = -90] (-0.1,0);
	\draw[-,thick] (-0.1,0) to[out=90, in=0] (-0.3,0.4);
	\draw[-,thick] (-0.3,0.4) to[out = 180, in =90] (-0.5,0);
  \draw[-,thick] (-0.5,0) to (-0.5,-.4);
   \node at (-0.5,-.55) {$\scriptstyle{i}$};
   \node at (0.5,0) {$\color{gray}\scriptstyle{\lambda}$};
   \node at (-0.1,0){$\bull$};
\end{tikzpicture}
}\label{adjdot2}\end{equation}
\begin{align}\label{loop1}
\mathord{
\begin{tikzpicture}[baseline = -0.5mm]
	\draw[-,thick] (0,0.6) to (0,0.3);
	\draw[-,thick] (0,0.3) to [out=-90,in=180] (.3,-0.2);
	\draw[-,thick] (0.3,-0.2) to [out=0,in=-90](.5,0);
	\draw[-,thick] (0.5,0) to [out=90,in=0](.3,0.2);
	\draw[-,thick] (0.3,.2) to [out=180,in=90](0,-0.3);
	\draw[->,thick] (0,-0.3) to (0,-0.6);
   \node at (0,-.75) {$\scriptstyle{i}$};
   \node at (0.4,-0.4) {$\color{gray}\scriptstyle{\lambda}$};
\end{tikzpicture}
}&=
-\delta_{\al_i^{\vee}(\lambda),0} \:
\mathord{
\begin{tikzpicture}[baseline = -0.5mm]
	\draw[->,thick] (0,0.6) to (0,-0.6);
   \node at (0,-.75) {$\scriptstyle{i}$};
   \node at (0.2,0) {$\color{gray}\scriptstyle{\lambda}$};
\end{tikzpicture}
}
\text{ if $\al_i^{\vee}(\lambda) \leq
   0$,}\\\label{loop2}
\mathord{
\begin{tikzpicture}[baseline = -0.5mm]
	\draw[-,thick] (0,0.6) to (0,0.3);
	\draw[-,thick] (0,0.3) to [out=-90,in=0] (-.3,-0.2);
	\draw[-,thick] (-0.3,-0.2) to [out=180,in=-90](-.5,0);
	\draw[-,thick] (-0.5,0) to [out=90,in=180](-.3,0.2);
	\draw[-,thick] (-0.3,.2) to [out=0,in=90](0,-0.3);
	\draw[->,thick] (0,-0.3) to (0,-0.6);
   \node at (0,-.75) {$\scriptstyle{i}$};
   \node at (-0.4,-0.4) {$\color{gray}\scriptstyle{\lambda}$};
\end{tikzpicture}
}&=
\delta_{\al_i^{\vee}(\lambda),0}
\: \:\mathord{
\begin{tikzpicture}[baseline = -0.5mm]
	\draw[->,thick] (0,0.6) to (0,-0.6);
   \node at (0,-.75) {$\scriptstyle{i}$};
   \node at (-0.2,0) {$\color{gray}\scriptstyle{\lambda}$};
\end{tikzpicture}
}
\phantom{-}\text{if $\al_i^{\vee}(\lambda) \geq 0$},\end{align}\begin{align}\label{bubble-normalize1}
\mathord{
\begin{tikzpicture}[baseline = 1.25mm]
  \draw[<-,thick] (0,0.4) to[out=180,in=90] (-.2,0.2);
  \draw[-,thick] (0.2,0.2) to[out=90,in=0] (0,.4);
 \draw[-,thick] (-.2,0.2) to[out=-90,in=180] (0,0);
  \draw[-,thick] (0,0) to[out=0,in=-90] (0.2,0.2);
   \node at (0,-.15) {$\scriptstyle{i}$};
   \node at (0.36,0.2) {$\color{gray}\scriptstyle{\lambda}$};
   \node at (-0.2,0.2) {$\bull$};
   \node at (-1,0.2) {$\scriptstyle{n +\al_i^{\vee}(\lambda)-1}$};
\end{tikzpicture}
}&=
\delta_{n,0}
\:
 \:1_{1_\lambda}
\text{ if $-\al_i^{\vee}(\lambda) < n\leq 0$,}\\ \label{bubble-normalize2}
\mathord{
\begin{tikzpicture}[baseline = 1.25mm]
  \draw[->,thick] (0.2,0.2) to[out=90,in=0] (0,.4);
  \draw[-,thick] (0,0.4) to[out=180,in=90] (-.2,0.2);
\draw[-,thick] (-.2,0.2) to[out=-90,in=180] (0,0);
  \draw[-,thick] (0,0) to[out=0,in=-90] (0.2,0.2);
   \node at (0.2,0.2) {$\bull$};
   \node at (1,0.2) {$\scriptstyle{n-\al_i^{\vee}(\lambda)-1}$};
   \node at (0,-.15) {$\scriptstyle{i}$};
   \node at (-0.4,0.2) {$\color{gray}\scriptstyle{\lambda}$};
\end{tikzpicture}
}&=
\delta_{n,0}
\: 1_{1_\lambda}
\text{ if $\al_i^{\vee}(\lambda) < n \leq 0$,}
\end{align}\begin{align}
\mathord{
\begin{tikzpicture}[baseline = 0]
	\draw[->,thick] (0.28,0) to[out=90,in=-90] (-0.28,.7);
	\draw[-,thick] (-0.28,0) to[out=90,in=-90] (0.28,.7);
	\draw[<-,thick] (0.28,-.7) to[out=90,in=-90] (-0.28,0);
	\draw[-,thick] (-0.28,-.7) to[out=90,in=-90] (0.28,0);
  \node at (-0.28,-.85) {$\scriptstyle{j}$};
  \node at (0.28,.85) {$\scriptstyle{i}$};
  \node at (.45,0) {$\color{gray}\scriptstyle{\lambda}$};
\end{tikzpicture}
}
&=
(-1)^{\delta_{i,j}}
\mathord{
\begin{tikzpicture}[baseline = 0]
	\draw[<-,thick] (0.08,-.3) to (0.08,.4);
	\draw[->,thick] (-0.28,-.3) to (-0.28,.4);
   \node at (-0.28,-.45) {$\scriptstyle{j}$};
   \node at (0.08,.55) {$\scriptstyle{i}$};
   \node at (.3,.05) {$\color{gray}\scriptstyle{\lambda}$};
\end{tikzpicture}
}+\delta_{i,j}\sum_{r,s\geq 0}
\mathord{
\begin{tikzpicture}[baseline = 0]
	\draw[-,thick] (0.3,0.7) to[out=-90, in=0] (0,0.3);
	\draw[->,thick] (0,0.3) to[out = 180, in = -90] (-0.3,0.7);
    \node at (0.3,.85) {$\scriptstyle{i}$};
    \node at (0.4,-0.32) {$\color{gray}\scriptstyle{\lambda}$};
  \draw[->,thick] (0.2,0) to[out=90,in=0] (0,0.2);
  \draw[-,thick] (0,0.2) to[out=180,in=90] (-.2,0);
\draw[-,thick] (-.2,0) to[out=-90,in=180] (0,-0.2);
  \draw[-,thick] (0,-0.2) to[out=0,in=-90] (0.2,0);
 \node at (-0.3,0) {$\scriptstyle{i}$};
   \node at (0.2,0) {$\bull$};
   \node at (0.85,0) {$\scriptstyle{-r-s-2}$};
   \node at (-0.25,0.45) {$\bull$};
   \node at (-0.42,0.45) {$\scriptstyle{r}$};
	\draw[<-,thick] (0.3,-.7) to[out=90, in=0] (0,-0.3);
	\draw[-,thick] (0,-0.3) to[out = 180, in = 90] (-0.3,-.7);
    \node at (-0.3,-.85) {$\scriptstyle{i}$};
   \node at (-0.25,-0.5) {$\bull$};
   \node at (-.42,-.5) {$\scriptstyle{s}$};
\end{tikzpicture}
},\label{flight1}\\
\mathord{
\begin{tikzpicture}[baseline = 0]
	\draw[-,thick] (0.28,0) to[out=90,in=-90] (-0.28,.7);
	\draw[->,thick] (-0.28,0) to[out=90,in=-90] (0.28,.7);
	\draw[-,thick] (0.28,-.7) to[out=90,in=-90] (-0.28,0);
	\draw[<-,thick] (-0.28,-.7) to[out=90,in=-90] (0.28,0);
  \node at (0.28,-.85) {$\scriptstyle{j}$};
  \node at (-0.28,.85) {$\scriptstyle{i}$};
  \node at (.45,0) {$\color{gray}\scriptstyle{\lambda}$};
\end{tikzpicture}
}
&=
(-1)^{\delta_{i,j}}\mathord{
\begin{tikzpicture}[baseline = 0]
	\draw[->,thick] (0.08,-.3) to (0.08,.4);
	\draw[<-,thick] (-0.28,-.3) to (-0.28,.4);
   \node at (-0.28,.55) {$\scriptstyle{i}$};
   \node at (0.08,-.45) {$\scriptstyle{j}$};
   \node at (.3,.05) {$\color{gray}\scriptstyle{\lambda}$};
\end{tikzpicture}
}+\delta_{i,j}\sum_{r,s\geq 0}
\mathord{
\begin{tikzpicture}[baseline=0]
	\draw[-,thick] (0.3,-0.7) to[out=90, in=0] (0,-0.3);
	\draw[->,thick] (0,-0.3) to[out = 180, in = 90] (-0.3,-.7);
    \node at (0.3,-.85) {$\scriptstyle{i}$};
   \node at (0.25,-0.5) {$\bull$};
   \node at (.42,-.5) {$\scriptstyle{r}$};
  \draw[<-,thick] (0,0.2) to[out=180,in=90] (-.2,0);
  \draw[-,thick] (0.2,0) to[out=90,in=0] (0,.2);
 \draw[-,thick] (-.2,0) to[out=-90,in=180] (0,-0.2);
  \draw[-,thick] (0,-0.2) to[out=0,in=-90] (0.2,0);
 \node at (0.3,0.1) {$\scriptstyle{i}$};
   \node at (0.4,-0.2) {$\color{gray}\scriptstyle{\lambda}$};
   \node at (-0.2,0) {$\bull$};
   \node at (-0.9,0) {$\scriptstyle{-r-s-2}$};
	\draw[<-,thick] (0.3,.7) to[out=-90, in=0] (0,0.3);
	\draw[-,thick] (0,0.3) to[out = -180, in = -90] (-0.3,.7);
   \node at (0.27,0.5) {$\bull$};
   \node at (0.45,0.5) {$\scriptstyle{s}$};
    \node at (-0.3,.85) {$\scriptstyle{i}$};
\end{tikzpicture}
}.\hspace{10mm}\label{flight2}
\end{align}

Frequently, when there is no danger of confusion, we abbreviate $\tU(\fg)$ to $\tU$.  The case $\fg=\mathfrak{sl}_2$ is particularly important, and we write $\tUdot=\tU(\mathfrak{sl}_2)$.  It is a standard and useful observation that the diagrams where all strands are labeled $i$ form a copy of $\tUdot$ inside $\tU$.  Just as the structure and representation theory of $\fg$ are largely determined by the $\mathfrak{sl}_2$-subalgebras corresponding to simple roots, the structure and representation theory of $\tU$ have an analogous relationship with $\tUdot$.

A brief comment is useful here on why this is the same as the 2-category defined in \cite{brundanDefinitionKac2016} (up to adding direct sums of objects), since our definition is not exactly either of the two given there.  
\begin{definition}
Let $\tU_R$ be the category defined in \cite[Def. 1.1]{brundanDefinitionKac2016}, called ``Rouquier's 2-category'' because its definition follows the approach of \cite{Rou2KM}.
For ease of comparison, we take the preimage of Brundan's generating set and relations under the ``Chevalley involution'' defined in \cite[Th. 2.3]{brundanDefinitionKac2016}.  This category is generated by 1-morphisms 
	\begin{align}\label{eq:BrundanRouquier-gens}
\mathord{
\begin{tikzpicture}[baseline = -2]
	\draw[<-,thick] (0.08,-.15) to (0.08,.3);
      \node at (0.08,0.05) {$\bull$};
   \node at (0.08,-.3) {$\scriptstyle{i}$};
\end{tikzpicture}
}
{\color{gray}\scriptstyle\lambda}
&:\eF_i 1_\lambda \Rightarrow \eF_i 1_\lambda,
&
\mathord{
\begin{tikzpicture}[baseline = 0]
	\draw[<-,thick] (0.3,0.2) to[out=-90, in=0] (0.1,-0.1);
	\draw[-,thick] (0.1,-0.1) to[out = 180, in = -90] (-0.1,0.2);
    \node at (-0.1,.35) {$\scriptstyle{i}$};
\end{tikzpicture}
}\:\,
{\color{gray}\scriptstyle\lambda}
&:1_\lambda \Rightarrow \eF_i \eE_i 1_\lambda,\\
\label{eq:BrundanRouquier-gens2}
\mathord{
\begin{tikzpicture}[baseline = -2]
	\draw[<-,thick] (0.18,-.15) to (-0.18,.3);
	\draw[<-,thick] (-0.18,-.15) to (0.18,.3);
   \node at (-0.18,-.3) {$\scriptstyle{j}$};
   \node at (0.18,-.3) {$\scriptstyle{i}$};
\end{tikzpicture}
}
{\color{gray}\scriptstyle\lambda}
&:\eF_j \eF_i 1_\lambda \Rightarrow \eF_i \eF_j 1_\lambda,
&
\mathord{
\begin{tikzpicture}[baseline = -2]
	\draw[<-,thick] (0.3,-0.1) to[out=90, in=0] (0.1,0.2);
	\draw[-,thick] (0.1,0.2) to[out = 180, in = 90] (-0.1,-0.1);
    \node at (-0.1,-.25) {$\scriptstyle{i}$};
\end{tikzpicture}
}
\:\,{\color{gray}\scriptstyle\lambda}
&:\eE_i \eF_i 1_\lambda \Rightarrow 1_\lambda.
\end{align}
and the relations 
\cref{rightadj} and 
\crefrange{QHA1}{QHA3} for which we will use the shorthands (KM1) and (KM2) respectively, as well as the relation (KM3):
\begin{enumerate}
	\item  If 
$k_i=\al_i^{\vee}(\mu)\geq 0$,
we adjoin an inverse to the 2-morphism $\eE_{i} \eF_{i}1_\lambda \Rightarrow
\eF_{i}\eE_{i}1_\lambda 
\oplus 1_\lambda^{\oplus k_i}$, induced by the column vector
\begin{equation}\label{EF1}
\left[
\mathord{
\begin{tikzpicture}[baseline = 0,thick]
	\draw[->] (-0.28,-.3) to (0.28,.3);
\draw[<-] (0.28,-.3) to (-0.28,.3);
      \node at (-0.33,-0.43) {$\scriptstyle{i}$};
      \node at (0.33,-0.43) {$\scriptstyle{i}$};
      \node at (-0.33,0.43) {$\scriptstyle{i}$};
      \node at (0.33,0.43) {$\scriptstyle{i}$};
   \end{tikzpicture}
}\quad 
\mathord{
\begin{tikzpicture}[baseline = 1mm,thick]
	\draw[<-] (0.4,0) to[out=90, in=0] (0.1,0.4);
      \node at (-0.15,0.45) {$\phantom\bullet$};
	\draw[-] (0.1,0.4) to[out = 180, in = 90] (-0.2,0);
      \node at (-0.2,-0.14) {$\scriptstyle{i}$};
\end{tikzpicture}
}\quad
\mathord{
\begin{tikzpicture}[baseline = 1mm,thick]
	\draw[<-] (0.4,0) to[out=90, in=0] (0.1,0.4);
	\draw[-] (0.1,0.4) to[out = 180, in = 90] (-0.2,0);
      \node at (-0.15,0.45) {$\phantom\bullet$};
      \node at (-0.15,0.2) {$\bullet$};
\node at (-0.37,.2) {$\scriptstyle x$};
      \node at (-0.2,-0.14) {$\scriptstyle{i}$};
\end{tikzpicture}
}\quad\cdots \quad 
\mathord{
\begin{tikzpicture}[baseline = 1mm,thick]
	\draw[<-] (0.4,0) to[out=90, in=0] (0.1,0.4);
	\draw[-] (0.1,0.4) to[out = 180, in = 90] (-0.2,0);
\node at (-0.6,.2) {$\scriptstyle x^{k_{i}-1}$};
      \node at (-0.15,0.42) {$\phantom\bullet$};
      \node at (-0.15,0.2) {$\bullet$};
      \node at (-0.2,-0.14) {$\scriptstyle{i}$};
\end{tikzpicture}
}\,.
\right]
\end{equation} \item   If $k_{i}\leq 0$, we adjoin an inverse to the 2-morphism  $\eE_{i} \eF_{i}1_\lambda 
\oplus 1_\lambda ^{\oplus -k_{i}}\Rightarrow
\eF_{i}\eE_{i}1_\lambda $ given by the row vector
\begin{equation}\label{EF2}
\left[
\begin{tikzpicture}[baseline = -1mm,thick]
	\draw[<-] (0.28,-.28) to (-0.28,.28);
\draw[->] (-0.28,-.28) to (0.28,.28);
      \node at (-0.33,-0.43) {$\scriptstyle{i}$};
      \node at (0.33,-0.43) {$\scriptstyle{i}$};
      \node at (-0.33,0.43) {$\scriptstyle{i}$};
      \node at (0.33,0.43) {$\scriptstyle{i}$};
\end{tikzpicture}
\quad
\mathord{
\begin{tikzpicture}[baseline = 1mm,thick]
	\draw[<-] (0.4,.4) to[out=-90, in=0] (0.1,0);
	\draw[-] (0.1,0) to[out = 180, in = -90] (-0.2,.4);
      \node at (-0.2,0.55) {$\scriptstyle{i}$};
\end{tikzpicture}
}
\quad
\mathord{
\begin{tikzpicture}[baseline = 1mm,thick]
	\draw[<-] (0.4,.4) to[out=-90, in=0] (0.1,0);
	\draw[-] (0.1,0) to[out = 180, in = -90] (-0.2,.4);
      \node at (-0.2,0.55) {$\scriptstyle{i}$};
\node at (0.37,.2) {$\bullet$};
\node at (0.57,.2) {$\scriptstyle x$};
\end{tikzpicture}
}
\quad
\cdots
\quad
\mathord{
\begin{tikzpicture}[baseline = 1mm,thick]
	\draw[<-] (0.4,.4) to[out=-90, in=0] (0.1,0);
	\draw[-] (0.1,0) to[out = 180, in = -90] (-0.2,.4);
      \node at (-0.2,0.55) {$\scriptstyle{i}$};
\node at (0.37,.2) {$\bullet$};
\node at (.9,.2) {$\scriptstyle x^{-k_{i}-1}$};
\end{tikzpicture}
}
\right]
\end{equation}
\end{enumerate}
\end{definition}
\begin{lemma}\label{lem:BrundanRouquier}
  There is an equivalence $c\colon\tU_R\to \tU$ sending Brundan's generators \crefrange{eq:BrundanRouquier-gens}{eq:BrundanRouquier-gens2} to the same diagrams in $\tU$.
\end{lemma}
\begin{proof}
	First we need to check that $c$ is a functor.  The relations (KM1,KM2) are explicitly given as relations of $\tU$, so we need only check (KM3).
	To check this we  note that \crefrange{flight1}{flight2} gives an explicit formula for the desired inverse of \crefrange{EF1}{EF2} in terms of oppositely oriented cups or caps and fake bubbles (for more detail, see the results leading up to \cite[Cor. 3.3]{brundanDefinitionKac2016}).

On the other hand, to define an inverse functor $d\colon \tU\to \tU_R$, we simply send the only additional generators in our set, the leftward cup and cap to those defined in \cite[(1.14-18)]{brundanDefinitionKac2016}. This is clearly inverse to $c$ if it is well-defined.  Checking that this functor is well-defined is effectively equivalent to the Main Theorem of \cite{brundanDefinitionKac2016}.  More explicitly, we simply need to check that our remaining relations \crefrange{leftadj}{flight2} hold in $\tU_R$, which is verified in {\it loc. cit.}:
	\begin{enumerate}
		\item For \cref{leftadj} in Theorem 4.3.
		\item For \cref{adjdot1} in Theorem 5.3.
		\item For \crefrange{loop1}{loop2} in Lemma 4.1.
		\item For \crefrange{bubble-normalize1}{bubble-normalize2} in Theorem 3.2.
		\item For \crefrange{flight1}{flight2} in Corollary 3.4.  \qedhere
	\end{enumerate}
\end{proof}

A representation of $\tU$ is a 2-functor from $\tU$ to the 2-category of exact categories with exact functors.  This sends $\mu$ to a category $\Ccat_{\mu}$ and a direct sum of 0-morphisms to the direct sum of the corresponding categories $\Ccat_{\mu}$.   We always assume that $\Ccat_{\mu}$ is locally finite abelian or Schurian over the field $\K$, as defined in \cite[\S 2.2]{brundanHeisenbergKacMoody2020}:

\begin{itemize}
  \item We call $\Ccat_{\mu}$ {\bf locally finite abelian} if every object has finite length and every morphism space is finitely dimensional over $\K$.  
	\item We call $\Ccat_{\mu}$ {\bf Schurian} if it is isomorphic to the locally finite-dimensional, locally unital modules over a locally finite-dimensional, locally unital algebra $A$.  
\end{itemize}
Since $I$ might potentially be infinite, we also add the restriction that:
\begin{itemize}
	\item[$(\dagger)$] on any given object $M$ we only have $\eE_iM\neq 0$ or $\eF_iM\neq 0$ for finitely many $i\in I$.  
\end{itemize} 

When constructing representations of $\tU$, it is more convenient to use the presentation $\tU_R$ defined in the remark above, writing out the relations (KM1-3).  That is:
\begin{theorem}\label{thm:brundandef}
Given categories $\Ccat_{\mu}$ for $\mu \in \rola$ and functors $\eE_i\colon \Ccat_{\mu}\to \Ccat_{\mu+\al_i},\eF_i\colon \Ccat_{\mu}\to \Ccat_{\mu-\al_i}$ define a representation of $\tU(\fg)$ if and only if:
	\begin{enumerate}
\item[(KM1)] there are prescribed adjunctions
$(\eE_{i}, \eF_{i})$ for all $i\in I$;
\item[(KM2)]
For $m \geq 0$ there is an action of the
KLR algebra  $R_m$ with $m$ strands for $I$ on the $m$th power of the functor $\eE := \bigoplus_{i\in I}\eE_{i}$;
\item[(KM3)]
If $k_i=\al_i^{\vee}(\mu)\geq 0$ then there is an isomorphism $\eE_{i} \eF_{i}|_{\Ccat_{\mu}}\Rightarrow
\eF_{i}\eE_{i}|_{\Ccat_{\mu}}
\oplus \operatorname{Id}_{\Ccat_{\mu}}^{\oplus k_i}$, induced by the column vector \cref{EF1}.
 If $k_{i}\leq 0$, there is an isomorphism $\eE_{i} \eF_{i}|_{\Ccat_\mu}
\oplus \operatorname{Id}_{\Ccat_\mu}^{\oplus -k_{i}}\Rightarrow
\eF_{i}\eE_{i}|_{\Ccat_\mu}$ induced by the row vector
\cref{EF2}.
\end{enumerate}
\end{theorem} 
As in \cite{brundanDegenerateHeisenberg2023,brundanHeisenbergKacMoody2020}, we will often use generating functions when working with elements of
an algebra $A$. This means that we will work with formal
Laurent series $f(z) \in A(\!(z^{-1})\!)$ in an indeterminate $z$ (or
$v$, $w$, \dots).
We write $\left[f(z)\right]_{z^r}$ for the $z^r$-coefficient of such a
series,
$\left[f(z)\right]_{z^{< 0}}$ for $\sum_{r < 0} \left[f(z)\right]_{z^r} z^r$,
$\left[f(z)\right]_{z^{\geq 0}}$ for $\sum_{r \geq 0} \left[f(z)\right]_{z^r} z^r$ (which is a polynomial),
and so on.
To give an example, suppose that $$f(z) = \sum_{r \geq 0} f_r
z^{k-r}= z^k+f_{1}z^{k-1}+\cdots \in z^k 1_A + z^{k-1} A[\![z^{-1}]\!]$$
for some $f_r \in A$. Then
we can define new elements $g_r \in A$ by declaring that
$$ 
g(z) = \sum_{r \geq 0} g_r z^{-k-r}= z^{-k}+g_{1}z^{-k-1}+\cdots \in z^{-k}1_A + z^{-k-1} A[\![z^{-1}]\!]
$$
is the inverse of the formal Laurent series $f(z)$.
In fact, setting $f_r := 0$ for $r < 0$, we have
\begin{equation}\label{psid}
g_r = \det\left(-f_{s-t+1}\right)_{s,t=1,\dots,r}.
\end{equation}
This identity is valid even if
$A$ is non-commutative providing
the determinant is interpreted
as a suitably ordered Laplace expansion.  We let
\begin{align}\label{chink}
{\color{gray}\scriptstyle\lambda}\:\anticlocki(z) &:= \sum_{r \in \Z}
\mathord{
\begin{tikzpicture}[baseline = 1.25mm]
  \draw[->,thick] (0.2,0.2) to[out=90,in=0] (0,.4);
  \draw[-,thick] (0,0.4) to[out=180,in=90] (-.2,0.2);
\draw[-,thick] (-.2,0.2) to[out=-90,in=180] (0,0);
  \draw[-,thick] (0,0) to[out=0,in=-90] (0.2,0.2);
   \node at (0.2,0.2) {$\bull$};
   \node at (.4,0.2) {$\scriptstyle{r}$};
   \node at (-0.05,-.15) {$\scriptstyle{i}$};
   \node at (-.38,0.2) {$\color{gray}\scriptstyle{\lambda}$};
\end{tikzpicture}}z^{-r-1}\in
z^{\al_i^{\vee}(\lambda)} 1_{1_\lambda}+
z^{\al_i^{\vee}(\lambda)-1} \End(1_\lambda)[\![z^{-1}]\!]
,
\\\label{chunk}
{\color{gray}\scriptstyle\lambda}\:\clocki(z)
&:=\sum_{r \in\Z}
\mathord{
\begin{tikzpicture}[baseline = 1.25mm]
  \draw[<-,thick] (0,0.4) to[out=180,in=90] (-.2,0.2);
  \draw[-,thick] (0.2,0.2) to[out=90,in=0] (0,.4);
 \draw[-,thick] (-.2,0.2) to[out=-90,in=180] (0,0);
  \draw[-,thick] (0,0) to[out=0,in=-90] (0.2,0.2);
   \node at (0,-.15) {$\scriptstyle{i}$};
   \node at (0.36,0.2) {$\color{gray}\scriptstyle{\lambda}$};
   \node at (-0.2,0.2) {$\bull$};
   \node at (-.4,0.2) {$\scriptstyle{r}$};
\end{tikzpicture}
}
z^{-r-1} \in
z^{-\al_i^{\vee}(\lambda)} 1_{1_\lambda}+
z^{-\al_i^{\vee}(\lambda)-1} \End(1_\lambda)[\![z^{-1}]\!].
\end{align}
\notation{$\anticlocki(z),\clocki(z)$}{The bubble power series defined in \crefrange{chink}{chunk}.}
Copying the notation of \cite{brundanHeisenbergKacMoody2020}, we interpret a dot with an adjacent power series $p(z,x)\in \K\llbracket z^{-1},x\rrbracket $ as the expression obtained by substituting in a dot on the adjacent strand for $x$, leaving $z$ as a formal variable.   
We can then rewrite the relations \crefrange{loop1}{bubble-normalize2} as 
\begin{align}\label{infgras}
\clocki(z) \:\anticlocki(z)\:{\scriptstyle\color{gray}\lambda} = 1_{1_\lambda},
\end{align}
\begin{align}\label{pop-curl}
\mathord{
\begin{tikzpicture}[baseline = -1mm]
	\draw[<-,thick] (0,0.6) to (0,0.3);
	\draw[-,thick] (0,0.3) to [out=-90,in=0] (-.3,-0.2);
	\draw[-,thick] (-0.3,-0.2) to [out=180,in=-90](-.5,0);
	\draw[-,thick] (-0.5,0) to [out=90,in=180](-.3,0.2);
	\draw[-,thick] (-0.3,.2) to [out=0,in=90](0,-0.3);
	\draw[-,thick] (0,-0.3) to (0,-0.5);
  \node at (-1.06,0) {$\scriptstyle{(z-x)^{-1}}$};
      \node at (-0.5,0) {$\bull$};
   \node at (0,-.6) {$\scriptstyle{i}$};
   \node at (-0.8,-0.3) {$\color{gray}\scriptstyle{\lambda}$};
\end{tikzpicture}
}&=
\left[\mathord{\begin{tikzpicture}[baseline = -1mm]
\node at (-.6,0.05) {$\anticlocki \scriptstyle (z)$};
	\draw[->,thick] (0.08,-.5) to (0.08,.6);
      \node at (.08,0.05) {$\bull$};
      \node at (.64,0.05) {$\scriptstyle{(z-x)^{-1}}$};
   \node at (0.08,-.6) {$\scriptstyle{i}$};
   \node at (-0.5,-0.3) {$\color{gray}\scriptstyle{\lambda}$};
\end{tikzpicture}
}\right]_{z^{< 0}}\!\!\!\!\!,
&\mathord{
\begin{tikzpicture}[baseline = -1mm]
	\draw[<-,thick] (0,0.6) to (0,0.3);
	\draw[-,thick] (0,0.3) to [out=-90,in=180] (.3,-0.2);
	\draw[-,thick] (0.3,-0.2) to [out=0,in=-90](.5,0);
	\draw[-,thick] (0.5,0) to [out=90,in=0](.3,0.2);
	\draw[-,thick] (0.3,.2) to [out=180,in=90](0,-0.3);
	\draw[-,thick] (0,-0.3) to (0,-0.5);
  \node at (1.06,0) {$\scriptstyle{(z-x)^{-1}}$};
      \node at (0.5,0) {$\bull$};
   \node at (0,-.6) {$\scriptstyle{i}$};
   \node at (0.5,-0.3) {$\color{gray}\scriptstyle{\lambda}$};
\end{tikzpicture}
}&=
-
\left[\mathord{
\begin{tikzpicture}[baseline = -1mm]
	\draw[->,thick] (0.08,-.5) to (0.08,.6);
  \node at (-0.46,0.05) {$\scriptstyle{(z-x)^{-1}}$};
      \node at (0.08,0.05) {$\bull$};
      \node at (.8,0.05) {$\clocki \scriptstyle (z)$};
   \node at (.08,-.6) {$\scriptstyle{i}$};
   \node at (0.5,-0.3) {$\color{gray}\scriptstyle{\lambda}$};
\end{tikzpicture}
}\right]_{z^{< 0}}\!\!\!\!\!\\
\mathord{\begin{tikzpicture}[baseline = -1mm]
	\draw[<-,thick] (0.08,-.4) to (0.08,.4);
      \node at (-0.6,0) {$\anticlocki\scriptstyle (z)$};
   \node at (.08,-.55) {$\scriptstyle{i}$};
   \node at (0.4,0) {$\color{gray}\scriptstyle{\lambda}$};
\end{tikzpicture}
}
&=
\mathord{
\begin{tikzpicture}[baseline = -1mm]
	\draw[<-,thick] (0.08,-.4) to (0.08,.4);
      \node at (0.8,0) {$\anticlocki\scriptstyle(z)$};
   \node at (.08,-.55) {$\scriptstyle{i}$};
   \node at (0.5,-0.3) {$\color{gray}\scriptstyle{\lambda}$};
      \node at (.08,0) {$\bull$};
      \node at (-.5,0.05) {$\scriptstyle{(z-x)^{-2}}$};
\end{tikzpicture}
},&
\mathord{
\begin{tikzpicture}[baseline = -1mm]
	\draw[<-,thick] (0.08,-.4) to (0.08,.4);
      \node at (0.8,0) {$\clocki\scriptstyle(z)$};
   \node at (.08,-.55) {$\scriptstyle{i}$};
   \node at (0.5,-0.3) {$\color{gray}\scriptstyle{\lambda}$};
\end{tikzpicture}
}
&=
\mathord{
\begin{tikzpicture}[baseline = -1mm]
	\draw[<-,thick] (0.08,-.4) to (0.08,.4);
      \node at (-0.6,0) {$\clocki\scriptstyle(z)$};
      \node at (.08,0) {$\bull$};
      \node at (.7,0.05) {$\scriptstyle{(z-x)^{-2}}$};
   \node at (.08,-.55) {$\scriptstyle{i}$};
   \node at (0.4,-.3) {$\color{gray}\scriptstyle{\lambda}$};
\end{tikzpicture}
}\label{bubslides1}\\
\shortintertext{Since we are working with arbitrary $Q_{ij}$, we must be careful when rewriting the bubble slides in the case $i\neq j$, but we can rewrite \cite[Prop. 2.8]{Webmerged} in the following form:} 
\mathord{\begin{tikzpicture}[baseline = -1mm]
	\draw[<-,thick] (0.08,-.4) to (0.08,.4);
      \node at (-0.6,0) {$\anticlockj\scriptstyle (z)$};
   \node at (.08,-.55) {$\scriptstyle{i}$};
   \node at (0.4,0) {$\color{gray}\scriptstyle{\lambda}$};
\end{tikzpicture}
}
&=
\mathord{
\begin{tikzpicture}[baseline = -1mm]
	\draw[<-,thick] (0.08,-.4) to (0.08,.4);
      \node at (0.8,0) {$\anticlockj\scriptstyle(z)$};
   \node at (.08,-.55) {$\scriptstyle{i}$};
   \node at (0.5,-0.3) {$\color{gray}\scriptstyle{\lambda}$};
      \node at (.08,0) {$\bull$};
      \node at (-.85,0.05) {$\scriptstyle{t_{ij}Q_{ji}(z,x)}$};
\end{tikzpicture}
},&
\mathord{
\begin{tikzpicture}[baseline = -1mm]
	\draw[<-,thick] (0.08,-.4) to (0.08,.4);
      \node at (0.8,0) {$\clockj\scriptstyle(z)$};
   \node at (.08,-.55) {$\scriptstyle{i}$};
   \node at (0.5,-0.3) {$\color{gray}\scriptstyle{\lambda}$};
\end{tikzpicture}
}
&=
\mathord{
\begin{tikzpicture}[baseline = -1mm]
	\draw[<-,thick] (0.08,-.4) to (0.08,.4);
      \node at (-0.6,0) {$\clockj\scriptstyle(z)$};
      \node at (.08,0) {$\bull$};
      \node at (1.05,0.05) {$\scriptstyle{t_{ij}Q_{ji}(z,x)}$};
   \node at (.08,-.55) {$\scriptstyle{i}$};
   \node at (0.4,-.3) {$\color{gray}\scriptstyle{\lambda}$};
\end{tikzpicture}
}.\label{bubslides2}
\end{align}

\begin{lemma}[\mbox{\cite[Lem. 3.5]{brundanHeisenbergKacMoody2020}}]\label{sure2lemma}
For a polynomial $p(z) \in \K[z]$, we have that:
\begin{align}\label{sure1a}
\mathord{
\begin{tikzpicture}[baseline = -1.5mm]
 	\draw[->,thick] (0.08,-.4) to (0.08,.4);
   \node at (.08,-.55) {$\scriptstyle{i}$};
     \node at (0.08,0) {$\bull$};
    \node at (-0.33,0) {$\scriptstyle p(x)$};
   \node at (0.35,0) {$\color{gray}\scriptstyle{\lambda}$};
\end{tikzpicture}
}
&=
\left[
\mathord{
\begin{tikzpicture}[baseline = -1.5mm]
 	\draw[->,thick] (0.08,-.4) to (0.08,.4);
   \node at (.08,-.55) {$\scriptstyle{i}$};
     \node at (0.08,0) {$\bull$};
    \node at (-0.52,0) {$\scriptstyle (z-x)^{-1}$};
   \node at (0.4,.3) {$\color{gray}\scriptstyle{\lambda}$};
\end{tikzpicture}
}
\!\!\!\!\!p(z)
\right]_{z^{-1}},
&
\mathord{
\begin{tikzpicture}[baseline = 0mm]
 	\draw[<-,thick] (0.08,-.4) to (0.08,.4);
   \node at (.08,.55) {$\scriptstyle{i}$};
     \node at (0.08,0.04) {$\bull$};
    \node at (-0.33,0.04) {$\scriptstyle p(x)$};
   \node at (0.35,0.05) {$\color{gray}\scriptstyle{\lambda}$};
\end{tikzpicture}
}
&=
\left[
\mathord{
\begin{tikzpicture}[baseline = 0mm]
 	\draw[<-,thick] (0.08,-.4) to (0.08,.4);
   \node at (.08,.55) {$\scriptstyle{i}$};
     \node at (0.08,0.04) {$\bull$};
    \node at (-0.5,0.04) {$\scriptstyle (z-x)^{-1}$};
   \node at (0.4,-.3) {$\color{gray}\scriptstyle{\lambda}$};
\end{tikzpicture}
}
\!\!\!\!\!p(z)
\right]_{z^{-1}},\\\label{sure2a}
\!\mathord{
\begin{tikzpicture}[baseline = 1.25mm]
  \draw[<-,thick] (0,0.4) to[out=180,in=90] (-.2,0.2);
  \draw[-,thick] (0.2,0.2) to[out=90,in=0] (0,.4);
 \draw[-,thick] (-.2,0.2) to[out=-90,in=180] (0,0);
  \draw[-,thick] (0,0) to[out=0,in=-90] (0.2,0.2);
   \node at (0.2,0.2) {$\bull$};
  \node at (0.55,0.2) {$\scriptstyle{p(x)}$};
   \node at (0,-.15) {$\scriptstyle{i}$};
   \node at (-0.4,.2) {$\color{gray}\scriptstyle{\lambda}$};
\end{tikzpicture}
}
&=
\left[{\scriptstyle{\color{gray}\lambda\:\:}\clocki(z)}\:p(z)\right]_{z^{-1}},&
\mathord{
\begin{tikzpicture}[baseline = 1.25mm]
  \draw[-,thick] (0,0.4) to[out=180,in=90] (-.2,0.2);
  \draw[->,thick] (0.2,0.2) to[out=90,in=0] (0,.4);
 \draw[-,thick] (-.2,0.2) to[out=-90,in=180] (0,0);
  \draw[-,thick] (0,0) to[out=0,in=-90] (0.2,0.2);
   \node at (0.2,0.2) {$\bull$};
  \node at (0.55,0.2) {$\scriptstyle{p(x)}$};
   \node at (0,-.15) {$\scriptstyle{i}$};
   \node at (-0.4,.2) {$\color{gray}\scriptstyle{\lambda}$};
\end{tikzpicture}
}
&=
\left[{\scriptstyle{\color{gray}\lambda\:\:}\anticlocki(z)}\:p(z)\right]_{z^{-1}},\\
\label{sure3a}
\mathord{
\begin{tikzpicture}[baseline = -1.5mm]
	\draw[<-,thick] (0,0.6) to (0,0.3);
	\draw[-,thick] (0,0.3) to [out=-90,in=180] (.3,-0.2);
	\draw[-,thick] (0.3,-0.2) to [out=0,in=-90](.5,0);
	\draw[-,thick] (0.5,0) to [out=90,in=0](.3,0.2);
	\draw[-,thick] (0.3,.2) to [out=180,in=90](0,-0.3);
	\draw[-,thick] (0,-0.3) to (0,-0.5);
   \node at (0,-.65) {$\scriptstyle{i}$};
   \node at (0.3,-.4) {$\color{gray}\scriptstyle{\lambda}$};
  \node at (.87,0) {$\scriptstyle{p(x)}$};
      \node at (0.5,0) {$\bull$};
\end{tikzpicture}
}&=
-\left[\!\mathord{
\begin{tikzpicture}[baseline = -1.5mm]
	\draw[->,thick] (0.08,-.5) to (0.08,.6);
   \node at (0.08,-.65) {$\scriptstyle{i}$};
   \node at (0.4,-.4) {$\color{gray}\scriptstyle{\lambda}$};
  \node at (-0.46,0.05) {$\scriptstyle{(z-x)^{-1}}$};
      \node at (0.08,0.05) {$\bull$};
      \node at (.7,0.05) {$\clocki \scriptstyle (z)$};
\end{tikzpicture}
}p(z)\right]_{z^{-1}}\!\!\!\!\!,
&
\mathord{
\begin{tikzpicture}[baseline = 0mm]
	\draw[-,thick] (0,0.5) to (0,0.3);
	\draw[-,thick] (0,0.3) to [out=-90,in=180] (.3,-0.2);
	\draw[-,thick] (0.3,-0.2) to [out=0,in=-90](.5,0);
	\draw[-,thick] (0.5,0) to [out=90,in=0](.3,0.2);
	\draw[-,thick] (0.3,.2) to [out=180,in=90](0,-0.3);
	\draw[->,thick] (0,-0.3) to (0,-0.6);
  \node at (.87,0) {$\scriptstyle{p(x)}$};
      \node at (0.5,0) {$\bull$};
   \node at (0,.65) {$\scriptstyle{i}$};
   \node at (0.4,-.4) {$\color{gray}\scriptstyle{\lambda}$};
\end{tikzpicture}
}&=
\left[\!\mathord{
\begin{tikzpicture}[baseline = 0mm]
	\draw[<-,thick] (0.08,-.6) to (0.08,.5);
  \node at (-0.46,0.05) {$\scriptstyle{(z-x)^{-1}}$};
      \node at (0.08,0.05) {$\bull$};
      \node at (.7,0.05) {$\anticlocki \scriptstyle (z)$};
   \node at (0.08,.65) {$\scriptstyle{i}$};
   \node at (0.4,-.4) {$\color{gray}\scriptstyle{\lambda}$};
\end{tikzpicture}
}p(z)\right]_{z^{-1}}\!\!\!\!\!\!.
\end{align}
\end{lemma}

Since the definition of $\tU$ uses the set of weights $\wela$, it depends on the choice of realization, not just the Cartan matrix $C$.  However, we can simplify this dependence by noting that the relations between 2-morphisms depend only on the scalars $\al_i^{\vee}$.  Thus, if $\la \in \mathfrak{h}^*$ is perpendicular to all $\al_i^{\vee}$, then there is an equivalence of 2-categories that acts on 0-morphisms by $\mu \mapsto \mu+\la$, and on 1-morphisms and 2-morphisms by the obvious identifications. 

Let $\mathfrak{g}'=[\mathfrak{g}, \mathfrak{g}]\subset \mathfrak{g}$ be the derived subalgebra.  The Cartan of $\mathfrak{g}'$ is the subspace $\mathfrak{h}'\subset \mathfrak{h}$ spanned by the elements $H_i=\al_i^{\vee}$.  We have a functor $\tU(\mathfrak{g})\to \tU(\mathfrak{g}')$ given by pulling back weights, together with the obvious identifications on 1-morphisms and 2-morphisms (note the slightly surprising direction of this functor).

This is not full on 1-morphisms: if the Cartan matrix is degenerate, then we could have a sum of roots which is perpendicular to all $\al_i^{\vee}$, but non-zero in $\mathfrak{h}^*$.  However, this functor is fully faithful on categories of 1-morphisms; in particular, two monomials $\eE_{\Bi}, \eE_{\Bj}$ which have different weights for $\mathfrak{h}$ might have the same weight under $\mathfrak{h}'$, but there are still no 2-morphisms between them.  In particular, the question of non-degeneracy of a categorification is unchanged when passing from $\mathfrak{g}$ to $\mathfrak{g}'$, and so we may assume $\mathfrak{g}=\mathfrak{g}'$.

Similarly, if there are linear relations between the $\al_i^{\vee}$ in $\mathfrak{g}$, then we can write $\mathfrak{g}$ as a quotient of its universal central extension $\mathfrak{g}''$.  In this case, we have a 2-functor $\tU(\mathfrak{g})\to \tU(\mathfrak{g}'')$ (again in a surprising direction) which acts on weights by pullback, together with the obvious identification of 1-morphisms and 2-morphisms.  

This functor is fully faithful on 1-morphisms and 2-morphisms, so if our categorification for $\mathfrak{g}''$ is non-degenerate, then the categorification for $\mathfrak{g}$ is also non-degenerate.

Some of our constructions will be a little easier if we use a realization of this type.  Thus it will be useful later to summarize this discussion:
\begin{lemma}\label{lem:different-realizations}
	If we check that non-degeneracy holds when $\fg$ is the universal derived Lie algebra for a Cartan matrix (where simple coroots are linearly independent and span $\mathfrak{h}$), then it holds for all other realizations of this Cartan matrix.
\end{lemma}
Thus, for \cref{sec:proof,sec:spectral}, we assume that $\fg$ is the universal derived Kac-Moody algebra attached to our chosen Cartan matrix.

\section{The proof of non-degeneracy}
\label{sec:proof}

\subsection{The \texorpdfstring{$\mathfrak{sl}_2$}{sl_2} coproduct functor}
\label{sec:coproduct}
In this section, we take $\fg=\mathfrak{sl}_2$ and work with the 2-category $\tUdot=\tU(\mathfrak{sl}_2)$.  We denote the simple root by $\alpha$ and the simple coroot by $\alpha^{\vee}$.  We also write $n$ for the weight $n\alpha/2$.
As in \cite{brundanDegenerateHeisenberg2023}, we can define a functor from $\tUdot$ to the outer tensor product of two copies of itself, with a morphism inverted.  We will use the convention from {\it loc. cit.} that the 1-morphisms $\eE,\eF$ in our first factor are colored blue and those in the second factor are colored red.  This might seem a peculiar idea if one is just now encountering it for the first time, but it is quite powerful when employed in non-degeneracy proofs.  The reasoning behind the definition of this map is explained in more detail below \cref{internal-through-red-blue}.

\newcommand{\tUrb}{\tU_{\blue{\bullet} \red{\bullet}}}
\newcommand{\bUrb}{\overline{\tU}_{\blue{\bullet} \red{\bullet}}}
For technical reasons, it is more convenient to avoid using the tensor product of 2-categories and instead consider the 2-category $\tU$ for the set $I=\{\blue{\bullet},\red{\bullet}\}$ with Cartan matrix $\big [\begin{smallmatrix}
		2 & 0\\
		0 & 2
	\end{smallmatrix}\big ]$.  We denote this 2-category by $\tUrb$.  

\notation{$\tU_{\blue{\bullet} \red{\bullet}}$}{The 2-category $\tU$ for the set $I=\{\blue{\bullet},\red{\bullet}\}$ with Cartan matrix $\big [\begin{smallmatrix}
		2 & 0\\
		0 & 2
	\end{smallmatrix}\big ]$.}

We will use blue and red to distinguish the two nodes of the Dynkin diagram, so for example we will write $(\blue{k},\red{n})$ for objects, and $\blue{\eE}=\eE_{\blue{\bullet}}$  and $\red{\eE}=\eE_{\red{\bullet}}$.

\begin{remark}\label{rem:biequivalence}
We let $\blue{\tUdot} \;{\otimes}\; \red{\tUdot}$ be the tensor product of additive 2-categories.  This is the strict 2-category whose objects are pairs of objects from $\blue{\tUdot}$ and $\red{\tUdot}$, where 1-morphisms $\Hom_{\blue{\tUdot} \;{\otimes}\; \red{\tUdot}}((\blue{k},\red{\ell}),(\blue{m},\red{n}))$ are given by the strict tensor product of the Hom categories $\Hom_{\blue{\tUdot}}(\blue{k},\blue{m})$ and $\Hom_{\red{\tUdot}}(\red{\ell},\red{n})$.  
There is a 2-functor $\tUrb\to \blue{\tUdot} \;{\otimes}\; \red{\tUdot}$ sending $(\blue{k},\red{n})\mapsto (\blue{k},\red{n})$ and 
 \[\blue{\eE}\mapsto \blue{\eE}\otimes \red{\Id}, \qquad \red{\eE}\mapsto \blue{\Id}\otimes \red{\eE},\] 
 \[\blue{\eF}\mapsto \blue{\eF}\otimes \red{\Id}, \qquad \red{\eF}\mapsto \blue{\Id}\otimes \red{\eF}.\] 
On 2-morphisms that are a single color, we send them to the corresponding 2-morphisms in the appropriate factor, tensored with the identity in the other factor.  The remaining generating 2-morphism is a crossing of a red and blue strand with downward orientation, which we send to the identity 2-morphism on $\blue{\eE}\otimes \red{\eE}$.  This manifestly preserves all the relations involving a single color and those involving both colors all follow from the interchange law in the tensor product of 2-categories.  

One can construct a pseudofunctor (that is, a functor compatible with composition of 1-morphisms only up to coherent isomorphism) $\blue{\tUdot} \;{\otimes}\; \red{\tUdot}\to \tUrb$ demonstrating that our original functor is a biequivalence (but not an equivalence of strict 2-categories) by sending $\blue{u}\otimes \red{v}$ to the composition $\blue{u}\red{v}$, and all 2-morphisms to the horizontal composition of the corresponding 2-morphisms in $\tUrb$.  The fact that we had to choose between $\blue{u}\red{v}$ and $\red{v}\blue{u}$ exactly reflects the failure to be a strict 2-functor---these objects are isomorphic but not equal in $\tUrb$.  Since we are working with strict 2-categories, it is preferable to work with $\tUrb$ rather than the tensor product, but the reader should keep this biequivalence in mind.
\end{remark}

Note that this is closely related to \cite{brundanDegenerateHeisenberg2023}.  There we consider monoidal categories, which can be thought of as 2-categories with a single object.  Thus the tensor product of 2-categories induces a product operation on monoidal categories, which is exactly the notion of {\bf symmetric product} discussed in \cite[Def. 2.1]{brundanDegenerateHeisenberg2023}.

Let $\bUrb$ be the localization of $\tUrb$ by the 2-morphism
$\mathord{
\begin{tikzpicture}[baseline = -1mm]
 	\draw[->,thin,red] (0.18,-.4) to (0.18,.4);
	\draw[->,thin,blue] (-0.38,-.4) to (-0.38,.4);
     \node at (0.18,0) {$\red{\bullet}$};
\end{tikzpicture}
}-
\mathord{
\begin{tikzpicture}[baseline = -1mm]
 	\draw[->,thin,red] (0.18,-.4) to (0.18,.4);
	\draw[->,thin,blue] (-0.38,-.4) to (-0.38,.4);
     \node at (-0.38,0) {$\blue{\bullet}$};
\end{tikzpicture}}
\,$.
This means that we adjoin a two-sided inverse to this 2-morphism, which
we denote as a dumbbell
\begin{equation}\label{dumb}
\mathord{
\begin{tikzpicture}[baseline = -1mm]
 	\draw[->,thin,red] (0.18,-.4) to (0.18,.4);
	\draw[->,thin,blue] (-0.38,-.4) to (-0.38,.4);
	\draw[-] (-0.38,0.01) to (0.18,0.01);
     \node at (0.18,0) {$\dt$};
     \node at (-0.38,0) {$\dt$};
\end{tikzpicture}
}:=
\left(\,\mathord{
\begin{tikzpicture}[baseline = -1mm]
 	\draw[->,thin,red] (0.18,-.4) to (0.18,.4);
	\draw[->,thin,blue] (-0.38,-.4) to (-0.38,.4);
     \node at (0.18,0) {$\red{\bullet}$};
\end{tikzpicture}
}-
\mathord{
\begin{tikzpicture}[baseline = -1mm]
 	\draw[->,thin,red] (0.18,-.4) to (0.18,.4);
	\draw[->,thin,blue] (-0.38,-.4) to (-0.38,.4);
     \node at (-0.38,0) {$\blue{\bullet}$};
\end{tikzpicture}}\,\right)^{-1}.
\end{equation}

\begin{theorem}\label{th:coproduct}
We can define a functor $\Delta$ from $\tUdot$ to $\bUrb$ sending:
\begin{equation}
	n \mapsto \bigoplus_{k+\ell=n}(\blue{k},\red{\ell})\qquad \qquad \eE\mapsto \blue{\eE}\oplus \red{\eE}\qquad \qquad \eF\mapsto \blue{\eF}\oplus \red{\eF}
\end{equation}
which we should interpret as sending $\eE\colon n\to n+2$ to a matrix of 1-morphisms with the entry in column $(k,\ell)$ and row $(k+2,\ell)$ equal to $\blue{\eE}$, and the entry in column $(k,\ell)$ and row $(k,\ell+2)$ equal to $\red{\eE}$.  On 2-morphisms, this acts by 
\begin{equation}
\label{com0old}
\mathord{
\begin{tikzpicture}[baseline = -.6mm]
	\draw[->,thin] (0.08,-.3) to (0.08,.3);
      \node at (0.08,0) {$\bullet$};
\end{tikzpicture}
}
\mapsto
\mathord{
\begin{tikzpicture}[baseline = -.6mm]
	\draw[->,thin,blue] (0.08,-.3) to (0.08,.3);
      \node at (0.08,0) {$\color{blue}\bullet$};
\end{tikzpicture}
}
+
\mathord{
\begin{tikzpicture}[baseline = -.6mm]
	\draw[->,thin,red] (0.08,-.3) to (0.08,.3);
      \node at (0.08,0) {$\color{red}\bullet$};
\end{tikzpicture}
}\:,
\qquad \qquad 
\mathord{
\begin{tikzpicture}[baseline = 1mm]
	\draw[<-,thin] (0.4,0) to[out=90, in=0] (0.1,0.4);
	\draw[-,thin] (0.1,0.4) to[out = 180, in = 90] (-0.2,0);
\end{tikzpicture}
}\mapsto
\mathord{
\begin{tikzpicture}[baseline = 1mm]
	\draw[<-,thin,blue] (0.4,0) to[out=90, in=0] (0.1,0.4);
	\draw[-,thin,blue] (0.1,0.4) to[out = 180, in = 90] (-0.2,0);
\end{tikzpicture}
}+
\mathord{
\begin{tikzpicture}[baseline = 1mm]
	\draw[<-,thin,red] (0.4,0) to[out=90, in=0] (0.1,0.4);
	\draw[-,thin,red] (0.1,0.4) to[out = 180, in = 90] (-0.2,0);
\end{tikzpicture}
}\:,
\qquad\qquad
\mathord{
\begin{tikzpicture}[baseline = 1mm]
	\draw[<-,thin] (0.4,0.4) to[out=-90, in=0] (0.1,0);
	\draw[-,thin] (0.1,0) to[out = 180, in = -90] (-0.2,0.4);
\end{tikzpicture}
}
\mapsto
\mathord{
\begin{tikzpicture}[baseline = 1mm]
	\draw[<-,thin,blue] (0.4,0.4) to[out=-90, in=0] (0.1,0);
	\draw[-,thin,blue] (0.1,0) to[out = 180, in = -90] (-0.2,0.4);
\end{tikzpicture}
}+\mathord{
\begin{tikzpicture}[baseline = 1mm]
	\draw[<-,thin,red] (0.4,0.4) to[out=-90, in=0] (0.1,0);
	\draw[-,thin,red] (0.1,0) to[out = 180, in = -90] (-0.2,0.4);
\end{tikzpicture}
}\:,
\end{equation}
\begin{equation}
\label{com0older}
\mathord{
\begin{tikzpicture}[baseline = -.6mm]
	\draw[->,thin] (0.28,-.3) to (-0.28,.3);
	\draw[thin,->] (-0.28,-.3) to (0.28,.3);
\end{tikzpicture}
}\mapsto
\mathord{
\begin{tikzpicture}[baseline = -.6mm]
	\draw[->,thin,blue] (0.28,-.3) to (-0.28,.3);
	\draw[thin,->,blue] (-0.28,-.3) to (0.28,.3);
\end{tikzpicture}
}+
\mathord{
\begin{tikzpicture}[baseline = -.6mm]
	\draw[->,thin,red] (0.28,-.3) to (-0.28,.3);
	\draw[thin,->,red] (-0.28,-.3) to (0.28,.3);
\end{tikzpicture}
}
-
\mathord{
\begin{tikzpicture}[baseline = -.6mm]
	\draw[->,thin,red] (0.28,-.3) to (-0.28,.3);
	\draw[thin,->,blue] (-0.28,-.3) to (0.28,.3);
	\draw[-] (-0.15,-.16) to (0.15,-.16);
     \node at (0.15,-.17) {$\dt$};
     \node at (-0.15,-.17) {$\dt$};
\end{tikzpicture}
}
+
\mathord{
\begin{tikzpicture}[baseline = -.6mm]
	\draw[->,thin,blue] (0.28,-.3) to (-0.28,.3);
	\draw[thin,->,red] (-0.28,-.3) to (0.28,.3);
	\draw[-] (-0.15,-.16) to (0.15,-.16);
     \node at (0.15,-.17) {$\dt$};
     \node at (-0.15,-.17) {$\dt$};
\end{tikzpicture}
}
-\mathord{
\begin{tikzpicture}[baseline =-.6mm]
 	\draw[->,thin,red] (0.2,-.3) to (0.2,.3);
	\draw[->,thin,blue] (-0.2,-.3) to (-0.2,.3);
	\draw[-] (-0.2,0.01) to (0.2,0.01);
     \node at (0.2,0.0) {$\dt$};
     \node at (-0.2,0.0) {$\dt$};
\end{tikzpicture}
}+
\mathord{
\begin{tikzpicture}[baseline = -0.6mm]
 	\draw[->,thin,blue] (0.2,-.3) to (0.2,.3);
	\draw[->,thin,red] (-0.2,-.3) to (-0.2,.3);
	\draw[-] (-0.2,0.01) to (0.2,0.01);
     \node at (0.2,0.0) {$\dt$};
     \node at (-0.2,0.0) {$\dt$};
\end{tikzpicture}
}\:.
\end{equation}
\end{theorem}
This theorem is manifestly analogous to \cite[Th. 5.4]{brundanDegenerateHeisenberg2023};  the proofs are also very similar, to the point that one could use a verbatim copy of the proof in {\it loc. cit.}  with only minor changes in some of the formulas, which follow the natural template for transposing results for degenerate affine Hecke algebras to nilHecke algebras.  This is one of the points where results appearing after the original draft of this paper have given us a template to simplify the approach taken here.

Even having this template to follow, this theorem is still a delicate direct computation.  Due to its length, we give the proof of this result on page \pageref{proof-th:coproduct}.  Note that we have not described the image of the left cups and caps, but these are determined by the functor $\tUdot\to (\tUdot)_R$ given in \cref{lem:BrundanRouquier}.  These images are explicitly given in \cref{com4}.

\subsection{Completions}
In \cref{sec:coproduct}, we worked with localizations of our categories, but for certain constructions we wish to do below, we require something a bit stronger.  Since the same considerations will apply to all Kac-Moody types, we return to the general setting of $\tU=\tU(\fg)$ for an arbitrary Kac-Moody algebra $\fg$.

First, we consider representations of $\tU$ where, on any object, only finitely many $\eE_i$ are non-zero.  Furthermore, we assume that the dots are topologically nilpotent, that is, the limit $\lim_{n\to\infty}  (\mathord{
\begin{tikzpicture}[baseline = -.6mm]
	\draw[->] (0.08,-.3) to (0.08,.3);
      \node at (0.08,0) {$\bullet$};
\end{tikzpicture}
})^n=0$; in the resulting completion, any power series in the dots gives a well-defined morphism.

These properties can be captured on the level of 2-categories by using completion. 

\begin{definition}\label{def:grading-topology}
  The graded vector space $\Hom_{\tU}(u,v)$ has a natural {\bf grading topology} where the set of elements $\Hom^{\geq k}_{\tU}(u,v)$ of degree $\geq k$ is an open neighborhood of zero for all $k$, and these form a basis of such neighborhoods.   
\end{definition}

  Of course, any 2-morphism of positive degree in $\Hom_{\tU}(u,u)$ is topologically nilpotent in this topology.  We need to modify this topology slightly to account for the possibility that $I$ is infinite.  
  
\begin{definition}\label{def:cofinite-topology}
    For a finite set $I_0\subset I$, we define a subspace $\Hom_{\tU}^{I_0,r}(u,v)$ by the 2-morphisms that factor through a direct sum of monomials of length $\leq r$ which contain one of the symbols $\eE_{\pm i}$ with $i\notin I_0$.  
    
    The {\bf cofinite topology} on the vector space $\Hom_{\tU}(u,v)$ is the coarsest topology invariant under translation such that the sets $\Hom_{\tU}^{I_0,r}(u,v)$ are neighborhoods of the zero morphism.
  \end{definition}
   Note that $\Hom_{\tU}^{I_0,r}(u,v)$ is a linear subspace, since sums of two morphisms that factor through such monomials also factor through a similar direct sum.  If $I_0\subset I_1$, then $\Hom_{\tU}^{I_0,r}(u,v)\subset \Hom_{\tU}^{I_1,r}(u,v)$, and if $r\leq s$, then $\Hom_{\tU}^{I_0,r}(u,v)\subset \Hom_{\tU}^{I_0,s}(u,v)$.  In particular, for any $I_0,I_0'$ and $r,r'$, we have \[\Hom_{\tU}^{I_0,r}(u,v)\cap \Hom_{\tU}^{I_0',r'}(u,v)\supset \Hom_{\tU}^{I_0\cup I_0',\min(r,r')}(u,v).\]  Therefore, the sets $\Hom_{\tU}^{I_0,r}(u,v)$ form a basis of neighborhoods of zero in the cofinite topology.  Note that if $I$ is finite, then the cofinite topology is just the discrete topology.

Given a representation $\mathcal{C}$ of $\tU$ satisfying $(\dagger)$, we equip the morphism spaces $\Hom(uM,vM)$ for $M\in \Ob(\mathcal{C})$ with the discrete topology.  We are interested in continuity of the map $\Hom_{\tU}(u,v)\to \Hom(uM,vM)$.  Recall that by Lagrange's theorem, a coset of a subgroup of a topological group is open if and only if it contains an open set. Thus, a homomorphism of topological groups with discrete target is continuous if and only if its kernel contains a neighborhood of zero.  

Recall that for any object $X\in \Ob(\mathcal{C})$ in a representation $\mathcal{C}$, we assume that $(\dagger)$ holds, that is, the set of vertices such that $\eE_{\pm i}X$ are not both zero is finite.

  Then we have the following:
\begin{lemma}\label{lem:continuous}
For a representation $\mathcal{C}$ of $\tU$ satisfying $(\dagger)$, the map $\Hom_{\tU}(u,v)\to \Hom(uX,vX)$ is continuous for all objects $X$ when $\Hom_{\tU}(u,v)$ is given the cofinite topology and $\Hom(uX,vX)$ is given the discrete topology.
\end{lemma}
\begin{proof}
  As discussed above, we need to show that the kernel of this map contains a neighborhood of zero in the cofinite topology. Equivalently, we need some $\Hom_{\tU}^{I_0,r}(u,v)$ such that all morphisms in this set act trivially on $X$. 

	 Applying $(\dagger)$ repeatedly, we find that there are only finitely many monomials of length $\leq r$ such that $\eE_{\Bi}X\neq 0$, and thus the set $I_0$ of indices appearing in these monomials is finite.  Accordingly, any monomial of length $\leq r$ which contains one of the symbols $\eE_{i}$ with $i\notin I_0$ acts trivially.
\end{proof}

Now, consider the join of the grading and cofinite topologies, which we call the {\bf GC topology}; this topology is generated by the spans $\Hom^{\geq k}_{\tU}(u,v)+ \Hom_{\tU}^{I_0,r}(u,v)$ for all $k,r\in \Z_{\geq 0}$ and finite sets $I_0\subset I$.
\notation{GC topology}{The topology generated by spans $\Hom^{\geq k}_{\tU}(u,v)+ \Hom_{\tU}^{I_0,r}(u,v)$, the 2-morphisms of degree $\geq k$ and factoring through monomials of length $\leq r$ containing $\eE_{\pm i}$ with $i\notin I_0$.}
\notation{$\htU$}{The completion of the 2-morphisms $\tU$ by the GC topology.}
\begin{definition}
	Let $\htU$ be the 2-category enriched on the level of 2-morphisms by topological vector spaces, where: 
\begin{itemize}
\item The object sets are unchanged from $\tU$.
	\item The 1-morphism sets are sums of (potentially infinite) formal Cartesian products
of monomials $\eE_{\Bi}$ in 1-morphisms where:
 \begin{itemize}
     \item there is an upper bound $r$ on the length of all monomials appearing, and
\item for any finite subset $I_0\subset I$, there are only finitely many monomials $\eE_{\Bi}$ in which only functors $\eE_{\pm i}$ for $i\in I_0$ appear.
\end{itemize} 
	\item The 2-morphism sets $\Hom_{\htU}(\prod u_\alpha,\prod v_{\beta})$ are the completion of the Cartesian product $\prod_{\alpha, \beta}\Hom_{\tU}(u_\alpha,v_{\beta})$ with respect to the GC topology (i.e., they are matrices of 2-morphisms with no finiteness conditions on rows or columns).
\end{itemize}
\end{definition} 

It is not manifest that the composition in this category is well-defined, but we confirm it below:
\begin{enumerate}
	\item For 1-morphisms, note that for each finite set $I_0$, there are only finitely many monomials in which $\eE_{\pm i}$ for $i\in I_0$ appears in both $u$ and $v$, so the same is true of their composition.

	\item For 2-morphisms, we have to be concerned about the fact that we are composing matrices where there are potentially infinitely many non-zero entries in both rows and columns.  That is, when we consider the composition \[\Hom_{\htU}(\prod u_\alpha,\prod v_{\beta}) \times \Hom_{\htU}(\prod v_{\beta}, \prod w_\gamma)\to  \Hom_{\htU}(\prod u_\alpha,\prod w_\gamma),\] the matrix entry for $u_\alpha \to w_\gamma $ is potentially an infinite sum of morphisms factoring through $v_{\beta}$.  However, for any $I_0$, all but finitely many of these terms lie in $ \Hom_{\tU}^{I_0,r}(u,v) $ where $r$ is the upper bound on the length of monomials in $\prod v_{\beta}$, so this composition is well-defined in the completion.  
\end{enumerate}

This might seem a little strange, but the simplest way to think about this completion is that it acts on any graded $\tU$-module category satisfying $(\dagger)$ where the grading on $\Hom(uM,vM)$ for an object $M$ is bounded above. In fact, this action is continuous when the morphisms in this module category are given the discrete topology. In this setting, we can make sense of $\pwr_a(\mathord{
\begin{tikzpicture}[baseline = -2]
	\draw[->,thick] (0.08,-.15) to (0.08,.3);
      \node at (0.08,0.05) {$\bull$};
   \node at (0.08,-.3) {$\scriptstyle{i}$};
\end{tikzpicture}
})$ for any power series $\pwr_a(z)=a+a^{(1)}z+a^{(2)}z^2+\cdots$ with $a\neq 0$ and $a^{(1)}\neq 0$.  Note that the power series 
\notation{$\pwr_a$}{A power series $\pwr_a(z)=a+a^{(1)}z+a^{(2)}z^2+\cdots$ with $a\neq 0$ and $a^{(1)}\neq 0$ used in the definition of the functor of \cref{lem:many-sl2}.}
\[q_a\Big(\begin{tikzpicture}[baseline = -.6mm]
	\draw[->,thin] (0.08,-.3) to (0.08,.3);
	\draw[->,thin] (-0.08,-.3) to (-0.08,.3);
      \node at (-0.08,0) {$\bullet$};
\end{tikzpicture},\begin{tikzpicture}[baseline = -.6mm]
	\draw[->,thin] (-0.08,-.3) to (-0.08,.3);
	\draw[->,thin] (0.08,-.3) to (0.08,.3);
      \node at (0.08,0) {$\bullet$};
\end{tikzpicture}\Big)=\frac{\begin{tikzpicture}[baseline = -.6mm]
	\draw[->,thin] (0.08,-.3) to (0.08,.3);
	\draw[->,thin] (-0.08,-.3) to (-0.08,.3);
      \node at (-0.08,0) {$\bullet$};
\end{tikzpicture}-\begin{tikzpicture}[baseline = -.6mm]
	\draw[->,thin] (-0.08,-.3) to (-0.08,.3);
	\draw[->,thin] (0.08,-.3) to (0.08,.3);
      \node at (0.08,0) {$\bullet$};
\end{tikzpicture}}{\pwr_a\Big(\!\!\begin{tikzpicture}[baseline = -.6mm]
	\draw[->,thin] (0.08,-.3) to (0.08,.3);
	\draw[->,thin] (-0.08,-.3) to (-0.08,.3);
      \node at (-0.08,0) {$\bullet$};
\end{tikzpicture}\Big)-\pwr_a\Big(\begin{tikzpicture}[baseline = -.6mm]
	\draw[->,thin] (-0.08,-.3) to (-0.08,.3);
	\draw[->,thin] (0.08,-.3) to (0.08,.3);
      \node at (0.08,0) {$\bullet$};
\end{tikzpicture}\!\!\Big)}\]
is well-defined and a unit in the power series ring, since its reciprocal has constant term $a^{(1)}$. 

\subsection{Iterated coproducts} 
Now we return to the case $\fg=\mathfrak{sl}_2$.  We expand to an arbitrary set $A$, viewed as a Dynkin diagram with no edges, so $C=2I$.  We have an associated 2-category $\tU_A$ corresponding to the Kac-Moody algebra $\mathfrak{sl}_2^{\oplus A}$, with objects given by $A$-tuples $(k_a)_{a\in A}$ of integers, and 1-morphisms generated by $\eE_a$ and $\eF_a$ for $a\in A$.\notation{$\tU_A$}{The Kac-Moody 2-category for $\mathfrak{sl}_2^{\oplus A}$,  $A$ a Dynkin diagram with no edges.}  As with $\tUrb$, we could interpret this, up to biequivalence, as a tensor product of copies of $\tUdot$ indexed by $A$ (discussed in more detail in \cref{lem:htUA-to-tensor-htUbullet}), but it is more convenient to view it as a Kac-Moody 2-category.  We also consider the completion $\htU_A$ of this category, as described in the previous section. 

\newcommand{\bUA}{\ensuremath{\overline{\tU}_A}}

As in the previous section, we can also consider the localization $\bUA$ of $\tU_A$ by the 2-morphisms
\begin{equation}\label{dumbA}
\mathord{\begin{tikzpicture}[baseline = -1mm]
 	\draw[->,thin] (0.18,-.4) to node [below,at start]{$a\phantom{b}$} (0.18,.4);
  \draw[->,thin] (-0.38,-.4) to node [below,at start]{$b$} (-0.38,.4);
     \node at (0.18,0) {$\bullet$};
     \node at (-0.38,0) {$\bullet$};
\end{tikzpicture}}-
\mathord{\begin{tikzpicture}[baseline = -1mm] 	\draw[->,thin] (0.18,-.4) to node [below,at start]{$a\phantom{b}$} (0.18,.4);
  \draw[->,thin] (-0.38,-.4) to node [below,at start]{$b$} (-0.38,.4);
     \node at (0.18,0) {$\bullet$};
     \node at (-0.38,0) {$\bullet$};
\end{tikzpicture}} \qquad \text{for all $a\neq b\in A$}.
\end{equation}\notation{$\overline{\tU}_A$}{The localization of $\tU_A$ inverting differences of dots on distinct strands.}

We now want to iterate the coproduct of \cref{sec:coproduct} to obtain a functor $\tUdot\to \bUA$.  This phrasing is slightly imprecise, since $\bUA$ is not a tensor product on the nose.  However, whenever we have a simple root $\al_i$ orthogonal to all other simple roots, we can define a functor $\tU \to \tU'$, where $\tU'$ is the Kac-Moody 2-category obtained by replacing $i$ with red and blue copies $\red{i}$ and $\blue{i}$, while leaving all other simple roots unchanged.  This is done by applying formulas \crefrange{com0old}{com0older} to strands labeled $i$, and leaving all other strands unchanged.  Since these formulas are symmetric in red and blue strands, we can apply this operation any number of times, and the result is independent of the order of application.  

Given any finite set $A$, this iterated operation gives a functor $\Delta^A\colon \tUdot\to \bUA$ which acts on 2-morphisms by the formulas \crefrange{com0newp}{com0newerp}:
\begin{equation}
\label{com0newp}
\mathord{
\begin{tikzpicture}[baseline = -.6mm]
	\draw[->,thin] (0.08,-.3) to (0.08,.3);
      \node at (0.08,0) {$\bullet$};
\end{tikzpicture}
}
\mapsto \sum_{a\in A} \mathord{
\begin{tikzpicture}[baseline = -.6mm]
	\draw[->,thin] (0.08,-.3) to  node [below,at start]{$a$} (0.08,.3);
      \node at (0.08,0) {$\bullet$};
\end{tikzpicture}
} 
\:,
\qquad \qquad 
\mathord{
\begin{tikzpicture}[baseline = 1mm]
	\draw[<-,thin] (0.4,0) to[out=90, in=0] (0.1,0.4);
	\draw[-,thin] (0.1,0.4) to[out = 180, in = 90] (-0.2,0);
\end{tikzpicture}
}\mapsto
\sum_{a\in A} \mathord{
\begin{tikzpicture}[baseline = 1mm]
	\draw[<-,thin] (0.4,0) to[out=90, in=0] node [below,at start]{$a$} (0.1,0.4);
	\draw[-,thin] (0.1,0.4) to[out = 180, in = 90] (-0.2,0);
\end{tikzpicture}
}\:,
\qquad\qquad
\mathord{
\begin{tikzpicture}[baseline = 1mm]
	\draw[<-,thin] (0.4,0.4) to[out=-90, in=0] (0.1,0);
	\draw[-,thin] (0.1,0) to[out = 180, in = -90] (-0.2,0.4);
\end{tikzpicture}
}
\mapsto\sum_{a\in A} 
\mathord{
\begin{tikzpicture}[baseline = 1mm]
	\draw[<-,thin] (0.4,0.4) to[out=-90, in=0] node [above,at start]{$a$}(0.1,0);
	\draw[-,thin,] (0.1,0) to[out = 180, in = -90] (-0.2,0.4);
\end{tikzpicture}
}\:,
\end{equation}
\begin{equation}
\label{com0newerp}
\mathord{
\begin{tikzpicture}[baseline = -.6mm,xscale=2.6,yscale=2]
	\draw[->,thin] (0.28,-.3) to (-0.28,.3);
	\draw[thin,->] (-0.28,-.3) to (0.28,.3);
\end{tikzpicture}
}\mapsto \sum_{a\in A} 
\mathord{
\begin{tikzpicture}[baseline = -.6mm,xscale=2.6,yscale=2]
	\draw[->,thin] (0.28,-.3) to node [below,at start]{$a\phantom{b}$}(-0.28,.3);
	\draw[thin,->] (-0.28,-.3) to node [below,at start]{$a\phantom{b}$}(0.28,.3);
\end{tikzpicture}
}+\sum_{a\neq b\in A}
\mathord{
\begin{tikzpicture}[baseline = -.6mm,xscale=2.6,yscale=2]
	\draw[->,thin] (0.28,-.3) to node [below,at start]{$b$}(-0.28,.3);
	\draw[thin,->] (-0.28,-.3) to node [below,at start]{$a\phantom{b}$}(0.28,.3);
	\draw[densely dotted,->] (-0.15,-.16) to  (0.11,-.16);
 \node[scale=.6] at (0,-.3){$(x_1-x_2)^{-1}$};
     \node at (0.15,-.17) {$\bull$};
     \node at (-0.15,-.17) {$\bull$};
\end{tikzpicture}
}+\sum_{a\neq b\in A}
\mathord{
\begin{tikzpicture}[baseline = -.6mm,xscale=2.6,yscale=2]
	\draw[->,thin] (0.28,-.3) to node [below,at start]{$b$}(0.28,.3);
	\draw[thin,->] (-0.28,-.3) to node [below,at start]{$a\phantom{b}$}(-0.28,.3);
	\draw[densely dotted,->] (-0.28,-.16) to  (0.24,-.16);
     \node at (0.28,-.17) {$\bull$};
      \node[scale=.6] at (0,-.3){$(x_1-x_2)^{-1}$};
     \node at (-0.28,-.17) {$\bull$};
\end{tikzpicture}
}.
\end{equation}
Here we use the convention, following \cite[\S 4]{brundanHeisenbergKacMoody2020}, that:
\begin{itemize}
	\item[$(\ddagger)$] 
 a dotted arrow labeled with an expression in $f(x_1,x_2)$ means that we substitute the dot at the start of the arrow for $x_1$ and the dot at the end of the arrow for $x_2$.
\end{itemize}  In \crefrange{com0newp}{com0newerp}, this is well-defined because we have localized the difference between the two dots.  Most uses in the rest of the paper will use a power series in these variables, which will be well-defined in a completion of the category.

We ultimately want to connect this map to understanding actions where we have an action of $\tUdot$ where the dot has multiple different eigenvalues, with the set $A$ corresponding to the spectrum in $\K$.  To do this, we want to map the dot, not just to a sum of dots, but to the desired scalar plus a power series in the dots, so that matching the desired spectrum becomes a matter of making dots nilpotent.


\begin{lemma}\label{lem:many-sl2}
  There is a functor $\sigma\colon \bUA\to \htU_A$ which acts on objects and 1-morphisms by the identity and on 2-morphisms by the formulas
\begin{equation}
 \label{sigma-1} \mathord{
\begin{tikzpicture}[baseline = -.6mm]
	\draw[->,thin] (0.08,-.3) to node [below,at start]{$a$} (0.08,.3);
      \node at (0.08,0) {$\bullet$};
\end{tikzpicture}
}
\mapsto
\pwr_{a} \big(\mathord{
\begin{tikzpicture}[baseline = -.6mm]
	\draw[->,thin] (0.08,-.3) to node [below,at start]{$a$} (0.08,.3);
      \node at (0.08,0) {$\bullet$};
\end{tikzpicture}
} \big)\:,
\qquad \qquad 
\mathord{
\begin{tikzpicture}[baseline = 1mm]
	\draw[<-,thin] (0.4,0) to[out=90, in=0] node [below,at start]{$a$}(0.1,0.4);
	\draw[-,thin] (0.1,0.4) to[out = 180, in = 90] node [below,at end]{$a$}(-0.2,0);
\end{tikzpicture}
}\mapsto
\mathord{
\begin{tikzpicture}[baseline = 1mm]
	\draw[<-,thin] (0.4,0) to[out=90, in=0] node [below,at start]{$a$}(0.1,0.4);
	\draw[-,thin] (0.1,0.4) to[out = 180, in = 90] node [below,at end]{$a$}(-0.2,0);
\end{tikzpicture}
}\:\:,
\end{equation}
\begin{equation} \label{sigma-2}
\mathord{
\begin{tikzpicture}[baseline = -.6mm,xscale=2.6,yscale=2]
	\draw[->,thin] (0.28,-.3) to node [below,at start]{$a$}(-0.28,.3);
	\draw[thin,->] (-0.28,-.3) to node [below,at start]{$a$}(0.28,.3);
\end{tikzpicture}
}\mapsto
\begin{tikzpicture}[baseline = -.6mm,xscale=2.6,yscale=2]
	\draw[->,thin] (0.28,-.3) to node [below,at start]{$a$}(-0.28,.3);
	\draw[thin,->] (-0.28,-.3) to node [below,at start]{$a$}(0.28,.3);
	\draw[densely dotted,->] (-0.15,-.16) to  (0.11,-.16);
 \node[scale=.6] at (0,-.3){$q_a(x_1,x_2)$};
     \node at (0.15,-.17) {$\bull$};
     \node at (-0.15,-.17) {$\bull$};
\end{tikzpicture}\:, \qquad\qquad 
\mathord{\begin{tikzpicture}[baseline = -.6mm,xscale=2.6,yscale=2]
  \draw[->,thin] (0.28,-.3) to node [below,at start]{$b$}(-0.28,.3);
  \draw[thin,->] (-0.28,-.3) to node [below,at start]{$a$}(0.28,.3);
\end{tikzpicture}} \mapsto \mathord{\begin{tikzpicture}[baseline = -.6mm,xscale=2.6,yscale=2]
  \draw[->,thin] (0.28,-.3) to node [below,at start]{$b$}(-0.28,.3);
  \draw[thin,->] (-0.28,-.3) to node [below,at start]{$a$}(0.28,.3);
\end{tikzpicture}} \:.
\end{equation}
\end{lemma}
\begin{proof}
  \newcommand{\BY}{\mathbf{Y}}
  First, we note that by \cref{thm:brundandef}, we need to check the relations (KM1-3).
  \begin{enumerate}
  \item[(KM1)] This is immediate since the images of the left cups and caps are unchanged.
  \item[(KM2)] We just need to check that the formulas above define an automorphism $\sigma_-$ of the KLR algebra.  We can see this immediately by considering the automorphism $\sigma_{\Ba}$ of the completed polynomial ring $\K[[Y_1,\dots, Y_n]]$ defined by $\sigma_{\Ba}(Y_k)=\pwr_{a_k}(Y_k)$, which is an automorphism since the power series $\pwr_{a_k}$ has non-zero linear term. These induce an automorphism $\sigma_{\mathcal{P}}$ of the grading completion $\widetilde{\mathcal{P}}$ of the polynomial representation of the KLR algebra.  Writing $\BY=(Y_1,\dots, Y_n), s_k\BY=(Y_1,\dots, Y_k,Y_{k+1},\dots, Y_n)$,  we have the following formulas for the action of the images of the generators on $\sigma_{\mathcal{P}}$: 
  \newseq 
  \begin{align*}
	\label{eq:poly-action1-check}\subeqn
		\sigma_-(y_k)\sigma_{\Ba}(f(\BY))e_{\Ba}&=\pwr_{a_k}(Y_k)\sigma_{\Ba}(f(\BY))e_{\Ba}=\sigma_{\Ba}(Y_kf(\BY))\\ \subeqn\label{eq:poly-action2-check}
    \sigma_-(\psi_k)\sigma_{\Ba}(f(\BY))&=
	\sigma_{s_k\Ba}( f(s_k\BY))e_{s_k\Ba}&a_k\neq a_{k+1}
\end{align*}
On the other hand, if $a_k=a_{k+1}$, we have
\begin{align*}
  \sigma_-(\psi_k)\sigma_{\Ba}(f(\BY))e_{\Ba}& =
	\subeqn \label{eq:poly-action3-check} q_{a_k}(Y_k,Y_{k+1})\frac{\sigma_{\Ba}(f(s_k\BY)-f(\BY))}{Y_{k+1}-Y_k}e_{\Ba}\\&=\frac{\sigma_{\Ba}(f(s_k\BY)-f(\BY))}{\pwr_{a_k}(Y_{k+1})-\pwr_{a_k}(Y_k)}e_{\Ba}\\
  &=\sigma_{\Ba}(\psi_k f(\BY))e_{\Ba}.
\end{align*}
which completes the proof.
\item[(KM3)] First, we note that if $\gamma_a(z)=a+z$, then we have $q_a=1$ and so the maps \crefrange{EF1}{EF2} are sent to the same maps with the dots on cups shifted by $a$;  since the span of $1,z+a,\dots, (z+a)^{r-1}$ is the same as the span of $1,z,\dots, z^{r-1}$, the resulting maps differ from the original maps by an automorphism of the trivial summand corresponding to this change of basis, so the relations still hold.  This in fact shows that the formulas of $\sigma_a$ define an automorphism of $\tUdot$.  

Now, we turn to the general case.   We can assume that $a=0$ by precomposing with the automorphism for $-a+z$.    We can apply the formulas \crefrange{sigma-1}{sigma-2} to the maps \crefrange{EF1}{EF2}, and we obtain maps which we hope to show are an isomorphism in $\htU_A$.  Since $a=0$, the resulting morphisms are sums of 2-morphisms of degree $\geq 0$ when we choose grading shifts so that \crefrange{EF1}{EF2}.  The leading term of these are the original maps, so they are invertible in the completion.  This shows that the images satisfy (KM3).
  \end{enumerate}
Having checked the relations, we have a well-defined functor $\sigma\colon \bUA\to \htU_A$. 
\end{proof}

Taking the composition $\rho=\sigma\circ \Delta^A \colon \tUdot\to \htU_A$, we obtain a 2-functor which maps:
\begin{equation}\label{eq:1-morphism}
		n \mapsto \bigoplus_{\sum_{a\in A}\lambda^a=n}\lambda\qquad \qquad \eE\mapsto \bigoplus_{a\in A} \eE_a \end{equation}
Note that this maps each integer $n$, thought of as a weight, to an infinite direct sum of weights, which we can think of as functions $A\to \Z$ whose values sum to $n$.  On 2-morphisms, this functor can be found by applying $\sigma$ to the formulas \crefrange{com0new}{com0newer}:
\begin{equation}
\label{com0new}
\mathord{
\begin{tikzpicture}[baseline = -.6mm]
	\draw[->,thin] (0.08,-.3) to (0.08,.3);
      \node at (0.08,0) {$\bullet$};
\end{tikzpicture}
}
\mapsto \sum_{a\in A} \pwr_a \big(\mathord{
\begin{tikzpicture}[baseline = -.6mm]
	\draw[->,thin] (0.08,-.3) to  node [below,at start]{$a$} (0.08,.3);
      \node at (0.08,0) {$\bullet$};
\end{tikzpicture}
} \big) 
\:,
\qquad \qquad 
\mathord{
\begin{tikzpicture}[baseline = 1mm]
	\draw[<-,thin] (0.4,0) to[out=90, in=0] (0.1,0.4);
	\draw[-,thin] (0.1,0.4) to[out = 180, in = 90] (-0.2,0);
\end{tikzpicture}
}\mapsto
\sum_{a\in A} \mathord{
\begin{tikzpicture}[baseline = 1mm]
	\draw[<-,thin] (0.4,0) to[out=90, in=0] node [below,at start]{$a$} (0.1,0.4);
	\draw[-,thin] (0.1,0.4) to[out = 180, in = 90] (-0.2,0);
\end{tikzpicture}
}\:,
\qquad\qquad
\mathord{
\begin{tikzpicture}[baseline = 1mm]
	\draw[<-,thin] (0.4,0.4) to[out=-90, in=0] (0.1,0);
	\draw[-,thin] (0.1,0) to[out = 180, in = -90] (-0.2,0.4);
\end{tikzpicture}
}
\mapsto\sum_{a\in A} 
\mathord{
\begin{tikzpicture}[baseline = 1mm]
	\draw[<-,thin] (0.4,0.4) to[out=-90, in=0] node [above,at start]{$a$}(0.1,0);
	\draw[-,thin,] (0.1,0) to[out = 180, in = -90] (-0.2,0.4);
\end{tikzpicture}
}\:,
\end{equation}
\begin{equation}
\label{com0newer}
\mathord{
\begin{tikzpicture}[baseline = -.6mm,xscale=2.6,yscale=2]
	\draw[->,thin] (0.28,-.3) to (-0.28,.3);
	\draw[thin,->] (-0.28,-.3) to (0.28,.3);
\end{tikzpicture}
}\mapsto \sum_{a\in A} 
\mathord{
\begin{tikzpicture}[baseline = -.6mm,xscale=2.6,yscale=2]
	\draw[->,thin] (0.28,-.3) to node [below,at start]{$a\phantom{b}$}(-0.28,.3);
	\draw[thin,->] (-0.28,-.3) to node [below,at start]{$a\phantom{b}$}(0.28,.3); \node[scale=.6] at (0,-.3){$q_a(x_1,x_2)$};
	\draw[densely dotted,->] (-0.15,-.16) to  (0.11,-.16);
     \node at (0.15,-.17) {$\bull$};
     \node at (-0.15,-.17) {$\bull$};
\end{tikzpicture}
}+\sum_{a\neq b\in A}
\mathord{
\begin{tikzpicture}[baseline = -.6mm,xscale=2.6,yscale=2]
	\draw[->,thin] (0.28,-.3) to node [below,at start]{$b$}(-0.28,.3);
	\draw[thin,->] (-0.28,-.3) to node [below,at start]{$a\phantom{b}$}(0.28,.3);
	\draw[densely dotted,->] (-0.15,-.16) to  (0.11,-.16);
 \node[scale=.6] at (0,-.3){$f_{a,b}(x_1,x_2)$};
     \node at (0.15,-.17) {$\bull$};
     \node at (-0.15,-.17) {$\bull$};
\end{tikzpicture}
}+\sum_{a\neq b\in A}
\mathord{
\begin{tikzpicture}[baseline = -.6mm,xscale=2.6,yscale=2]
	\draw[->,thin] (0.28,-.3) to node [below,at start]{$b$}(0.28,.3);
	\draw[thin,->] (-0.28,-.3) to node [below,at start]{$a\phantom{b}$}(-0.28,.3);
	\draw[densely dotted,->] (-0.28,-.16) to  (0.24,-.16);
     \node at (0.28,-.17) {$\bull$};
      \node[scale=.6] at (0,-.3){$f_{a,b}(x_1,x_2)$};
     \node at (-0.28,-.17) {$\bull$};
\end{tikzpicture}
}\:
\end{equation}
where $f_{a,b}=(\pwr_a(x_1)-\pwr_b(x_2))^{-1}$, which is a well-defined power series in $x_1$ and $x_2$ since the constant term of $\pwr_a(x_1)-\pwr_b(x_2)$ is $a-b\neq 0$. 

These formulas make sense in the completion $\htU_A$, since modulo any power of the dot, each formula becomes a finite sum.  Note that we have not written the images of the left cups and caps here.  Implicitly they are fixed by the formulas above, and can be determined from iterating the formula \cref{com4} and then applying the automorphism  $\sigma_a$ to the results.  However, we have not given a formula for the right cup under $\sigma_a$ and the result is quite unwieldy, so we have chosen not to write it out explicitly.

Let $\htU_{\bullet}$ denote the completion of $\tUdot$ in the GC topology.  
\begin{lemma}\label{lem:htUA-to-tensor-htUbullet}
For a finite subset $A\subset \K$, there is a natural 2-functor
\[
\Phi_A\colon \htU_A\to \otimes_{a\in A}\htU_{\bullet}
\]
sending an object $(k_a)_{a\in A}$ to $\otimes_{a\in A}k_a$, and sending generators by
\[
\eE_a\mapsto 1\otimes\cdots\otimes \eE\otimes\cdots\otimes 1,
\qquad
\eF_a\mapsto 1\otimes\cdots\otimes \eF\otimes\cdots\otimes 1,
\]
with $\eE,\eF$ in the $a$th factor.  This sends dots, cups, and caps and single color crossings to the corresponding maps in the $a$th factor, and sends crossings between different colors to the identity map.  
\end{lemma}
\begin{proof}
The one-color relations (KM1) and (KM3) hold factorwise, since each factor is $\htU_{\bullet}$.  For labels $a\neq b$, there are no edges in the Dynkin diagram for $A$, so (KM2) is exactly the statement that different factors commute; this is the interchange law in the tensor product of 2-categories.  The finiteness/completion conditions are naturally preserved.
\end{proof}

Consider a collection of non-zero representations $\mathcal{X}_a$ for $a\in A$ of $  \tUdot$ in which $\mathord{
\begin{tikzpicture}[baseline = -.6mm]
	\draw[->] (0.08,-.3) to (0.08,.3);
      \node at (0.08,0) {$\bullet$};
\end{tikzpicture}}
$ acts nilpotently on $\eE M$ for all $M$.   Since the dot acts nilpotently on $\eE M$ for every object $M$ of $\mathcal{X}_a$, any power series in the dot applied to $\eE M$ reduces to a finite sum; this means that the action of $\tUdot$ on $\mathcal{X}_a$ extends to an action of the GC-completion $\htU_{\bullet}$.  

Once we have an action of $\htU_{\bullet}$ on each factor $\mathcal{X}_a$, the tensor product $\otimes_{a\in A}\htU_{\bullet}$ naturally acts on the Deligne tensor product $\bigboxtimes_{a\in A}\mathcal{X}_a$: by the universal property of the Deligne tensor product, giving an action of $\otimes_{a\in A}\htU_{\bullet}$ on $\bigboxtimes_{a\in A}\mathcal{X}_a$ is equivalent to giving a multilinear functor $\prod_{a\in A}\mathcal{X}_a\to \bigboxtimes_{a\in A}\mathcal{X}_a$ for each $A$-tuple $(u_a)$ of 1-morphisms in $\htU_{\bullet}$.  We just precompose the tautological multilinear functor with $u_a$ acting on the $a$-factor.  Composing with the functor $\Phi_A$ of \cref{lem:htUA-to-tensor-htUbullet}, we obtain an action of $\htU_A$ on $\bigboxtimes_{a\in A}\mathcal{X}_a$, and precomposing with the functor $\rho\colon \tUdot\to \htU_A$, we arrive at an action of $\tUdot$ on $\bigboxtimes_{a\in A}\mathcal{X}_a$.

Note that the collections of locally finite abelian and Schurian abelian categories are both closed under Deligne tensor product.  This is shown in the locally finite abelian case in \cite[Prop. 22]{lopezfrancoTensorProducts2013}, and in the Schurian case, we simply take the tensor product of the corresponding locally finite-dimensional locally unital algebras.   

\begin{remark}
The reader might naturally ask why we need the additional step of acting by $\sigma$, instead of simply constructing an action of $\bUA$ on the Deligne product.  This is slightly subtler, since $\bUA$ is not a tensor product, it is a localization of one.  Such an action only exists if the polynomials in dots that we invert in the definition of $\bUA$ act invertibly on the Deligne product.  This will often be the case, but not always.  

 We could apply this construction to obtain the same categorical modules by taking $\mathcal{X}_a'$ to be the same underlying category as $\mathcal{X}_a$, but with the action of $\tUdot$ twisted by $\sigma_a$. Unlike $\mathcal{X}_a$, the categorical module $\mathcal{X}_a'$ does not have the property that the dot acts nilpotently on $\eE M$ for all $M$.  Rather, it acts by the scalar $a$ plus a nilpotent operator.  

While this other phrasing is equivalent, our first version of the construction plays more naturally with extension to higher rank, so we have chosen to emphasize this perspective.
\end{remark}

The most natural example is to take $\mathcal{X}_a$ to be the category of projective modules over the cyclotomic quotient for some highest weight $\la_a$ with $\la_a=0$ for all but finitely many $a\in A$; this is the unique categorification generated by a single object $\mathbb{V}_a$ of weight $\la$ where $\mathord{
\begin{tikzpicture}[baseline = -.6mm]
\draw[<-, thick] (0.08,-.3) to (0.08,.3);
      \node at (0.08,0) {$\bullet$};
\end{tikzpicture}}
\colon \eF\mathbb{V}_a\to \eF\mathbb{V}_a$ satisfies \begin{equation}\label{eq:deformed-cyclotomic-quotient}\mathord{
\begin{tikzpicture}[baseline = -1.5mm]
 	\draw[<-,thick] (0.08,-.4) to (0.08,.4);
   \node at (.08,-.55) {$\scriptstyle{i}$};
     \node at (0.08,0) {$\bullet$};
    \node at (-0.68,.05) {$\scriptstyle \prod_{a}(x-a)^{\la_a}$};
   \node at (0.35,0) {$\color{gray}\scriptstyle{\lambda}$};
\end{tikzpicture}
}=0.
 \end{equation}
 We will show in \cref{equivalence}
that, in fact, the tensor product has an induced equivalence to the deformed cyclotomic quotient with \cref{eq:deformed-cyclotomic-quotient}.  Of course, for $\tUdot$, it is easy to construct this cyclotomic quotient by hand, and so the discussion above seems like an unnecessarily roundabout method, but for higher rank, it will pay significant dividends.  
The formulas \crefrange{eq:1-morphism}{com0newer} define a 2-functor $\rho\colon \tUdot\to \htU_A$ for an arbitrary subset $A\subset \K$.  
Above, we have assumed that $A$ is finite, but in fact we can relax this assumption:
\begin{lemma} \label{lem:many-sl2-general}
The formulas \crefrange{eq:1-morphism}{com0newer} define a 2-functor $\rho\colon \tUdot\to \htU_A$ for an arbitrary subset $A\subset \K$.  
\end{lemma}
\begin{proof}
	Checking each relation involves only finitely many indices, so assuming these maps make sense, the relations follow automatically from the finite case.   On the 1-morphisms $\eE,\eF$, these are sent to a sum of length 1 monomials, where each $i$ appears once, so in fact only finitely many terms use $i\in I_0$ for any finite $I_0$.  The result for all other 1-morphisms follows by composition.
	
Now consider 2-morphisms. For each $I_0$, in \crefrange{com0new}{com0newer} only finitely many terms involve only indices from $I_0$ so all but finitely many lie in $\Hom_{\tU}^{I_0,2}(u,v) $.  Thus, the functor is well-defined.
\end{proof}

\subsection{Extension to higher rank}
Now, we generalize and consider $\tU(\mathfrak{g})$ for a general Cartan datum.  As mentioned previously, we assume in this section that $\fg$ is the universal derived algebra for this Cartan matrix.  By \cref{lem:different-realizations}, we will lose no generality by doing this.  Furthermore, since now, we may have $d_i>1$ for some $i$, we recall the assumption that $\K$ is an algebraically closed field of characteristic coprime to $d_i$ for all $i$.  

As before, we choose a spectrum $U_i$ for each $i\in I$ which we assume to be complete (in the sense of \cref{def:tilde-I}).  As defined in \cref{def:tilde-I}, we have a set $\tilde{I}$ naturally mapping to $I$, together with an induced graph structure whose vertices are pairs $(i,u)$ for $u\in U_i$.  Our previous section discussed an example of this construction when $I$ is a single point with no edges.  In this case, $\tilde{I}$ was an arbitrary number of points, identified with the set $U_{\bullet}$.  As we show in \cref{prop:furling-homomorphism}, there is a natural Lie algebra homomorphism $\tilde{\fg}\to \fg$ which sends
\begin{equation}\label{eq:map-on-generators}
  E_i \mapsto \sum_{u\in U_i} E_{i,u}, \qquad F_i \mapsto \sum_{u\in U_i} F_{i,u}, \qquad H_i\mapsto \sum_{u\in U_i} H_{i,u}
\end{equation}
In particular, we have a homomorphism of the Cartan subalgebras $\mathfrak{h}\to \tilde{\mathfrak{h}}$, along which we can consider the pullback of weights---we use $\mu|_{\mathfrak{h}}$ to denote the pullback of a weight $\mu\in \tilde{\mathfrak{h}}^*$ to $\mathfrak{h}^*$.  We will show in \cref{lem:g-action} that this homomorphism extends to a 2-functor $\tU(\fg)\to \htU(\tilde{\fg})$.  

In order to understand the relationship between these 2-categories, let us give a more detailed analysis of the polynomials $P_{ij}$. While these are two variable polynomials, their homogeneity properties make them behave more like one variable polynomials.  
In particular, if $P(x,y)$ is a polynomial homogeneous in the usual grading, then we have a factorization $P(x,y)=P(1,0)\cdot (x-a_1y)\cdots (x-a_ny)$ where $a_i$ are the roots of $P(x,1)$; at $x=u, y=u'$ for $u,u'\in \K\setminus\{0\}$, the number of these factors which vanish is the order of vanishing of $P(u,v)$ at the divisor $x/u=y/u'$.   

Now, we turn to generalizing this notion to the case of non-symmetric Cartan data.  Let $g_{ij}=\gcd(-c_{ij},-c_{ji})$ and $h_{ij}=-c_{ij}/g_{ij}$.\notation{$g_{ij}, h_{ij}$}{The greatest common divisor of the Cartan matrix entries  $g_{ij}=\gcd(-c_{ij},-c_{ji})$ and the ratio $h_{ij}=-c_{ij}/g_{ij}$.}
The natural interpretation of the vanishing order of $P_{ij}$ at $x=u,y=u'$ is the vanishing order of $P_{ij}$ at the irreducible divisor $(x/u)^{h_{ij}}=(y/u')^{h_{ji}}$. By homogeneity and since $\K$ is algebraically closed, divisors of this form are the only ones where $P_{ij}$ can vanish. 

Put differently, if we let $x^{1/d_i}, y^{1/d_j}$ be formal roots of these variables, then we have a resulting factorization 
\begin{equation}\label{eq:factorize}
P_{ij}(x,y)=p_{ij}\prod_{a^{(k)}_{ij} \in A_{ij}}
(x^{1/d_i}-a^{(k)}_{ij}y^{1/d_j})
\end{equation} where
$A_{ij}=\{a^{(k)}_{ij}\}$ is the multiset of roots of
$P_{ij}(x^{d_i},1)$, considered with multiplicity.\notation{$A_{ij},B_{ij}$}{The multiset $A_{ij}=\{a^{(k)}_{ij}\}$ of roots of
$P_{ij}(x^{d_i},1)$ and its reciprocals $B_{ij}=\{b^{(k)}_{ij}=(a^{(k)}_{ji})^{-1}\}$.}
  Let
$B_{ij}=\{b^{(k)}_{ij}=(a^{(k)}_{ji})^{-1}\}$ be the reciprocals of these
numbers.  Note that
\[Q_{ij}(x,y)=t_{ji}^{-1}\prod_{a^{(k)}_{ij} \in A_{ij}}
(x^{1/d_i}-a^{(k)}_{ij}y^{1/d_j}) \prod_{b^{(k)}_{ij} \in B_{ij}}
(x^{1/d_i}-b^{(k)}_{ij}y^{1/d_j}).\] 

Let $\#[(i,u)\to (j,u')]$ be the number of edges from $(i,u)$ to $(j,u')$ in the graph with vertex set $\tilde{I}$. Associated to this graph structure, we have polynomials
\begin{equation*}
\mathsf{P}_{(i,u),(j,u')}(x,y) =(x-y)^{\#[(i,u)\to (j,u')]}\in \K[x,y].
\end{equation*} 
\notation{$\mathsf{P}_{(i,u),(j,u')}$}{The polynomials corresponding to ``geometric coefficients'' for the graph structure on $\tilde{I}$.}
These are the ``geometric coefficients'' for a KLR algebra of this graph.
\begin{definition}
Consider the KLR algebra $\mathsf{R}$ of $\tilde{I}$ with the symmetric Cartan
datum associated to its graph structure and the polynomials $\mathsf{P}_{(i,u),(j,u')}$.  To avoid confusion, we denote the elements $\mathsf{\psi}_k , \mathsf{y}_k,\mathsf{e}_{\Bj}$ of this algebra with sans serif letters.  

Let $\tilde{\fg}$ be the derived universal symmetric Kac-Moody algebra attached to the Cartan datum on $\tilde{I}$. Let $\tU(\tilde{\fg})$ be the 2-category categorifying $\tilde{\fg}$ with the polynomials $\mathsf{P}_{(i,u),(j,u')}$.
\end{definition}

We want to extend \cref{lem:many-sl2-general} to give a 2-functor $\tU(\fg)\to \htU(\tilde{\fg})$. First, we must be careful about how we set up these functors on the individual $\mathfrak{sl}_2$'s.  In order to define these functors on strands with label $i$ via the formulas \crefrange{eq:1-morphism}{com0newer}, we use the power series \[\pwr_{\aa,i}(z)=\aa(z+1)^{d_i}.\]  

In order to accomplish this extension, we only need to describe the image of the crossing of differently labeled strands.  To understand the formulas for this image, note that by \cref{eq:factorize}, we have that 
\begin{equation}
P_{ij}(\pwr_{u,i}(x),\pwr_{u',j}(y))=p_{ij}\prod_{a^{(k)}_{ij} \in A_{ij}}
(u^{1/d_i}(x+1)-a^{(k)}_{ij}(u')^{1/d_{j}}(y+1))
\end{equation}
for $u^{1/d_i}$ and $(u')^{1/d_j}$ any roots of $u,u'\in \K$.  Note that such roots exist by our assumption that $\K$ is algebraically closed.  The product above is independent of this choice.  Note that each of the factors in the product above is either:
\begin{enumerate}
	\item invertible if $u^{1/d_i}\neq a^{(k)}_{ij}(u')^{1/d_{j}}$ or
	\item a multiple of $x-y$ if $u^{1/d_i}=a^{(k)}_{ij}(u')^{1/d_{j}}$.
\end{enumerate}
and that the number of the latter factors is exactly the number of arrows $(i,u)\to (j,u')$.  
Thus, we can factor \begin{equation}\label{eq:P-factorization}
P_{ij}(\pwr_{u,i}(x),\pwr_{u',j}(y))=\mathsf{P}_{(i,u),(j,u')}(x,y) P_{ij}^{\circ,uu'}(x,y)
\end{equation}
 for an invertible power series $P_{ij}^{\circ,uu'}(x,y)\in \K\llbracket x,y\rrbracket$.  
We find that:
\notation{$P_{ij}^{\circ,uu'}$}{The invertible power series such that $P_{ij}(\pwr_{u,i}(x),\pwr_{u',j}(y))=\mathsf{P}_{(i,u),(j,u')}(x,y) P_{ij}^{\circ,uu'}(x,y)$.}
\begin{lemma}\label{lem:g-action} Given a complete choice of $U_i$,  
there is a 2-functor $\rho\colon \tU(\fg)\to \htU(\tilde\fg)$ acting on 0-morphisms and 1-morphisms by:
\begin{equation}
	\la \mapsto \bigoplus_{\mu|_{\mathfrak{h}}=\la}\mu,\qquad \eE_i\mapsto \bigoplus_{u\in U_i} \eE_{i,u}, \qquad \eF_i\mapsto \bigoplus_{u\in U_i} \eF_{i,u}. 
\end{equation}

The action on 2-morphisms is given by 
\begin{equation}
\label{com0new2}
\mathord{
\begin{tikzpicture}[baseline = -.6mm]
	\draw[->,thin] (0.08,-.3) to  node [below,at start]{$i$} (0.08,.3);
      \node at (0.08,0) {$\bullet$};
\end{tikzpicture}
}
\mapsto \sum_{a\in U_i} \pwr_{a,i} \big(\mathord{
\begin{tikzpicture}[baseline = -.6mm]
	\draw[->,thin] (0.08,-.3) to  node [below,at start]{$(i,a)$} (0.08,.3);
      \node at (0.08,0) {$\bullet$};
\end{tikzpicture}
} \big) 
\:,
\qquad \qquad 
\mathord{
\begin{tikzpicture}[baseline = 1mm]
	\draw[<-,thin] (0.4,0) to[out=90, in=0] node [below,at start]{$i$}(0.1,0.4);
	\draw[-,thin] (0.1,0.4) to[out = 180, in = 90]  (-0.2,0);
\end{tikzpicture}
}\mapsto
\sum_{a\in U_i} \mathord{
\begin{tikzpicture}[baseline = 1mm]
	\draw[<-,thin] (0.4,0) to[out=90, in=0] node [below,at start]{$(i,a)$} (0.1,0.4);
	\draw[-,thin] (0.1,0.4) to[out = 180, in = 90] (-0.2,0);
\end{tikzpicture}
}\:,
\qquad\qquad
\mathord{
\begin{tikzpicture}[baseline = 1mm]
	\draw[<-,thin] (0.4,0.4) to[out=-90, in=0] node [above,at start]{$i$}(0.1,0);
	\draw[-,thin] (0.1,0) to[out = 180, in = -90] (-0.2,0.4);
\end{tikzpicture}
}
\mapsto\sum_{a\in U_i} 
\mathord{
\begin{tikzpicture}[baseline = 1mm]
	\draw[<-,thin] (0.4,0.4) to[out=-90, in=0] node [above,at start]{$(i,a)$}(0.1,0);
	\draw[-,thin,] (0.1,0) to[out = 180, in = -90] (-0.2,0.4);
\end{tikzpicture}
}\:,
\end{equation}
\begin{equation}
\label{com0newer2}
\mathord{
\begin{tikzpicture}[baseline = -.6mm,xscale=2.6,yscale=2]
	\draw[->,thin] (0.28,-.3) to node [below,at start]{$i\phantom{b}$}(-0.28,.3);
	\draw[thin,->] (-0.28,-.3) to node [below,at start]{$i\phantom{b}$}(0.28,.3);
\end{tikzpicture}
}\mapsto \sum_{a\in U_i} 
\mathord{
\begin{tikzpicture}[baseline = -.6mm,xscale=2.6,yscale=2]
	\draw[->,thin] (0.28,-.3) to node [below,at start]{$(i,a)\phantom{b}$}(-0.28,.3);
	\draw[thin,->] (-0.28,-.3) to node [below,at start]{$(i,a)\phantom{b}$}(0.28,.3);	\draw[thin,->] (-0.28,-.3) to node [below,at start]{$(i,a)\phantom{b}$}(0.28,.3);
	\draw[densely dotted,->] (-0.15,-.16) to  (0.11,-.16);
 \node[scale=.6] at (0,-.3){$q_{a,i}(x_1,x_2)$};
     \node at (0.15,-.17) {$\bull$};
     \node at (-0.15,-.17) {$\bull$};
\end{tikzpicture}
}+\sum_{a\neq b\in U_i}
\mathord{
\begin{tikzpicture}[baseline = -.6mm,xscale=2.6,yscale=2]
	\draw[->,thin] (0.28,-.3) to node [below,at start]{$(i,b)$}(-0.28,.3);
	\draw[thin,->] (-0.28,-.3) to node [below,at start]{$(i,a)\phantom{b}$}(0.28,.3);
	\draw[densely dotted,->] (-0.15,-.16) to  (0.11,-.16);
 \node[scale=.6] at (0,-.3){$f_{a,b,i}(x_1,x_2)$};
     \node at (0.15,-.17) {$\bull$};
     \node at (-0.15,-.17) {$\bull$};
\end{tikzpicture}
}+\sum_{a\neq b\in U_i}
\mathord{
\begin{tikzpicture}[baseline = -.6mm,xscale=2.6,yscale=2]
	\draw[->,thin] (0.28,-.3) to node [below,at start]{$(i,b)$}(0.28,.3);
	\draw[thin,->] (-0.28,-.3) to node [below,at start]{$(i,a)\phantom{b}$}(-0.28,.3);
	\draw[densely dotted,->] (-0.28,-.16) to  (0.24,-.16);
     \node at (0.28,-.17) {$\bull$};
      \node[scale=.6] at (0,-.3){$f_{a,b,i}(x_1,x_2)$};
     \node at (-0.28,-.17) {$\bull$};
\end{tikzpicture}
}\:
\end{equation}
	\begin{equation}
\label{com0newerer}
\mathord{
\begin{tikzpicture}[baseline = -.6mm,xscale=3,yscale=2]
	\draw[->,thin] (0.28,-.3) to node [below,at start]{$i$}(-0.28,.3);
	\draw[thin,->] (-0.28,-.3) to node [below,at start]{$j$} (0.28,.3);
\end{tikzpicture}
}\mapsto \sum_{\substack{a\in U_i\\ b\in U_j}}
\mathord{
\begin{tikzpicture}[baseline = -.6mm,xscale=3,yscale=2]
	\draw[->,thin] (0.28,-.3) to node [below,at start]{$\phantom{j}(i,a)$}(-0.28,.3);
	\draw[thin,->] (-0.28,-.3) to node [below,at start]{$(j,b)\phantom{b}$}(0.28,.3);
	\draw[densely dotted,->] (-0.15,.16) to  (0.11,.16);
 \node[scale=.6] at (0,.3){$P_{ij}^{\circ,ab}(x_1,x_2)$};
     \node at (0.15,.17) {$\bull$};
     \node at (-0.15,.17) {$\bull$};
\end{tikzpicture}
}
\end{equation}
\[ q_{a,i}(x_1,x_2)=\frac{x_1-x_2}{\gamma_{a,i}(x_1)-\gamma_{a,i}(x_2)},\qquad 
f_{a,b,i}(x_1,x_2)=(\gamma_{a,i}(x_1)-\gamma_{b,i}(x_2))^{-1}.\]
\end{lemma}
Note that \crefrange{com0new2}{com0newer2} are effectively the same as \crefrange{com0new}{com0newer}.  We give the proof of this result on page \pageref{proof-lem:g-action}.

\subsection{Non-degeneracy}
\label{sec:non-degenerate}
\notation{$B_{\Bi,\Bi',\la}$}{The basis of  $\Hom_{\tU}
(\Bi,\Bi')$ constructed in \cref{sec:non-degenerate}.}

This provides us with the tools we need to prove non-degeneracy for categorified Kac-Moody algebras $\tU=\tU(\fg)$.  For simplicity, from now on, we will use the notation $\Hom_{\tU}(\Bi,\Bi')$ for the Hom-space $\Hom(\eE_{\Bi},\eE_{\Bi'})$ between two 1-morphisms $\eE_{\Bi},\eE_{\Bi'}\colon \la\to \mu$ in $\tU$.  In \cite[\S
3.2.3]{khovanovCategorificationQuantum2010}, Khovanov and Lauda define a spanning set $B_{\Bi,\Bi',\la}$ of the Hom space  $\Hom_{\tU}(\Bi,\Bi')$  that they hypothesize are a basis.  
This set is not truly canonical, but it is well-defined up to strictly lower triangular change of basis.  

This set is constructed as follows:
      \begin{enumerate}
      \item for each way of pairing up the combined entries of $\Bi$ and $\Bi'$ in a way compatible with orientation (an $(\Bi,\Bi')$-pairing, in the
terminology of \cite{khovanovCategorificationQuantum2010}), choose a diagram which connects the terminals for the two elements of each
pair, with a minimal number of crossings and no bubbles or
dots.  Note that this requires us to match an $i$ in $\Bi$ with an $i$ in $\Bi'$ or a $-i$ in $\Bi$, etc.
\item for each diagram of the above, and each arc joining two
terminals, we fix a position on that arc (avoiding the
crossings with any others).  The full list of diagrams in $B_{\Bi,\Bi',\la}$ are
indexed by 
(i) a diagram as above, (ii) a choice of non-negative
integer for each arc joining two terminals, and (iii) a monomial in
the unnested bubbles (a basis vector of the ring $\Pi_{\la}$ in the
notation of \cite[\S 3.2.1]{khovanovCategorificationQuantum2010}).  We construct the
corresponding diagram by adding the corresponding number of
dots on each arc and putting the bubbles to the left of the diagram. 
      \end{enumerate}
      	Thus far in the paper, we have assumed that $\K$ is an algebraically closed field. In fact, we can work over an arbitrary commutative ring $\K'$ where the integers $d_i$ are invertible, together with a choice of $Q_{ij}(x,y)$ such that $t_{ij}$ and $t_{ji}$ are multiplicative inverses of the scalars $Q_{ij}(0,1),Q_{ij}(1,0)$.  

      \begin{theorem}\label{nondegenerate}
	In the category $\tU$ defined over $\K'$, Khovanov and Lauda's spanning set $B_{\Bi,\Bi',\la}$ is a basis of the Hom space $\Hom_{\tU}
      (\Bi,\Bi')$ as a free $\K'$-module.
\end{theorem}
We prove this by exploiting \cref{lem:g-action} to construct examples of representations where any hypothetical relation must act trivially.  We give the proof of this result on page \pageref{proof-nondegenerate}.
Note that while our proof will use the universal derived Kac-Moody algebra of our Cartan matrix, \cref{lem:different-realizations} shows that this will imply the result for all different choices of realization.  

\section{Spectral analysis of actions}
\label{sec:spectral}
\subsection{Eigenfunctors}
\label{sec:eigenfunctors}
In this section, we reverse the construction of $\tU(\fg)$-representations from $\tU(\tilde{\fg})$-representations.  As mentioned in the introduction, our approach is very close to that of \cite[\S 4]{brundanHeisenbergKacMoody2020}, requiring only occasional changes due to the different nature of Heisenberg and Kac-Moody categorifications.  We begin with a discussion of the spectral analysis of $\tU(\fg)$-representations, so in \crefrange{sec:eigenfunctors}{swd}, we write $\tU=\tU(\fg)$ without ambiguity. 

In much of the literature, the representations of $\tU$ considered
have had
the action of the dots and bubbles be nilpotent; this is necessary in
a graded 2-representation which is locally finite abelian or Schurian.  However,
it can be a very powerful technique to deform these representations in
such a way as to break this assumption.  

Consider a representation of the 2-category
$\tU$, sending $\la\mapsto \mathcal{C}_\la$ such that
$\mathcal{C}_\la$ is $\K$-linear locally finite abelian or Schurian. Since we assume that $\K$ is algebraically closed, the endomorphisms of any simple object in such a category are always the base field $\K$.  

For simplicity, we start with the case $\fg=\mathfrak{sl}_2$.  By the usual identification of the weight lattice of $\mathfrak{sl}_2$ with $\Z$, we write $\Ccat_k$ for the weight categories of a categorical $\fg$-action. 

\notation{$\eE_{i,\aa}, \eF_{i,\aa}$}{The summands of $\eE_{i}$ and $\eF_{i}$ given by generalized eigenspaces of dots.}
\begin{definition}\label{def:Ea}
	For $\aa \in  \K$, let $\eE_{\aa}$ and $\eF_{\aa}$ be the subfunctors of $\eE$ and $\eF$
defined on $V \in \Ccat_k$
by declaring that
$\eE_{\aa} V$ and $\eF_{\aa} V$ are the generalized $\aa$-eigenspaces of
the endomorphisms
$\mathord{\begin{tikzpicture}[baseline = -1mm]
 	\draw[->] (0.08,-.2) to (0.08,.2);
     \node at (0.08,0) {$\bullet$};
 	\draw[-,darkg,thick] (0.38,.2) to (0.38,-.2);
     \node at (0.55,0) {$\darkg\scriptstyle{V}$};
\end{tikzpicture}
}$ and
$\mathord{\begin{tikzpicture}[baseline = -1mm]
 	\draw[<-] (0.08,-.2) to (0.08,.2);
     \node at (0.08,0.02) {$\bullet$};
 	\draw[-,darkg,thick] (0.38,.2) to (0.38,-.2);
     \node at (0.55,0) {$\darkg\scriptstyle{V}$};
\end{tikzpicture}
}$, respectively.
\end{definition}
 
As discussed in \cite[\S 4.1]{brundanHeisenbergKacMoody2020}, it suffices to define these functors on finitely generated objects.  For a finitely generated object, we have minimal polynomials $m_V(z), n_V(z) \in \K[z]$ of
the endomorphisms
$\mathord{\begin{tikzpicture}[baseline = -1mm]
 	\draw[->] (0.08,-.2) to (0.08,.2);
     \node at (0.08,0) {$\bullet$};
 	\draw[-,darkg,thick] (0.38,.2) to (0.38,-.2);
     \node at (0.55,0) {$\darkg\scriptstyle{V}$};
\end{tikzpicture}
}$ and
$\mathord{\begin{tikzpicture}[baseline = -1mm]
 	\draw[<-] (0.08,-.2) to (0.08,.2);
     \node at (0.08,0.02) {$\bullet$};
 	\draw[-,darkg,thick] (0.38,.2) to (0.38,-.2);
     \node at (0.55,0) {$\darkg\scriptstyle{V}$};
\end{tikzpicture}
}$, respectively.
Then
there are injective homomorphisms
\begin{align}\label{CRT1}
\K[z] / (m_V(z)) &\hookrightarrow \End_{\Ccat_k} (\eE V),
&
\K[z] / (n_V(z)) &\hookrightarrow \End_{\Ccat_k} ( \eF V),\\
p(z) &
\mapsto \mathord{
\begin{tikzpicture}[baseline = -1mm]
 	\draw[->] (0.08,-.3) to (0.08,.4);
     \node at (0.08,0.05) {$\bullet$};
    \node at (-0.3,0.05) {$\scriptstyle p(x)$};
 	\draw[-,darkg,thick] (0.45,.4) to (0.45,-.3);
     \node at (0.45,-.45) {$\darkg\scriptstyle{V}$};
\end{tikzpicture}
},
&p(z) &\mapsto \mathord{
\begin{tikzpicture}[baseline = -1mm]
 	\draw[<-] (0.08,-.3) to (0.08,.4);
     \node at (0.08,0.1) {$\bullet$};
    \node at (-0.3,0.1) {$\scriptstyle p(x)$};
 	\draw[-,darkg,thick] (0.45,.4) to (0.45,-.3);
     \node at (0.45,-.45) {$\darkg\scriptstyle{V}$};
\end{tikzpicture}
}.\notag
\end{align}
Also, let $\eps_{\aa}(V)$ and $\phi_{\aa}(V)$ denote the multiplicities
of $\aa \in  \K$ as a root of the polynomials $m_V(z)$ and $n_V(z)$,
respectively.
By the Chinese remainder theorem, we have that
\begin{align}\label{CRTdef}
\K[z] / (m_V(z)) &\cong \bigoplus_{\aa \in  \K} \K[z] \big/ \big((z-\aa)^{\eps_{\aa}(V)}\big),&
\K[z] / (n_V(z)) &\cong \bigoplus_{\aa \in  \K} \K[z] \big/
                   \big((z-\aa)^{\phi_{\aa}(V)}\big).
\end{align}
There are corresponding decompositions
$1_{\K[z] / (m_V(z))} = \sum_{\aa \in  \K} e_{\aa, V}$ and
$1_{\K[z] / (n_V(z))} = \sum_{\aa \in  \K} f_{\aa,V}$
of the identity elements of these algebras as sums of mutually orthogonal
idempotents projecting to the generalized eigenspaces of $z$.  
The functors $\eE_{\aa} V$ and $\eF_{\aa} V$ can be defined as the images of the idempotents $e_{\aa,V}$ and $f_{\aa,V}$ acting on $\eE V$ and $\eF V$.  

We will represent the identity endomorphisms of the functors $\eE_{\aa}$ and $\eF_{\aa}$ by
vertical strings colored by $\aa$ as in the diagrams
below.
The inclusions $\eE_{\aa} \hookrightarrow \eE$ and $\eF_{\aa} \hookrightarrow \eF$ are depicted
by the second pair of diagrams below. The projections $\eE
\twoheadrightarrow\eE_{\aa}$ and $\eF \twoheadrightarrow \eF_{\aa}$ are the final pair.
\begin{align*}
&\mathord{
\begin{tikzpicture}[baseline = 0]
	\draw[->] (0.08,-.3) to (0.08,.4);
      \node at (0.08,-0.45) {$\scriptstyle{\aa}$};
\end{tikzpicture}
}
\!: \eE_{\aa} \Rightarrow\eE_{\aa},
&&
\mathord{
\begin{tikzpicture}[baseline = 0]
	\draw[<-] (0.08,-.3) to (0.08,.4);
\node at (0.08,0.55) {$\scriptstyle{\aa}$};
\end{tikzpicture}
} \!: \eF_{\aa} \Rightarrow \eF_{\aa},
&&
\mathord{
\begin{tikzpicture}[baseline = 0]
	\draw[->] (0.08,-.3) to (0.08,.4);
	\draw[-] (0.04,.05) to (0.12,.05);
      \node at (0.08,-0.45) {$\scriptstyle{\aa}$};
\end{tikzpicture}
} \!: \eE_{\aa} \Rightarrow \eE,
&&
\mathord{
\begin{tikzpicture}[baseline = 0]
	\draw[<-] (0.08,-.3) to (0.08,.4);
	\draw[-] (0.04,.05) to (0.12,.05);
      \node at (0.08,-0.45) {$\scriptstyle{\aa}$};
\end{tikzpicture}
} \!: \eF_{\aa} \Rightarrow \eF,
&&
\mathord{
\begin{tikzpicture}[baseline = 0mm]
	\draw[->] (0.08,-.3) to (0.08,.4);
	\draw[-] (0.04,.05) to (0.12,.05);
      \node at (0.08,0.55) {$\scriptstyle{\aa}$};
\end{tikzpicture}
} \!: \eE \Rightarrow\eE_{\aa},
&&
\mathord{
\begin{tikzpicture}[baseline = 0mm]
  \draw[<-] (0.08,-.3) to (0.08,.4);
	\draw[-] (0.04,.05) to (0.12,.05);
      \node at (0.08,0.55) {$\scriptstyle{\aa}$};
\end{tikzpicture}
} \!: \eF \Rightarrow \eF_{\aa}.
\end{align*}
To illustrate the notation, the natural transformation
$\:\mathord{
\begin{tikzpicture}[baseline = -0.8mm]
	\draw[->] (0.08,-.25) to (0.08,.35);
	\draw[-] (0.04,.15) to (0.12,.15);
	\draw[-] (0.04,-.05) to (0.12,-.05);
      \node at (0.2,0.05) {$\scriptstyle{\aa}$};
\end{tikzpicture}
}:\eE \Rightarrow \eE
$
is the projection of $\eE$ onto its summand $\eE_{\aa}$, while
\begin{equation}\label{othogonality}
\mathord{
\begin{tikzpicture}[baseline = 0]
	\draw[->] (0.08,-.3) to (0.08,.4);
	\draw[-] (0.04,.15) to (0.12,.15);
	\draw[-] (0.04,-0.05) to (0.12,-0.05);
      \node at (0.08,0.55) {$\scriptstyle{\bb}$};
      \node at (0.08,-.45) {$\scriptstyle{\aa}$};
\end{tikzpicture}
}
= \delta_{\aa,\bb}
\mathord{
\begin{tikzpicture}[baseline = 0]
	\draw[->] (0.08,-.3) to (0.08,.4);
        \node at (0.08,-.45) {$\scriptstyle{\aa}$};
\end{tikzpicture}
}\:\:.
\end{equation}
It is also clear from the definition that the endomorphisms of
$\eE$ and $\eF$
defined by the dots restrict to endomorphisms of the
summands $\eE_{\aa}$
and $\eF_{\aa}$.
Representing these restrictions simply by drawing the dots on a string
colored by $\aa$, we have that
\begin{align}\label{sizzle}
\mathord{
\begin{tikzpicture}[baseline = -1mm]
	\draw[->] (0.08,-.3) to (0.08,.4);
      \node at (0.08,-0.05) {$\bullet$};
	\draw[-] (0.04,.15) to (0.12,.15);
      \node at (0.08,-0.45) {$\scriptstyle{\aa}$};
\end{tikzpicture}
}&=
\mathord{
\begin{tikzpicture}[baseline = -1mm]
	\draw[->] (0.08,-.3) to (0.08,.4);
      \node at (0.08,0.15) {$\bullet$};
	\draw[-] (0.04,-.05) to (0.12,-.05);
      \node at (0.08,-0.45) {$\scriptstyle{\aa}$};
\end{tikzpicture}
}\:\:,
&
\mathord{
\begin{tikzpicture}[baseline = -1mm]
	\draw[<-] (0.08,-.3) to (0.08,.4);
      \node at (0.08,-0.05) {$\bullet$};
	\draw[-] (0.04,.15) to (0.12,.15);
      \node at (0.08,-0.45) {$\scriptstyle{\aa}$};
\end{tikzpicture}
}&=
\mathord{
\begin{tikzpicture}[baseline = -1mm]
	\draw[<-] (0.08,-.3) to (0.08,.4);
      \node at (0.08,0.15) {$\bullet$};
	\draw[-] (0.04,-.05) to (0.12,-.05);
      \node at (0.08,-0.45) {$\scriptstyle{\aa}$};
\end{tikzpicture}
}\:\:,&
\mathord{
\begin{tikzpicture}[baseline = 1mm]
	\draw[->] (0.08,-.3) to (0.08,.4);
      \node at (0.08,-0.05) {$\bullet$};
	\draw[-] (0.04,.15) to (0.12,.15);
      \node at (0.08,0.55) {$\scriptstyle{\aa}$};
\end{tikzpicture}
}&=
\mathord{
\begin{tikzpicture}[baseline = 1mm]
	\draw[->] (0.08,-.3) to (0.08,.4);
      \node at (0.08,0.15) {$\bullet$};
	\draw[-] (0.04,-.05) to (0.12,-.05);
      \node at (0.08,0.55) {$\scriptstyle{\aa}$};
\end{tikzpicture}
}\:\:,
&
\mathord{
\begin{tikzpicture}[baseline = 1mm]
	\draw[<-] (0.08,-.3) to (0.08,.4);
      \node at (0.08,-0.05) {$\bullet$};
	\draw[-] (0.04,.15) to (0.12,.15);
      \node at (0.08,0.55) {$\scriptstyle{\aa}$};
\end{tikzpicture}
}&=
\mathord{
\begin{tikzpicture}[baseline = 1mm]
	\draw[<-] (0.08,-.3) to (0.08,.4);
      \node at (0.08,0.15) {$\bullet$};
	\draw[-] (0.04,-.05) to (0.12,-.05);
      \node at (0.08,0.55) {$\scriptstyle{\aa}$};
\end{tikzpicture}
}
\:\:.
\end{align}

Since the downwards dot is both the left and right mate of the upwards dot,
the adjunctions $(\eE,\eF)$ and $(\eF,\eE)$ induce adjunctions $(\eE_{\aa}, \eF_{\aa})$
and $(\eF_{\aa},\eE_{\aa})$ for all $\aa \in U$.
We draw the units and counits of these adjunctions using cups and caps
colored by $\aa$.
Again, the various inclusions and projections commute with these morphisms:
\begin{align}
\begin{array}{llll}
\mathord{
\begin{tikzpicture}[baseline=1mm]
	\draw[<-] (0.4,0.4) to[out=-90, in=0] (0.1,0);
	\draw[-] (-0.2,.188) to (-0.123,.22);
	\draw[-] (0.1,0) to[out = 180, in = -90] (-0.2,0.4);
      \node at (-0.2,0.55) {$\scriptstyle{\aa}$};
\end{tikzpicture}
}
\:=
\mathord{
\begin{tikzpicture}[baseline = 1mm]
	\draw[<-] (0.4,0.4) to[out=-90, in=0] (0.1,0);
	\draw[-] (.402,.188) to (.325,.22);
	\draw[-] (0.1,0) to[out = 180, in = -90] (-0.2,0.4);
      \node at (-0.2,0.55) {$\scriptstyle{\aa}$};
\end{tikzpicture}
}\:
,
\quad&
\mathord{
\begin{tikzpicture}[baseline = 1mm]
	\draw[-] (0.4,0.4) to[out=-90, in=0] (0.1,0);
	\draw[-] (-0.2,.188) to (-0.123,.22);
	\draw[->] (0.1,0) to[out = 180, in = -90] (-0.2,0.4);
      \node at (-0.2,0.55) {$\scriptstyle{\aa}$};
\end{tikzpicture}
}
\;=\;
\mathord{
\begin{tikzpicture}[baseline = 1mm]
	\draw[-] (0.4,0.4) to[out=-90, in=0] (0.1,0);
	\draw[-] (.402,.188) to (.325,.22);
	\draw[->] (0.1,0) to[out = 180, in = -90] (-0.2,0.4);
      \node at (0.4,0.55) {$\scriptstyle{\aa}$};
\end{tikzpicture}
}\:
,
\quad&
\mathord{
\begin{tikzpicture}[baseline = 1mm]
	\draw[<-] (0.4,0.4) to[out=-90, in=0] (0.1,0);
	\draw[-] (-0.2,.188) to (-0.123,.22);
	\draw[-] (0.1,0) to[out = 180, in = -90] (-0.2,0.4);
      \node at (0.4,0.55) {$\scriptstyle{\aa}$};
\end{tikzpicture}
}
=
\:\mathord{
\begin{tikzpicture}[baseline = 1mm]
	\draw[<-] (0.4,0.4) to[out=-90, in=0] (0.1,0);
	\draw[-] (.402,.188) to (.325,.22);
	\draw[-] (0.1,0) to[out = 180, in = -90] (-0.2,0.4);
      \node at (0.4,0.55) {$\scriptstyle{\aa}$};
\end{tikzpicture}
}\:
,
\quad&
\mathord{
\begin{tikzpicture}[baseline = 1mm]
	\draw[-] (0.4,0.4) to[out=-90, in=0] (0.1,0);
	\draw[-] (-0.2,.188) to (-0.123,.22);
	\draw[->] (0.1,0) to[out = 180, in = -90] (-0.2,0.4);
      \node at (0.4,0.55) {$\scriptstyle{\aa}$};
\end{tikzpicture}
}
=\:
\mathord{
\begin{tikzpicture}[baseline = 1mm]
	\draw[-] (0.4,0.4) to[out=-90, in=0] (0.1,0);
	\draw[-] (.402,.188) to (.325,.22);
	\draw[->] (0.1,0) to[out = 180, in = -90] (-0.2,0.4);
      \node at (0.4,0.55) {$\scriptstyle{\aa}$};
\end{tikzpicture}
}\:
,
\\\\
\mathord{
\begin{tikzpicture}[baseline = 1mm]
	\draw[<-] (0.4,0) to[out=90, in=0] (0.1,0.4);
	\draw[-] (-0.2,.22) to (-0.123,.188);
	\draw[-] (0.1,0.4) to[out = 180, in = 90] (-0.2,0);
      \node at (-0.2,-0.15) {$\scriptstyle{\aa}$};
\end{tikzpicture}
}\:=\:
\mathord{
\begin{tikzpicture}[baseline = 1mm]
	\draw[<-] (0.4,0) to[out=90, in=0] (0.1,0.4);
	\draw[-] (.402,.22) to (.325,.188);
	\draw[-] (0.1,0.4) to[out = 180, in = 90] (-0.2,0);
      \node at (-0.2,-0.15) {$\scriptstyle{\aa}$};
\end{tikzpicture}
}\:,
\quad&
\mathord{
\begin{tikzpicture}[baseline = 1mm]
	\draw[-] (0.4,0) to[out=90, in=0] (0.1,0.4);
	\draw[-] (-0.2,.22) to (-0.123,.188);
	\draw[->] (0.1,0.4) to[out = 180, in = 90] (-0.2,0);
      \node at (-0.2,-0.15) {$\scriptstyle{\aa}$};
\end{tikzpicture}
}\;=\;
\mathord{
\begin{tikzpicture}[baseline = 1mm]
	\draw[-] (0.4,0) to[out=90, in=0] (0.1,0.4);
	\draw[-] (.402,.22) to (.325,.188);
	\draw[->] (0.1,0.4) to[out = 180, in = 90] (-0.2,0);
      \node at (-0.2,-0.15) {$\scriptstyle{\aa}$};
\end{tikzpicture}
}\:,
\quad&
\mathord{
\begin{tikzpicture}[baseline = 1mm]
	\draw[<-] (0.4,0) to[out=90, in=0] (0.1,0.4);
	\draw[-] (-0.2,.22) to (-0.123,.188);
	\draw[-] (0.1,0.4) to[out = 180, in = 90] (-0.2,0);
      \node at (0.4,-0.15) {$\scriptstyle{\aa}$};
\end{tikzpicture}
}=\:
\mathord{
\begin{tikzpicture}[baseline = 1mm]
	\draw[<-] (0.4,0) to[out=90, in=0] (0.1,0.4);
	\draw[-] (.402,.22) to (.325,.188);
	\draw[-] (0.1,0.4) to[out = 180, in = 90] (-0.2,0);
      \node at (0.4,-0.15) {$\scriptstyle{\aa}$};
\end{tikzpicture}
}\:,
\quad&
\mathord{
\begin{tikzpicture}[baseline = 1mm]
	\draw[-] (0.4,0) to[out=90, in=0] (0.1,0.4);
	\draw[-] (-0.2,.22) to (-0.123,.188);
	\draw[->] (0.1,0.4) to[out = 180, in = 90] (-0.2,0);
      \node at (0.4,-0.15) {$\scriptstyle{\aa}$};
\end{tikzpicture}
}=\:
\mathord{
\begin{tikzpicture}[baseline = 1mm]
	\draw[-] (0.4,0) to[out=90, in=0] (0.1,0.4);
	\draw[-] (.402,.22) to (.325,.188);
	\draw[->] (0.1,0.4) to[out = 180, in = 90] (-0.2,0);
      \node at (0.4,-0.15) {$\scriptstyle{\aa}$};
\end{tikzpicture}
}\:.
\end{array}\label{incpro}
\end{align}

The situation with crossings is more interesting.
For $\aa,\bb,\aa',\bb' \in \K$, define
\begin{equation}
\begin{tikzpicture}[baseline = -1mm]
	\draw[->] (0.28,-.28) to (-0.28,.28);
	\draw[->] (-0.28,-.28) to (0.28,.28);
      \node at (-0.33,-0.43) {$\scriptstyle{\bb}$};
      \node at (0.33,-0.43) {$\scriptstyle{\aa}$};
      \node at (-0.33,0.43) {$\scriptstyle{\bb'}$};
      \node at (0.33,0.43) {$\scriptstyle{\aa'}$};
\node at (0,-0.01) {$\diamond$};
\end{tikzpicture}
:=
\begin{tikzpicture}[baseline = -1mm]
	\draw[->] (0.28,-.28) to (-0.28,.28);
	\draw[->] (-0.28,-.28) to (0.28,.28);
      \node at (-0.33,-0.43) {$\scriptstyle{\bb}$};
      \node at (0.33,-0.43) {$\scriptstyle{\aa}$};
      \node at (-0.33,0.43) {$\scriptstyle{\bb'}$};
      \node at (0.33,0.43) {$\scriptstyle{\aa'}$};
	\draw[-] (.14,.22) to (.22,.14);
	\draw[-] (.14,-.22) to (.22,-.14);
	\draw[-] (-.14,.22) to (-.22,.14);
	\draw[-] (-.14,-.22) to (-.22,-.14);
\end{tikzpicture}
\label{interesting}\end{equation}
Thus, these natural transformations are defined by first including
the summand $\eE_{\bb}\eE_{\aa}$ into $\eE \eE$, then
applying natural transformation $\eE \eE \Rightarrow \eE \eE$ defined by
the usual crossing, then
projecting
$\eE \eE$ onto the summand $\eE_{\bb'}\eE_{\aa'}$.
The defining relations plus \cref{sizzle} imply
that
\begin{equation}\label{newdotslide}
\begin{tikzpicture}[baseline = -1mm]
	\draw[->] (0.28,-.28) to (-0.28,.28);
	\draw[->] (-0.28,-.28) to (0.28,.28);
      \node at (-0.33,-0.43) {$\scriptstyle{\bb}$};
      \node at (0.33,-0.43) {$\scriptstyle{\aa}$};
      \node at (-0.33,0.43) {$\scriptstyle{\bb'}$};
      \node at (0.33,0.43) {$\scriptstyle{\aa'}$};
\node at (0,-.01) {$\diamond$};
\node at (-.17,-.19) {$\bullet$};
\end{tikzpicture}
=
\begin{tikzpicture}[baseline = -1mm]
	\draw[->] (0.28,-.28) to (-0.28,.28);
	\draw[->] (-0.28,-.28) to (0.28,.28);
      \node at (-0.33,-0.43) {$\scriptstyle{\bb}$};
      \node at (0.33,-0.43) {$\scriptstyle{\aa}$};
      \node at (-0.33,0.43) {$\scriptstyle{\bb'}$};
      \node at (0.33,0.43) {$\scriptstyle{\aa'}$};
\node at (0,-.01) {$\diamond$};
\node at (.17,.15) {$\bullet$};
\end{tikzpicture}
+
\delta_{\aa,\aa'} \delta_{\bb,\bb'}
\begin{tikzpicture}[baseline = -1mm]
	\draw[->] (0.18,-.28) to (0.18,.28);
	\draw[->] (-0.18,-.28) to (-0.18,.28);
      \node at (-0.18,-0.43) {$\scriptstyle{\bb}$};
      \node at (0.18,-0.43) {$\scriptstyle{\aa}$};
\end{tikzpicture},\qquad
\begin{tikzpicture}[baseline = -1mm]
	\draw[->] (0.28,-.28) to (-0.28,.28);
	\draw[->] (-0.28,-.28) to (0.28,.28);
      \node at (-0.33,-0.43) {$\scriptstyle{\bb}$};
      \node at (0.33,-0.43) {$\scriptstyle{\aa}$};
      \node at (-0.33,0.43) {$\scriptstyle{\bb'}$};
      \node at (0.33,0.43) {$\scriptstyle{\aa'}$};
\node at (0,-.01) {$\diamond$};
\node at (-.17,.15) {$\bullet$};
\end{tikzpicture}
=
\begin{tikzpicture}[baseline = -1mm]
	\draw[->] (0.28,-.28) to (-0.28,.28);
	\draw[->] (-0.28,-.28) to (0.28,.28);
      \node at (-0.33,-0.43) {$\scriptstyle{\bb}$};
      \node at (0.33,-0.43) {$\scriptstyle{\aa}$};
      \node at (-0.33,0.43) {$\scriptstyle{\bb'}$};
      \node at (0.33,0.43) {$\scriptstyle{\aa'}$};
\node at (0,-.01) {$\diamond$};
\node at (.18,-.19) {$\bullet$};
\end{tikzpicture}
+
\delta_{\aa,\aa'} \delta_{\bb,\bb'}
\begin{tikzpicture}[baseline = -1mm]
	\draw[->] (0.18,-.28) to (0.18,.28);
	\draw[->] (-0.18,-.28) to (-0.18,.28);
      \node at (-0.18,-0.43) {$\scriptstyle{\bb}$};
      \node at (0.18,-0.43) {$\scriptstyle{\aa}$};
\end{tikzpicture}
\end{equation}
There are also sideways and downwards versions of the new crossings
that may be defined in a similar way, or equivalently by ``rotating'' the upwards
ones using \cref{incpro}.
The same proofs as in \cite[Lem. 4.1--2]{brundanHeisenbergKacMoody2020} show that:
\begin{lemma}\label{essential}
If $\{\aa,\bb\}\neq\{\aa',\bb'\}$ then the natural transformation \cref{interesting}
is zero.
The same
holds for the rotated versions of these crossings.

If  $\bb=\bb' \neq \aa=\aa'$, we have that
$$
\begin{tikzpicture}[baseline = -1mm]
	\draw[->] (0.28,-.28) to (-0.28,.28);
	\draw[->] (-0.28,-.28) to (0.28,.28);
      \node at (-0.33,-0.43) {$\scriptstyle{\bb}$};
      \node at (0.33,-0.43) {$\scriptstyle{\aa}$};
      \node at (-0.33,0.43) {$\scriptstyle{\bb}$};
      \node at (0.33,0.43) {$\scriptstyle{\aa}$};
\node at (0,-.01) {$\diamond$};
\end{tikzpicture}
=
\begin{tikzpicture}[baseline = -0.8mm]
	\draw[->,densely dotted] (0.55,.03) to (0.15,.03);
	\draw[->] (0.58,-.25) to (0.58,.3);
	\draw[->] (0.08,-.25) to (0.08,.3);
      \node at (0.08,-0.4) {$\scriptstyle{\bb}$};
      \node at (0.08,0.02) {$\bullet$};
      \node at (0.58,-0.4) {$\scriptstyle{\aa}$};
      \node at (0.58,0.02) {$\bullet$};
      \node at (1.3,0.04) {$\scriptstyle{(x_2-x_1)^{-1}}$};
\end{tikzpicture}
$$
\end{lemma}

\subsection{Bubbles and central characters}\label{swd} Any dotted bubble 
defines an endomorphism of the identity functor
$\operatorname{Id}_{k}$, i.e., an element of the center of
the category $\Ccat_{k}$.
In particular, for $V \in \Ccat_{k}$, dotted bubbles evaluate to
elements of the center $Z_V$ of the endomorphism
algebra $\End_{\Ccat_{k}}(V)$.
It is convenient to
work with all of these endomorphisms at once in terms of the
generating function
\notation{$\h_V(z)$}{The action of the power series $\anticlock(z)$ on the object $V$.}
\begin{align}\label{jjj}
\h_V(z)&:=
\mathord{
\begin{tikzpicture}[baseline = -1mm]
     \node at (0.08,0) {$\scriptstyle\anticlock(z)$};
 	\draw[-,darkg,thick] (0.68,.2) to (0.68,-.22);
     \node at (0.68,-.37) {$\darkg\scriptstyle{V}$};
\end{tikzpicture}
}=
\left(\mathord{
\begin{tikzpicture}[baseline = -1mm]
     \node at (0.08,0) {$\scriptstyle\clock(z)$};
 	\draw[-,darkg,thick] (0.68,.2) to (0.68,-.22);
     \node at (0.68,-.37) {$\darkg\scriptstyle{V}$};
\end{tikzpicture}
}\right)^{-1}.
\end{align}
Recalling \cref{chink} and
\cref{chunk},
we have $\h_V(z) \in z^{k} + z^{k-1} Z_V[\![z^{-1}]\!]$ when $V$ has weight $k$.
In the following lemma, 
given a polynomial
$p(z) = \sum_{s=0}^r z_s z^{r-s} \in Z_V[z]$,
we let
\begin{align*}
\mathord{
\begin{tikzpicture}[baseline = -1mm]
 	\draw[->] (0.08,-.3) to (0.08,.3);
     \node at (0.08,0.01) {$\bullet$};
    \node at (-0.3,0.01) {$\scriptstyle p(x)$};
 	\draw[-,darkg,thick] (0.45,.3) to (0.45,-.3);
     \node at (0.45,-0.45) {$\darkg\scriptstyle{V}$};
\end{tikzpicture}
}
&:=
\sum_{s=0}^r 
\mathord{
\begin{tikzpicture}[baseline = -1mm]
 	\draw[->] (0.08,-.3) to (0.08,.3);
     \node at (0.08,0.01) {$\bullet$};
    \node at (-0.27,0.01) {$\scriptstyle x^{r-s}$};
 	\draw[-,darkg,thick] (0.45,.15) to (0.45,.3);
 	\draw[-,darkg,thick] (0.45,-.15) to (0.45,-.3);
     \node at (0.45,-.45) {$\darkg\scriptstyle{V}$};
      \draw[darkg,thick] (0.45,0.0) circle (4.5pt);
   \node at (0.45,0) {$\darkg\scriptstyle{z_s}$};
\end{tikzpicture}
},
&
\mathord{
\begin{tikzpicture}[baseline = -1mm]
 	\draw[<-] (0.08,-.3) to (0.08,.3);
     \node at (0.08,0.01) {$\bullet$};
    \node at (-0.3,0.01) {$\scriptstyle p(x)$};
 	\draw[-,darkg,thick] (0.45,.3) to (0.45,-.3);
     \node at (0.45,-.45) {$\darkg\scriptstyle{V}$};
\end{tikzpicture}
}
&:=
\sum_{s=0}^r
\mathord{
\begin{tikzpicture}[baseline = -1mm]
 	\draw[<-] (0.08,-.3) to (0.08,.3);
     \node at (0.08,0.01) {$\bullet$};
    \node at (-0.27,0.01) {$\scriptstyle x^{r-s}$};
 	\draw[-,darkg,thick] (0.45,.15) to (0.45,.3);
 	\draw[-,darkg,thick] (0.45,-.15) to (0.45,-.3);
      \draw[darkg,thick] (0.45,0.0) circle (4.5pt);
   \node at (0.45,0) {$\darkg\scriptstyle{z_s}$};
     \node at (0.45,-.45) {$\darkg\scriptstyle{V}$};
\end{tikzpicture}
}.
\end{align*}
\cref{sure2lemma} obviously extends to the setting of coefficients in $Z_V$.

Most categorical $\mathfrak{sl}_2$-actions with which readers are familiar will assume that $\mathord{
\begin{tikzpicture}[baseline = -1mm]
 	\draw[<-] (0.08,-.3) to (0.08,.3);
     \node at (0.08,0.01) {$\bullet$};
\end{tikzpicture}
}$ is nilpotent, but this is by no means necessary.  On any object $V$, the endomorphism $\mathord{
\begin{tikzpicture}[baseline = -1mm]
 	\draw[<-] (0.08,-.3) to (0.08,.3);
     \node at (0.08,0.01) {$\bullet$};
 	\draw[-,darkg,thick] (0.45,.3) to (0.45,-.3);
     \node at (0.45,-.45) {$\darkg\scriptstyle{V}$};
\end{tikzpicture}
}$ must satisfy a polynomial relation with coefficients in $\K$, and thus has a finite spectrum \[U_V=\{a\in \K \mid \mathord{
\begin{tikzpicture}[baseline = -1mm]
 	\draw[<-] (0.08,-.3) to (0.08,.3);
    \node at (-0.27,0.01) {$\scriptstyle x-a$};
     \node at (0.08,0.01) {$\bullet$};
 	\draw[-,darkg,thick] (0.45,.3) to (0.45,-.3);
     \node at (0.45,-.45) {$\darkg\scriptstyle{V}$};
\end{tikzpicture}}\text{ not invertible}\}.\] Let $U=\cup_{V}U_V$ be the union of these spectra for all objects.  

Repeating the proof of \cite[Lem. 4.3--4]{brundanHeisenbergKacMoody2020} with minor modifications shows that:
\begin{lemma}\label{preimpy}
Let $V \in \Ccat_{k}$ be any object of weight $k$:
\begin{itemize}
\item[(1)]
If $f(z) \in Z_V[z]$ is a monic polynomial such that
$\mathord{
\begin{tikzpicture}[baseline = -1mm]
 	\draw[->] (0.08,-.2) to (0.08,.2);
     \node at (0.08,0.01) {$\bullet$};
    \node at (-0.3,0.01) {$\scriptstyle f(x)$};
 	\draw[-,darkg,thick] (0.32,.2) to (0.32,-.2);
     \node at (0.49,0) {$\darkg\scriptstyle{V}$};
\end{tikzpicture}
}=0$,
then $g(z) := \h_V(z) f(z)$ is a monic polynomial
in $Z_V[z]$
of degree $\deg f(z)+k$ 
such that
${\begin{tikzpicture}[baseline = -1mm]
 	\draw[<-] (0.08,-.2) to (0.08,.2);
    \node at (-0.3,0.01) {$\scriptstyle g(x)$};
     \node at (0.08,0.01) {$\bullet$};
 	\draw[-,darkg,thick] (0.32,.2) to (0.32,-.2);
     \node at (0.49,0) {$\darkg\scriptstyle{V}$};
\end{tikzpicture}
}=0$.
\item[(2)]
If $g(z) \in Z_V[z]$ is a monic polynomial such that
$\mathord{
\begin{tikzpicture}[baseline = -1mm]
 	\draw[<-] (0.08,-.2) to (0.08,.2);
    \node at (-0.3,0.01) {$\scriptstyle g(x)$};
     \node at (0.08,0.01) {$\bullet$};
 	\draw[-,darkg,thick] (0.32,.2) to (0.32,-.2);
     \node at (0.49,0) {$\darkg\scriptstyle{V}$};
\end{tikzpicture}
}=0$, then $f(z) := \h_V(z)^{-1} g(z)$ is a monic polynomial
in $Z_V[z]$
of degree $\deg g(z)-k$ 
such that
$\mathord{
\begin{tikzpicture}[baseline = -1mm]
 	\draw[->] (0.08,-.2) to (0.08,.2);
     \node at (0.08,0.01) {$\bullet$};
    \node at (-0.3,0.01) {$\scriptstyle f(x)$};
 	\draw[-,darkg,thick] (0.32,.2) to (0.32,-.2);
     \node at (0.49,0) {$\darkg\scriptstyle{V}$};
\end{tikzpicture}
}=0$.
\end{itemize}

For any simple module $L \in \Ccat_{k}$,  we can think of $\h_L(z)$ as a power series in $\K((z^{-1}))$, and 
the only power series satisfying properties (1) and (2) is thus the rational function:
$$
\h_L(z) = n_L(z)/m_L(z).
$$
\end{lemma}

Now we will finally encounter a point where there is a substantive difference with \cite{brundanHeisenbergKacMoody2020} (though a strong analogy remains):
\begin{lemma}\label{choose}
Suppose that $L \in \Ccat_{k}$ is an irreducible object and
let $K$ be an irreducible subquotient of $\eE_{\aa} L$ for some $\aa \in  \K$ and $K'$ an irreducible subquotient of $\eF_{\aa} L$.
Then
\begin{equation} \label{choose1}
    \h_K(z)
    = \h_L(z) (z-\aa)^2 \qquad     \h_{K'}(z)
    = \h_L(z) (z-\aa)^{-2}.
\end{equation}
\end{lemma}

\begin{proof}
This follows from the bubble slides \cref{bubslides1}.  For $\eE_{\aa}$, we note that:
$$
\mathord{
\begin{tikzpicture}[baseline = -1mm]
     \node at (0.08,0) {$\scriptstyle\anticlock(z)$};
 	\draw[-,darkg,thick] (1.1,.3) to (1.1,-.3);
 	\draw[<-] (0.68,.3) to (0.68,-.32);
     \node at (.68,-.45) {$\scriptstyle{\aa}$};
     \node at (1.1,-.45) {$\darkg\scriptstyle{L}$};
\end{tikzpicture}
}=
\mathord{
\begin{tikzpicture}[baseline = -1mm]
     \node at (0.6,0) {$\scriptstyle\anticlock(z)$};
\draw[-,darkg,thick] (1.1,.3) to (1.1,-.3);
 	\draw[<-] (0.08,.3) to (0.08,-.3);
    \node at (-.33,0.03) {$\scriptstyle (z-x)^2\:\bullet$};
     \node at (1.1,-.45) {$\darkg\scriptstyle{L}$};
     \node at (.08,-.45) {$\scriptstyle{\aa}$};
\end{tikzpicture}
}
=
\mathord{
\begin{tikzpicture}[baseline = -1mm]
 	\draw[-,darkg,thick] (.5,.3) to (.5,-.3);
 	\draw[<-] (0.08,.3) to (0.08,-.3);
     \node at (-.665,0.03) {$
\scriptstyle \h_L(z) (z-x)^2\:\bullet$};
     \node at (.5,-.45) {$\darkg\scriptstyle{L}$};
     \node at (.08,-.45) {$\scriptstyle{\aa}$};
\end{tikzpicture}
}.
$$
When we pass to the irreducible subquotient $K$ of $\eE_{\aa} L$, 
we can replace the occurrences of $x$ in the expression on the right-hand side
here with $\aa$, and the lemma follows. The proof for $\eF_{\aa}$ is identical with orientations reversed.
\end{proof}

Just as in \cite[\S 4.2]{brundanHeisenbergKacMoody2020}, we can decompose the weight categories $\Ccat_{k}$ in a way that is compatible with the functors $\eE_{\aa}$ and $\eF_{\aa}$:  for any simple $L$, let $k_{\aa}(L)=\phi_{\aa}(L)-\eps_{\aa}(L)$ for each $\aa\in U$.  In other words,
$k_{\aa}(L) \in \Z$ is the multiplicity of $z=\aa$ as a zero
or pole of the rational function $\h_L(z) \in \K(z)$ for each $\aa \in U$.  
\begin{definition}\label{def:Cm}
	 For a fixed $\Bm=(m_{\aa})\in \Z^U$, we let $\Ccat_{\Bm}$ be the Serre
subcategory of $\Ccat_m$ for $m=\sum_{\aa \in U} m_{\aa}$ consisting of the objects $V$
such that every irreducible subquotient $L$ of $V$
satisfies $k_{\aa}(L)=m_{\aa}$. 
\end{definition} 
\begin{lemma}\label{lem:block}
	Any indecomposable object $M$ in $\Ccat_m$ lies in one of these categories.
\end{lemma}
\begin{proof}
  Since $\Ccat_m$ is Schurian or locally finite abelian over $\K$, it is Krull-Schmidt and 
  the endomorphism algebra of any indecomposable finitely generated object in $\Ccat_m$ is local.  Furthermore, since $\K$ is algebraically closed, the residue field of this local ring is $\K$.   The image of $\h_V(z)$ under the unique $\K$-algebra map $\End(V)\to \K$ gives a power series in $\K((z^{-1}))$ which must coincide with $\h_L(z)$ for any simple subquotient $L$ of $M$. 
\end{proof}


In terms of categorifications, we think of $U$ as a Dynkin diagram with no edges, thus corresponding to the Lie algebra $\tilde{\fg}=\mathfrak{sl}_2^U$.  It is worth noting the contrast with the similar analysis in \cite[\S 4.2]{brundanHeisenbergKacMoody2020};  in that case we naturally ended up with a non-trivial Dynkin diagram.  This is most easily seen by comparing \cref{choose1} with \cite[(4.11)]{brundanHeisenbergKacMoody2020}; in both cases, the entries of the Cartan matrix can be read off from the poles and zeros of the ratio of the bubble power series acting on the two simples.  
Thus, we can think of $\Bm$ as a weight over $\tilde{\fg}=\mathfrak{sl}_2^U$, and in particular, we have the simple roots $\al_{\aa}=(0,\dotsc, 2,\dotsc, 0)$.  
By \cref{choose}, for $\Bm\in \Z^{U}$ and $\aa \in U$, the restrictions of $\eE_{\aa}$ and $\eF_{\aa}$ to $\Ccat_{\Bm}$ give functors
\begin{align*}
E_{\aa}|_{\Ccat_{\Bm}}&:\Ccat_{\Bm} \rightarrow \Ccat_{\Bm+\alpha_{\aa}},&
F_{\aa}|_{\Ccat_{\Bm}}&:\Ccat_{\Bm} \rightarrow \Ccat_{\Bm-\alpha_{\aa}},
\end{align*}
\begin{proposition} \label{prop:KM3}
The functors $\eE_{\aa}$ give a categorical action of $\mathfrak{sl}_2^U$.
\end{proposition}
We give the proof of this result on page \pageref{proof-prop:KM3}.  

\subsection{Higher rank}

Now we turn to constructing a $\tU(\tilde \fg)$-action.  Given that most of the difficult relations in $\tU(\tilde \fg)$ only concern strands with a single label, it is simple to extend this result to higher rank.  So, as in \cref{sec:eigenfunctors}, we consider an action of $\tU(\fg)$ with weight categories $\Ccat_{\lambda}$ which are Schurian or locally finite abelian.  

We apply the constructions of the previous two sections for each $i\in I$.  In particular, we consider the generalized eigenspaces as in  \cref{def:Ea}:
\begin{definition}\label{def:Eiu}
Let $\eE_{i,u}$ be the $u$-generalized eigenspace of
$y$ acting on $\eE_i$, and similarly, $\eF_{i,u}$ the analogous
eigenspace for $\eF_i$. 

Let $U_i=\{u\in \K\mid \eE_{i,u}\neq 0\}\subset \K$; note that since
  the corresponding functors are adjoint, $\eE_{i,u}$ or $\eF_{i,u}$
can be used symmetrically in this definition. 
\end{definition}

Consider the locally finite graph with vertex set $\tilde{I}$, where $\tilde{I}$ is the set of pairs $(i,u)$ with $u\in U_i$ constructed from the polynomials $P_{ij}$ as in \cref{def:tilde-I}.  For each $i$ and simple $L\in \Ccat_{\nu}$ for some $\nu \in \rola$, we have the statistic $k_{i,u}(L)$ defined as the vanishing order of $\h_L(z)$ at $z=u$.

  We assign to this simple $L$ the unique weight $\mu_L$ 
 in the weight lattice $\tilde{\rola}$ for
$\tilde{I}$ such that $\al_{i,u}^\vee(\mu_L)=k_{i,u}(L)$.  
\begin{definition}
	\label{def:subcat} Let\; $\Ccat_{(\mu)}$ be the Serre subcategory of objects whose simple sub\-quo\-tients $L$ all satisfy $\mu_L=\mu$.\notation{$\Ccat_{(\mu)}$}{The Serre subcategory of objects with all simple subquotients of weight $\mu$.}
\end{definition}
By the same logic as \cref{lem:block}, every indecomposable module lies in one of these subcategories, we can define a block decomposition of our category $\Ccat_\nu=\oplus_{\mu|_{\mathfrak{h}}=\nu}\Ccat_{(\mu)}$.  
  \begin{lemma}\label{lem:weight-act}
  	  The functors $\eE_{i,u} ,\eF_{i,u}$ send objects in $\Ccat_{(\mu)}$ to $\Ccat_{(\mu\pm \al_{i,u})}$.
  \end{lemma}
\begin{proof}
	This follows immediately from the bubble slides.  It follows immediately from \cref{bubslides1} that $k_{i,u'}(\eE_{i,u}L)=2\delta_{u,u'}$, and $k_{i,u'}(\eF_{i,u}L)=-2\delta_{u,u'}$ as expected.  On the other hand, \cref{bubslides2} shows that $k_{j,u'}(\eE_{i,u}L)$ decreases by exactly the vanishing order of $Q_{ji}(x,y)$ at $x=u'$ and $y=u$, which is the same as the number of edges joining $(i, u)$ and $(j,u')$ in $\tilde{I}$.  
\end{proof}

This result allows us to prove an analogue of \cite[Lem. 4.6]{brundanHeisenbergKacMoody2020}:
\begin{lemma}\label{lem:complete}
Given any categorical $\fg$-action with weight categories $\Ccat_\la$, the sets $U_i$ defined in \cref{def:Eiu} are complete in the sense of \cref{def:tilde-I}.
\end{lemma}
\begin{proof}
  First, note that by \cref{preimpy}, if the power series $\h_L(z)$ has a zero or pole at $z=u$ for some simple $L$, then the functors $\eE_{i,u}$ and $\eF_{i,u}$ are non-zero, so $u\in U_i$. Thus, by \cref{lem:weight-act}, 
  if $\eE_{j,u'}\neq 0$, then there exists $\mu$ such that the categories $\Ccat_{(\mu)}$ and $\Ccat_{(\mu+\al_{j,u'})}$ are both non-zero.  If the vanishing order of $Q_{ij}(x,y)$ at $(x,y)=(u,u')$ is positive, then by definition $\al_{i,u}^\vee(\al_{j,u'})\neq 0$, so $\al_{i,u}^\vee$ cannot vanish on both $\mu$ and $\mu+\al_{j,u'}$.  Hence there must be some simple $L$ in one of these categories such that $k_{i,u}(L)\neq 0$.  Thus, $\eE_{i,u}\neq 0$ and $u\in U_i$.  This shows that the sets $U_i$ are complete.
\end{proof}

Thus, we have weight categories $\Ccat_{(\mu)}$ indexed by weights of $\tilde{\fg}$.  
The eigenspace functors $\eE_{i,u}, \eF_{i,u},$ defined as in
\cref{def:Eiu}, act as expected on weights.  
\begin{theorem}\label{thm:deform-action}
The functors $\eE_{i,u}$ and $\eF_{i,u}$ and the weight space categories
$\Ccat_{(\mu)}$ define a categorical action of $\tilde{\fg}$.
\end{theorem}

\begin{proof}
We will use \cref{thm:brundandef} to show that we have a categorical action.  Checking the conditions given in that Theorem:
\begin{itemize}
    \item [(KM1)] The adjunction of  $\eE_{i,u}$ and $\eF_{i,u}$ follows immediately from the adjunction of
  $\eE_i$ and $\eF_i$ and the equation \cref{adjdot1}. 
  \item[(KM2)] As part of the structure of a categorical
action, $R_n$ acts on the $n$th power $\eE^n$. Since the action of any
dot on $\eE_i$ satisfies a polynomial relation with roots in $U_i$, this
extends to an action of $\widehat{R}_n$, the completion defined in \cref{def:widehat-R}. 
     By transport of structure using the isomorphism $\gamma$ of
    \cref{prop:nu-iso}, we have an induced action of
     ${\mathsf{R}}_n$ such that $\mathsf{y}$ is nilpotent.
    \item[(KM3)] This is simply \cref{prop:KM3}.
\end{itemize}
This completes the proof.  
  \end{proof}

\section{Deformed tensor product algebras}
\label{sec:deform-tens-prod}

\subsection{The definition}
\label{sec:definition}

In \cite[\S 5]{WebCB}, we introduced natural categorifications $\mathcal{X}^\bla$ for tensor
products of highest and lowest weight representations.  These
categorifications have natural deformations, which we now study in
the context of the previous section.  As mentioned in the introduction, we require certain non-degeneracy results for these algebras, analogous to \cref{nondegenerate}, in order to show that $\mathcal{X}^\bla$ has the expected Grothendieck group and Hom spaces.  In an earlier version of this article, we used the results of this section to prove \cref{nondegenerate}.  We still believe that this gives the more conceptual picture of the proof, although the version in \cref{sec:proof} requires less technical overhead.  

In \cite[\S 4]{WebCB}, we introduced {\bf tricolore diagrams}, which we briefly recall here (see \cite[Def. 4.2]{WebCB} for the full definition). 
\begin{definition}
  A {\bf tricolore diagram} is a collection of finitely many oriented curves in $\R\times [0,1]$ that satisfy the usual genericity relations. Each curve is either
  \begin{itemize}
  \item colored red and labeled with a dominant weight of $\fg$, or
  \item colored blue\footnote{The reader might wonder if these red and blue lines have anything to do with the red and blue copies of $\tU(\fg)$ used in \cref{sec:coproduct}.  It would make a lot of sense if that were the case, but there is no connection.  We are just very unoriginal about choosing colors.} and labeled with an anti-dominant weight of
    $\fg$, or 
  \item colored black and labeled with $i\in I$ and decorated with finitely many dots.
\end{itemize}
We will consistently use $n$ for the number of black strands and $\ell$ for the number of red and blue.
We also include a labeling of regions
by weights, consistent with the rules 
\begin{equation*}
  \tikz[baseline,very thick]{
\draw[wei,postaction={decorate,decoration={markings,
    mark=at position .5 with {\arrow[scale=1.3]{<}}}}] (0,-.5) -- node[below,at start]{$\la$}  (0,.5);
\node at (-1,0) {$\mu$};
\node at (1,.05) {$\mu+\la$};
}\qquad \qquad   \tikz[baseline,very thick]{
\draw[awei,postaction={decorate,decoration={markings,
    mark=at position .7 with {\arrow[scale=1.3]{>}}}}] (0,-.5) -- node[below,at start]{$-\la$}  (0,.5);
\node at (-1,0) {$\mu$};
\node at (1,.05) {$\mu-\la$};
}\qquad \qquad 
  \tikz[baseline,very thick]{
\draw[postaction={decorate,decoration={markings,
    mark=at position .5 with {\arrow[scale=1.3]{<}}}}] (0,-.5) -- node[below,at start]{$i$}  (0,.5);
\node at (-1,0) {$\mu$};
\node at (1,.05) {$\mu-\al_i$};
}
\end{equation*}
Since this labeling is fixed as soon as one region is labeled, we
typically do not draw weights in all regions, in order to
simplify pictures.

The red and blue strands must map diffeomorphically to $[0,1]$ after forgetting the $x$-value and are forbidden to intersect with any other red or blue strand.  The
black strands are allowed to close into circles, self-intersect,
intersect red and blue strands, etc.

As usual, we record the horizontal slices at $y=0$ and
$y=1$, the {\bf bottom} and {\bf top} of the diagram.  This will be
encoded as a {\bf tricolore quadruple}, consisting of 
\begin{itemize}
\item A sequence $\Bi\in (\pm I)^n$ of simple roots and their
  negatives on black strands, read from the left;
\item A sequence $\bla\in (\wela^{\pm})^\ell$ of dominant or
  anti-dominant weights on red and blue strands, read from the left;
	\item A weakly increasing function $\kappa\colon [1,\ell]\to [0,n]$
  such that $\kappa(m)$ is the number of black strands to the left of the $m$th red or blue strand (both counted
  from the left). By
convention, we write $\kappa(i)=0$ if the $i$th red or blue strand is left of all
black strands.  
\item Weights $\EuScript{L}$ and $\EuScript{R}$ at the far left and far right of the diagram, respectively.  These are related by
\[\EuScript{L}+\sum_{k=1}^\ell\la_k+\sum_{m=1}^n\al_{i_m}=\EuScript{R}.\]
\end{itemize}
\end{definition}  

Tricolore diagrams are endowed with horizontal and vertical
composition operations, just like KL and DS diagrams; similarly,
tricolore quadruples are endowed with horizontal composition.  
These naturally make tricolore quadruples the 1-morphisms and tricolore diagrams the 2-morphisms of a 2-category
$\doubletilde{\mathcal{T}}$ whose objects are the integral weights $\wela$.  

The categorifications $\mathcal{X}^\bla$ are natural
subquotients of this category, and our deformed categorifications
arise from a straightforward deformation of the relations from
\cite[\S 4]{WebCB}, which we present below:

\begin{definition}\label{def:hl}
  Let $\EuScript{T}$ be the quotient of $\doubletilde{\mathcal{T}}\otimes \K[\zw_1,\dots, \zw_\ell]$
 by the relations \crefrange{QHA1}{QHA3},\crefrange{rightadj}{flight2} on black strands
 and \crefrange{color-opp-cancel}{red-triple} below
 relating red and blue strands to black.  Note that the relations \crefrange{color-opp-cancel}{red-triple}
 are deformations of the relations of $\cT$ in \cite[4.3]{WebCB}; thus we will recover the category $\cT$ if we specialize $\zw_i=0$.
\newseq

    \begin{equation*}\label{dcost2}\subeqn
    \begin{tikzpicture}[very thick,baseline]
      \draw[postaction={decorate,decoration={markings, mark=at
          position .51 with {\arrow[scale=1.3]{<}}}}] (-2.8,0) +(0,-1)
      .. controls (-1.2,0) ..  +(0,1) node[below,at
      start]{$i$}; \draw[wei] (-1.2,0) +(0,-1) .. controls (-2.8,0) ..
      +(0,1) node[below,at
      start]{$\la_k$}; \end{tikzpicture}\hspace{-1mm}=\hspace{-1mm}
    \begin{tikzpicture}[very thick,baseline] \draw[wei] (2,0)
      +(0,-1) -- +(0,1) node[below,at
      start]{$\la_k$};
            \draw[postaction={decorate,decoration={markings, mark=at
          position .2 with {\arrow[scale=1.3]{<}}}}] (1,0) +(0,-1) --
      node[midway,draw, fill=black, scale=.7, inner sep=2pt,
      circle,label=left:{$\scriptstyle{ (x-\zw_k)^{\la^i_k}}$}]{} +(0,1) node[below,at
      start]{$i$}; 
    \end{tikzpicture}\qquad\qquad\qquad
    \begin{tikzpicture}[very thick,baseline]
      \draw[postaction={decorate,decoration={markings, mark=at
          position .5 with {\arrow[scale=1.3]{>}}}}] (-2.8,-1)
      .. controls (-1.2,0) ..  (-2.8,1) node[below,at
      start]{$i$}; \draw[awei] (-1.2,-1) .. controls (-2.8,0)
      ..  (-1.2,1) node[below,at
      start]{$-\la_k$}; \end{tikzpicture}\hspace{-1mm}=\hspace{-1mm}
    \begin{tikzpicture}[very thick,baseline] \draw[awei] (2.4,0)
      +(0,-1) -- +(0,1) node[below,at
      start]{$-\la_k$};
      \draw[postaction={decorate,decoration={markings, mark=at
          position .8 with {\arrow[scale=1.3]{>}}}}] (1.2,0) +(0,-1)
      -- node[midway,draw, fill=black, scale=.7, inner sep=2pt,
      circle,label=left:{$\scriptstyle{ (x-\zw_k)^{\la^i_k}}$}]{} +(0,1) node[below,at
      start]{$i$}; 
    \end{tikzpicture}
  \end{equation*}
  \begin{equation*}\subeqn
\begin{tikzpicture} [scale=.9,baseline]
\node at (9,0){\begin{tikzpicture} [scale=1.1]
\draw[postaction={decorate,decoration={markings,
    mark=at position .5 with {\arrow[scale=1.3]{<}}}},very thick] (0,0) to[out=90,in=-90] node[at start,below]{$i$} (1,1) to[out=90,in=-90] (0,2) ;  
\draw[awei] (1,0) to[out=90,in=-90]  (0,1) to[out=90,in=-90] (1,2);
\end{tikzpicture}};
\node at (10.7,0) {$=$};
\node at (12.4,0){\begin{tikzpicture} [scale=1.3]

\draw[postaction={decorate,decoration={markings,
    mark=at position .5 with {\arrow[scale=1.3]{<}}}},very thick] (1,0)--  (1,2) node[at start,below]{$i$};  
\draw[awei] (1.7,0) --  (1.7,2);
\end{tikzpicture}};
\node at (0,0){\begin{tikzpicture} [scale=1.3]
\draw[postaction={decorate,decoration={markings,
    mark=at position .5 with {\arrow[scale=1.3]{>}}}},very thick] (0,0) to[out=90,in=-90] node[at start,below]{$i$} (1,1) to[out=90,in=-90] (0,2) ;  
\draw[wei] (1,0) to[out=90,in=-90] (0,1) to[out=90,in=-90] (1,2);
\end{tikzpicture}};

\node at (1.7,0) {$=$};
\node at (3.4,0){\begin{tikzpicture} [scale=1.3]

\draw[postaction={decorate,decoration={markings,
    mark=at position .5 with {\arrow[scale=1.3]{>}}}},very thick] (1,0) -- (1,2) node[at start,below]{$i$};  
\draw[wei] (1.7,0) --  (1.7,2) ;
\end{tikzpicture}};
\end{tikzpicture}
\label{color-opp-cancel}
\end{equation*} 
\begin{equation*}\subeqn \label{fig:pass-through1}
   \begin{tikzpicture}[yscale=.9,baseline]
      \node at (4,0){    \begin{tikzpicture}[very thick]
          \draw[postaction={decorate,decoration={markings,
    mark=at position .2 with {\arrow[scale=1.3]{>}}}}] (-3,0) +(1,-1) -- +(-1,1);
          \draw[postaction={decorate,decoration={markings,
    mark=at position .8 with {\arrow[scale=1.3]{<}}}}] (-3,0) +(-1,-1) -- +(1,1);
          \draw[awei] (-3,0) +(0,-1) .. controls (-4,0) ..  +(0,1);
          \node at (-1.5,0) {=}; \draw[postaction={decorate,decoration={markings,
    mark=at position .8 with {\arrow[scale=1.3]{>}}}}] (0,0) +(1,-1) -- +(-1,1); \draw[postaction={decorate,decoration={markings,
    mark=at position .2 with {\arrow[scale=1.3]{<}}}}] (0,0) +(-1,-1) -- +(1,1); \draw[awei] (0,0) +(0,-1) .. controls
          (1,0) ..  +(0,1); 
        \end{tikzpicture}};
     \node at (-4,0){    \begin{tikzpicture}[very thick]
       \draw (-3,0)[postaction={decorate,decoration={markings,
    mark=at position .2 with {\arrow[scale=1.3]{<}}}}] +(1,-1) -- +(-1,1);
          \draw[postaction={decorate,decoration={markings,
    mark=at position .8 with {\arrow[scale=1.3]{>}}}}] (-3,0) +(-1,-1) -- +(1,1);
          \draw[wei] (-3,0) +(0,-1) .. controls (-4,0) ..  +(0,1);
          \node at (-1.5,0) {=}; \draw[postaction={decorate,decoration={markings,
    mark=at position .8 with {\arrow[scale=1.3]{<}}}}] (0,0) +(1,-1) -- +(-1,1); \draw[postaction={decorate,decoration={markings,
    mark=at position .2 with {\arrow[scale=1.3]{>}}}}] (0,0) +(-1,-1) -- +(1,1); \draw[wei] (0,0) +(0,-1) .. controls
          (1,0) ..  +(0,1); 
        \end{tikzpicture}};
      \end{tikzpicture}
    \end{equation*}
\begin{equation*}\subeqn \label{fig:pass-through}
   \begin{tikzpicture}[yscale=.9,baseline]
      \node at (4,1.5){    \begin{tikzpicture}[very thick]
          \draw[postaction={decorate,decoration={markings,
    mark=at position .2 with {\arrow[scale=1.3]{<}}}}] (-3,0) +(1,-1) -- +(-1,1);
          \draw[postaction={decorate,decoration={markings,
    mark=at position .8 with {\arrow[scale=1.3]{<}}}}] (-3,0) +(-1,-1) -- +(1,1);
          \draw[awei] (-3,0) +(0,-1) .. controls (-4,0) ..  +(0,1);
          \node at (-1.5,0) {=}; \draw[postaction={decorate,decoration={markings,
    mark=at position .8 with {\arrow[scale=1.3]{<}}}}] (0,0) +(1,-1) -- +(-1,1); \draw[postaction={decorate,decoration={markings,
    mark=at position .2 with {\arrow[scale=1.3]{<}}}}] (0,0) +(-1,-1) -- +(1,1); \draw[awei] (0,0) +(0,-1) .. controls
          (1,0) ..  +(0,1); 
        \end{tikzpicture}};
     \node at (-4,1.5){    \begin{tikzpicture}[very thick]
       \draw (-3,0)[postaction={decorate,decoration={markings,
    mark=at position .2 with {\arrow[scale=1.3]{>}}}}] +(1,-1) -- +(-1,1);
          \draw[postaction={decorate,decoration={markings,
    mark=at position .8 with {\arrow[scale=1.3]{>}}}}] (-3,0) +(-1,-1) -- +(1,1);
          \draw[wei] (-3,0) +(0,-1) .. controls (-4,0) ..  +(0,1);
          \node at (-1.5,0) {=}; \draw[postaction={decorate,decoration={markings,
    mark=at position .8 with {\arrow[scale=1.3]{>}}}}] (0,0) +(1,-1) -- +(-1,1); \draw[postaction={decorate,decoration={markings,
    mark=at position .2 with {\arrow[scale=1.3]{>}}}}] (0,0) +(-1,-1) -- +(1,1); \draw[wei] (0,0) +(0,-1) .. controls
          (1,0) ..  +(0,1); 
        \end{tikzpicture}};
    \node at (-4,-1.5){    \begin{tikzpicture}[very thick]
          \draw[postaction={decorate,decoration={markings,
    mark=at position .8 with {\arrow[scale=1.3]{<}}}}](-3,0) +(-1,-1) -- +(1,1); \draw[wei](-3,0) +(1,-1) --
          +(-1,1); \fill (-3.5,-.5) circle (3pt); \node at (-1.5,0) {=};
          \draw[postaction={decorate,decoration={markings,
    mark=at position .2 with {\arrow[scale=1.3]{<}}}}](0,0) +(-1,-1) -- +(1,1); \draw[wei](0,0) +(1,-1) --
          +(-1,1); \fill (.5,0.5) circle (3pt);
        \end{tikzpicture}};
    \node at (4,-1.5){    \begin{tikzpicture}[very thick]
          \draw[postaction={decorate,decoration={markings,
    mark=at position .8 with {\arrow[scale=1.3]{<}}}}](-3,0) +(-1,-1) -- +(1,1); \draw[awei](-3,0) +(1,-1) --
          +(-1,1); \fill (-3.5,-.5) circle (3pt); \node at (-1.5,0) {=};
          \draw[postaction={decorate,decoration={markings,
    mark=at position .2 with {\arrow[scale=1.3]{<}}}}](0,0) +(-1,-1) -- +(1,1); \draw[awei](0,0) +(1,-1) --
          +(-1,1); \fill (.5,0.5) circle (3pt);
        \end{tikzpicture}};
      \end{tikzpicture}
    \end{equation*}
 \begin{equation*}\subeqn
 \begin{tikzpicture}[very thick,baseline]\label{pitch}
      \draw[postaction={decorate,decoration={markings,
    mark=at position .9 with {\arrow[scale=1.3]{>}}}}] (-3,0)  +(1,-1)
to[out=90,in=0] +(0,0) to[out=180,in=90] +(-1,-1);
      \draw[wei] (-3,0)  +(0,-1) .. controls (-4,0) ..  +(0,1);
      \node at (-1.5,0) {=};
      \draw[postaction={decorate,decoration={markings,
    mark=at position .9 with {\arrow[scale=1.3]{>}}}}] (0,0)  +(1,-1)
to[out=90,in=0] +(0,0) to[out=180,in=90] +(-1,-1);
      \draw[wei] (0,0) +(0,-1) .. controls (1,0) ..  +(0,1);   
 \end{tikzpicture}\qquad \qquad \qquad    \begin{tikzpicture}[very thick,baseline]
      \draw[postaction={decorate,decoration={markings,
    mark=at position .9 with {\arrow[scale=1.3]{>}}}}] (-3,0)  +(1,-1)
to[out=90,in=0] +(0,0) to[out=180,in=90] +(-1,-1);
      \draw[awei] (-3,0)  +(0,-1) .. controls (-4,0) ..  +(0,1);
      \node at (-1.5,0) {=};
      \draw[postaction={decorate,decoration={markings,
    mark=at position .9 with {\arrow[scale=1.3]{>}}}}] (0,0)  +(1,-1)
to[out=90,in=0] +(0,0) to[out=180,in=90] +(-1,-1);
      \draw[awei] (0,0) +(0,-1) .. controls (1,0) ..  +(0,1);   
 \end{tikzpicture}
  \end{equation*}
We also include the reflection through a vertical line of all the relations above this point.
  \begin{equation*}\subeqn\label{blue-triple}
    \begin{tikzpicture}[very thick,baseline]
      \draw[postaction={decorate,decoration={markings, mark=at
          position .2 with {\arrow[scale=1.3]{>}}}}] (-3,0) +(1,-1) --
      +(-1,1) node[at
      start,below]{$i$};
      \draw[postaction={decorate,decoration={markings, mark=at
          position .8 with {\arrow[scale=1.3]{>}}}}] (-3,0) +(-1,-1)
      -- +(1,1)node [at
      start,below]{$j$}; \draw[awei] (-3,0) +(0,-1) .. controls (-4,0)
      ..  +(0,1) node [at start,
      below]{$-\la_k$}; \node at (-1,0) {=};
      \draw[postaction={decorate,decoration={markings, mark=at
          position .8 with {\arrow[scale=1.3]{>}}}}] (1,0) +(1,-1) --
      +(-1,1) node[at
      start,below]{$i$};
      \draw[postaction={decorate,decoration={markings, mark=at
          position .2 with {\arrow[scale=1.3]{>}}}}] (1,0) +(-1,-1) --
      +(1,1) node [at
      start,below]{$j$}; \draw[awei] (1,0) +(0,-1) .. controls (2,0)
      ..  +(0,1) node [at start,
      below]{$-\la_k$}; 
      \end{tikzpicture}-
       \begin{tikzpicture}[very thick,baseline]
       \draw[postaction={decorate,decoration={markings, mark=at
          position .2 with {\arrow[scale=1.3]{>}}}}] (6.5,0) +(1,-1)
      -- +(1,1)  node [at
      start,below]{$i$};
      \draw[postaction={decorate,decoration={markings, mark=at
          position .2 with {\arrow[scale=1.3]{>}}}}] (6.5,0) +(-1,-1)
      -- +(-1,1)  node [at
      start,below]{$j$}; 
      \draw[densely dotted,->] (5.5,0)--(7.5,0); \node[scale=.8] at (8.7,0) {$\frac{(x_1-\zw_k)^{\la_k^i}-(x_2-\zw_k)^{\la_k^i}}{x_1-x_2}$};
      \draw[awei] (6.5,0) +(0,-1) -- +(0,1) node
      [at start, below]{$-\la_k$};     \end{tikzpicture}
  \end{equation*}
  \begin{equation*}\subeqn\label{red-triple}
    \begin{tikzpicture}[very thick,baseline]
      \draw (-3,0)[postaction={decorate,decoration={markings, mark=at
          position .2 with {\arrow[scale=1.3]{<}}}}] +(1,-1) --
      +(-1,1) node[at
      start,below]{$i$};
      \draw[postaction={decorate,decoration={markings, mark=at
          position .8 with {\arrow[scale=1.3]{<}}}}] (-3,0) +(-1,-1)
      -- +(1,1)node [at
      start,below]{$j$}; \draw[wei] (-3,0) +(0,-1) .. controls (-4,0)
      ..  +(0,1) node [at start,
      below]{$\la_k$}; \node at (-1,0) {=};
      \draw[postaction={decorate,decoration={markings, mark=at
          position .8 with {\arrow[scale=1.3]{<}}}}] (1,0) +(1,-1) --
      +(-1,1) node[at
      start,below]{$i$};
      \draw[postaction={decorate,decoration={markings, mark=at
          position .2 with {\arrow[scale=1.3]{<}}}}] (1,0) +(-1,-1) --
      +(1,1) node [at
      start,below]{$j$}; \draw[wei] (1,0) +(0,-1) .. controls (2,0) ..
      +(0,1) node [at start, below]{$\la_k$};    \end{tikzpicture}+
       \begin{tikzpicture}[very thick,baseline] \draw[postaction={decorate,decoration={markings, mark=at
          position .8 with {\arrow[scale=1.3]{<}}}}] (6.5,0) +(1,-1)
      -- +(1,1)  node[at
      start,below]{$i$};
      \draw[postaction={decorate,decoration={markings, mark=at
          position .8 with {\arrow[scale=1.3]{<}}}}] (6.5,0) +(-1,-1)
      -- +(-1,1)  node [at
      start,below]{$j$}; \draw[wei] (6.5,0) +(0,-1) -- +(0,1) node [at
      start, below]{$\la_k$}; 
      \draw[densely dotted,->] (5.5,0)--(7.5,0); \node[scale=.8] at (8.7,0) {$\frac{(x_1-\zw_k)^{\la_k^i}-(x_2-\zw_k)^{\la_k^i}}{x_1-x_2}$};
    \end{tikzpicture}
  \end{equation*}
The reader should read the label $\la_k$ in this diagram to indicate
that the strand shown is the $k$th of the red and blue strands from
the left.  In particular, $\zw_k$ is connected to this $k$th strand, and
could be thought of as a new endomorphism of the tricolore triple with
a single red or blue strand and $\Bi=\emptyset$.

\notation{$\hld^\bla$}{The deformed categorification of a tensor product of highest and lowest weight representations.}
We let $\hld^\bla$ be the idempotent completion of the quotient of the
category of tricolore quadruples $(\bla,\Bi,\kappa,\EuScript{L})$
in $\EuScript{T}$ by the tricolore quadruples where
$\kappa(\ell)<n$. That is, we consider 1-morphisms with  the region at the far right labelled with the weight $0$,
where we fix the labels of
the red and blue strands as well as their order to match $\bla$, but allow arbitrary
black strands.  We then take the quotient of this category of
1-morphisms  by killing the diagrams with a black line at the
far right.  
\end{definition}
\begin{remark}\label{left-right}
  Note that we have switched the role of left and right from \cite[Def. 4.3]{WebCB}, where we killed diagrams at the far {\it left}.  These two approaches give equivalent categories.  We can see this by extending the functor $\tilde{\sigma}$ on $\tU$ defined in \cite[\S 3.3.3]{khovanovCategorificationQuantum2010}, which reflects diagrams in the $y$-axis while negating the label on each region and multiplying by $-1$ raised to the number of crossings of pairs of strands with the same labels.  This extension must replace a red or blue strand labeled $\lambda$ by a strand of the opposite color labeled $-\lambda$.  The only interesting relation check is that the image of \cref{blue-triple} is the negation of \cref{red-triple}, and {\it vice versa}.   
\end{remark}

The definition of $\hld^\bla$ has precisely the same form as that of
$\hl^\bla$, with the only difference being the relations
\crefrange{color-opp-cancel}{red-triple} in place of the relations in \cite[Def. 4.3]{WebCB} and the left/right reversal noted in \cref{left-right}.  

From the definition, it is clear that there is a 2-functor $\tU\to
\EuScript{T}$, since \crefrange{QHA1}{flight2} are simply the
relations of $\tU$.  Thus, horizontal composition on the left induces a $\tU$ action
on $\hld^\bla$.  

Note that we will need to know the relations for bubble slides in $\EuScript{T}$.  
\begin{lemma}
In  $\EuScript{T}$, we have the bubble slides:
\begin{align}
	\mathord{\begin{tikzpicture}[baseline = -1mm]
	\draw[wei] (0.08,-.4) to node [at start, below]{$\scriptstyle\la_k$} (0.08,.4) ;
      \node at (-0.6,0) {$\anticlockj\scriptstyle (z)$};
\end{tikzpicture}
}
&=
\mathord{
\begin{tikzpicture}[baseline = -1mm]
	\draw[wei] (0.08,-.4) tonode [at start, below]{$\scriptstyle\la_k$} (0.08,.4) ;
      \node at (0.8,0) {$\anticlockj\scriptstyle(z)$};
\node at (-.85,0.05) {$\scriptstyle{(z-\beta_k)^{\la_k}}$};
\end{tikzpicture}
},&
\mathord{
\begin{tikzpicture}[baseline = -1mm]
	\draw[wei] (0.08,-.4) to node [at start, below]{$\scriptstyle\la_k$}(0.08,.4) ;
      \node at (0.8,0) {$\clockj\scriptstyle(z)$};
\end{tikzpicture}
}
&=
\mathord{
\begin{tikzpicture}[baseline = -1mm]
	\draw[wei] (0.08,-.4) to  node [at start, below]{$\scriptstyle\la_k$}(0.08,.4);
      \node at (-0.6,0) {$\clockj\scriptstyle(z)$};
\node at (1.05,0.05) {$\scriptstyle{(z-\beta_k)^{\la_k}}$};
\end{tikzpicture}
}.\label{bubslides3}
\\
\mathord{\begin{tikzpicture}[baseline = -1mm]
	\draw[awei] (0.08,-.4) to node [at start, below]{$\scriptstyle-\la_k$} (0.08,.4) ;
      \node at (-0.6,0) {$\anticlockj\scriptstyle (z)$};
\end{tikzpicture}
}
&=
\mathord{
\begin{tikzpicture}[baseline = -1mm]
	\draw[awei] (0.08,-.4) tonode [at start, below]{$\scriptstyle-\la_k$} (0.08,.4) ;
      \node at (0.8,0) {$\anticlockj\scriptstyle(z)$};
\node at (-.85,0.05) {$\scriptstyle{(z-\beta_k)^{-\la_k}}$};
\end{tikzpicture}
},&
\mathord{
\begin{tikzpicture}[baseline = -1mm]
	\draw[awei] (0.08,-.4) to node [at start, below]{$\scriptstyle-\la_k$}(0.08,.4) ;
      \node at (0.8,0) {$\clockj\scriptstyle(z)$};
\end{tikzpicture}
}
&=
\mathord{
\begin{tikzpicture}[baseline = -1mm]
	\draw[awei] (0.08,-.4) to  node [at start, below]{$\scriptstyle-\la_k$}(0.08,.4);
      \node at (-0.6,0) {$\clockj\scriptstyle(z)$};
\node at (1.05,0.05) {$\scriptstyle{(z-\beta_k)^{-\la_k}}$};
\end{tikzpicture}
}.\label{bubslides4}
\end{align}
\end{lemma}

\begin{definition}
Given a $\K[\zw_1,\cdots \zw_\ell]$-algebra $K$, we let $\hld^\bla_K$ be
the idempotent completion of the extension of scalars
$\hld^\bla\otimes_{\K[\zw_1,\cdots \zw_\ell]}K$.  

\notation{$\mathbb{K},\bar{\mathbb{K}}$}{The function field $\mathbb{K}=\K(\zw_1,\dots, \zw_\ell)$  and its algebraic closure.} 
\notation{$\hld^\bla_K$}{The idempotent completion of the extension of scalars
$\hld^\bla\otimes_{\K[\zw_1,\cdots \zw_\ell]}K$. }
The main examples we will want to consider are $\mathbb{K}=\K(\zw_1,\dots, \zw_\ell)$ and the
  algebraic closure $\bar{\mathbb{K}}$.
\end{definition}

\subsection{Spectral analysis}
\label{sec:spectral-analysis}

Throughout the rest of \cref{sec:deform-tens-prod}, we fix an $\ell$-tuple $\bla$ of highest and lowest weights.  Given this choice of $\bla$, we define sets $\underline{U}_i$ that we will ultimately identify with the spectrum of the dot operators $y_k$ acting on objects in $\hldbK$. 

\begin{definition}
  Define sets $\underline{U}_i\subset \bar{\mathbb{K}}$ as follows:  let \[\underline U_i^{(0)}=\{\zw_k\mid  \al_i^\vee(\la_k)\neq 0 \text{ for some
  $k$}\}.\]
  Now we inductively define
  $\underline U_i^{(N)}$ to be the union of $\underline U_i^{(N-1)}$ with the elements $u$
  of $\bar{\mathbb{K}}$ that satisfy $Q_{ij}(u,u')=0$ for $u'\in
  \underline U_j^{(N-1)}$, and $\underline U_i=\bigcup_{N\in \Z} \underline U_i^{(N)}$.

 Let $U_i'$ be the union of $\underline U_i$ with the set $U_i$ of elements in $\bar{\mathbb{K}}$ that appear in the spectrum of the elements $y_ke_{\Bi}$ with $i_k=i$
  acting on objects in the
 category $\hldbK$;
  that is, the eigenvalues that appear when dots on strands labeled $i$ act.
\end{definition}
It might seem strange that we add the elements of $\underline U_i$ to $U_i'$ by
definition, but this simplifies matters for us, since we have not yet
established that $\hldbK$ is non-zero. Thus, we have not yet
established that there are any elements of the spectrum of $y$ acting
on objects in this category.  We
ultimately see that $U_i=U_i'=\underline U_i$.

\begin{definition}\label{tla}
We let $\tilde{I}',\tilde{I}$ be the sets constructed from $U_i'$ and $U_i$ as before, with their induced graph structures and Cartan data, and $\tilde{\fg}'$ and $\tilde{\fg}$ the corresponding Kac-Moody algebras.  We let $\tilde{\la}$ be the weight for $\tilde{\fg}'$ such that $\al_{(i,u)}^\vee(\tilde{\la})=\sum_m\delta_{u,\zw_m}\al_i^\vee(\la_m)$.
  \notation{$\tilde{\la}$}{The weight for $\tilde{\fg}'$ defined in \cref{tla}.}
\end{definition}

\notation{$m(u)$}{The unique index $m$ such that $u$ and $\zw_m$ are algebraically dependent.  }
Note that by construction, any element of $\underline{U}_i^{(N)}$ is algebraically dependent on some element of $\underline{U}_j^{(N-1)}$ for some $j$ connected to $i$ in the Dynkin diagram of $\fg$.  Thus, any element $u\in \underline{U}_i$ is algebraically dependent on some element of $\underline{U}_j^{(0)}$, which is a subset of $\{\zw_m\}$. Also note that since the coefficients of $Q_{ij}$ are in $\K$, this relation is symmetric and $\zw_m$ is algebraically dependent on $u$ as well. 
If there were two such elements $\zw_{k},\zw_{k'}$, then we would have that $\K(\zw_k,\zw_{k'})$ is contained in an algebraic extension of $\K(u)$, which is impossible since $\K(\zw_k,\zw_{k'})$ has transcendence degree 2 over $\K$ while $\K(u)$ has transcendence degree at most 1.

Thus, every element $u\in \underline{U}_i$ is algebraically dependent on exactly one $\zw_m$.  We
denote this index $m(u)$.  In many cases that interest us, there is
exactly one component of $\tilde{I}$ for each of these indices, but if
$\al_i^\vee(\la_m)\neq 0$ for several elements $i$, the pairs
$(i,\zw_m)$ can lie in different components for different $i$.  

\excise{Generically, this
deformation ``pulls apart'' the tensor factors of $\hl^\bla$.  More
precisely, consider the Lie algebra  $\tilde{\fg}^{\oplus\ell}$;  for simplicity
  of notation, we use $(i,m)$ to denote the copy of $i\in I$ corresponding
to the $m$th copy of $\fg$ in the sum $\fg^{\oplus\ell}$.    
Let $K$ be the fraction field of $\K[\Bz]$.   For $(\Bi,\Bm)\in
\tilde{I}^n$, let
$\epsilon_{\Bi,\Bm}\colon \eF_{\Bi}\to \eF_{\Bi}$ be the projection to the
space where $y_k$ acts with generalized eigenvalue $\zw_{m_k}$.  As
before, let
$\eF_{(i,m)}$ denote the image of $\epsilon_{i,m}$ in $\eF_i$.}
 \excise{One should think about these formulas in terms of the polynomial
representation of the KLR algebra given in \cite[3.12]{Rou2KM}.  The
$p\neq m$ cases of \cref{eq:6} are simply the formulas for $s_i$ in
terms of $\psi$. }

We can define formal power series valued in the center of the
category $\hld^{\bla}$ which act on the object $(\bla,\Bi,\kappa)$ by \[\mathbf{y}_i(z):=\prod_{i_k=\pm i}(z-y_k)^{\pm 1}\qquad
  \mathbf{Q}_{ji}(z)=\prod_{i_r=\pm j} \Big(t_{ji}Q_{ij}(z,y_r)\Big)^{\pm 1},\] where $y_r$ is the dot acting on the $r$th strand from the left.  

We can view this as a formal power series in $z$ with coefficients in the polynomial ring generated by dots on strands labeled $j$.  These polynomials are symmetric when we swap two dots on strands with the same orientation.  That is, we can regard them as symmetric polynomials in a pair of alphabets, one for each orientation.  In fact, since the factors corresponding to upward- and downward-oriented strands are inverses of each other, the resulting polynomials are supersymmetric in the sense of \cite{stembridgeCharacterizationSupersymmetric1985}; that is, they become independent of $t$ after substituting $t$ simultaneously for one variable from an upward strand and one from a downward strand.  As discussed below, we need this supersymmetric property to show that these power series are central 2-morphisms in $\tU$.

Any such polynomial commutes with all upward- or downward-oriented diagrams by \cite[2.9]{KLI}, since each coefficient is symmetric in the corresponding variables. 
It commutes with a cup or cap joining the $k$th strand to the $k+1$st since multiplying by $(z-y_{k})^{\pm 1}$ at one end of the cup or cap cancels with $(z-y_{k+1})^{\mp 1}$ at the other (this is a restatement of the supersymmetric property).  Note that the bubble slides \crefrange{bubslides1}{bubslides2},\crefrange{bubslides3}{bubslides4} and the triviality of bubbles at the far right
show that at the far left of the diagram, we have:
  \begin{align}\label{bubble-slide}
  \clocki(z) &=\mathbf{y}_i(z)^{-2}\prod_{j\neq i}
      \mathbf{Q}_{ji}(z)\prod_{m=1}^\ell(z-\zw_m)^{-\la_m^i}\\   \anticlocki(z) &=\mathbf{y}_i(z)^{2}\prod_{j\neq i}
\mathbf{Q}_{ji}(z)^{-1}\prod_{m=1}^\ell(z-\zw_m)^{\la_m^i}.
  \end{align}

 Let $\tilde{\mu}=\tilde{\la}-\sum a_{i,u}\al_{i,u}$ be a weight of $\tilde{I}'$.  We can define subcategories 
  $\mathcal{V}_{(\tilde{\mu})}\subset \hldbK$ by the vanishing orders of $\clocki(z)$ as in \cref{def:subcat}.  
By \cref{thm:deform-action},
  the functors $\eF_{i,u}$ and their adjoints $\eE_{i,u}$ induce a
  categorical action of $\tilde{\fg}$ on $\hldbK$, with weight decomposition given by $\hldbK\cong \oplus_{\tilde{\mu}}\mathcal{V}_{\tilde{\mu}}$.  It will be convenient to extend this action to an action of $\tilde{\fg}'$ by setting $\eF_{i,u}=\eE_{i,u}=0$ for $u\notin U_i$;  we will prove in a moment that there are no such $u$'s, but it is helpful to allow them as a possibility while we prove this.

Given a triple $(\bla,\Bi,\kappa)$ with $\Bi=(i_1,\dots,
i_n)$ considered as an object in $\hldbK$ (recall that we will
often exclude $\bla$ from the notation when it is unlikely to be
confused), we can thus decompose it according to the spectrum of the
dots $y_k$.  For a sequence $j_k=(i_k,u_k)\in \tilde{I}'$ for $k=1,\dots, n$,
we let $(\Bi,\kappa)_{\Bu}$ be the simultaneous generalized eigenspace of $y_k$ with eigenvalue $u_k$, that is, the simultaneous stable kernel of
$y_k-u_k$, for all $1\leq k\leq n$.  

\begin{lemma}\label{lem:cyclic}
We have that
\begin{equation}
(\Bi,\kappa)_{\Bu}\cong \eE_{i_1,u_1}\cdots
\eE_{i_n,u_n}(\emptyset,0)\label{eq:subu}
\end{equation}
if $u_k\in \underline{U}_{i_k}$ and
$k\leq \kappa(m(u_k))$ for each $k$, and $(\Bi,\kappa)_{\Bu}=0$
otherwise.  

In particular, we have $U_i= U_i'$ for all $i$ and
$\tilde{\fg}= \tilde{\fg}'$, and the category $\hldbK$ is generated by the tricolore
triple $(\emptyset,0)$ as a categorical module over
$\tilde{\fg}$.    
\end{lemma}
\begin{proof}
First, we note that if $u\in \underline U_i$ and $u'\notin \underline U_j$, then  $Q_{ij}(y_1,y_2)$ acts on
$\eE_{i,u}\eE_{j,u'}$ with its only eigenvalue $Q_{ij}(u,u')\neq 0$
(by the definition of $\underline U_i$).  Thus the crossing $\psi$ induces an isomorphism
$\eE_{i,u}\eE_{j,u'}\cong \eE_{j,u'}\eE_{i,u}$.  Similarly, still assuming that $u'\notin \underline U_j$, the crossing of  $\eE_{j,u'}$ over 
red and blue strands is invertible since $y-\zw_k$ is invertible for all $k$, with its only
eigenvalue $u'-\zw_k$; in fact, this crossing is still invertible for the $k$th red/blue
strand if we simply assume that $u'\neq \zw_k$ (in particular, if $k\neq m(u')$).  

We establish the result by induction on $\ell$ and $n$ (that is, on the total
number of strands).  If $n=0$, the result is tautological.  Otherwise, the leftmost strand in the idempotent for the
object $(\Bi,\kappa)$ is one of black, blue, or red.  If it is
black, then the image of $\kappa$ lies in $[1,n-1]$ and we can use the same symbol to mean the same map to this codomain.  Thus, we have $(\Bi,\kappa)=\eE_{i_1} (\Bi^-,\kappa-1)$ for some $i\in \pm I$,
where  $\Bi^-=(i_2,\dots, i_n)$ and $\kappa-1$ denote the usual difference of functions $(\kappa-1)(k)=\kappa(k)-1$.  
Decomposing with respect to the eigenvalues of $y$, we have $\eE_i (\Bi^-,\kappa)\cong \bigoplus_u \eE_{i,u}
(\Bi^-,\kappa)$ where $u$ ranges over the roots of the minimal
polynomial of $y$ acting on $\eE_i (\Bi^-,\kappa)$.  By induction
$(\Bi^-,\kappa)$ is a sum of modules obtained from
$(\emptyset,0)$ by the functors $\eE_{j,u'}$ and $\eF_{j,u'}$ for
$u'\in \underline U_j$.  If $u$
is not in $\underline U_i$, then all
these functors commute with $\eE_{i,u}$ (as argued above), and
$\eE_{i,u}(\emptyset,0)=0$, so $\eE_{i,u}
(\Bi^-,\kappa)=0$, and $\eF_i (\Bi^-,\kappa)\cong \bigoplus_{u\in \underline U_i} \eF_{i,u}
(\Bi^-,\kappa)$.  By induction, this establishes the result for the case where the leftmost strand is black.

On the other hand, consider the case where the leftmost strand is blue or red, so $\kappa(1)=0$.
We apply induction with the tricolore triple $(\bla^-,\Bi,\kappa^-)$ with
this strand removed, where $\kappa^-$ is the composition of $\kappa$ with the map $[1,\ell-1] \to [1,\ell]$ of addition by 1.  By induction,
$(\bla^-,\Bi,\kappa^-)\cong \bigoplus(\bla^-,\Bi,\kappa^-)_{\Bu}$ with
$k\leq \kappa(m(u_k))$;  in particular, no eigenvalue with $m(u)=1$ appears.  Since adding in the $\ell$th blue or
red strand does not change the eigenvalues of the dots, we also have
$(\bla,\Bi,\kappa)\cong \bigoplus (\bla,\Bi,\kappa)_{\Bu}$ with $\Bu$
ranging over the same set; this rules out $u_k$ satisfying $\kappa(m(u_k))<k$. This shows \cref{eq:subu}, and
that $U_i'= U_i$.  
\end{proof}

Thus $\hldbK$ is generated by a single object, which is
highest weight for the components of $\tilde{I}$ with $\la_{m(u)}$ dominant and lowest weight 
for those with $\la_{m(u)}$ anti-dominant.   Alternatively, we can
easily choose a Borel for
which this representation is straightforwardly highest weight.   To
distinguish objects which are highest weight for this Borel, we call
them {\bf signed highest weight}.  We can write each weight $\tilde{\la}$
uniquely as a sum $\tilde{\la}=\tilde{\la}_1+\cdots +\tilde{\la}_\ell$
where $\tilde{\la}_m$ is supported on components with $m(u)=m$.

Let $\hl^{\tilde\la}$ be the category over the base field $\bar{\mathbb{K}}$ defined in \cite[\S 5]{WebCB} for the
singleton $(\tilde\la)$ and the Dynkin diagram $\tilde{I}$, and 
$\hl^{\tilde\bla}$ this category for the sequence
$\tilde{\bla}=(\tilde\la_1,\tilde\la_2,\dots,\tilde\la_\ell )$. Both
of these are defined using the signed highest weight Borel, so for
example, $\hl^{\tilde\la}$ is defined using a single colored strand at
right (whether it is red, blue or purple is a matter of taste) that
satisfies the red version of the relations
\crefrange{color-opp-cancel}{red-triple} for a black strand with
label $(i,u)$ such that $\la_{m(u)}$ is dominant, and the blue version
if $\la_{m(u)}$ is anti-dominant.  
\begin{lemma}\label{lem:obvious-functor}
The obvious functor $\hl^{\tilde\la}\to
\hl^{\tilde\bla}$ is an equivalence of categories.
\end{lemma}
\begin{proof}
    This is equivalent to the statement that the tensor product algebra $T^{\tilde\bla}$ is Morita equivalent to the cyclotomic quotient $T^{\tilde{\la}}$ categorifying the fact that if highest weights are concentrated on different components of the Dynkin diagram, then their tensor product is irreducible.

    This Morita equivalence is induced by the idempotent given by the sum of vertical diagrams where all the black strands are at the far left (note the reverse of left and right from \cref{left-right}).  If an idempotent in $T^{\tilde\bla}$ is non-zero, then all black and red strands from the same component of the Dynkin diagram must be ordered with the black to the left.  Thus, for any non-zero idempotent, we can take any diagram and push all the black strands to the far left in the middle, since all red and black crossings created involve strands from different components of the Dynkin diagram.  This shows the desired Morita equivalence.  
\end{proof}
 Both categories can be defined over $\K$, but we
will be more often interested in their base extensions to
$\bar{\mathbb{K}}$.  

The important fact we will need about the category $\hl^{\tilde\la}$
is that unlike the general case, we already know the non-degeneracy
results we need for this category, based on \cite{Webmerged}, since
we have deformed to a signed highest-weight representation.  Thus our
strategy for proving non-degeneracy for $\hld^{\bla}$ is to compare it
with this category whose Grothendieck group and Hom spaces are known
by \cite[Cor. 3.20 \& 3.22]{Webmerged}.  By applying the automorphism
$\tilde{\omega}$ from \cite[\S 3.3.2]{khovanovCategorificationQuantum2010} which swaps the functors
$\eE_i$ and $\eF_i$ to all components where $\tilde\la_i$ is
anti-dominant, we can reduce to the case where all $\tilde\la_i$ are
dominant.  In this case, the endomorphism algebra of the sum of all objects
in $\hl^{\tilde\la}$ is precisely the algebra $DR^{\tilde\la}$ defined
in \cite[Def. 3.1]{Webmerged}. By the Morita equivalence of this ring
to the cyclotomic quotient $R^{\tilde\la}$
(\cite[Def. 3.3]{Webmerged}) shown in \cite[Cor. 3.20]{Webmerged}, we
can compute the dimension of morphism spaces in $\hl^{\tilde\la}$.  In
fact, we will only need the basic observation that in this
category \begin{equation}\label{eq:cyc-quotient}
  \Hom_{\hl^{\tilde\la}}(((i,u),0),((i,u),0))\cong
  \K[y]/(y^{\la_{m(u)}^i}),
\end{equation} that is, if we have a single strand, then the dots satisfy the cyclotomic relation and nothing else; this is obvious in the cyclotomic quotient  $R^{\tilde\la}$.  Of course, this is readily extended to the case where there is only one strand with label in any single component; up to Morita equivalence, one just obtains a tensor product of these endomorphism algebras.
\excise{ Consider the deformed cyclotomic quotient $\check{R}^{\tilde{\la}}$
for $\tilde{\fg}$ of signed highest weight $\tilde{\la}$.  For any
weight $\mu$, the ring $\check{R}^{\tilde{\la}}_{\mu}$ is free of
finite rank over the central subalgebra
$\check{R}^{\tilde{\la}}_{\tilde \la}$, the polynomial
ring on fake bubbles at the weight $\tilde\la$.  

As is true whenever the underlying Dynkin diagram is disconnected, the
ring $\check{R}^{\tilde{\la}}$ is Morita equivalent to the tensor
product $\check{R}^{\tilde\la_1}\otimes \cdots\otimes \check
R^{\tilde\la_\ell}$.  The fact that we use the signed Borel means
that it is more convenient to think of the deformed cyclotomic quotient $\check{R}^{\tilde\la_k}$ as
written with a red strand and downward black strands if $\la_k$ is dominant and with a blue
strand and upward black strands if $\la_k$ is anti-dominant.  However,
if the reader prefers, applying the Cartan involution in the factors where $\la_{m(u)}$ is anti-dominant,
which relabels $\eF_{i,u}$ as $\eE_{i,u}$, allows us to write all of
these cyclotomic quotients with downward strands.

The classification of highest weight
representations in \cite[Prop. 3.25]{Webmerged} shows that:
\begin{lemma}
  The category $\hl^{\tilde\la}$ is
  equivalent to the category of projective modules over  $\check{R}^{\tilde\la_1}\otimes \cdots\otimes \check
R^{\tilde\la_\ell}\otimes_{\check{R}^{\tilde{\la}}_{\tilde
    \la}}\End(\emptyset,0)$, the base change via the natural map
$\check{R}^{\tilde{\la}}_{\tilde{\la}}\to \End(\emptyset,0)$ to the endomorphisms of the signed highest weight object. 
\end{lemma}}

\begin{lemma}\label{equivalence}
There is a strongly equivariant functor
  \begin{equation}
\Phi\colon \hl^{\tilde\la}\cong \hl^{\tilde\bla}\to \hldbK\label{eq:Phi}
\end{equation} sending $(\emptyset,0) \mapsto (\emptyset,0)$.
  The functor $\Phi$  is an  equivalence.
\end{lemma}

We give the proof of this result on page \pageref{proof-equivalence}.

\subsection{Non-degeneracy}
\label{sec:applications}

Given two tricolore triples $(\bla,\Bi,\kappa)$ and $(\bla,\Bi',\kappa')$, we define a set $B$ of diagrams in $\Hom_{\EuScript{T}}\big((\bla,\Bi,\kappa),(\bla,\Bi',\kappa')\big)$ which generalizes the set $B_{\Bi,\Bi',\la}$ defined in \cite[\S 3.2.3]{khovanovCategorificationQuantum2010}.  In fact, we can define $B$ to be the set of diagrams where the black strands trace out an element of Khovanov and Lauda's spanning set $B_{\Bi,\Bi',\la}$ and red and blue strands are added as required  by the functions $\kappa$ and $\kappa'$.  That is, it is a set satisfying the following conditions:
\begin{enumerate}
     \item for each way of dividing the set $\Bi\cup -\Bi'$ into pairs of matching elements (an $(\Bi,\Bi')$-pairing, in the terminology of \cite{khovanovCategorificationQuantum2010}), there is a unique diagram in $B$ which connects the terminals for the two elements of each pair, with a minimal number of crossings and no bubbles or dots.
     \item for each diagram of the above, and each arc joining two terminals, we fix a position on that arc (avoiding the crossings with any others).  The full list of diagrams in $B$ are indexed by diagrams as described in part (1), a choice of non-negative  integer for each arc joining two terminals, and a monomial in the unnested bubbles (a basis vector of the ring $\Pi_{\la}$ in the notation of \cite[\S 3.2.1]{khovanovCategorificationQuantum2010}).  We construct the corresponding diagram by adding the corresponding number of
 dots on each arc and putting the bubbles to the left of the diagram. 
     \end{enumerate}
   \begin{theorem}\label{tricolore-nondegenerate}
For two tricolore triples $(\bla,\Bi,\kappa)$ and $(\bla,\Bi',\kappa')$, the set $B$ is a basis over $\C[\Bz]$  for the morphism space $\Hom_{\EuScript{T}}  \big((\bla,\Bi,\kappa),(\bla,\Bi',\kappa')\big)$.
   \end{theorem}
   
We give the proof of this result on page \pageref{proof-tricolore-nondegenerate}.

\section{Proofs}
\label{sec:proofs}

\subsection{Proof of \cref{th:coproduct}}

\begin{proof}[Proof of \cref{th:coproduct}]
  \refstepcounter{dummy}\label{proof-th:coproduct}
While the morphisms given in the statement of \cref{th:coproduct} fix the 2-functor, to prove that we have a 2-functor, it is useful to also describe the image of the functor on some additional morphisms:
\begin{align}\label{com4}
\mathord{
\begin{tikzpicture}[baseline = 1mm]
	\draw[-,thin] (0.4,0) to[out=90, in=0] (0.1,0.4);
	\draw[->,thin] (0.1,0.4) to[out = 180, in = 90] (-0.2,0);
\end{tikzpicture}
}&\mapsto
\mathord{
\begin{tikzpicture}[baseline = 1mm]
	\draw[-,thin,blue] (0.4,-.2) to[out=90, in=0] (0.1,0.6);
	\draw[->,thin,blue] (0.1,0.6) to[out = 180, in = 90] (-0.2,-.2);
     \node at (0.4,.3) {$\color{red}\anticlockleft$};
\end{tikzpicture}
}+
\mathord{
\begin{tikzpicture}[baseline = 1mm]
	\draw[-,thin,red] (0.4,-.2) to[out=90, in=0] (0.1,0.6);
	\draw[->,thin,red] (0.1,0.6) to[out = 180, in = 90] (-0.2,-.2);
     \node at (0.4,.3) {$\color{blue}\anticlockleft$};
\end{tikzpicture}
}\:,
&\mathord{
\begin{tikzpicture}[baseline = 1mm]
	\draw[-,thin] (0.4,0.4) to[out=-90, in=0] (0.1,0);
	\draw[->,thin] (0.1,0) to[out = 180, in = -90] (-0.2,0.4);
\end{tikzpicture}
}&\mapsto
\mathord{
\begin{tikzpicture}[baseline = 1mm]
	\draw[-,thin,blue] (0.4,0.6) to[out=-90, in=0] (0.1,-0.2);
	\draw[->,thin,blue] (0.1,-0.2) to[out = 180, in = -90] (-0.2,0.6);
     \node at (-0.2,.2) {$\color{red}\clockright$};
\end{tikzpicture}
}+\mathord{
\begin{tikzpicture}[baseline = 1mm]
	\draw[-,thin,red] (0.4,0.6) to[out=-90, in=0] (0.1,-0.2);
	\draw[->,thin,red] (0.1,-0.2) to[out = 180, in = -90] (-0.2,0.6);
     \node at (-0.2,.2) {$\color{blue}\clockright$};
\end{tikzpicture}
}\:,\\\label{com5}
\mathord{\begin{tikzpicture}[baseline=-1mm]
\node at (0.04,0.05) {$\anticlockleft\,{\scriptstyle a}$};
\node at (.1,0) {$\bullet$};
\end{tikzpicture}}
&\mapsto
\sum_{b \in \Z}
\begin{array}{l}
\mathord{
\begin{tikzpicture}[baseline = 1.25mm]
  \draw[-,thin,blue] (0,0.4) to[out=180,in=90] (-.2,0.2);
  \draw[->,thin,blue] (0.2,0.2) to[out=90,in=0] (0,.4);
 \draw[-,thin,blue] (-.2,0.2) to[out=-90,in=180] (0,0);
  \draw[-,thin,blue] (0,0) to[out=0,in=-90] (0.2,0.2);
   \node at (-0.2,0.2) {$\color{blue}\bullet$};
   \node at (-.38,0.2) {$\color{blue}\scriptstyle{b}$};
\end{tikzpicture}
}\\ \mathord{
\begin{tikzpicture}[baseline = 1.25mm]
  \draw[->,thin,red] (0.2,0.2) to[out=90,in=0] (0,.4);
  \draw[-,thin,red] (0,0.4) to[out=180,in=90] (-.2,0.2);
\draw[-,thin,red] (-.2,0.2) to[out=-90,in=180] (0,0);
  \draw[-,thin,red] (0,0) to[out=0,in=-90] (0.2,0.2);
   \node at (0.2,0.2) {$\color{red}\bullet$};
   \node at (.7,0.2) {$\color{red}\scriptstyle{a-b-1}$};
\end{tikzpicture}
}\end{array},&
\mathord{\begin{tikzpicture}[baseline=-1mm]
\node at (-0.04,.05) {${\scriptstyle a}\,\clockright$};
\node at (-.1,0) {$\bullet$};
\end{tikzpicture}}
&\mapsto
\sum_{b \in \Z}
\begin{array}{l}
\mathord{
\begin{tikzpicture}[baseline = 1.25mm]
  \draw[<-,thin,blue] (0,0.4) to[out=180,in=90] (-.2,0.2);
  \draw[-,thin,blue] (0.2,0.2) to[out=90,in=0] (0,.4);
 \draw[-,thin,blue] (-.2,0.2) to[out=-90,in=180] (0,0);
  \draw[-,thin,blue] (0,0) to[out=0,in=-90] (0.2,0.2);
   \node at (-0.2,0.2) {$\color{blue}\bullet$};
   \node at (-.38,0.2) {$\color{blue}\scriptstyle{b}$};
\end{tikzpicture}
}\\
\mathord{
\begin{tikzpicture}[baseline = 1.25mm]
  \draw[-,thin,red] (0.2,0.2) to[out=90,in=0] (0,.4);
  \draw[<-,thin,red] (0,0.4) to[out=180,in=90] (-.2,0.2);
\draw[-,thin,red] (-.2,0.2) to[out=-90,in=180] (0,0);
  \draw[-,thin,red] (0,0) to[out=0,in=-90] (0.2,0.2);
   \node at (0.2,0.2) {$\color{red}\bullet$};
   \node at (.7,0.2) {$\color{red}\scriptstyle{a-b-1}$};
\end{tikzpicture}
}\end{array}.
\end{align}
The sum in \cref{com5} only has finitely non-zero terms since on any object, only finitely many fake bubbles can be non-zero.

In \cref{com4}, we use the notation of internal bubbles:
\begin{align}
\mathord{
\begin{tikzpicture}[baseline=2mm]
	\draw[->,thin,red] (0,-.2) to[out=90,in=-90] (0,.8);
     \node at (0,.3) {$\color{blue}\anticlockleft$};
\end{tikzpicture}
}&:=
\sum_{a \geq 0}
\mathord{
\begin{tikzpicture}[baseline=2mm]
	\draw[->,thin,red] (0,-.2) to[out=90,in=-90] (0,.8);
     \node at (-1.1,.3) {$\blue{\anticlockleft}$};
     \node at (0,0.3) {$\color{red}\bullet$};
     \node at (0.2,0.3) {$\color{red}\scriptstyle a$};
     \node at (-0.9,0.3) {$\color{blue}\bullet$};
     \node at (-0.5,0.3) {$\color{blue}\scriptstyle -a-1$};
\end{tikzpicture}
}+
\mathord{
\begin{tikzpicture}[baseline=2mm]
	\draw[->,thin,red] (0,-.2) to[out=90,in=-90] (0,.8);
     \node at (-.6,.3) {$\color{blue}\anticlockleft$};
	\draw[-] (0,0.31) to (-0.4,0.31);
     \node at (0,0.3) {$\dt$};
     \node at (-0.4,0.3) {$\dt$};
\end{tikzpicture}
}
\:,
&
\mathord{
\begin{tikzpicture}[baseline=2mm]
	\draw[->,thin,red] (0,-.2) to[out=90,in=-90] (0,.8);
     \node at (0,.3) {$\color{blue}\clockright$};
\end{tikzpicture}
}
&:=
\sum_{a \geq 0}
\mathord{
\begin{tikzpicture}[baseline=2mm]
	\draw[->,thin,red] (0,-.2) to[out=90,in=-90] (0,.8);
     \node at (1.1,.3) {$\blue{\clockright}$};
     \node at (0,0.3) {$\color{red}\bullet$};
     \node at (-0.2,0.3) {$\color{red}\scriptstyle a$};
     \node at (0.9,0.3) {$\color{blue}\bullet$};
     \node at (0.5,0.3) {$\color{blue}\scriptstyle -a-1$};
\end{tikzpicture}
}+
\mathord{
\begin{tikzpicture}[baseline=2mm]
	\draw[->,thin,red] (0,-.2) to[out=90,in=-90] (0,.8);
     \node at (.6,.3) {$\color{blue}\clockright$};
	\draw[-] (0,0.31) to (0.4,0.31);
     \node at (0,0.3) {$\dt$};
     \node at (0.4,0.3) {$\dt$};
\end{tikzpicture}
}\:,\label{odd1}\\
\mathord{
\begin{tikzpicture}[baseline=2mm]
	\draw[->,thin,blue] (0,-.2) to[out=90,in=-90] (0,.8);
     \node at (0,.3) {$\color{red}\anticlockleft$};
\end{tikzpicture}
}&:=
\sum_{a \geq 0}
\mathord{
\begin{tikzpicture}[baseline=2mm]
	\draw[->,thin,blue] (0,-.2) to[out=90,in=-90] (0,.8);
     \node at (-1.1,.3) {$\red{\anticlockleft}$};
     \node at (0,0.3) {$\color{blue}\bullet$};
     \node at (0.2,0.3) {$\color{blue}\scriptstyle a$};
     \node at (-0.9,0.3) {$\color{red}\bullet$};
     \node at (-0.5,0.3) {$\color{red}\scriptstyle -a-1$};
\end{tikzpicture}
}-\mathord{
\begin{tikzpicture}[baseline=2mm]
	\draw[->,thin,blue] (0,-.2) to[out=90,in=-90] (0,.8);
     \node at (-.6,.3) {$\color{red}\anticlockleft$};
	\draw[-] (0,0.31) to (-0.4,0.31);
     \node at (0,0.3) {$\dt$};
     \node at (-0.4,0.3) {$\dt$};
\end{tikzpicture}
}\:,
&
\mathord{
\begin{tikzpicture}[baseline=2mm]
	\draw[->,thin,blue] (0,-.2) to[out=90,in=-90] (0,.8);
     \node at (0,.3) {$\color{red}\clockright$};
\end{tikzpicture}
}
&:=
\sum_{a \geq 0}
\mathord{
\begin{tikzpicture}[baseline=2mm]
	\draw[->,thin,blue] (0,-.2) to[out=90,in=-90] (0,.8);
     \node at (1.1,.3) {$\red{\clockright}$};
     \node at (0,0.3) {$\color{blue}\bullet$};
     \node at (-0.2,0.3) {$\color{blue}\scriptstyle a$};
     \node at (0.9,0.3) {$\color{red}\bullet$};
     \node at (0.5,0.3) {$\color{red}\scriptstyle -a-1$};
\end{tikzpicture}
}-\mathord{
\begin{tikzpicture}[baseline=2mm]
	\draw[->,thin,blue] (0,-.2) to[out=90,in=-90] (0,.8);
     \node at (.6,.3) {$\color{red}\clockright$};
	\draw[-] (0,0.31) to (0.4,0.31);
     \node at (0,0.3) {$\dt$};
     \node at (0.4,0.3) {$\dt$};
\end{tikzpicture}
}
\:.\label{odd2}
\end{align}

Note that since red and blue 1-morphisms commute, it does not matter which side the bubbles above are drawn on.

While the Heisenberg and $\mathfrak{sl}_2$ Kac-Moody categories are quite different, because the relations \cref{QHA1,infgras,pop-curl} are almost identical to corresponding relations in the Heisenberg category \cite[(3.14--3.17)]{brundanHeisenbergKacMoody2020}, many of the same relations hold. 
One unfortunate proviso here: the signs are off in several of these relations, so our proofs will be nearly identical but with sign differences. 
That is, one can check that:
\begin{equation}\label{internal-inverse}
	\mathord{
\begin{tikzpicture}[baseline=-0.8mm]
	\draw[->,thin,blue] (0,-.5) to[out=90,in=-90] (0,.5);
     \node at (0,.0) {$\color{red}\anticlockleft$};
\end{tikzpicture}
}
=
\left(
\mathord{
\begin{tikzpicture}[baseline=-0.8mm]
	\draw[->,thin,blue] (0,-.5) to[out=90,in=-90] (0,.5);
     \node at (0,.0) {$\color{red}\clockright$};
\end{tikzpicture}
}
\right)^{-1} \qquad \qquad \displaystyle\mathord{
\begin{tikzpicture}[baseline=-1mm]
\draw[-,thin,blue] (0,-0.38) to[out=180,in=-90] (-.38,0);
\draw[-,thin,blue] (-0.38,0) to[out=90,in=180] (0,0.38);
\draw[->,thin,blue] (0,0.38) to[out=0,in=90] (0.38,0);
\draw[-,thin,blue] (0.378,0.02) to[out=-86,in=0] (0,-0.38);
     \node at (-.29,-.2) {$\color{red}\clockright$};
     \node at (-.27,.27) {$\color{blue}\bullet$};
     \node at (-.44,.28) {$\color{blue}\scriptstyle{a}$};
\end{tikzpicture}}
\:+
\mathord{\begin{tikzpicture}[baseline=-1mm]
\draw[-,thin,red] (0,-0.38) to[out=180,in=-90] (-.38,0);
\draw[-,thin,red] (-0.38,0) to[out=90,in=180] (0,0.38);
\draw[->,thin,red] (0,0.38) to[out=0,in=90] (0.38,0);
\draw[-,thin,red] (0.378,0.02) to[out=-86,in=0] (0,-0.38);
     \node at (-.29,-.2) {$\color{blue}\clockright$};
     \node at (-.27,.27) {$\color{red}\bullet$};
     \node at (-.44,.28) {$\color{red}\scriptstyle{a}$};
\end{tikzpicture}}
=
\sum_{b \in \Z}
\begin{array}{l}
\mathord{
\begin{tikzpicture}[baseline = 1.25mm]
  \draw[<-,thin,blue] (0,0.4) to[out=180,in=90] (-.2,0.2);
  \draw[-,thin,blue] (0.2,0.2) to[out=90,in=0] (0,.4);
 \draw[-,thin,blue] (-.2,0.2) to[out=-90,in=180] (0,0);
  \draw[-,thin,blue] (0,0) to[out=0,in=-90] (0.2,0.2);
   \node at (-0.2,0.2) {$\color{blue}\bullet$};
   \node at (-.38,0.2) {$\color{blue}\scriptstyle{b}$};
\end{tikzpicture}
}\\
\mathord{
\begin{tikzpicture}[baseline = 1.25mm]
  \draw[-,thin,red] (0.2,0.2) to[out=90,in=0] (0,.4);
  \draw[<-,thin,red] (0,0.4) to[out=180,in=90] (-.2,0.2);
\draw[-,thin,red] (-.2,0.2) to[out=-90,in=180] (0,0);
  \draw[-,thin,red] (0,0) to[out=0,in=-90] (0.2,0.2);
   \node at (0.2,0.2) {$\color{red}\bullet$};
   \node at (.65,0.2) {$\color{red}\scriptstyle{a-b-1}$};
\end{tikzpicture}
}\end{array} \quad \text{for all }a\geq 0.
\end{equation}
\begin{equation}
	\displaystyle\:\mathord{
\begin{tikzpicture}[baseline = -1mm]
	\draw[<-,thin,blue] (0.45,.6) to (-0.45,-.6);
	\draw[->,thin,blue] (0.45,-.6) to (-0.45,.6);
   \node at (0.25,-0.3) {$\color{red}\clockright$};
\end{tikzpicture}
}=
\mathord{
\begin{tikzpicture}[baseline = -1mm]
	\draw[<-,thin,blue] (0.45,.6) to (-0.45,-.6);
	\draw[->,thin,blue] (0.45,-.6) to (-0.45,.6);
   \node at (-0.25,0.3) {$\color{red}\clockright$};
\end{tikzpicture}
}
+\mathord{
\begin{tikzpicture}[baseline = -1mm]
	\draw[<-,thin,blue] (0.5,.6) to (0.5,-.6);
	\draw[->,thin,blue] (-0.5,-.6) to (-0.5,.6);
   \node at (0,0) {$\color{red}\clocktop$};
	\draw[-] (.2,0.01) to (0.5,0.01);
	\draw[-] (-.2,0.01) to (-0.5,0.01);
     \node at (0.2,0) {$\dt$};
     \node at (-0.2,0) {$\dt$};
     \node at (0.5,0) {$\dt$};
     \node at (-0.5,0) {$\dt$};
\end{tikzpicture}
}
-\sum_{a,b \geq 0}
\mathord{
\begin{tikzpicture}[baseline = -1mm]
	\draw[<-,thin,blue] (0.5,.6) to (0.5,-.6);
	\draw[->,thin,blue] (-0.5,-.6) to (-0.5,.6);
   \node at (0,0) {$\color{red}\clocktop$};
   \node at (0,-0.2) {$\color{red}\bullet$};
\node at (0,-.35) {$\color{red}\scriptstyle{-a-b-2}$};
   \node at (.5,0) {$\color{blue}\bullet$};
\node at (0.65,0) {$\color{blue}\scriptstyle{b}$};
   \node at (-.5,0) {$\color{blue}\bullet$};
\node at (-0.65,0) {$\color{blue}\scriptstyle{a}$};
\end{tikzpicture}
}
\end{equation}
\begin{equation}
\mathord{
\begin{tikzpicture}[baseline = -1mm]
  \draw[->,thin,blue] (-0.45,-.6) to (-0.45,-.5)
        to[in=180,out=90] (.1,.4) to[out=0,in=90] (.45,0)
        to[out=-90,in=0] (.1,-.4) to [out=180,in=-90] (-0.45,.45) to (-0.45,.6);
   \node at (.45,0) {$\color{red}\clockright$};
\end{tikzpicture}
}
= \mathord{\begin{tikzpicture}[baseline=-1mm]
\draw[->,thin,blue] (-0.8,-0.6) to (-.8,0.6);
\draw[-,thin,red] (0,-0.38) to[out=180,in=-90] (-.38,0);
\draw[->,thin,red] (-0.38,0) to[out=90,in=180] (0,0.38);
\draw[-,thin,red] (0,0.38) to[out=0,in=90] (0.38,0);
\draw[-,thin,red] (0.378,0.02) to[out=-86,in=0] (0,-0.38);
     \node at (.38,0) {$\color{blue}\clockright$};
	\draw[-] (-.38,0.01) to (-0.8,0.01);
     \node at (-0.38,0) {$\dt$};
     \node at (-0.8,0) {$\dt$};
\end{tikzpicture}}
-\sum_{\substack{a\geq 0\\b \in \Z}}
\mathord{\begin{tikzpicture}[baseline=-1mm]
\draw[->,thin,blue] (0,-0.6) to (0,0.6);
   \node at (0,0) {$\color{blue}\bullet$};
   \node at (-.2,0) {$\color{blue}\scriptstyle{a}$};
\end{tikzpicture}}
\begin{array}{l}
\mathord{
\begin{tikzpicture}[baseline = 1.25mm]
  \draw[<-,thin,blue] (0,0.4) to[out=180,in=90] (-.2,0.2);
  \draw[-,thin,blue] (0.2,0.2) to[out=90,in=0] (0,.4);
 \draw[-,thin,blue] (-.2,0.2) to[out=-90,in=180] (0,0);
  \draw[-,thin,blue] (0,0) to[out=0,in=-90] (0.2,0.2);
   \node at (-0.2,0.2) {$\color{blue}\bullet$};
   \node at (-.38,0.2) {$\color{blue}\scriptstyle{b}$};
\end{tikzpicture}
}\\
\mathord{
\begin{tikzpicture}[baseline = 1.25mm]
  \draw[-,thin,red] (0.2,0.2) to[out=90,in=0] (0,.4);
  \draw[<-,thin,red] (0,0.4) to[out=180,in=90] (-.2,0.2);
\draw[-,thin,red] (-.2,0.2) to[out=-90,in=180] (0,0);
  \draw[-,thin,red] (0,0) to[out=0,in=-90] (0.2,0.2);
   \node at (0.2,0.2) {$\color{red}\bullet$};
   \node at (.7,0.2) {$\color{red}\scriptstyle{-a-b-2}$};
\end{tikzpicture}
}\end{array}\label{curl-red-blue}
\end{equation}

\begin{equation}\label{bigon-red-blue}
-\mathord{
\begin{tikzpicture}[baseline=-.5mm]
	\draw[->,thin,blue] (0,0.6) to[out=-90, in=90] (.5,0) to
        [out=-90,in=90] (0,-.6);
	\draw[<-,thin,blue] (.5,0.6) to[out=-90, in=90] (0,0) to
        [out=-90,in=90] (0.5,-.6);
     \node at (.5,0) {$\color{red}\clockright$};
\end{tikzpicture}
}\:-
\mathord{
\begin{tikzpicture}[baseline=-.5mm]
	\draw[<-,thin,blue] (.3,0.6) to[out=-90, in=0] (0,.1);
	\draw[-,thin,blue] (0,.1) to[out = 180, in = -90] (-.3,0.6);
	\draw[-,thin,blue] (.3,-.6) to[out=90, in=0] (0,-0.1);
	\draw[->,thin,blue] (0,-0.1) to[out = 180, in = 90] (-0.3,-.6);
\draw[-,thin,red] (1,-0.38) to[out=0,in=-90] (1.38,0);
\draw[-,thin,red] (1.38,0) to[out=90,in=0] (1,0.38);
\draw[<-,thin,red] (1,0.38) to[out=180,in=90] (.62,0);
\draw[-,thin,red] (.622,0.02) to[out=-86,in=180] (1,-0.38);
     \node at (1.38,0) {$\color{blue}\clockright$};
\draw[-] (.71,-.24) to (.22,-.24);
\draw[-] (.71,.26) to (.22,.26);
      \node at (.71,-0.25) {$\dt$};
   \node at (.71,0.25) {$\dt$};
      \node at (.22,-0.25) {$\dt$};
   \node at (0.22,0.25) {$\dt$};
\end{tikzpicture}}=
\mathord{
\begin{tikzpicture}[baseline = -1mm]
  \draw[->,thin,blue] (0,-.6) to (0,.6);
  \draw[<-,thin,blue] (-.5,-.6) to (-.5,.6);
   \node at (-.5,0) {$\color{red}\clockright$};
\end{tikzpicture}} 
-\displaystyle
\sum_{\substack{a,b \geq 0 \\ c \in \Z}}
\!\!\!\!\!\begin{array}{l}
\;\quad\mathord{
\begin{tikzpicture}[baseline = 1.25mm]
  \draw[<-,thin,blue] (0,0.4) to[out=180,in=90] (-.2,0.2);
  \draw[-,thin,blue] (0.2,0.2) to[out=90,in=0] (0,.4);
 \draw[-,thin,blue] (-.2,0.2) to[out=-90,in=180] (0,0);
  \draw[-,thin,blue] (0,0) to[out=0,in=-90] (0.2,0.2);
   \node at (-0.2,0.2) {$\color{blue}\bullet$};
   \node at (-.38,0.2) {$\color{blue}\scriptstyle{c}$};
\end{tikzpicture}
}\\
\mathord{
\begin{tikzpicture}[baseline = 1.25mm]
  \draw[-,thin,red] (0.2,0.2) to[out=90,in=0] (0,.4);
  \draw[<-,thin,red] (0,0.4) to[out=180,in=90] (-.2,0.2);
\draw[-,thin,red] (-.2,0.2) to[out=-90,in=180] (0,0);
  \draw[-,thin,red] (0,0) to[out=0,in=-90] (0.2,0.2);
   \node at (-0.2,0.2) {$\color{red}\bullet$};
   \node at (-.85,0.2) {$\color{red}\scriptstyle{-a-b-c-3}$};
\end{tikzpicture}
}\end{array}
\mathord{
\begin{tikzpicture}[baseline=-.5mm]
	\draw[<-,thin,blue] (0.3,0.6) to[out=-90, in=0] (0,.1);
	\draw[-,thin,blue] (0,.1) to[out = 180, in = -90] (-0.3,0.6);
      \node at (0.44,-0.3) {$\color{blue}\scriptstyle{b}$};
	\draw[-,thin,blue] (0.3,-.6) to[out=90, in=0] (0,-0.1);
	\draw[->,thin,blue] (0,-0.1) to[out = 180, in = 90] (-0.3,-.6);
      \node at (0.27,-0.3) {$\color{blue}\bullet$};
   \node at (0.27,0.3) {$\color{blue}\bullet$};
   \node at (.43,.3) {$\color{blue}\scriptstyle{a}$};
\end{tikzpicture}}
\end{equation}

\begin{equation}
\displaystyle\mathord{
\begin{tikzpicture}[baseline = -1mm]
  \draw[<-,thin,red] (1,-.6) to (1,.6);
  \draw[->,thin,blue] (-0.45,-.6) to (-0.45,-.5)
        to[in=180,out=90] (.1,.4) to[out=0,in=90] (.45,0)
        to[out=-90,in=0] (.1,-.4) to [out=180,in=-90] (-0.45,.45) to (-0.45,.6);
   \node at (.4,-0.2) {$\color{red}\clockright$};
	\draw[-] (.42,0.19) to (1,0.19);
     \node at (0.42,0.18) {$\dt$};
     \node at (1,0.18) {$\dt$};
\end{tikzpicture}
}\:-\:\mathord{
\begin{tikzpicture}[baseline = -1mm]
  \draw[->,thin,blue] (-1,-.6) to (-1,.6);
  \draw[<-,thin,red] (0.45,-.6) to (0.45,-.5)
        to[in=0,out=90] (-.1,.4) to[out=180,in=90] (-.45,0)
        to[out=-90,in=180] (-.1,-.4) to [out=0,in=-90] (0.45,.45) to (0.45,.6);
   \node at (-.4,-0.2) {$\color{blue}\clockright$};
	\draw[-] (-.42,0.19) to (-1,0.19);
     \node at (-0.42,0.18) {$\dt$};
     \node at (-1,0.18) {$\dt$};
\end{tikzpicture}
}
=
\sum_{\substack{a,b \geq 0 \\ c \in \Z}}
\mathord{
\begin{tikzpicture}[baseline = -1mm]
  \draw[->,thin,blue] (0,-.6) to (0,.6);
  \node at (0,0) {$\color{blue}\bullet$};
   \node at (-0.2,0) {$\color{blue}\scriptstyle{a}$};
\end{tikzpicture}}
\mathord{
\begin{tikzpicture}[baseline=2.5mm]
 \draw[-,thin,blue] (0,0.75) to[out=180,in=90] (-.2,0.55);
  \draw[-,thin,blue] (0.2,0.55) to[out=90,in=0] (0,.75);
 \draw[-,thin,blue] (-.2,0.55) to[out=-90,in=180] (0,.35);
  \draw[<-,thin,blue] (0,0.35) to[out=0,in=-90] (0.2,0.55);
   \node at (0,0.75) {$\color{blue}\bullet$};
   \node at (0.03,.93) {$\color{blue}\scriptstyle{c}$};
  \draw[-,thin,red] (0.2,0) to[out=90,in=0] (0,.2);
  \draw[<-,thin,red] (0,0.2) to[out=180,in=90] (-.2,0);
\draw[-,thin,red] (-.2,0) to[out=-90,in=180] (0,-0.2);
  \draw[-,thin,red] (0,-.2) to[out=0,in=-90] (0.2,0);
   \node at (0,-.2) {$\color{red}\bullet$};
   \node at (0,-0.38) {$\color{red}\scriptstyle{-a-b-c-3}$};
\end{tikzpicture}
}
\mathord{
\begin{tikzpicture}[baseline = -1mm]
  \draw[<-,thin,red] (0,-.6) to (0,.6);
  \node at (0,0) {$\color{red}\bullet$};
   \node at (0.2,0) {$\color{red}\scriptstyle{b}$};
\end{tikzpicture}}\label{teleporter-on-loop}
\end{equation}

\begin{equation}
\mathord{
\begin{tikzpicture}[baseline = -1mm]
	\draw[<-,thin,red] (0.45,.6) to (-0.45,-.6);
	\draw[->,thin,blue] (0.45,-.6) to (-0.45,.6);
   \node at (-0.25,0.3) {$\color{red}\clockright$};
\end{tikzpicture}
}=\mathord{
\begin{tikzpicture}[baseline = -1mm]
	\draw[<-,thin,red] (0.45,.6) to (-0.45,-.6);
	\draw[->,thin,blue] (0.45,-.6) to (-0.45,.6);
   \node at (0.25,-0.3) {$\color{red}\clockright$};
\end{tikzpicture}
}-
\mathord{
\begin{tikzpicture}[baseline = -1mm]
	\draw[<-,thin,red] (0.45,.6) to (-0.45,-.6);
	\draw[->,thin,blue] (0.45,-.6) to (-0.45,.6);
   \node at (0.25,-0.3) {$\color{red}\clockright$};
	\draw[-] (.2,0.26) to (-0.2,0.26);
     \node at (0.2,0.25) {$\dt$};
     \node at (-0.2,0.25) {$\dt$};
	\draw[-] (.3,0.4) to (-0.3,0.4);
     \node at (0.3,0.39) {$\dt$};
     \node at (-0.3,0.39) {$\dt$};
\end{tikzpicture}
}\label{internal-through-red-blue} 
\end{equation}
Since the proofs are identical to those of \cite[Lem. 5.6--12]{brundanHeisenbergKacMoody2020} except for minor sign differences, we omit the proofs.

First, we check the relations of the nilHecke algebra \crefrange{QHA1}{QHA3} in the case where all labels are the same.    This is not difficult to do ``by hand,'' but it might be clearer to explain more conceptually why this works.  Fix an integer $n$ and consider the direct sum of one copy of $\K[y_1,\dots,y_n]$ for each sequence $\mathbf{c}\in \{r,b\}^n$, and localize this ring at the multiplicative set $\{y_i-y_j\}$ for $i,j$ ranging over all pairs where $c_i\neq c_j$.  Let $S_n$ act on this direct sum by permuting variables and entries of $\mathbf{c}$ simultaneously.  We have an action of the nilHecke algebra on this direct sum which sends $\psi_k$ to $\frac{1}{y_k-y_{k+1}}(1-s_k)$; if $c_k=c_{k+1}$, this is just the usual divided difference operator on the corresponding polynomial ring, whereas if $c_k\neq c_{k+1}$, this operator corresponds to the last 4 terms in \cref{com0older}, and in particular, relies on using the fact that we have inverted $y_k-y_{k+1}$.  Thus, checking the relations of the nilHecke algebra \crefrange{QHA1}{QHA3} simply consists of translating the proof that these divided difference operators satisfy the nilHecke relations into diagrams.  

For instance, \cref{QHA1} reduces to the equations below and their mirror images:
\begin{align*}
\mathord{
\begin{tikzpicture}[baseline = -1mm]
	\draw[<-,thin,blue] (0.25,.3) to (-0.25,-.3);
	\draw[->,thin,blue] (0.25,-.3) to (-0.25,.3);
     \node at (-0.12,-0.145) {$\blue{\bullet}$};
\end{tikzpicture}
}
=
\mathord{
\begin{tikzpicture}[baseline = -1mm]
	\draw[<-,blue,thin] (0.25,.3) to (-0.25,-.3);
	\draw[->,blue,thin] (0.25,-.3) to (-0.25,.3);
     \node at (0.12,0.135) {$\blue{\bullet}$};
\end{tikzpicture}}
+\:\mathord{
\begin{tikzpicture}[baseline = -1mm]
 	\draw[->,thin,blue] (0.08,-.3) to (0.08,.3);
	\draw[->,thin,blue] (-0.28,-.3) to (-0.28,.3);
\end{tikzpicture}
}\:,\qquad
\mathord{
\begin{tikzpicture}[baseline = -1mm]
	\draw[<-,thin,blue] (0.25,.3) to (-0.25,-.3);
	\draw[->,thin,red] (0.25,-.3) to (-0.25,.3);
	\draw[-] (-0.12,-.135) to (0.12,-.135);
     \node at (-0.12,-0.145) {$\dt$};
     \node at (0.12,-0.145) {$\dt$};
     \node at (0.12,0.135) {$\blue{\bullet}$};
\end{tikzpicture}
}
=
\mathord{
\begin{tikzpicture}[baseline = -1mm]
	\draw[<-,thin,blue] (0.25,.3) to (-0.25,-.3);
	\draw[->,thin,red] (0.25,-.3) to (-0.25,.3);
	\draw[-] (-0.12,.155) to (0.12,.155);
     \node at (-0.12,0.145) {$\dt$};
     \node at (0.12,0.145) {$\dt$};
     \node at (-0.12,-0.145) {$\blue{\bullet}$};
\end{tikzpicture}
}
\:,
\qquad
\mathord{
\begin{tikzpicture}[baseline = -1mm]
 	\draw[->,thin,blue] (0.08,-.3) to (0.08,.3);
	\draw[->,thin,red] (-0.28,-.3) to (-0.28,.3);
	\draw[-] (-0.28,.06) to (0.08,.06);
     \node at (-0.28,0.05) {$\dt$};
     \node at (0.08,0.05) {$\dt$};
     \node at (-0.28,-0.16) {$\red{\bullet}$};
\end{tikzpicture}
}
=\:
\mathord{
\begin{tikzpicture}[baseline = -1mm]
 	\draw[->,thin,blue] (0.08,-.3) to (0.08,.3);
	\draw[->,thin,red] (-0.28,-.3) to (-0.28,.3);
	\draw[-] (-0.28,-.15) to (0.08,-.15);
     \node at (-0.28,-0.16) {$\dt$};
     \node at (0.08,-0.16) {$\dt$};
     \node at (0.08,0.05) {$\blue{\bullet}$};
\end{tikzpicture}
}
+\:
\mathord{
\begin{tikzpicture}[baseline = -1mm]
 	\draw[->,thin,blue] (0.08,-.3) to (0.08,.3);
	\draw[->,thin,red] (-0.28,-.3) to (-0.28,.3);
\end{tikzpicture}
}
\:.
\end{align*}  Only the first relation of \cref{QHA2} will appear, and this quadratic relation follows by:  
\begin{equation*}
\mathord{
\begin{tikzpicture}[baseline = -1mm]
	\draw[->,thin,blue] (-.2,-.4) to [out=90,in=-90] (0.2,0) to[out=90,in=-90] (-0.2,.4);
	\draw[->,thin,blue] (.2,-.4) to [out=90,in=-90] (-0.2,0) to[out=90,in=-90] (0.2,.4);
\end{tikzpicture}
}
-
\mathord{
\begin{tikzpicture}[baseline = -1mm]
	\draw[->,thin,red] (-.2,-.4) to [out=90,in=-90] (0.2,0) to[out=90,in=-90] (-0.2,.4);
	\draw[->,thin,blue] (.2,-.4) to [out=90,in=-90] (-0.2,0) to[out=90,in=-90] (0.2,.4);
\draw[-] (.17,-.31) to (-.17,-.31);
      \node at (-.17,-0.32) {$\dt$};
   \node at (.17,-0.32) {$\dt$};
\draw[-] (.17,.09) to (-.17,.09);
      \node at (-.17,0.08) {$\dt$};
   \node at (.17,0.08) {$\dt$};
\end{tikzpicture}
}
+
\mathord{
\begin{tikzpicture}[baseline = -1mm]
	\draw[->,thin,red] (0.2,-.4) to (-0.2,.4);
	\draw[->,thin,blue] (-0.2,-.4) to (0.2,.4);
\draw[-] (.15,-.31) to (-.15,-.31);
      \node at (-.15,-0.32) {$\dt$};
   \node at (.15,-0.32) {$\dt$};
\draw[-] (.09,-.18) to (-.09,-.18);
      \node at (-.09,-0.19) {$\dt$};
   \node at (.09,-0.19) {$\dt$};
\end{tikzpicture}}
-
\mathord{
\begin{tikzpicture}[baseline = -1mm]
	\draw[->,thin,red] (0.2,-.4) to (-0.2,.4);
	\draw[->,thin,blue] (-0.2,-.4) to (0.2,.4);
\draw[-] (.09,.18) to (-.09,.18);
      \node at (-.09,0.17) {$\dt$};
   \node at (.09,0.17) {$\dt$};
\draw[-] (.09,-.18) to (-.09,-.18);
      \node at (-.09,-0.19) {$\dt$};
   \node at (.09,-0.19) {$\dt$};
\end{tikzpicture}}
+\mathord{
\begin{tikzpicture}[baseline = -1mm]
	\draw[->,thin,blue] (0.2,-.4) to (0.2,.4);
	\draw[->,thin,red] (-0.2,-.4) to (-0.2,.4);
\draw[-] (.2,.18) to (-.2,.18);
      \node at (-.2,0.17) {$\dt$};
   \node at (.2,0.17) {$\dt$};
\draw[-] (.2,-.18) to (-.2,-.18);
      \node at (-.2,-0.19) {$\dt$};
   \node at (.2,-0.19) {$\dt$};
\end{tikzpicture}
}+\:\mathord{
\begin{tikzpicture}[baseline = -1mm]
	\draw[->,thin,red] (-.2,-.4) to [out=90,in=-90] (0.2,0) to[out=90,in=-90] (-0.2,.4);
	\draw[->,thin,red] (.2,-.4) to [out=90,in=-90] (-0.2,0) to[out=90,in=-90] (0.2,.4);
\end{tikzpicture}
}
-
\mathord{
\begin{tikzpicture}[baseline = -1mm]
	\draw[->,thin,blue] (-.2,-.4) to [out=90,in=-90] (0.2,0) to[out=90,in=-90] (-0.2,.4);
	\draw[->,thin,red] (.2,-.4) to [out=90,in=-90] (-0.2,0) to[out=90,in=-90] (0.2,.4);
\draw[-] (.17,-.31) to (-.17,-.31);
      \node at (-.17,-0.32) {$\dt$};
   \node at (.17,-0.32) {$\dt$};
\draw[-] (.17,.09) to (-.17,.09);
      \node at (-.17,0.08) {$\dt$};
   \node at (.17,0.08) {$\dt$};
\end{tikzpicture}
}
+
\mathord{
\begin{tikzpicture}[baseline = -1mm]
	\draw[->,thin,blue] (0.2,-.4) to (-0.2,.4);
	\draw[->,thin,red] (-0.2,-.4) to (0.2,.4);
\draw[-] (.15,-.31) to (-.15,-.31);
      \node at (-.15,-0.32) {$\dt$};
   \node at (.15,-0.32) {$\dt$};
\draw[-] (.09,-.18) to (-.09,-.18);
      \node at (-.09,-0.19) {$\dt$};
   \node at (.09,-0.19) {$\dt$};
\end{tikzpicture}}
-
\mathord{
\begin{tikzpicture}[baseline = -1mm]
	\draw[->,thin,blue] (0.2,-.4) to (-0.2,.4);
	\draw[->,thin,red] (-0.2,-.4) to (0.2,.4);
\draw[-] (.09,.18) to (-.09,.18);
      \node at (-.09,0.17) {$\dt$};
   \node at (.09,0.17) {$\dt$};
\draw[-] (.09,-.18) to (-.09,-.18);
      \node at (-.09,-0.19) {$\dt$};
   \node at (.09,-0.19) {$\dt$};
\end{tikzpicture}}
+\mathord{
\begin{tikzpicture}[baseline = -1mm]
	\draw[->,thin,red] (0.2,-.4) to (0.2,.4);
	\draw[->,thin,blue] (-0.2,-.4) to (-0.2,.4);
\draw[-] (.2,.18) to (-.2,.18);
\node at (-.2,0.17) {$\dt$};
   \node at (.2,0.17) {$\dt$};
\draw[-] (.2,-.18) to (-.2,-.18);
      \node at (-.2,-0.19) {$\dt$};
   \node at (.2,-0.19) {$\dt$};
\end{tikzpicture}
}=0\:.
\end{equation*}
The braid type relation \cref{QHA3} is a similar calculation; we will spare the reader the diagrammatic proof of it, but note that it corresponds to the proof that $\psi_k\psi_{k+1}\psi_k=\psi_{k+1}\psi_k\psi_{k+1}$ in the polynomial representation discussed at the start of the proof.
	
Now, we turn to the relations \crefrange{rightadj}{flight2}.
The relations \cref{rightadj} are immediate and \cref{leftadj} follow from \cref{internal-inverse}.  The proof of \crefrange{loop1}{loop2} is exactly as in \cite[Th. 5.4]{brundanDegenerateHeisenberg2023}:  the LHS is sent to $$
-
\mathord{
\begin{tikzpicture}[baseline = -1mm]
  \draw[->,thin,blue] (-0.45,-.4) to (-0.45,-.3)
        to[in=180,out=90] (-.2,.2) to[out=0,in=90] (0,0.05)
        to[out=-90,in=0] (-.2,-.2) to [out=180,in=-90] (-0.45,.3) to (-0.45,.4);
   \node at (0,0) {$\color{red}\smallclock$};
\end{tikzpicture}
}
+
\mathord{\begin{tikzpicture}[baseline=-1mm]
\draw[->,thin,blue] (-0.6,-0.4) to (-.6,0.4);
\draw[-,thin,red] (0,-0.2) to[out=180,in=-90] (-.2,0);
\draw[->,thin,red] (-0.2,0) to[out=90,in=180] (0,0.2);
\draw[-,thin,red] (0,0.2) to[out=0,in=90] (0.2,0);
\draw[-,thin,red] (0.2,0) to[out=-90,in=0] (0,-0.2);
     \node at (.2,0) {$\color{blue}\smallclock$};
	\draw[-] (-.2,0.01) to (-0.6,0.01);
     \node at (-0.2,0) {$\dt$};
     \node at (-0.6,0) {$\dt$};
\end{tikzpicture}}
-
\mathord{
\begin{tikzpicture}[baseline = -1mm]
  \draw[->,thin,red] (-0.45,-.4) to (-0.45,-.3)
        to[in=180,out=90] (-.2,.2) to[out=0,in=90] (0,0.05)
        to[out=-90,in=0] (-.2,-.2) to [out=180,in=-90] (-0.45,.3) to (-0.45,.4);
   \node at (0,0) {$\color{blue}\smallclock$};
\end{tikzpicture}
}
-
\mathord{\begin{tikzpicture}[baseline=-1mm]
\draw[->,thin,red] (-0.6,-0.4) to (-.6,0.4);
\draw[-,thin,blue] (0,-0.2) to[out=180,in=-90] (-.2,0);
\draw[->,thin,blue] (-0.2,0) to[out=90,in=180] (0,0.2);
\draw[-,thin,blue] (0,0.2) to[out=0,in=90] (0.2,0);
\draw[-,thin,blue] (0.2,0) to[out=-90,in=0] (0,-0.2);
     \node at (.2,0) {$\color{red}\smallclock$};
	\draw[-] (-.2,0.01) to (-0.6,0.01);
     \node at (-0.2,0) {$\dt$};
     \node at (-0.6,0) {$\dt$};
\end{tikzpicture}}
= \delta_{k,0}\:
\mathord{\begin{tikzpicture}[baseline=-1mm]
\draw[->,thin,blue] (0,-0.4) to (0,0.4);
\end{tikzpicture}}
\:+\: \delta_{k,0}\:
\mathord{\begin{tikzpicture}[baseline=-1mm]
\draw[->,thin,red] (0,-0.4) to (0,0.4);
\end{tikzpicture}}
\:.
$$
where the equality follows from \cref{curl-red-blue}.  

Finally, we turn to the proof of \crefrange{flight1}{flight2}. 
Under our functor, the sideways crossings will be sent to:
	\begin{align*}
\mathord{
\begin{tikzpicture}[baseline = 0]
	\draw[<-,thin] (0.28,-.3) to (-0.28,.4);
	\draw[->,thin] (-0.28,-.3) to (0.28,.4);
\end{tikzpicture}
}
&\mapsto
\mathord{
\begin{tikzpicture}[baseline = 0]
	\draw[<-,thin,blue] (0.28,-.3) to (-0.28,.4);
	\draw[->,thin,blue] (-0.28,-.3) to (0.28,.4);
\end{tikzpicture}
}
+
\mathord{
\begin{tikzpicture}[baseline = 0]
	\draw[<-,thin,red] (0.28,-.3) to (-0.28,.4);
	\draw[->,thin,red] (-0.28,-.3) to (0.28,.4);
\end{tikzpicture}
}
-\mathord{
\begin{tikzpicture}[baseline = 0]
	\draw[<-,thin,blue] (0.28,-.3) to (-0.28,.4);
	\draw[->,thin,red] (-0.28,-.3) to (0.28,.4);
	\draw[-] (-0.15,-.14) to (0.15,-.14);
     \node at (0.15,-.15) {$\dt$};
     \node at (-0.15,-.15) {$\dt$};
\end{tikzpicture}
}
+\mathord{
\begin{tikzpicture}[baseline = 0]
	\draw[<-,thin,red] (0.28,-.3) to (-0.28,.4);
	\draw[->,thin,blue] (-0.28,-.3) to (0.28,.4);
	\draw[-] (-0.15,-.14) to (0.15,-.14);
     \node at (0.15,-.15) {$\dt$};
     \node at (-0.15,-.15) {$\dt$};
\end{tikzpicture}
}
-
\mathord{
\begin{tikzpicture}[baseline = 0mm]
	\draw[<-,thin,blue] (0.4,0.4) to[out=-90, in=0] (0.1,-.1);
	\draw[-,thin,blue] (0.1,-.1) to[out = 180, in = -90] (-0.2,0.4);
	\draw[<-,thin,red] (1.4,-.3) to[out=90, in=0] (1.1,0.2);
	\draw[-,thin,red] (1.1,0.2) to[out = 180, in = 90] (0.8,-.3);
	\draw[-] (.34,0.06) to (0.88,0.06);
     \node at (0.34,0.05) {$\dt$};
     \node at (0.88,0.05) {$\dt$};
\end{tikzpicture}
}+
\mathord{
\begin{tikzpicture}[baseline = 0mm]
	\draw[<-,thin,red] (0.4,0.4) to[out=-90, in=0] (0.1,-.1);
	\draw[-,thin,red] (0.1,-.1) to[out = 180, in = -90] (-0.2,0.4);
	\draw[<-,thin,blue] (1.4,-.3) to[out=90, in=0] (1.1,0.2);
	\draw[-,thin,blue] (1.1,0.2) to[out = 180, in = 90] (0.8,-.3);
	\draw[-] (.34,0.06) to (0.88,0.06);
     \node at (0.34,0.05) {$\dt$};
     \node at (0.88,0.05) {$\dt$};
\end{tikzpicture}
}\:,\\
\mathord{
\begin{tikzpicture}[baseline = 0]
	\draw[->,thin] (0.28,-.3) to (-0.28,.4);
	\draw[<-,thin] (-0.28,-.3) to (0.28,.4);
\end{tikzpicture}
}&\mapsto
-\mathord{
\begin{tikzpicture}[baseline = 0]
	\draw[->,thin,blue] (0.28,-.3) to (-0.28,.4);
	\draw[<-,thin,blue] (-0.28,-.3) to (0.28,.4);
     \node at (.13,.21) {$\color{red}\smallclock$};
\node at (-.14,-.09) {$\color{red}\smallanticlock$};
\end{tikzpicture}
}
-\mathord{
\begin{tikzpicture}[baseline = 0]
	\draw[->,thin,red] (0.28,-.3) to (-0.28,.4);
	\draw[<-,thin,red] (-0.28,-.3) to (0.28,.4);
     \node at (.13,.21) {$\color{blue}\smallclock$};
\node at (-.14,-.09) {$\color{blue}\smallanticlock$};
\end{tikzpicture}
}
+\mathord{
\begin{tikzpicture}[baseline = 0]
	\draw[->,thin,blue] (0.35,-.3) to (-0.35,.4);
	\draw[<-,thin,red] (-0.35,-.3) to (0.35,.4);
	\draw[-] (-0.25,-.21) to (0.25,-.21);
     \node at (0.25,-.22) {$\dt$};
     \node at (-0.25,-.22) {$\dt$};
     \node at (.16,.23) {$\color{blue}\smallclock$};
\node at (-.11,-.06) {$\color{blue}\smallanticlock$};
\end{tikzpicture}
}
-\mathord{
\begin{tikzpicture}[baseline = 0]
	\draw[->,thin,red] (0.35,-.3) to (-0.35,.4);
	\draw[<-,thin,blue] (-0.35,-.3) to (0.35,.4);
	\draw[-] (-0.25,-.21) to (0.25,-.21);
     \node at (0.25,-.22) {$\dt$};
     \node at (-0.25,-.22) {$\dt$};
     \node at (.16,.23) {$\color{red}\smallclock$};
\node at (-.11,-.06) {$\color{red}\smallanticlock$};
\end{tikzpicture}
}
-
\mathord{
\begin{tikzpicture}[baseline = 0mm]
	\draw[-,thin,red] (1.4,0.4) to[out=-90, in=0] (1.1,-.1);
	\draw[->,thin,red] (1.1,-.1) to[out = 180, in = -90] (0.8,0.4);
	\draw[-,thin,blue] (.4,-.3) to[out=90, in=0] (.1,0.2);
	\draw[->,thin,blue] (.1,0.2) to[out = 180, in = 90] (-0.2,-.3);
	\draw[-] (.34,0.06) to (0.88,0.06);
\node at (1.35,.15) {$\color{blue}\smallclock$};
\node at (-.15,-.04) {$\color{red}\smallanticlock$};
     \node at (0.34,0.05) {$\dt$};
     \node at (0.88,0.05) {$\dt$};
\end{tikzpicture}
}+
\mathord{
\begin{tikzpicture}[baseline = 0mm]
	\draw[-,thin,blue] (1.4,0.4) to[out=-90, in=0] (1.1,-.1);
	\draw[->,thin,blue] (1.1,-.1) to[out = 180, in = -90] (0.8,0.4);
	\draw[-,thin,red] (.4,-.3) to[out=90, in=0] (.1,0.2);
	\draw[->,thin,red] (.1,0.2) to[out = 180, in = 90] (-0.2,-.3);
	\draw[-] (.34,0.06) to (0.88,0.06);
     \node at (0.34,0.05) {$\dt$};
     \node at (0.88,0.05) {$\dt$};
\node at (1.35,.15) {$\color{red}\smallclock$};
\node at (-.15,-.04) {$\color{blue}\smallanticlock$};
\end{tikzpicture}
}\:.
\end{align*}
Thus, the image of the LHS of \cref{flight1} is:
\begin{align}
	\mathord{
\begin{tikzpicture}[baseline = 0mm]
	\draw[->,thin] (0.28,0) to[out=90,in=-90] (-0.28,.6);
	\draw[-,thin] (-0.28,0) to[out=90,in=-90] (0.28,.6);
	\draw[<-,thin] (0.28,-.6) to[out=90,in=-90] (-0.28,0);
	\draw[-,thin] (-0.28,-.6) to[out=90,in=-90] (0.28,0);
\end{tikzpicture}
}&=A+B+C & A&=-\mathord{
\begin{tikzpicture}[baseline=-.5mm]
	\draw[->,thin,blue] (0,0.6) to[out=-90, in=90] (.5,0) to
        [out=-90,in=90] (0,-.6);
	\draw[<-,thin,blue] (.5,0.6) to[out=-90, in=90] (0,0) to
        [out=-90,in=90] (0.5,-.6);
     \node at (.39,-.15) {$\color{red}\smallclock$};
\node at (0.11,-.39) {$\color{red}\smallanticlock$};
\end{tikzpicture}
}-
\mathord{
\begin{tikzpicture}[baseline=-.5mm]
	\draw[<-,thin,blue] (.3,0.6) to[out=-90, in=0] (0,.1);
	\draw[-,thin,blue] (0,.1) to[out = 180, in = -90] (-.3,0.6);
	\draw[-,thin,blue] (.3,-.6) to[out=90, in=0] (0,-0.1);
	\draw[->,thin,blue] (0,-0.1) to[out = 180, in = 90] (-0.3,-.6);
\draw[-,thin,red] (.9,-0.38) to[out=0,in=-90] (1.18,0);
\draw[-,thin,red] (1.18,0) to[out=90,in=0] (.9,0.38);
\draw[<-,thin,red] (.9,0.38) to[out=180,in=90] (.62,0);
\draw[-,thin,red] (.622,0.02) to[out=-86,in=180] (.9,-0.38);
     \node at (1.18,0) {$\color{blue}\smallclock$};
\draw[-] (.71,-.24) to (.22,-.24);
\draw[-] (.71,.26) to (.22,.26);
      \node at (.71,-0.25) {$\dt$};
   \node at (.71,0.25) {$\dt$};
      \node at (.22,-0.25) {$\dt$};
   \node at (0.22,0.25) {$\dt$};
\node at (-0.25,-.33) {$\color{red}\smallanticlock$};
\end{tikzpicture}}\\
B&=-\mathord{
\begin{tikzpicture}[baseline=-.5mm]
	\draw[->,thin,blue] (0,0.6) to[out=-90, in=90] (.5,0) to
        [out=-90,in=90] (0,-.6);
	\draw[<-,thin,red] (.5,0.6) to[out=-90, in=90] (0,0) to
        [out=-90,in=90] (0.5,-.6);
     \node at (.39,-.15) {$\color{red}\smallclock$};
\node at (0.11,-.39) {$\color{red}\smallanticlock$};
\end{tikzpicture}
}+
\mathord{
\begin{tikzpicture}[baseline=-.5mm]
	\draw[->,thin,blue] (0,0.6) to[out=-90, in=90] (.5,0) to
        [out=-90,in=90] (0,-.6);
	\draw[<-,thin,red] (.5,0.6) to[out=-90, in=90] (0,0) to
        [out=-90,in=90] (0.5,-.6);
     \node at (.39,-.15) {$\color{red}\smallclock$};
\node at (0.11,-.39) {$\color{red}\smallanticlock$};
\draw[-] (.44,.42) to (.07,.42);
      \node at (.44,0.41) {$\dt$};
   \node at (.07,0.41) {$\dt$};
\draw[-] (.44,.16) to (.07,.16);
      \node at (.44,0.15) {$\dt$};
   \node at (.07,0.15) {$\dt$};
\end{tikzpicture}
}& C&=\mathord{
\begin{tikzpicture}[baseline = -3.8mm]
	\draw[<-,thin,blue] (0.4,0.4) to[out=-90, in=0] (0.1,-.1);
	\draw[-,thin,blue] (0.1,-.1) to[out = 180, in = -90] (-0.2,0.4);
	\draw[<-,thin,red] (.8,-.7) to [out=90,in=-90] (1.4,-.1) to[out=90, in=0] (1.1,0.2);
	\draw[-,thin,red] (1.1,0.2) to[out = 180, in = 90] (0.8,-.1)
        to[out=-90,in=90] (1.4,-.7);
	\draw[-] (.34,0.06) to (0.85,0.06);
     \node at (0.34,0.05) {$\dt$};
     \node at (0.85,0.05) {$\dt$};
\node at (1.38,-.22) {$\color{blue}\smallclock$};
\node at (.9,-.62) {$\color{blue}\smallanticlock$};
\end{tikzpicture}
}-\mathord{
\begin{tikzpicture}[baseline = 0mm]
	\draw[-,thin,blue] (.8,.8) to [out=-90,in=90] (1.4,0.2) to[out=-90, in=0] (1.1,-.1);
	\draw[->,thin,blue] (1.1,-.1) to[out = 180, in = -90]
        (0.8,0.2) to [out=90,in=-90] (1.4,.8);
	\draw[-,thin,red] (.4,-.3) to[out=90, in=0] (.1,0.2);
	\draw[->,thin,red] (.1,0.2) to[out = 180, in = 90] (-0.2,-.3);
	\draw[-] (.34,0.06) to (0.85,0.06);
     \node at (0.34,0.05) {$\dt$};
     \node at (0.85,0.05) {$\dt$};
\node at (1.35,.15) {$\color{red}\smallclock$};
\node at (-.15,-.04) {$\color{blue}\smallanticlock$};
\end{tikzpicture}
}
\end{align}
We simplify these expressions as follows:
\begin{itemize}
\item[(A)] For $A$, we multiply \cref{bigon-red-blue} on the bottom left strand by $\mathord{
\begin{tikzpicture}[baseline=2mm]
	\draw[<-,thin,blue] (0,-.2) to[out=90,in=-90] (0,.8);
     \node at (0,.3) {$\color{red}\anticlockleft$};
\end{tikzpicture}
}$, so the lefthand side of the equation matches $A$, using \cref{internal-inverse} to cancel pairs internal bubbles.
\item[(B)] For $B$,  we apply a 90 degree counter-clockwise rotation to \cref{internal-through-red-blue}, and multiply it on the bottom left by $\mathord{
\begin{tikzpicture}[baseline=2mm]
	\draw[<-,thin,blue] (0,-.2) to[out=90,in=-90] (0,.8);
     \node at (0,.3) {$\color{red}\anticlockleft$};
\end{tikzpicture}
}$ and on the top by $\mathord{
\begin{tikzpicture}[baseline = 0]
	\draw[<-,thin,blue] (0.28,-.3) to (-0.28,.4);
	\draw[->,thin,red] (-0.28,-.3) to (0.28,.4);
\end{tikzpicture}
}$, so the right-hand side of the equation matches $B$.  Using the cancellation of internal bubbles \cref{internal-inverse}, the fact that internal bubbles and teleporters commute, we arrive at a  red-blue bigon $\mathord{
\begin{tikzpicture}[baseline=-.5mm]
	\draw[->,thin,blue] (0,0.6) to[out=-90, in=90] (.5,0) to
        [out=-90,in=90] (0,-.6);
	\draw[<-,thin,red] (.5,0.6) to[out=-90, in=90] (0,0) to
        [out=-90,in=90] (0.5,-.6);
\end{tikzpicture}
}$.  We can straighten this by \cref{flight2}.
 \item[(C)] For $C$, we multiply the outgoing red strand of \cref{teleporter-on-loop} by $\mathord{
\begin{tikzpicture}[baseline=2mm]
	\draw[<-,thin,red] (0,-.2) to[out=90,in=-90] (0,.8);
     \node at (0,.3) {$\color{blue}\anticlockleft$};
\end{tikzpicture}
}$.  After isotopy, left-hand side of the equation matches $C$.
\end{itemize}
Taking the other side of the equation in each of the cases above, we arrive at:
\begin{equation}
A=\mathord{
\begin{tikzpicture}[baseline = -1mm]
  \draw[->,thin,blue] (0,-.6) to (0,.6);
  \draw[<-,thin,blue] (-.5,-.6) to (-.5,.6);
\end{tikzpicture}}
\:-\;\displaystyle
\sum_{\substack{a,b \geq 0 \\ c \in \Z}}
\!\!\!\!\!\begin{array}{l}
\;\quad\mathord{
\begin{tikzpicture}[baseline = 1.25mm]
  \draw[<-,thin,blue] (0,0.4) to[out=180,in=90] (-.2,0.2);
  \draw[-,thin,blue] (0.2,0.2) to[out=90,in=0] (0,.4);
 \draw[-,thin,blue] (-.2,0.2) to[out=-90,in=180] (0,0);
  \draw[-,thin,blue] (0,0) to[out=0,in=-90] (0.2,0.2);
   \node at (-0.2,0.2) {$\color{blue}\bullet$};
   \node at (-.38,0.2) {$\color{blue}\scriptstyle{c}$};
\end{tikzpicture}
}\\
\mathord{
\begin{tikzpicture}[baseline = 1.25mm]
  \draw[-,thin,red] (0.2,0.2) to[out=90,in=0] (0,.4);
  \draw[<-,thin,red] (0,0.4) to[out=180,in=90] (-.2,0.2);
\draw[-,thin,red] (-.2,0.2) to[out=-90,in=180] (0,0);
  \draw[-,thin,red] (0,0) to[out=0,in=-90] (0.2,0.2);
   \node at (-0.2,0.2) {$\color{red}\bullet$};
   \node at (-.85,0.2) {$\color{red}\scriptstyle{-a-b-c-3}$};
\end{tikzpicture}
}\end{array}
\mathord{
\begin{tikzpicture}[baseline=-.5mm]
	\draw[<-,thin,blue] (0.3,0.6) to[out=-90, in=0] (0,.1);
	\draw[-,thin,blue] (0,.1) to[out = 180, in = -90] (-0.3,0.6);
      \node at (0.44,-0.3) {$\color{blue}\scriptstyle{b}$};
	\draw[-,thin,blue] (0.3,-.6) to[out=90, in=0] (0,-0.1);
	\draw[->,thin,blue] (0,-0.1) to[out = 180, in = 90] (-0.3,-.6);
      \node at (0.27,-0.3) {$\color{blue}\bullet$};
   \node at (0.27,0.3) {$\color{blue}\bullet$};
   \node at (.43,.3) {$\color{blue}\scriptstyle{a}$};
\node at (-0.25,-.33) {$\color{red}\smallanticlock$};
\end{tikzpicture}}
\qquad B=\mathord{
\begin{tikzpicture}[baseline = -1mm]
  \draw[->,thin,red] (0,-.6) to (0,.6);
  \draw[<-,thin,blue] (-.5,-.6) to (-.5,.6);
\end{tikzpicture}}\qquad 
C=-\sum_{\substack{a,b \geq 0 \\ c \in \Z}}
\!\!\!\!\!\begin{array}{l}
\;\quad\mathord{
\begin{tikzpicture}[baseline = 1.25mm]
  \draw[<-,thin,blue] (0,0.4) to[out=180,in=90] (-.2,0.2);
  \draw[-,thin,blue] (0.2,0.2) to[out=90,in=0] (0,.4);
 \draw[-,thin,blue] (-.2,0.2) to[out=-90,in=180] (0,0);
  \draw[-,thin,blue] (0,0) to[out=0,in=-90] (0.2,0.2);
   \node at (-0.2,0.2) {$\color{blue}\bullet$};
   \node at (-.38,0.2) {$\color{blue}\scriptstyle{c}$};
\end{tikzpicture}
}\\
\mathord{
\begin{tikzpicture}[baseline = 1.25mm]
  \draw[-,thin,red] (0.2,0.2) to[out=90,in=0] (0,.4);
  \draw[<-,thin,red] (0,0.4) to[out=180,in=90] (-.2,0.2);
\draw[-,thin,red] (-.2,0.2) to[out=-90,in=180] (0,0);
  \draw[-,thin,red] (0,0) to[out=0,in=-90] (0.2,0.2);
   \node at (-0.2,0.2) {$\color{red}\bullet$};
   \node at (-.85,0.2) {$\color{red}\scriptstyle{-a-b-c-3}$};
\end{tikzpicture}
}\end{array}
\mathord{
\begin{tikzpicture}[baseline=-.5mm]
	\draw[<-,thin,blue] (0.3,0.6) to[out=-90, in=0] (0,.1);
	\draw[-,thin,blue] (0,.1) to[out = 180, in = -90] (-0.3,0.6);
      \node at (0.44,-0.3) {$\color{red}\scriptstyle{b}$};
	\draw[-,thin,red] (0.3,-.6) to[out=90, in=0] (0,-0.1);
	\draw[->,thin,red] (0,-0.1) to[out = 180, in = 90] (-0.3,-.6);
      \node at (0.27,-0.3) {$\color{red}\bullet$};
   \node at (0.27,0.3) {$\color{blue}\bullet$};
   \node at (.43,.3) {$\color{blue}\scriptstyle{a}$};
\node at (-.25,-.3) {$\color{blue}\smallanticlock$};
\end{tikzpicture}}
\end{equation}
This immediately completes the proof of \cref{flight1}, and the proof of \cref{flight2} is identical.  
\end{proof}

\subsection{Proof of \cref{lem:g-action}}

Now we turn to the proof of \cref{lem:g-action}.  By \cref{lem:BrundanRouquier}, we only need to give the image of the generators \crefrange{eq:BrundanRouquier-gens}{eq:BrundanRouquier-gens2} and verify the relations (KM1-3).  These images are given in the statement of \cref{lem:g-action}.

To verify this result, we need to check the relations (KM1-3).  As we note in the proof below, the conditions (KM1) and (KM3) only involve a single node in the Dynkin diagram and thus follow from the $\mathfrak{sl}_2$-case established in \cref{lem:many-sl2-general}.  Thus, we need to focus on the condition (KM2) and consequently on the relation between the KLR algebras $R$ for $\fg$ and $\mathsf{R}$ for $\tilde{\fg}$. These are related by some results which we think will be of some independent interest for those who are interested in KLR algebras.  While completeness of the spectrum $U_i$ (in the sense of \cref{def:tilde-I}) is needed for the comparison of the full Kac-Moody actions, for the results \cref{lem:gamma-homo,prop:nu-iso}, it is not needed.  
\begin{definition}
Let $\mathsf{\widehat{R}}$ be completion of $\mathsf{{R}}$ which is the inverse limit of the quotients $\mathsf{{R}}/\mathsf{I}_n$ with respect to the two-sided ideals $\mathsf{I}_n$ generated by all elements of degree $\geq n$.  By \cite[Lemma 2.3]{WebBKnote}, this is the same as the completion with respect to the topology where the elements of degree $\geq n$ form neighborhoods of 0.  

  Let $\mathsf{{P}}$ be the polynomial representation for $\mathsf{{R}}$ defined by the equations \crefrange{eq:poly-action1}{eq:poly-action2}, using the polynomials $\mathsf{{P}}_{*,*}$ and $\mathsf{\widehat{P}}$ its completion with respect to the grading topology, which is faithful by  \cite[Lem. 2.5]{WebBKnote}.
\end{definition}

\begin{lemma}\label{lem:gamma-homo}
The formulas \crefrange{com0new2}{com0newerer} define an injective homomorphism $\gamma\colon R\to \mathsf{\widehat{R}}$.
\end{lemma}
\begin{proof}
  The algebras $R$ and $\mathsf{R}$ are equipped with the polynomial representations $P$ and $\mathsf{P}$ as in \crefrange{eq:poly-action1}{eq:poly-action2}. The homomorphism $\gamma_P\colon P\to \widehat{\mathsf{P}}$ on the underlying polynomial representations is defined in \cref{com0new}.  

Now, we will confirm that for each generator of $R$, its action on the polynomial representation is intertwined with its image in $\mathsf{\widehat R}$ by this map between polynomials, that is, we have $\gamma(r)\gamma_P(p)=\gamma_P(rp)$ for all generators $r$ and all $p\in P$.  
For $r=y_k$ a dot, we have 
\[\gamma(y_k)\gamma_P(p)=\gamma_{i_k,\aa_k}(Y_k) \gamma_P(p)=\gamma_P(Y_k p).\]
For $r=\psi_{k}$ a crossing with $i_k\neq i_{k+1}$, the factorization \cref{eq:P-factorization} shows that 
\begin{multline*}
  \gamma(\psi_k)\gamma_P(p)=P_{i_ki_{k+1}}^{\circ,\aa_k \aa_{k+1}}(y_k,y_{k+1}) \psi_k\gamma_P(p)=P_{i_ki_{k+1}}^{\circ,\aa_k \aa_{k+1}}(y_k,y_{k+1}) \psi_k\gamma_P(p)\\=P_{i_ki_{k+1}}^{\circ,\aa_k \aa_{k+1}}(y_k,y_{k+1}) \mathsf{P}_{i_k,i_{k+1}}(y_k,y_{k+1})s_k\gamma_P(p)\\=P_{i_k,i_{k+1}}(\pwr_{a,i_k}(y_k),\pwr_{b,i_{k+1}}(y_k))s_k\gamma_P( p)=\gamma_P(\psi_k p).
\end{multline*}
For $i_k=i_{k+1}$ and $\aa_k=\aa_{k+1}$, we have the same calculation as \cref{eq:poly-action3-check}, that is
\[ \gamma(\psi_k)\gamma_P(p)= q_{a,i}(y_k,y_{k+1})\frac{\gamma_P(s_kp-p)}{y_{k+1}-y_k}=\gamma_P(\psi_k p).\]
Finally for $i_k=i_{k+1}$ and $\aa_k\neq \aa_{k+1}$, we have 
\[ \gamma(\psi_k)\gamma_P(p)= \frac{\gamma_P(s_kp)}{\pwr_{\aa_{k},i_k}(y_{k+1})-\pwr_{\aa_{k+1},i_k}(y_{k})}-\frac{\gamma_P(p)}{\pwr_{\aa_k,i_{k+1}}(y_{k+1})-\pwr_{\aa_{k},i_k}(y_{k})}=\gamma_P(\psi_k p).\]
 This verifies that we have $\gamma(r)\gamma_P(p)=\gamma_P(rp)$ for all $r\in R$ and all $p\in P$.
Since the action of $R$ on $P$, the action of $\mathsf{R}$ and $\mathsf{P}$, and the action of the completion $\widehat{\mathsf{R}}$ on the completion $\widehat{\mathsf{P}}$ are faithful (the last of these by \cite[Lem. 2.5]{WebBKnote}), this means that $\gamma$ must extend to an injective homomorphism.  
\end{proof}

In fact, one can show relatively easily that this map becomes an isomorphism if we replace $R$ by an appropriate completion.

For any commutative subalgebra $S\subset R$ and any finite-dimensional quotient $R/J$, we can decompose $R/J$ over the maximal spectrum of $S$.  That is, we can diagonalize the semi-simple part of each element of $S$.  For each maximal ideal $\mathfrak{m}$, there is a unique idempotent $e$ in $R/J$ whose image is the vectors killed by $\mathfrak{m}^N$ for some $N$. 

For any finite subset $T\subset \MaxSpec(S)$, we can define the completion of $R$ at $T$ as the inverse limit of the directed system of all quotients which are supported in the spectral decomposition above on $T$.  Equivalently, as coherent sheaves on $\MaxSpec(S)$, they are supported on $T$.  

In $R_n$, we have a commutative subring $S$ generated by the idempotents $e(\Bi)$ for $\Bi\in I^n$ and the dots $y_1,\dots, y_n$; this is a direct sum of polynomial rings, so its $\MaxSpec$ is a disjoint union of copies of $\mathbb{A}^n$.  Let $\mathbb{A}^n_{\Bi}$ be the component of $\MaxSpec(S)$ on which $e(\Bi)$ is not zero.
\begin{definition}\label{def:widehat-R}
	Let $\widehat{R}_n$ be the completion of $R_n$ with respect to the subset $T$ defined by the union of the subsets \[T=\bigcup_{\Bi\in I^n}\{\mathbf{a}\in \mathbb{A}^n_{\Bi} \mid a_k\in U_{i_k}\}.\] That is, this is the completion at the directed system of all quotients where the spectrum of $y_ke(\Bi)$ lies in $U_{i_k}$.  Let $\widehat{R}=\bigoplus_{n} \widehat{R}_n$.
\end{definition}
\begin{definition}
 For a given  $\Bj=(j_1=(i_1,u_1),\dots, j_n=(i_n,u_n))$,  let $\epsilon_{\Bj}$ be the idempotent corresponding to the point $\mathbf{u}$ in $\mathbb{A}^n_{\Bi}$, that is, the maximal ideal generated by $e_{\Bi'}$ for $\Bi'\neq \Bi$ and by $y_k-u_k$. 
 In particular, $e_{\Bi}\epsilon_{\Bj}=\epsilon_{\Bj}
e_{\Bi}=\epsilon_{\Bj}$ and $(y_j-u_j)\epsilon_{\Bj}$ is
topologically nilpotent.  
\end{definition}

\begin{proposition}\label{prop:nu-iso}
	The map $\gamma$ induces an isomorphism $\widehat{R}\cong \widehat{\mathsf{R}}$ sending $\epsilon_{\Bj}$ to $e_{\Bj}$.
\end{proposition}
\begin{proof}
First, note that $\gamma_P$ induces an isomorphism of the corresponding completion of polynomial rings.  Since the action of $\widehat{R}$ on $\widehat{P}$ is faithful, we have an injective map $\widehat{R}\to \widehat{\mathsf{R}}$.  By uniqueness, $\gamma(\epsilon_{\Bj})=e_{\Bj}$.  Furthermore, we can use these formulas to solve for the inverse map to $\gamma$.  On dots, we use the inverse power series 
\[\gamma_{u_k,i_k}^{-1}(x)=\sqrt[d_i]{x/a}-1=\frac{1}{d}\big(\frac{x}{a}-1\big)+\binom{1/d}{2}\big(\frac{x}{a}-1\big)^2+\cdots \] using Taylor expansion at $x/a=1$.  Comparing with \cref{com0newer} and \cref{com0newerer}, we find that the inverse is given by \begin{align}\label{eq:nu-inv1}
	\gamma^{-1}(y_ke_{\Bj})&=\gamma_{u_k,i_k}^{-1}(y_k)\epsilon_{\Bj}\\
	\gamma^{-1}(e_{\Bj}\psi_k)&=\begin{cases}\Big(P_{i_ki_{k+1}}^{\circ,u_ku_{k+1}}\big(\gamma_{u_k,i_k}^{-1}(y_k),\gamma_{u_k,i_{k+1}}^{-1}(y_{k+1}) \big )\Big)^{-1}\epsilon_{\Bj}{\psi}_k & i_k\neq i_{k+1}\\
    ((y_{k+1}-y_k) \psi_k +1)\epsilon_{\Bj} & i_k=i_{k+1}, u_k\neq u_{k+1}\\
    \frac{y_{k+1}-y_k}{\gamma_{u_{k+1},i_{k+1}}^{-1}(y_{k+1})-\gamma_{u_k,i_k}^{-1}(y_k)}\psi_k\epsilon_{\Bj} & i_k=i_{k+1}, u_k= u_{k+1}\\
  \end{cases}\label{eq:nu-inv2}
\end{align}
This completes the proof.
\end{proof}
While all our preliminary results about the quiver Hecke algebra have not required completeness, we will need it to complete the proof.  

\begin{proof}[Proof of \cref{lem:g-action}]  \refstepcounter{dummy}\label{proof-lem:g-action}
The proof that the finiteness conditions of $\htU(\tilde \fg)$ hold is the same as in \cref{lem:many-sl2-general}.  

As usual, we use \cref{lem:BrundanRouquier}. Note that completeness is necessary to ensure that the images of the 1-morphisms $\eE_i$ and $\eF_i$ have the correct image and target.  We need to confirm that for any $(i,\aa)\in \tilde{I}$, we have that $\sum_{\aa'\in U_j}\al_{i,\aa}^{\vee}(\al_{j,\aa'})=\al_i^{\vee}(\al_j)$.  This holds if and only if the spectra $U_i$ are complete.  Note the similarity to \cref{prop:furling-homomorphism}, which shows that we need completeness to ensure that we have a homomorphism of Lie algebras $\fg\to\tilde{\fg}$. 

 The conditions (KM1) and (KM3) involve only a single color on strands, and so follow by 
\cref{lem:many-sl2-general}.  Thus, it suffices to check (KM2) which follows from \cref{lem:gamma-homo}.
\end{proof}

\subsection{The proof of \cref{nondegenerate}}

Before starting the proof, we give a simple lemma that we will need in the proof.  Assume that $I$ is the disjoint union of two subgraphs $I_+,I_-$.  Then $\tilde{\fg}=\tilde{\fg}_+\oplus\tilde{\fg}_-$ is the direct sum of two subalgebras corresponding to these subgraphs.  By an argument essentially identical to \cref{lem:htUA-to-tensor-htUbullet}, we obtain:
\begin{lemma}\label{lem:Ipm}
There is a natural 2-functor $\tU(\tilde{\fg})\to \tU(\tilde{\fg}_+)\times \tU(\tilde{\fg}_-)$ sending
\[(\mu_+,\mu_-)\mapsto \mu_+\otimes\mu_-, \qquad \eE_i \mapsto \begin{cases} \eE_i\otimes 1 &\text{if }i\in \tilde{I}_+\\ 1\otimes \eE_i &\text{if }i\in \tilde{I}_-\end{cases}\]
with the obvious action on 2-morphisms with all labels in $I_+$ or $I_-$, and sending the crossing of strands with labels in $i\in I_+$ and $j\in I_-$ to the identity on $\eE_i\otimes \eE_j$.

Accordingly, if $\Ccat^\pm_{\bullet}$ is a categorical $\tilde{\fg}_{\pm}$-module, then the Deligne product $\Ccat_{(\mu_+,\mu_-)}=\Ccat^+_{\mu_+}\otimes\Ccat^-_{\mu_-}$ is a categorical $\tilde{\fg}$-module by pullback.
\end{lemma}
 As in \cref{rem:biequivalence}, one can relatively easily show that this is a biequivalence of 2-categories, but we will not need this fact.

\begin{proof}[Proof of \cref{nondegenerate}]\refstepcounter{dummy}\label{proof-nondegenerate}

The strategy is simple at heart: if we produce a large class of $\tU(\tilde{\fg})$-representations that are ``big enough,'' we can use the categorical action of \cref{lem:g-action} to deduce the required linear independence.  

\noindent \mybox{Reduction to algebraically closed fields:} \label{reduction} First, let us show that we can reduce to the case where the coefficient field is an algebraically closed field.  Let $L$ be the ring given by polynomials over $\Z[1/\prod_i d_i]$ with $t_{ij}^{\pm 1}$ and the other coefficients of $Q_{ij}$ adjoined as formal variables.  If we prove that $B_{\Bi,\Bi',\la}$ is a basis of the Hom space $\Hom_{\tU}
(\Bi,\Bi')$ as a free $L$-module when $\K'=L$, it will follow for every other choice of $\K'$ by base change.  
      
      Furthermore, note that $\Hom_{\tU}
      (\Bi,\Bi')$ is filtered by finitely generated submodules consisting of elements of degree $\leq k$ for each $k\in \Z$, and it is equivalent to show that the elements of $B_{\Bi,\Bi',\la}$ lying in this space form a basis for it as a free module.  As usual, it is enough to prove this after localization, so we may assume that $\K'$ is a regular local domain.  Since $d_i$ is invertible in $L$, it is coprime to the characteristic of the residue field of $\K'$.  By Nakayama's lemma, a finite set of elements is a free basis for a module over a local domain if and only if it is a basis after base change to the residue field and to the fraction field.  Thus, we may assume that $\K'$ is a field. The result is unchanged by passing to a field extension, so we may assume without loss of generality that $\K'=\bar{\K}'$ is algebraically closed.  Therefore, we can apply the other results of the paper with $\K'=\K$ an algebraically closed field of characteristic coprime to $d_i$, the case considered throughout the rest of the paper.  For the rest of the proof, we use $\K$ for our base ring.

\noindent \mybox{Construction of representations:} By \cref{lem:different-realizations}, we can assume that $\fg$ is universal derived.  For a given $m$, consider the field $\mathbb{K}$ of rational functions on an alphabet with $2m$ elements $\zw_{i,\pm 1},\dots, \zw_{i,\pm m}$ for each $i\in I$ and $\bar{\mathbb{K}}$ its algebraic closure.  Let
$U_i$ be the complete choice of spectra in the algebraic closure of the field $\K(\zw_{i,\pm  k})_{i\in I,k\in [1,m]}$ produced by starting with $\zw_{i,\pm k}\in U_i$, and then completing as necessary.   Note that all elements connected to $\zw_{i,k}$ in $\tilde{I}$ are in the algebraic closure of $\K(\zw_{i,k})$, and thus are algebraically independent from all other $\zw_{j,\ell}$.  In particular, $\tilde{I}$ divides into two disjoint subgraphs $\tilde{I}_{\pm}$ given by the components that contain $\zw_{i,\pm k}$ for $i\in I, k\in [1,m]$.  

Consider a weight $\lambda$, which we can think of as just the function $I\to \Z$ sending $i$ to $\al_i^{\vee}(\lambda)$.  Consider weights $\mu_{\pm}$ that satisfy, for all $i\in I$:
\[
\al^{\vee}_{i,{\zw_{i,1}}}(\mu_+)=m+\max(0, \al_i^{\vee}(\lambda))\qquad \al^{\vee}_{i,{\zw_{i,-1}}}(\mu_-)=-m+\min(0, \al_i^{\vee}(\lambda))\]
\[ \al^{\vee}_{i,{\zw_{i,\pm k}}}(\mu_{\pm})=\pm m \text{ for }k\in [2,m].\]  

The 2-category $\tU(\tilde{\fg}_+)$ has a categorical representation on the category of projective modules over the cyclotomic quotient $R^{\mu_+}$ over $\bar{\mathbb{K}}$, and $\tU(\tilde{\fg}_-)$ has a categorical representation on the category of projective modules over the cyclotomic quotient $R^{-\mu_-}$.  In both cases, the $\eF$'s act by induction functors and the $\eE$'s by restriction functors. These categorify the highest weight representations $V(\mu_+)$ and $V(-\mu_-)$, respectively, and are generated by highest weight objects $\mathbb{V}_{\pm}$ corresponding to the cyclotomic quotient with $0$ strands as a module over itself.   

Let $\tilde{\omega}\colon \tU(\tilde{\fg}_-)\to \tU(\tilde{\fg}_-)$ be Khovanov and Lauda's automorphism reversing the orientation of strands; see \cite[\S 3.3.3]{khovanovCategorificationQuantum2010}. 
Twisting the action of $\tU(\tilde{\fg}_-)$ by $\tilde{\omega}$, we obtain a categorical representation of $\tU(\tilde{\fg}_-)$ on the category of projective modules over the cyclotomic quotient $R^{-\mu_-}$, which categorifies the lowest weight representation $\Lambda(\mu_-)$.  Now, $\eF$'s act by restriction functors and $\eE$'s by induction functors, since $\tilde{\omega}$ reverses the orientation of strands.  We can then use \cref{lem:Ipm} to obtain a categorical representation of $\tU(\tilde{\fg})$ on the Deligne product of these categories, which categorifies the tensor product $V(\mu_+)\otimes \Lambda(\mu_-)$.  Note that we can think of this tensor product as the category of projective modules over the cyclotomic quotient $R^{\mu_+-\mu_-}$, but with the categorical action twisted by an autoequivalence.  Call this category with this action $\Ccat^{\mu_+,\mu_-}$.

This means that by \cite[Th. 6.2]{kangCategorificationHighest2012} or \cite[Cor. 3.22]{Webmerged}, we know the dimensions of Hom spaces in this category in terms of the Shapovalov form on the corresponding representation, and in particular, that it is generated by the object $\mathbb{V}=\mathbb{V}_{+}\otimes \mathbb{V}_{-}$, which is no longer highest or lowest weight.

By \cref{lem:g-action}, we have a $\tU(\fg)$-action on $\Ccat^{\mu_+,\mu_-}$.  This category is generated (as a Karoubian additive category) by the objects $\eE_{\Bi}\mathbb{V}$, which we view as projective modules over the tensor product of the cyclotomic quotients. The action of $\eE_i$ for $i\in I$ is the sum of restriction functors in the factor $R^{\mu_+}$ and induction functors in the factor $R^{-\mu_-}$.  The action of $\eF_i$ is analogous, with induction and restriction switched.  Thus, each 2-morphism is sent to a natural transformation between iterated functors, built from the quiver Hecke action and the two adjunctions between induction and restriction.  

By construction, these have a decomposition $\eE_{\Bi}\mathbb{V}\cong \bigoplus_{\Bu\in \prod U_{|i_k|}}\tilde{\eE}_{(\Bi,\Bu)}\mathbb{V}$, where to avoid confusion, we let $\tilde{\eE}_*$ denote the categorical action of $\tU(\tilde{\fg})$.  The projections in this direct sum decomposition can be constructed purely in terms of the action of $\tU(\fg)$ as projections to the eigenspaces of dots---by Jordan--Chevalley decomposition, the projection 
to $\tilde{\eE}_{(\Bi,\Bu)}\mathbb{V}$ can be written as a power series in the dots.

Note that while we have not explicitly given the image of the bubbles in this action, it is relatively easy to determine their action on $\mathbb{V}$ by \cref{preimpy} (the reader might be concerned that we are using a result from \cref{sec:spectral} here, but no other results from this paper are used in the proof of \cref{preimpy}, so there is no danger of circularity).  Since $\mathbb{V}$ is simple, we only need to determine the minimal polynomials $m_{\mathbb{V}}(z),n_{\mathbb{V}}(z)$.  These are given by:
\begin{equation}
	m_{\mathbb{V}}(z)=\prod_{k=1}^m (z-\zw_{i,-k})^{\al^{\vee}_{i,{\zw_{i,-k}}}(\mu_-)}, \qquad n_{\mathbb{V}}(z)=\prod_{k=1}^m (z-\zw_{i,k})^{\al^{\vee}_{i,{\zw_{i,k}}}(\mu_+)}
\end{equation}
since in $\bar{\mathbb{K}}[z]/(z^r)$, the image of the power series $\pwr_{\aa,i}(z)$ has minimal polynomial $(z-\aa)^{r}$.

Thus, we have that the action of $\anticlocki(z)$ on $\mathbb{V}$ is given by the power series
\begin{equation}\label{eq:anticlock-action}
\anticlocki(z)=\frac{n_{\mathbb{V}}(z)}{m_{\mathbb{V}}(z)}=\prod_{k=1}^m\frac{(z-\zw_{i,k})^{\al^{\vee}_{i,{\zw_{i,k}}}(\mu_+)}}{(z-\zw_{i,-k})^{\al^{\vee}_{i,{\zw_{i,-k}}}(\mu_-)}}.
\end{equation}

 We claim that the first 2m non-constant coefficients of the image of $\anticlocki(z)$---the coefficients of $z^{\al_i^{\vee}(\lambda)-1}, \cdots,  z^{\al_i^{\vee}(\lambda)-2m}$---are algebraically independent.  

This is easiest to see by taking the logarithmic derivative $\eta(z)$ of $\anticlocki(z)$; by \cref{eq:anticlock-action}, we have 
\[\eta(z)=\sum_{k=1}^m\frac{\al^{\vee}_{i,{\zw_{i,k}}}(\mu_+)}{z-\zw_{i,k}}- \frac{\al^{\vee}_{i,{\zw_{i,-k}}}(\mu_-)}{z-\zw_{i,-k}}. \]
The coefficients $\eta_0,\dots, \eta_{2m-1}$ of the Laurent expansion $\eta(z)=\eta_0z^{-1}+\eta_1z^{-2}+\cdots$ at $z=\infty$ are the power sums
\[\eta_r=\sum_{k=1}^m \al^{\vee}_{i,{\zw_{i,k}}}(\mu_+)\zw_{i,k}^r- \sum_{k=1}^m \al^{\vee}_{i,{\zw_{i,-k}}}(\mu_-)\zw_{i,-k}^r\]
for $r=0,\dots, 2m-1$.  These form a transcendence basis for the field $\mathbb{K}$, since the Jacobian of the induced map is non-zero, and therefore they are algebraically independent.    

\noindent \mybox{Setup of generic spectrum:} Consider a relation in this spanning set $B_{\Bi,\Bi',\la}$ for $1$-morphisms $\Bi,\Bi'\colon \mu \to \mu'$.  
	Amongst the $(\Bi,\Bi')$ pairings appearing, fix a pairing $\pi$ which contributes at least one diagram with non-trivial coefficient and has a maximal number of crossings with respect to this property.  

	Let $m$ be $1$ plus the maximum of:
	\begin{enumerate}
		\item The degrees of all bubbles appearing in diagrams for $\pi$.
		\item The number of arcs appearing in one of these diagrams, that is, $(|\Bi|+|\Bi'|)/2$.  
		\item The number of dots that appear on any of these arcs.
	\end{enumerate}  
We wish to choose a generic enough point in the spectrum of the dots that our ostensible relation must have non-zero image, and it is easy to check that this image is not zero.  That is, we want to choose a $\Bj=(\Bi,\Bu)$ and $\Bj'=(\Bi',\Bu')$ such that the image of the relation in $\Hom(\tilde{\eE}_{\Bj}\mathbb{V}, \tilde{\eE}_{\Bj'}\mathbb{V})$ is non-zero. 

For every arc $A$ in $\pi$, we have a choice of $\aa_A$ such that both endpoints are of the form $(\pm i,\aa_A)$.  We wish to choose these so that they will be algebraically independent from each other.   
By assumption, we can choose an injection $\Theta$ from the set of arcs of $\pi$ to $[2,m]$, and given this function, we can choose $\aa_A=\zw_{i,\pm \Theta(A)}$.   This means that each second index appears at at most two terminals in $\Bj$ and $\Bj'$, namely at opposite ends of an arc, when it appears at all.  We choose $\aa_A$ to be $\zw_{i,\Theta(A)}$ if the strand at the rightmost endpoint of $A$ is oriented down as $-i$, and $\zw_{i,-\Theta(A)}$ if it is oriented up.  That is, if $A$ has one endpoint at the top of the diagram and one at the bottom, we follow the orientation of the strand; if it is a cup or cap, we look at its right endpoint.  
This convention might seem a little strange, but it is natural given how we set up $\Ccat^{\mu_+,\mu_-}$ as a Deligne product of a highest weight and lowest weight representation---we choose this sign so that the right endpoint acts on our tensor product of cyclotomic quotients by an induction functor, which is always non-zero, while a restriction functor is always $0$ on $\mathbb{V}$, since it already corresponds to having $0$ strands.

Note that once we have fixed this spectrum, $\pi$ is the only matching where the two ends of each arc have the correct labels; there are no other elements of $\Hom_{\Ccat^{\mu_+,\mu_-}}(\tilde{\eE}_{\Bj}\mathbb{V},\tilde{\eE}_{\Bj'}\mathbb{V})$.

We thus have a map $\Hom_{\tU}(\eE_{\Bi}, \eE_{\Bi'})\to \Hom_{\Ccat^{\mu_+,\mu_-}}(\eE_{\Bi}\mathbb{V}, \eE_{\Bi'}\mathbb{V})\to \Hom_{\Ccat^{\mu_+,\mu_-}}(\eE_{\Bj}\mathbb{V}, \eE_{\Bj'}\mathbb{V})$.   

In fact, this last Hom space is easy to describe---let us first discuss the case where $|\Bi|+|\Bi'|=2$.  All such cases can be reduced to the case where $\Bi=\Bi'=(\pm i)$ by the adjunction:
\begin{equation}\label{eq:adjunction}
\Hom_{\Ccat^{\mu_+,\mu_-}}(\tilde{\eE}_{j}\tilde{\eF}_{j}\mathbb{V},\mathbb{V})\cong \Hom_{\Ccat^{\mu_+,\mu_-}}(\tilde{\eF}_{j}\mathbb{V},\tilde{\eF}_{j}\mathbb{V})
\cong \Hom_{\Ccat^{\mu_+,\mu_-}}(\mathbb{V}, \tilde{\eE}_{j}\tilde{\eF}_{j}\mathbb{V}),
\end{equation}
and its analogue with $\tilde{\eE}$ and $\tilde{\eF}$ swapped.  

Note that $\aa_A=\zw_{i, \Theta(A)}$ in the case of \cref{eq:adjunction}, so the space $\Hom_{\Ccat^{\mu_+,\mu_-}}(\tilde{\eF}_{j}\mathbb{V},\tilde{\eF}_{j}\mathbb{V})$ is simply the summand of the cyclotomic quotient $R^{\mu_+}$ with one strand labeled $j$, which is a quotient of the polynomial ring in one variable $y$ by the ideal generated by $y^m$, and hence has basis $1,y,\dots, y^{m-1}$.  

If we swap $\eE$ and $\eF$, we get the same result, since we now take $\aa_A=\zw_{i, -\Theta(A)}$, and now $\eE$ acts by an induction functor in the factor $R^{-\mu_-}$.

As we add more strands, the calculation is essentially unchanged, since we never use two labels from the same component of $\tilde{I}$.  The space $\Hom_{\Ccat^{\mu_+,\mu_-}}(\tilde{\eF}_{\Bj}\mathbb{V},\tilde{\eF}_{\Bj}\mathbb{V})$ is again a cyclotomic quotient, where we have one strand labeled with each of the $j_k$.  This is a quotient of the polynomial ring in the dots, obtained by applying the cyclotomic relation independently to each strand.  All other cases are obtained from this one by adjunction and the fact that $\tilde{\eE}_j$ and $\tilde{\eE}_{j'}$ from different components of $\tilde{I}$ commute.  

Put differently, we can forget all nodes of $\tilde{I}$ except those that appear in $\Bj$ and $\Bj'$, and the result is a graph with no edges, so we only need the $\mathfrak{sl}_2$ calculation for a single strand, giving this truncation of the polynomial ring.  Geometrically, we can describe this as the subset of Khovanov and Lauda's basis where we constrain the number of dots to be $<m$, due to the cyclotomic relation.   

\noindent \mybox{Triviality of a relation:}
Now return to consideration of our relation.  Since all bubbles appearing in the diagrams have $\K$-algebraically independent images in $\mathbb{K}$, this relation must give a non-trivial relation with coefficients in $\mathbb{K}$ between diagrams in $B_{\Bi,\Bi',\la}$ with no bubbles.

By the assumption that we started with a relation, the resulting morphism must be 0. Note also that the functor defined by \crefrange{com0new}{com0newerer} sends the diagram for any pairing to a sum of diagrams for that pairing and for pairings with strictly fewer crossings. Since $\pi$ gives a maximal number of crossings amongst pairings
that appear in the relation, any other element $D'\in B_{\Bi,\Bi',\la}$ with pairing $\pi'$ has image in $\Hom_{\Ccat^{\mu_+,\mu_-}}(\eE_{\Bj}\mathbb{V}, \eE_{\Bj'}\mathbb{V})$ that is a sum of diagrams where the pairing $\pi$ never appears.  Since $\pi$ is the only pairing that matches the spectrum we have chosen, all such diagrams are sent to 0, since the eigenvalue $\aa_*$ at the two ends of an arc are different.

Thus, when we consider the image of our relation, it is the same as the image of only the elements of $B_{\Bi,\Bi',\la}$ appearing that match according to $\pi$.  Let $D$ be such an element of $B_{\Bi,\Bi',\la}$ with no bubbles that appears in our relation, and has a minimal number of dots amongst such diagrams.  By construction, under \crefrange{com0new}{com0newerer}, every diagram is sent to a multiple of that diagram by a power series in dots with non-zero constant term.  Thus, the diagram $D$ is sent to $cD+\dots$ where all other terms have strictly more dots than $D$.  The diagram $D$ will be one of the elements of our basis of diagrams with $<m$ dots on each arc, by assumption.  Thus, when expanded in this basis, the diagram $D$ cannot appear in the image of any other diagram, since this would require the existence of $D'$ in the relation with strictly fewer dots than $D$.  Thus, $cD$ cannot be cancelled and this contradicts the assumption that our relation was non-zero.  This shows that there are no relations between the elements of $B_{\Bi,\Bi',\la}$, completing the proof of \cref{nondegenerate}.

 \end{proof}
 
 \subsection{The proof of \cref{prop:KM3}}

\begin{proof}[Proof of \cref{prop:KM3}]
\refstepcounter{dummy}\label{proof-prop:KM3}

We need only verify the conditions of \cref{thm:brundandef}.

The axiom (KM1) is already established by the left and right adjunctions that we have constructed.  

 Since there are no edges in the graph $U$, the polynomials $Q_{\aa \bb}$ are simply $1$ for all $\aa\neq \bb$.  One can easily check that $R_m$ is Morita equivalent to the sum of tensor products of nilHecke algebras \[\bigoplus_{\sum_{\aa}m_{\aa}=m}\bigotimes_{\aa\in U} NH_{m_{\aa}}\] with each summand corresponding to the subalgebra where $m_{\aa}$ strands have the label $\aa$.  Taking $\gamma_\aa(x)=x-\aa$ and applying \cref{prop:nu-iso}, we find that the axiom (KM2) holds as well.  That is, there is a homomorphism $R_m\to \End(\eF^m)$ which sends \begin{equation}
		\begin{tikzpicture}[baseline = -1mm]
	\draw[thick, <-] (0.28,-.28) to (-0.28,.28);
	\draw[thick, <-] (-0.28,-.28) to (0.28,.28);
      \node at (-0.33,-0.43) {$\scriptstyle{\aa}$};
      \node at (0.33,-0.43) {$\scriptstyle{\bb}$};
      \node at (-0.33,0.43) {$\scriptstyle{\bb}$};
      \node at (0.33,0.43) {$\scriptstyle{\aa}$};
\end{tikzpicture}\mapsto \begin{cases}
		\begin{tikzpicture}[baseline = -1mm]
	\draw[<-] (0.28,-.28) to (-0.28,.28);
	\draw[<-] (-0.28,-.28) to (0.28,.28);
      \node at (-0.33,-0.43) {$\scriptstyle{\aa}$};
      \node at (0.33,-0.43) {$\scriptstyle{\bb}$};
      \node at (-0.33,0.43) {$\scriptstyle{\bb}$};
      \node at (0.33,0.43) {$\scriptstyle{\aa}$};
\node at (0,-.01) {$\diamond$};
\end{tikzpicture} & \aa=\bb\\
\begin{tikzpicture}[baseline = 0]
	\draw[<-] (0.38,-.4) to (-0.38,.5);
      \node at (0,0.05) {$\diamond$};
	\draw[<-] (-0.38,-.4) to (0.38,.5);
	\draw[->,densely dotted] (0.26,-.24) to (-0.19,-.24);
   \node at (-.4,-.55) {$\scriptstyle{\aa}$};
   \node at (.4,-.55) {$\scriptstyle{\bb}$};
   \node at (.4,.65) {$\scriptstyle{\aa}$};
   \node at (-.4,.65) {$\scriptstyle{\bb}$};
      \node at (-0.26,-0.25) {$\bull$};
      \node at (0.26,-0.25) {$\bull$};
      \node at (1,-0.2) {$\scriptstyle{x_2-x_1}$};
\end{tikzpicture}& \aa\neq \bb
\end{cases}\qquad 		\begin{tikzpicture}[baseline = -1mm]
	\draw[thick, <-] (0,-.28) to (-0,.28);
      \node at (0,-0.43) {$\scriptstyle{\aa}$};
      \node at (0,0.43) {$\scriptstyle{\aa}$};
         \node at (0,0) {$\bullet$};
\end{tikzpicture}\mapsto \begin{tikzpicture}[baseline = -1mm]
	\draw[ <-] (0,-.28) to (-0,.28);
      \node at (0,-0.43) {$\scriptstyle{\aa}$};
      \node at (0,0.43) {$\scriptstyle{\aa}$};
         \node at (0,0) {$\bullet$};
\end{tikzpicture}-\aa \begin{tikzpicture}[baseline = -1mm]
	\draw[ <-] (0,-.28) to (-0,.28);
      \node at (0,-0.43) {$\scriptstyle{\aa}$};
      \node at (0,0.43) {$\scriptstyle{\aa}$};
\end{tikzpicture}
	\end{equation}

Thus, we need only establish (KM3).  This is identical to the proof of \cite[Lem. 4.9]{brundanHeisenbergKacMoody2020}.
\end{proof}

\subsection{The proof of \cref{equivalence}}

In order to prove \cref{equivalence}, we need to construct a functor $\Phi$ which gives the equivalence.  It is in fact easier to begin by constructing a functor $\Xi$, which we will ultimately show is effectively inverse to $\Phi$, once it has been constructed.

\begin{lemma}
	There is a strongly equivariant functor
\[\Xi \colon \hldbK\to \hl^{\tilde{\bla}}\]
which sends $(\Bi,\kappa)$ 
to the sum $\bigoplus (\Bj,\kappa)$ where $\Bj$ ranges over sequences with
$j_k=(i_k,u_k)$.  
\end{lemma}
\begin{proof}
	\mybox{$\Xi$ on morphisms:} 
We have described how $\Xi$ behaves on objects, so we must only define how $\Xi$ acts on morphisms.  First, note that by \cref{thm:deform-action}, we have a categorical action
of $\fg$ on $\hl^{\tilde{\bla}}$.  On purely black
diagrams, $\Xi$ simply employs this action; that is, on
upward-oriented diagrams, it follows from the formulas \crefrange{eq:nu-inv1}{eq:nu-inv2}.
Since left (or right) adjunctions are unique up to isomorphism, we can
send the leftward cup and cap in $\tU_{\fg}$ to any adjunction we
choose.  For simplicity, we
simply match leftward oriented cups and caps as below:
\begin{equation}
  \tikz[baseline,very thick,scale=3]{\draw[->] (.2,.1)
    to[out=-120,in=-60] node[at end,above,scale=.8]{$i$} node[at start,above,scale=.8]{$i$}  (-.2,.1)
    ;} \mapsto \sum_{u\in U_i}   \tikz[baseline,very thick,scale=3]{\draw[->] (.2,.1)
    to[out=-120,in=-60] node[at end,above,scale=.8]{$(i,u)$} node[at start,above,scale=.8]{$(i,u)$} (-.2,.1)
    ;} 
 \qquad \tikz[baseline,very
  thick,scale=3]{\draw[->] (.2,.1) to[out=120,in=60] node[at
    end,below,scale=.8]{$i$}node[at
    start,below,scale=.8]{$i$} (-.2,.1);}\mapsto  \sum_{u\in U_i} \tikz[baseline,very
  thick,scale=3]{\draw[->] (.2,.1) to[out=120,in=60] node[at
end,below,scale=.8]{$(i,u)$} node[at
start,below,scale=.8]{$(i,u)$}  (-.2,.1);}\label{eq:cup/cap}
\end{equation}
For rightward oriented cups, the formula is quite complicated, but it is
fixed by the choices we have made thus far, and the
existence of a consistent choice follows from the existence of the $\fg$-action.
Thus, we need only define this action on diagrams with red/blue strands,
with formulas given below
\crefrange{eq:3}{eq:4}.  

\excise{In reading these formulas, we always use
$y_1,y_2$ to denote the dots on the lefthand and righthand of two
black strands in the diagram, and $y$ to denote the dot when a single
black strand appears, and a fraction $\frac ab$ should be interpreted
as $b^{-1}a$.   
\begin{equation}
\begin{tikzpicture}[very thick,baseline,yscale=.7]\draw[<-]  (1,0) +(0,-1) -- node[midway,draw, fill=black, scale=.7,
        inner sep=2pt, circle]{} +(0,1)
        node[below,at start]{$i$};  \end{tikzpicture}\mapsto \sum_m \begin{tikzpicture}[very thick,baseline,yscale=.7]\draw[<-]  (1,0) +(0,-1) -- node[midway,draw, fill=white, scale=.7,
        inner sep=.5pt, circle]{$+\zw_m$} +(0,1)
        node[below,at start]{$(i,m)$};  \end{tikzpicture}\label{eq:1}
\end{equation}
\begin{equation}
\tikz[baseline,very thick,scale=2]{\draw[->] (.2,.3) --
    (-.2,-.1) node[at end,below, scale=.8]{$i$}; \draw[<-] (.2,-.1) --
    (-.2,.3) node[at start,below,scale=.8]{$j$};}\mapsto
\begin{cases}\displaystyle
\sum_m\tikz[baseline,very thick,scale=2]{\draw[->] (.2,.3) --
    (-.2,-.1) node[at end,below, scale=.8]{$(i,m)$}; \draw[<-] (.2,-.1) --
    (-.2,.3) node[at start,below,scale=.8]{$(i,m)$};}+\sum_{m\neq p}  \frac{\tikz[baseline,very thick,scale=2]{\draw[->] (.2,.3) --
    (-.2,-.1) node[at end,below, scale=.8]{$(i,m)$}; \draw[<-] (.2,-.1) --
    (-.2,.3) node[at start,below,scale=.8]{$(i,p)$};}-\tikz[baseline,very thick,scale=2]{\draw[->] (-.2,.3) --
    (-.2,-.1) node[at end,below, scale=.8]{$(i,p)$}; \draw[<-] (.2,-.1) --
    (.2,.3) node[at start,below,scale=.8]{$(i,m)$};}} {y_1-y_2 +\zw_p-\zw_m}& i=j\\
\displaystyle\sum_m\tikz[baseline,very thick,scale=2]{\draw[->] (.2,.3) --
    (-.2,-.1) node[at end,below, scale=.8]{$(i,m)$}; \draw[<-] (.2,-.1) --
    (-.2,.3) node[at start,below,scale=.8]{$(j,m)$};} + \sum_{m\neq p}  R_{ji}(y_1+\zw_p,y_2+\zw_m)\Big(\tikz[baseline,very thick,scale=2]{\draw[->] (.2,.3) --
    (-.2,-.1) node[at end,below, scale=.8]{$(i,m)$}; \draw[<-] (.2,-.1) --
    (-.2,.3) node[at start,below,scale=.8]{$(j,p)$};}\Big) & i\neq j
\end{cases}\label{eq:2}
\end{equation}}
\begin{equation}
  \label{eq:3}
    \tikz[baseline,very thick,scale=2]{\draw[->] (.2,.3) --
    (-.2,-.1) node[at end,below, scale=.8]{$i$}; \draw[wei] (.2,-.1) --
    (-.2,.3) node[at start,below,scale=.8]{$\la_m$};}
  \mapsto \sum_{u\in U_i} \tikz[baseline,very thick,scale=2]{\draw[->] (.2,.3) --
    (-.2,-.1) node[at end,below, scale=.8]{$(i,u)$}; \draw[wei] (.2,-.1) --
    (-.2,.3) node[at start,below,scale=.8]{$\la_m$};}\qquad \tikz[baseline,very thick,scale=2]{\draw[->] (-.2,.3) --
    (.2,-.1) node[at end,below, scale=.8]{$i$}; \draw[wei] (-.2,-.1) --
    (.2,.3) node[at start,below,scale=.8]{$\la_m$};}
  \mapsto \tikz[baseline,very thick,scale=2]{\draw[->] (-.2,.3) --
    (.2,-.1) node[at end,below, scale=.8]{$(i,\zw_m)$}; \draw[wei] (-.2,-.1) --
    (.2,.3) node[at start,below,scale=.8]{$\la_m$};}+\sum_{u\in U_i\setminus\{\zw_m\}} (y-\zw_m+u)^{\la_m^i} \tikz[baseline,very thick,scale=2]{\draw[->] (-.2,.3) --
    (.2,-.1) node[at end,below, scale=.8]{$(i,u)$}; \draw[wei] (-.2,-.1) --
    (.2,.3) node[at start,below,scale=.8]{$\la_m$};}
  \end{equation}\begin{equation}
  \label{eq:4}
    \tikz[baseline,very thick,scale=2]{\draw[<-] (.2,.3) --
    (-.2,-.1) node[at end,below, scale=.8]{$i$}; \draw[awei] (.2,-.1) --
    (-.2,.3) node[at start,below,scale=.8]{$\la_m$};}
  \mapsto \sum_{u\in U_i}  \tikz[baseline,very thick,scale=2]{\draw[<-] (.2,.3) --
    (-.2,-.1) node[at end,below, scale=.8]{$(i,u)$}; \draw[awei] (.2,-.1) --
    (-.2,.3) node[at start,below,scale=.8]{$\la_m$};}\qquad \tikz[baseline,very thick,scale=2]{\draw[<-] (-.2,.3) --
    (.2,-.1) node[at end,below, scale=.8]{$i$}; \draw[awei] (-.2,-.1) --
    (.2,.3) node[at start,below,scale=.8]{$\la_m$};}
  \mapsto \tikz[baseline,very thick,scale=2]{\draw[<-] (-.2,.3) --
    (.2,-.1) node[at end,below, scale=.8]{$(i,\zw_m)$}; \draw[awei] (-.2,-.1) --
    (.2,.3) node[at start,below,scale=.8]{$\la_m$};}+\sum_{u\in U_i\setminus\{\zw_m\}} (y-\zw_m+u)^{\la_m^i} \tikz[baseline,very thick,scale=2]{\draw[<-] (-.2,.3) --
    (.2,-.1) node[at end,below, scale=.8]{$(i,u)$}; \draw[awei] (-.2,-.1) --
    (.2,.3) node[at start,below,scale=.8]{$\la_m$};}
\end{equation}

\mybox{Checking relations:} Now, we need to prove that this assignment defines a functor, that is, it is compatible with all the relations on morphisms.  
The equations that can be stated purely using upward or downward diagrams, that is,
\crefrange{QHA1}{QHA3} and
\cref{dcost2}, \cref{fig:pass-through}, \cref{red-triple}, and \cref{blue-triple} all follow by straightforward calculations as in the proof of \cref{prop:nu-iso}.  Similarly, once we know \cref{pitch}, the relation \cref{fig:pass-through1} can be rewritten this way:
\begin{equation*}\subeqn \label{fig:pass-throughup}
   \begin{tikzpicture}[yscale=.9,baseline]
      \node at (4,0){    \begin{tikzpicture}[very thick]
          \draw[postaction={decorate,decoration={markings,
    mark=at position .2 with {\arrow[scale=1.3]{>}}}}] (-3,0) +(.5,-1) to[out=90,in=-90] +(-.5,1);
          \draw[postaction={decorate,decoration={markings,
    mark=at position .8 with {\arrow[scale=1.3]{<}}}}] (-3,0) +(-1,1) to[out=-35,in=100] +(1,-1);
          \draw[awei] (-3,0) +(0,-1) -- +(0,1);
          \node at (-1.5,0) {=}; \draw[postaction={decorate,decoration={markings,
    mark=at position .2 with {\arrow[scale=1.3]{<}}}}] (0.2,0) +(-.5,1) to[out=270,in=90] +(.5,-1); \draw[postaction={decorate,decoration={markings,
    mark=at position .8 with {\arrow[scale=1.3]{>}}}}] (0.2,0) +(1,-1) to[out=145,in=280] +(-1,1); \draw[awei] (0.2,0) +(0,1) -- +(0,-1); 
        \end{tikzpicture}};
     \node at (-4,0){    \begin{tikzpicture}[very thick]
       \draw (-3,0)[postaction={decorate,decoration={markings,
    mark=at position .2 with {\arrow[scale=1.3]{<}}}}] +(.5,-1) to[out=90,in=-90] +(-.5,1);
          \draw[postaction={decorate,decoration={markings,
    mark=at position .8 with {\arrow[scale=1.3]{>}}}}] (-3,0) +(-1,1) to[out=-35,in=100] +(1,-1);
          \draw[wei] (-3,0) +(0,-1) -- +(0,1);
          \node at (-1.5,0) {=}; \draw[postaction={decorate,decoration={markings,
    mark=at position .2 with {\arrow[scale=1.3]{>}}}}] (0.2,0) +(-.5,1) to[out=270,in=90] +(.5,-1); \draw[postaction={decorate,decoration={markings,
    mark=at position .8 with {\arrow[scale=1.3]{<}}}}] (0.2,0) +(1,-1) to[out=145,in=280] +(-1,1); \draw[wei] (0.2,0) +(0,1) -- +(0,-1); 
        \end{tikzpicture}};
      \end{tikzpicture}
    \end{equation*}

Thus, we only need to argue for the relations involving right cups and
caps.
The way we have
defined the right cup/cap means that the relations
\crefrange{QHA1}{flight2} are automatic. The remaining
relations \cref{pitch,color-opp-cancel} are actually
redundant when the right cap and cup are defined in terms of the left
cup and cap. From this perspective, \cref{pitch} is the definition of the upward red/black or downward blue/black crossings.  For
\cref{color-opp-cancel}, assume that we are considering the red
version; the blue version follows similarly.  We must consider two different cases. Let $\mu$ be the label
of the region at the left of the picture.
\begin{itemize}
\item If $\mu^i\geq 0$, then we have a loop at the left with $\mu^i$ dots.
  Pulling this through and applying \cref{dcost2}, then undoing this bubble, we obtain the desired relation.
\item If $\mu^i\leq 0$, then we start with the diagram with a leftward cup at the bottom and rightward cup at the top, and compare the result of applying \cref{flight1}  to these two strands to the left and right of the red strand.  Using the relations
  \cref{fig:pass-through,red-triple}, we can move
  the bigon to the right side of the red line, and using \cref{dcost2}
  to remove bigons between red and downward strands.  The left- and right-hand sides now have the same pattern of black strands, but in one the upward strands make a bigon with the red strand, and in the other, they do not.  This can only hold if \cref{color-opp-cancel} is true.\qedhere
\end{itemize}
\end{proof}

\begin{proof}[Proof of \cref{equivalence}]  \refstepcounter{dummy}\label{proof-equivalence}
\mybox{Existence of the functor $\Phi$:} Both the source and the target
are generated by a signed highest weight object, so
\cite[Prop. 3.25]{Webmerged} shows that a strongly
equivariant functor is induced whenever there is a map
\begin{equation}
\End_{\hl^{\tilde\la}}(\emptyset,0) \cong \bar{\mathbb{K}}\to  \End_{\hldbK}(\emptyset,0)\label{eq:highest-action}
\end{equation}
compatible with the action of fake bubbles.  Since $\mathsf{y} \colon \eF_{i,u}\to
\eF_{i,u}$ is nilpotent, the fake bubbles act trivially in both cases,
and the $\bar{\mathbb{K}}$-algebra structure on
$\End_{\hldbK}(\emptyset,0)$ induces the functor.  The functor is an equivalence if and only if the map of \cref{eq:highest-action}
 is an isomorphism.  This can only fail if $ \hldbK=0$.

\mybox{Inverse functor:} Thus, we need to rule out the possibility that $\hldbK=0$. 
By \cite[Prop. 3.25]{Webmerged}, the functor $\Phi$ is an equivalence if $\End(\emptyset,0)\cong \K$.  Thus, the only issue is that the object $(\emptyset,0)$ might simply be 0 (in which case the entire category  $\hldbK$ is 0).  The functor $\Xi$ sends $(\emptyset,0)$
to $(\emptyset,0)$.  The existence of this functor establishes that $\hldbK$ is not 0, so $\Phi$ must be an equivalence.  In fact, we can easily see that $\Xi$ must be quasi-inverse to $\Phi$ when composed with the equivalence $\hl^{\tilde{\bla}}\cong \hl^{\tilde \la}$.
\end{proof}

\subsection{The proof of \cref{tricolore-nondegenerate}}
\begin{proof}[Proof of \cref{tricolore-nondegenerate}]  \refstepcounter{dummy}\label{proof-tricolore-nondegenerate}
 \mybox{$B$ spans:}  The proof that these are a spanning set is essentially equivalent to
 that of \cite[Prop. 3.11]{khovanovCategorificationQuantum2010}.  
First, note that any two minimal diagrams for the same matching are equivalent modulo those with fewer crossings (using the relations \cref{QHA3,blue-triple,red-triple}).  Similarly, moving the dots to the chosen positions only introduces diagrams with fewer crossings.  

Thus, we only need to show that minimal diagrams span.  Of course, if
a diagram is non-minimal, then it can be rewritten in terms of the
relations as a sum of diagrams with fewer crossings, using the relations
to clear all strands out from a bigon, and then the relations
\cref{QHA2}, \crefrange{flight1}{flight2}, \cref{dcost2}
to remove it.  Thus, by induction,
this process must terminate at an expression in terms of minimal
diagrams. Thus, these elements span, and it suffices to show that these elements are linearly independent when $\Bz$ are generic, that is, after base change to $\mathbb{\bar K}$.

\mybox{Linear independence:}    Assume $\EuScript{L}=\mu$. 
We consider how the elements in $B$ act on a quadruple with $\Bi=\emptyset$ in the deformed category $\hld^{\boldsymbol{\la}}$ with
$\boldsymbol{\la}$ chosen so that $\sum\la_i=\mu$. 

It suffices to check that these elements act linearly independently on $\hldbK$ for some $\bla$; in the course of the proof we will modify $\bla$ as necessary to achieve this.  Note the enormous advantage obtained by having both dominant and anti-dominant weights, as we can add canceling pairs of these without changing the total sum.  

As in \cite[4.17]{Webmerged}, we can compose with the diagram $\eta_{\kappa}$ pulling all black strands
 to the left and $\dot\eta_{\kappa}$, its vertical
 reflection\footnote{We use $\eta$ instead of $\theta$ here since we are pulling left rather than right.}.  This will send a non-trivial relation between the diagrams in $B$ to a non-trivial relation between diagrams where $\kappa(i)=n$ for all $i$.  

 \mybox{Choice of eigenvalues:} We can now project this relation to the subspace where we fix the eigenvalue of each dot acting at the top and bottom.  The
 formulas  \crefrange{eq:nu-inv1}{eq:nu-inv2} and 
\crefrange{eq:cup/cap}{eq:4} defining the functor $\Xi$ show that this projection is the image under $\Phi$ of a diagram with an equal or smaller number of crossings, and we can only have equality if we choose eigenvalues so that they coincide at opposite ends of a strand.  Now, fix a matching $D$ such that an associated basis vector appears in our relation, and the corresponding diagram has a maximal number of crossings among those that appear. 

Let us number the arcs $\{a_1,\dots, a_n\}$ in the
diagram $D$, and let $i_{a_m}$ be the index that labels this arc.  As noted before, we are free to add cancelling pairs
$(\lambda,-\lambda)$ of highest and lowest weights to   $\boldsymbol{\la}$, and to reorder our
weights, so we can
assume that $\la_m$ is
\begin{enumerate}
\item dominant if the corresponding arc $a$ is downward oriented at its
 lefthand edge and $\la_a$ anti-dominant if the arc $a$ is upward
 oriented at its lefthand edge,
\item $\la_m^{i_{a_m}}\neq 0$ for all $m$, that is, $\la_m$ is not
 perpendicular to the coroot corresponding to the arc $a_m$.
\end{enumerate}
For simplicity, let $\la_{a_m}=\la_{m}$ and $\zw_{a_m}=\zw_m$.

For example, if \[  D=
\begin{tikzpicture}[baseline,very thick]
\draw [postaction={decorate,decoration={markings,
   mark=at position .3 with {\arrow[scale=1.3]{>}}}}] (-.5,-1)
to[out=90,in=180] node[below,at start]{$i$} node [below, pos=.9,green!50!black]{1}(.25,0)
 to[out=0,in=90] node[below,at end]{$i$} (1,-1);
\draw [postaction={decorate,decoration={markings,
   mark=at position .2 with {\arrow[scale=1.3]{<}}}}] (.5,-1)
to[out=90,in=-90]  node[above,at end]{$j$} node [right, pos=.7,green!50!black]{2} node[below ,at start]{$j$} (1,1); 
 \draw[postaction={decorate,decoration={markings,
   mark=at position .5 with {\arrow[scale=1.3]{<}}}}]  (.5,1) to[out=-90,in=-90] node [below, pos=.5,green!50!black]{3} node[above,at start]{$i$} node[above,at end]{$i$}(0,1);
\end{tikzpicture}
\] with the numbering shown, then we must have $\la_1$
antidominant and $\la_2,\la_3$ dominant, with \[\la_1^i<0\qquad
 \la_2^j>0\qquad \la_3^i>0,\]
and can take $\la_4=-\la_1,\la_5=-\la_2,\la_6=-\la_3$.  

 \mybox{Projection to eigenspaces:} Now, let us take the projection to the subspace where the eigenvalue of the dot at each end of the arc $a$ is the variable $\zw_a$.  Let $\Bj$ and $\Bj'$ be the associated sequences in $\tilde I$ at the
bottom and top of the diagram.  In the above example, we would have
$\Bj=\Big ((i,\zw_1),(-j,\zw_2),(-i,\zw_1) \Big)$ and $\Bj'=\Big ((-i,\zw_3),(i,\zw_3),(j,\zw_2)\Big)$.

Note that $D$ gives the only way to match the terminals in $\Bj$ and $\Bj'$ to produce a legal tricolore diagram.  Thus, all diagrams in our relation that give a different matching from $D$ project to 0, since there is no matching which has the same eigenvalue at both ends of each strand and fewer crossings than $D$.

Therefore, this must be the projection of a relation in $\hldbK$ where all
terms have the underlying matching $D$ with some number $d_a$ of dots on
the arc $a$,
times some monomial $M$ in the bubbles at the left of the diagram.  If we show that no such relation
exists, then for each choice of $d_a$ and $M$, the corresponding term must have had coefficient 0 in the
original relation.  It follows that the original relation must have been trivial.

\mybox{Ruling out projected relations:} First, we consider bubbles.  Any bubble at the left of the diagram evaluates to a scalar, using
the relations \cref{color-opp-cancel} and \cref{dcost2}. The
clockwise bubbles with the label $i$ evaluate to the coefficients of the
power series $\prod_{i}(z-\zw_k)^{\la^i_k}$ and the counterclockwise bubbles
to the coefficients of its formal inverse
$\prod_{i}(z-\zw_k)^{-\la^i_k}$.  By adding new pairs of red and blue
strands with labels $\nu$ and $-\nu$ for a strictly dominant weight
$\nu$, we can ensure that any finite set of monomials in clockwise bubbles are sent to
elements of $\mathbb{\bar K}$ which are algebraically independent over $\K$.  

Furthermore, we can explicitly describe the space $\Hom_{\EuScript{T}}
\big((\bla,\Bj,\kappa),(\bla,\Bj',\kappa')\big)$; it has a basis over
$\mathbb{\bar K}$ given
diagrams with matching $D$ and with $d_a<|\la_a^{i_a}|$.  This follows from \cref{eq:cyc-quotient}: using the fact that functors labeled by different components commute, and adjunction as appropriate, we can reduce to the case where $D$ is a set of vertical strands with no crossings, where this is just the obvious basis of a tensor product of truncated polynomial rings.  Thus,
for any finite number of ways of choosing $d_a$ and $M$, we can choose
$\la_a$'s so that the corresponding diagrams lie in this basis. 
Since these diagrams remain linearly independent after acting in
$\hldbK$, they must be linearly independent, so
we must have that the coefficient of this diagram in $\mathbb{\bar K}$
is 0.  This, in turn,
supplies a polynomial relation between the values of the clockwise
bubbles.  We can rule out this possibility by choosing $\bla$ so that the bubbles which
appear in the relation are
algebraically independent.  Thus, we see that the relation we chose
is trivial.  This establishes the linear independence of our
prospective basis and establishes the result.  
   \end{proof}	

 \appendix

\renc{\theequation}{\Alph{section}.\arabic{equation}}
\section{Valued graphs}
\label{sec:value}
We will follow the conventions of Lemay \cite{Lemaygraph} in this
section.  For simplicity, ``graph'' will always mean a graph without loops.
\begin{definition}
  A {\bf relatively valued graph} is an oriented graph with vertex set
  $I$ with a pair of positive integers
  $(\eta_e,\nu_e)$ assigned to each edge such that there exist $d_{i}\in
  \Z_{>0}$ for each
  $i\in I$ such that $d_i\eta_e=d_j\nu_e$ for $e\colon i\to j$.  

  An {\bf absolutely valued graph} is an  oriented graph as above with
  a fixed choice of positive real numbers $d_i$ for each vertex $i$ and $m_e$ for each edge
  $e\colon i\to j$. 
\end{definition}
Each absolutely valued graph has an associated relatively valued graph
with \[\eta_e=\frac{m_e}{d_i}\qquad  \nu_e=\frac{m_e}{d_j}\] for an
  edge $e\colon i\to j$, and every relatively valued graph has this
  form. Note that relatively valued graphs have a natural notion of
{\bf Langlands duality}, given by switching $\eta_e$ and $\nu_e$.  
We attach a Cartan matrix to each such graph without loops, with $c_{ii}=2$ and 
\[c_{ij}=-\sum_{e\colon i\to j} \eta_e-\sum_{e\colon j\to i}
\nu_e=-\frac{1}{d_i}\Big(\sum_{e\colon i\to j} m_e+\sum_{e\colon j\to i}
m_e\Big).\] Note that Langlands duality transposes this Cartan matrix.  

Given a choice of Cartan datum, we have coefficients $d_i$ as defined before.  Now, choose polynomials  $P_{ij}(x,y)$  for each pair $i,j\in I^2$ satisfying the conditions we require for $P_{ij}$'s in \cref{sec:klr-algebra};  that is, their product $Q_{ij}(x,y)=P_{ij}(x,y)P_{ji}(y,x)$ is homogeneous of degree $-2\langle\al_i,\al_j\rangle= -2d_ic_{ij}=-2d_jc_{ji}$ when
$x$ is given degree $2d_i$ and $y$ degree $2d_j$ with $Q_{ij}(1,0)$  a unit for all $i,j$.

We can canonically
associate an absolutely valued graph with vertex set $I$ by adding an edge $i\to j$ whenever $P_{ij}(x,y)$
is non-constant with $m_e=\deg
P_{ij}(x^{d_i},0)=\deg P_{ij}(0,x^{d_j})$.  
In the associated relatively valued quiver, the values we add to this edge are $(\deg
P_{ij}(x,0), \deg P_{ij}(0,x))$.  The Cartan matrix of the result is
our original Cartan matrix $C$ attached to the Cartan datum. 

Given a graph homomorphism between two valued graphs, we can consider
various forms of compatibility between the valuings on the two
graphs.  One notion considered by Lemay \cite{Lemaygraph} is a
morphism of valued graphs: this is a homomorphism of graphs where the
appropriate statistics ($\eta_*,\nu_*,m_*,d_*$) are preserved; this is too inflexible for our purposes.
Instead, we will consider a set of maps which are more analogous to
topological covers.
Consider relatively valued graphs $X$ and $Y$ with vertex sets $I(X)$ and $I(Y)$.
\begin{definition}
  We call a map $f\colon X\to Y$ of graphs a {\bf furling} if
  given any $y,y'\in I(Y)$, and $x\in f^{-1}(y)$, we have that for each edge
  $d\colon y\to y'$ and each edge $e\colon y'\to y$, 
    \[\nu_d=\sum_{x'\in f^{-1}(y')}\sum_{\substack{d'\colon x\to x'\\ f(d')=d}} \nu_{d'}
    \qquad \eta_e=\sum_{x'\in f^{-1}(y')}\sum_{\substack{e'\colon x'\to x\\ f(e')=e}} \eta_{e'}.\] 
\end{definition}
The notion of a furling is closely related to a ``folding,''
but we will not use this term, since it usually applies to the Langlands
dual of the operation above, and implies the existence of a group
action.  We will call $Y$ a {\bf furling} of $X$ and $X$ an {\bf
  unfurling} of $Y$. A morphism of relatively valued graphs which is also a topological cover is a furling.  See \cref{example1,example2} for some examples of furlings which will be relevant for us. Note that since this is a condition on preimages, it is satisfied when $I(X)=\emptyset$;  however, it does require that if $y\in Y$ has a preimage, then any $y'$ adjacent to it along an edge has a preimage.  Thus, the image of any furling is a union of connected components.

Note that if $X$ has a compatible absolute valued structure such that $d_x$ and $m_e$ are constant on the fibers of $f$ and these fibers are finite, then for an edge
$e\colon y'\to y$, we have that
\[\eta_e=\sum_{x'\in
f^{-1}(y')}\frac{1}{d_{x'}}\sum_{\substack{e'\colon x'\to x\\
    f(e')=e}} m_{e'}\qquad \nu_e=\sum_{x\in
  f^{-1}(y)}\frac{1}{d_{x}}\sum_{\substack{e'\colon x'\to x\\
    f(e')=e}} m_{e'}.\]
Thus, we can choose an absolute valued structure on $Y$ such that
\[d_y=\frac{d_x}{|f^{-1}(y)|}\qquad m_e=\frac{\sum_{
    f(e')=e} m_{e'}}{|f^{-1}(y)|\cdot |f^{-1}(y')|}.\]  As defined
here, $d_y$ and $m_e$ may not be integers, but
  if $Y$ is finite, then we can always just multiply every $d_y$ and
  $m_e$ by $\operatorname{lcm}(|f^{-1}(y)|)$ to clear denominators.

One special case of particular interest is when $X$ is given the
trivial valuation $d_x=m_e=\nu_e=\eta_e=1$ and is equipped
with an admissible automorphism $\sigma$; 
recall that we call an automorphism of a graph {\bf admissible} if no
edges connect two vertices in the same orbit under the action.  We let
$Y$ be the quotient graph $X/\sigma$ and $f\colon X\to Y$ the obvious
projection map.  In this case, we have
\begin{equation}
d_y=\frac{1}{|f^{-1}(y)|}\qquad
m_e=\frac{|f^{-1}(e)|}{|f^{-1}(y)|\cdot |f^{-1}(y')|}.\label{eq:absolute}
\end{equation}
This is the Langlands dual of the ``folding'' 
  discussed in \cite[\S 1]{Lemaygraph} (which is the more common way
  of associating a Cartan matrix to a graph with automorphism).
  
In particular, letting $\sigma$ be the unique nontrivial automorphism of the $A_{2n-1}$ Dynkin diagram, we obtain the type $C_n$ Cartan datum, with the obvious projection map being a folding;  similarly, for the $D_{n+1}$ Dynkin diagram, we have a folding to the $B_n$ Cartan datum.

\begin{lemma}\label{lem:Cartan}
Given a furling $f\colon X\to Y$, for any fixed $y,y'\in I(Y)$ and $x'\in f^{-1}(y')$ we have that:
\[ c_{yy'}=\sum_{x\in f^{-1}(y)} c_{xx'}.\]
\end{lemma}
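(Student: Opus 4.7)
The plan is to split into cases $y \neq y'$ and $y = y'$, and rely on a symmetry of the furling condition. The definition quantifies over arbitrary pairs $y, y'$ with a base point $x$ in the fiber over the first coordinate, but we may apply it instead with the coordinates exchanged and base point $x'\in f^{-1}(y')$. Doing so yields the complementary identities
\[
\nu_d = \sum_{x\in f^{-1}(y)}\sum_{\substack{d'\colon x'\to x\\ f(d')=d}}\nu_{d'} \quad\text{for } d\colon y'\to y,
\qquad
\eta_e = \sum_{x\in f^{-1}(y)}\sum_{\substack{e'\colon x\to x'\\ f(e')=e}}\eta_{e'} \quad\text{for } e\colon y\to y'.
\]
These are the forms we need, because the right-hand side of the claimed identity involves summing over preimages of $y$ with the target $x'$ held fixed.

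For the main case $y\neq y'$, all $x\in f^{-1}(y)$ satisfy $x\neq x'$, so expanding
\[
c_{xx'} = -\sum_{e'\colon x\to x'}\eta_{e'} - \sum_{d'\colon x'\to x}\nu_{d'}
\]
and summing over $x\in f^{-1}(y)$ gives a double sum over (i) an edge $e'$ or $d'$ in $X$ incident to $x'$, and (ii) the endpoint $x\in f^{-1}(y)$. Since $f$ is a graph homomorphism and $Y$ has no loops, every such edge $e'\colon x\to x'$ (respectively $d'\colon x'\to x$) must project to an edge $e\colon y\to y'$ (respectively $d\colon y'\to y$) in $Y$. Regrouping the sum by the image edge and applying the two identities above collapses the expression into $-\sum_{e\colon y\to y'}\eta_e - \sum_{d\colon y'\to y}\nu_d = c_{yy'}$, which is the desired equality.

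The case $y = y'$ is immediate: any edge of $X$ between two vertices of $f^{-1}(y)$ would project to a self-loop at $y$, which is forbidden since graphs in this section are assumed loop-free. Hence $c_{xx'} = 0$ for every $x\in f^{-1}(y)\setminus\{x'\}$, while $c_{x'x'} = 2 = c_{yy}$, so the sum reduces to $2$ and the identity holds trivially. The only subtlety in the whole argument is the first step -- recognizing that the seemingly asymmetric statement of the furling definition actually provides the ``sum over $f^{-1}(y)$ with $x'$ fixed'' version by an innocuous relabeling -- and once that symmetry is exploited, the rest of the proof is a matter of grouping edges of $X$ by their images in $Y$.
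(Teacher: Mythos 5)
Your proof is correct and follows the same line of reasoning as the paper's: expand the Cartan entries in terms of $\eta$ and $\nu$, apply the furling identities, and regroup the double sum by preimages of $y$. You are somewhat more careful than the paper, noting explicitly that the definition must be invoked with $y$ and $y'$ exchanged (base point over $y'$) to obtain the identities in the required form, and handling the diagonal case $y=y'$, which the paper passes over silently.
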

\begin{proof}
    \begin{align*}
    c_{yy'}&=-\sum_{e\colon y\to y'} \eta_e-\sum_{e\colon y'\to y}
             \nu_e\\
           &=-\sum_{x\in f^{-1}(y)}\Bigg(\sum_{e\colon x\to x'} \eta_e+\sum_{e\colon x'\to x}
             \nu_e\Bigg)\\
           &=\sum_{x\in f^{-1}(y)} c_{xx'}\qedhere
  \end{align*}
\end{proof}

From now on, for all valued graphs appearing, we assume that the matrix $C$ is a generalized Cartan matrix (in
particular, all off-diagonal entries are negative integers).   
\begin{definition}
  Given a valued graph $X$, let $\check{\fg}_X$ be the Lie algebra generated by $E_i,F_i,H_i$ for $i\in I(X)$ with the relations \begin{equation}
    [H_i,E_{j}]=c_{ij}E_j \qquad [H_i,F_j]=-c_{ij}F_j\qquad
    [E_i,F_j]=\delta_{ij}H_i\qquad  [H_i,H_j]=0\label{eq:serre1}
  \end{equation}
We also have the 
 associated universal derived
  Kac-Moody algebra $\fg_X$ where we further quotient by the relations  
  \begin{equation}
    \operatorname{ad}_{E_i}^{1-c_{ij}}E_j=\operatorname{ad}_{F_i}^{1-c_{ij}}F_j=0.\label{eq:serre2}
  \end{equation}
\end{definition}
Since $I(X)$ might be infinite, we need to define a cofinite topology on the Lie algebras $\check{\fg}_X,\fg_X$.  Both of these algebras are spanned by iterated Lie brackets of the elements $E_i,F_i,H_i$. Let $\check{\fg}_X^{I_0,r},\fg_X^{I_0,r}$ for a finite subset $I_0\subset I(X)$ be the span of all monomials of length $\leq r$ in these elements that involve at least one $E_i,F_i,$ or $H_i$ for $i\notin I_0$.   Let the cofinite topology be the topology on $\check{\fg}_X,\fg_X$ with these sets as a basis of neighborhoods of the identity.  

Note the analogy to the finiteness conditions of 1-morphisms in $\htU$.  For any representation of ${\fg}_X$ where on any given vector $v$, we have $E_iv=F_iv=H_iv=0$  for all but finitely many $i\in I(X)$, we will have $\fg_X^{I_0,r}v=0$ for some $I_0\subset I(X)$, so the action on any such representation factors through the completion $\widehat{{\fg}}_X$ with respect to this topology.  

A straightforward extension of \cite[7.9]{kacInfinitedimensionalLie1990} shows that:
\begin{proposition}\label{prop:furling-homomorphism}
  If $f\colon X\to Y$ is a furling of valued graphs, there is an induced
  homomorphism of Kac-Moody algebras $\fg_Y\to \widehat{\fg}_X$ given by the
  formulas:
\[F_y\mapsto \sum_{x\in f^{-1}(y)} F_x\qquad E_y\mapsto \sum_{x\in
  f^{-1}(y)} E_x\qquad H_y\mapsto \sum_{x\in f^{-1}(y)} H_x. \]
\end{proposition}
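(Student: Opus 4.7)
The plan is to verify that the proposed assignments $\tilde{E}_y := \sum_{x\in f^{-1}(y)} E_x$, $\tilde{F}_y := \sum_{x\in f^{-1}(y)} F_x$, and $\tilde{H}_y := \sum_{x\in f^{-1}(y)} H_x$ satisfy the defining relations (\ref{eq:serre1}) and (\ref{eq:serre2}) of $\fg_Y$ inside $\fg_X$; since $\fg_Y$ is given by generators and relations, this is enough.

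For the Cartan-type relations (\ref{eq:serre1}) I would compute directly. Expanding $[\tilde{H}_y, \tilde{E}_{y'}] = \sum_{x,x'} c_{xx'} E_{x'}$, Lemma \ref{lem:Cartan} collapses the inner sum over $x\in f^{-1}(y)$ to $c_{yy'}$ for each fixed $x'$, so the double sum becomes $c_{yy'}\tilde{E}_{y'}$; the computation for $[\tilde{H}_y,\tilde{F}_{y'}]$ is the same up to signs. For $[\tilde{E}_y,\tilde{F}_{y'}]$, the bracket $[E_x,F_{x'}] = \delta_{xx'} H_x$ combined with the disjointness of fibers over distinct $y,y'$ immediately yields $\delta_{yy'}\tilde{H}_y$.

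The key preliminary step for the Serre relations (\ref{eq:serre2}) is to show that no edges of $X$ lie within a fiber of $f$. Applying Lemma \ref{lem:Cartan} with $y=y'$ and any fixed $x''\in f^{-1}(y)$ gives $2 = c_{yy} = \sum_{x\in f^{-1}(y)} c_{xx''}$; since $c_{x''x''} = 2$ and every remaining term is nonpositive (as $C$ is a generalized Cartan matrix), each $c_{xx''}$ with $x\neq x''$ must vanish. Consequently the Serre relations in $\fg_X$ reduce to $[E_x,E_{x'}] = [F_x,F_{x'}] = 0$ for distinct $x,x'\in f^{-1}(y)$, so the operators $\operatorname{ad}_{E_x}$ (respectively $\operatorname{ad}_{F_x}$) for $x$ running over one fiber commute pairwise on all of $\fg_X$.

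With this commutativity, $\operatorname{ad}_{\tilde{E}_y} = \sum_x \operatorname{ad}_{E_x}$ admits a multinomial expansion
\[
\operatorname{ad}_{\tilde{E}_y}^{\,n}\tilde{E}_{y'} \;=\; \sum_{x'\in f^{-1}(y')}\,\sum_{\substack{(n_x)_{x\in f^{-1}(y)}\\ \sum_x n_x = n}} \binom{n}{(n_x)} \prod_{x\in f^{-1}(y)} \operatorname{ad}_{E_x}^{n_x} E_{x'}.
\]
Setting $n = 1-c_{yy'}$ and using Lemma \ref{lem:Cartan} to rewrite $n = 1 - \sum_{x} c_{xx'}$, I would observe that if every $n_x$ were at most $-c_{xx'}$ then $\sum_x n_x \le -\sum_x c_{xx'} = n-1$, contradicting $\sum_x n_x = n$. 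Hence in each surviving multi-index some $n_x \ge 1-c_{xx'}$, so the Serre relation $\operatorname{ad}_{E_x}^{1-c_{xx'}}E_{x'} = 0$ in $\fg_X$ kills that term; the $F$-version is identical. The only genuine obstacle is the fiber-commutativity observation, which requires both Lemma \ref{lem:Cartan} and the hypothesis that $C$ is a generalized Cartan matrix; once it is established, the rest is the direct extension of Kac's argument where the orbit of an admissible automorphism is replaced by the fiber of a furling.
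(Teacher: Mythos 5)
Your verification of the relations \eqref{eq:serre1} is the same computation as the paper's: expand the bracket, swap the order of summation, and apply Lemma~\ref{lem:Cartan}. The genuine difference is in how the Serre relations \eqref{eq:serre2} are handled. The paper disposes of them in a single line by appealing to the Gabber--Kac theorem. You instead give a direct combinatorial argument: first observe that no edges of $X$ lie inside a fiber of $f$, so the operators $\operatorname{ad}_{E_x}$ for $x$ in one fiber commute pairwise; then use a multinomial expansion and a pigeonhole count showing that in each multi-index some $n_x \ge 1-c_{xx'}$, so the corresponding factor already vanishes by the Serre relation in $\fg_X$. The pigeonhole step is correct, and because the operators commute, the vanishing of one factor kills the whole term. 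This is a valid, self-contained alternative; it makes the mechanism explicit at the cost of being longer than the paper's appeal to Gabber--Kac.

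One small caveat on the preliminary step. You deduce that fibers contain no internal edges by applying Lemma~\ref{lem:Cartan} with $y=y'$. Be aware that the proof of that lemma in the paper only treats the off-diagonal case: it rewrites $c_{yy'}$ via the edge-sum formula, which produces $0$ rather than $2$ when $y=y'$, so reading the lemma as valid on the diagonal is essentially equivalent to the statement you want and is therefore slightly circular. The cleaner route to the same fact is direct: a furling is in particular a morphism of valued graphs, hence a graph homomorphism, and the paper's convention is that all graphs are loopless; an edge between two vertices of one fiber would have nowhere in $Y$ to map, so no such edge can exist. With that substitution for the fiber-commutativity observation, your argument is airtight.
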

\begin{proof}
The relations \cref{eq:serre1} are straightforward computations using
\cref{lem:Cartan}.  We have that:
\[\Big[\sum_{x\in f^{-1}(y)} H_x,\sum_{x'\in
   f^{-1}(y')} E_{x'}\Big ]=\sum_{x'\in
  f^{-1}(y')} \Bigg(\sum_{x\in f^{-1}(y)} c_{xx'}\Bigg)E_{x'}=\sum_{x'\in
  f^{-1}(y')} c_{yy'}E_{x'}\]
  \[\Big[\sum_{x\in f^{-1}(y)} H_x,\sum_{x'\in
  f^{-1}(y')} F_{x'}\Big ]=\sum_{x'\in
  f^{-1}(y')} \Bigg(\sum_{x\in f^{-1}(y)}-c_{xx'}\Bigg)F_{x'}=\sum_{x'\in
  f^{-1}(y')}- c_{yy'}F_{x'}\]
\[\Big[\sum_{x\in f^{-1}(y)} E_x,\sum_{x'\in
  f^{-1}(y')} F_{x'}\Big ]=\delta_{y,y'}\sum_{x\in f^{-1}(y)} H_x\]
This shows that we have a homomorphism 
$\check{\fg}_Y\to \widehat{\check{\fg}}_X$, which we wish to show descends.  If there are any components of $Y$ with no preimages, this map kills all the attached $E_i, H_i,F_i$, so this is just augmentation map and indeed descends.  

Thus, without loss of generality, we may assume that  $X\to Y$ is surjective.  Consequently, the induced map on Cartan subalgebras $\mathfrak{h}_Y \to \mathfrak{h}_X$ is injective, and so this map sends elements of non-zero weight to elements of non-zero weight.  In particular, the kernel of the map $\check{\fg}_Y\to {\fg}_Y$ is an ideal of this Lie algebra with no non-zero vectors of weight zero.  Thus, the same is true of its image in $\check{\fg}_X$.  
By the Gabber-Kac theorem \cite[Cor. 2]{gabberDefiningRelations1981}, any such ideal lies in the ideal generated by the relations \cref{eq:serre2}, so this completes the proof.
\end{proof}

As in the main body of the paper, we fix a Cartan datum and corresponding polynomials $P_{ij}(x,y)$.  We let $Y$ be the  valued graph defined above on the Dynkin diagram.  Given a complete choice of spectra $U_i$, we define the set $\tilde{I}$ and its induced graph structure as in \cref{def:tilde-I}.

\begin{proposition}
  If $U_i$ is a complete choice of spectra, then the map $\tilde{I}\to
  Y$ is a furling of valued graphs.
\end{proposition}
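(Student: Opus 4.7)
The plan is to check directly the two edge-counting identities that define a furling for $f\colon \tilde{I}\to I$. Fix $i,i'\in I$, a lift $(i,u)\in f^{-1}(i)$, and the unique edge $d\colon i\to i'$ in $I$ (coming from the polynomial $P_{ii'}$). Every edge $(i,u)\to (i',u')$ in $\tilde{I}$ projects to $d$, so the furling identity for $\nu_d$ reduces to showing that the total number of edges from $(i,u)$ into the fiber $f^{-1}(i')$ equals $\nu_d = m_d/d_{i'} = |A_{ii'}|/d_{i'}$.

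The natural way to count is to pass from $A_{ii'}$ to $\EuScript{A}_{ii'}$, using that $A_{ii'}$ is the multiset of $d_i h_{ii'}$-th roots of elements of $\EuScript{A}_{ii'}$, so $|A_{ii'}|=d_i h_{ii'}\cdot |\EuScript{A}_{ii'}|$ counted with multiplicity. Raising the edge condition $u^{1/d_i}=a(u')^{1/d_{i'}}$ to the $d_ih_{ii'}=\operatorname{lcm}(d_i,d_{i'})$-power yields $u^{h_{ii'}}=\alpha(u')^{h_{i'i}}$ with $\alpha=a^{d_ih_{ii'}}\in\EuScript{A}_{ii'}$. For fixed $u$ and $\alpha$, this equation determines $u'$ up to an $h_{i'i}$-th root of unity, giving $h_{i'i}$ candidate values; each candidate produces a unique $a=u^{1/d_i}/(u')^{1/d_{i'}}$, which lies in $A_{ii'}$ with the multiplicity $m_\alpha$ of $\alpha$ in $\EuScript{A}_{ii'}$.

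Completeness of the spectra is exactly what ensures that every candidate $u'$ actually lies in $U_{i'}$: the edge condition forces one factor of the $A$-product in $Q_{ii'}(u,u')$ to vanish, so completeness gives $u'\in U_{i'}$. The total edge count is therefore $\sum_\alpha h_{i'i}m_\alpha = h_{i'i}|\EuScript{A}_{ii'}| = |A_{ii'}|/d_{i'} = \nu_d$. The identity for $\eta_e$ on an edge $e\colon i'\to i$ is entirely symmetric: an edge $(i',u')\to (i,u)$ corresponds to $b\in A_{i'i}$ with $(u')^{1/d_{i'}}=bu^{1/d_i}$, which rewrites as $u^{1/d_i}=b^{-1}(u')^{1/d_{i'}}$ with $b^{-1}\in B_{ii'}$; now a factor of the $B$-product in $Q_{ii'}(u,u')$ vanishes, and completeness once more forces $u'\in U_{i'}$. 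The same counting then gives $h_{i'i}|\EuScript{A}_{i'i}| = |A_{i'i}|/d_{i'} = \eta_e$.

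The main obstacle is the bookkeeping: keeping track of multiplicities in $A_{ii'}$ versus $\EuScript{A}_{ii'}$, and ensuring that the \emph{chosen} $d_{i'}$-th roots (rather than arbitrary ones) match up correctly with the $h_{i'i}$ candidate values of $u'$. The substitution $\alpha=a^{\operatorname{lcm}(d_i,d_{i'})}$ is what collapses the Galois redundancy among $d_ih_{ii'}$-th roots and reduces the whole problem to a clean count.
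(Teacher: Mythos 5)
Your proof is correct, but it takes a different (and longer) route than the paper's. The paper's argument is a one-line degree count: for a fixed $u\in U_i$, the preimages of the edge $e\colon i\to i'$ starting from $(i,u)$ are in bijection (with multiplicity) with the roots in $x$ of the honest bivariate polynomial $P_{ii'}(u,x)$, and these roots lie in $U_{i'}$ by completeness, so the count is simply $\deg_x P_{ii'}(u,x)=\deg P_{ii'}(0,x)=\nu_e$; similarly for the edge $i'\to i$ one counts roots of $P_{i'i}(x,u)$ to get $\eta_d=\deg P_{i'i}(x,0)$. The implicit ingredient making this work is that the multiset $A_{ii'}$ is stable under multiplication by $d_{i'}$-th roots of unity (since $P_{ii'}$ is a polynomial in $v$, not just in $v^{1/d_{i'}}$), which is exactly what collapses the $d_{i'}$ representatives $a$ with a common $(u^{1/d_i}/a)^{d_{i'}}$ onto one root of $P_{ii'}(u,\cdot)$. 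You instead raise the edge condition to the $\operatorname{lcm}(d_i,d_{i'})$-th power and count via $\EuScript{A}_{ii'}$: $h_{i'i}$ candidate values of $u'$ per $\alpha$, each with multiplicity $m_\alpha$, giving $h_{i'i}|\EuScript{A}_{ii'}|=|A_{ii'}|/d_{i'}=\nu_d$. This is correct and makes the multiplicity bookkeeping fully explicit (you check that $\operatorname{mult}_{A_{ii'}}(a)=m_{a^{d_ih_{ii'}}}$ and that completeness delivers $u'\in U_{i'}$ via the vanishing factor of $Q_{ii'}$). The trade-off: the paper's count buys brevity by working with the polynomial $P_{ii'}$ itself, at the price of a suppressed verification that its $\nu_e$ roots (with multiplicity) really enumerate the edges; your version surfaces that verification at the cost of introducing the $\EuScript{A}$-multiset and an lcm-power detour that the paper avoids.
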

\begin{proof}
  The unique edge $e\colon i\to i'$ in $Y$ has preimages corresponding to each
  element $u\in U_i$ and each root of $P_{ii'}(u,x)$ as a polynomial in
  $x$.  Thus the number of preimages $e'$ is the degree of this polynomial
  in $x$, and each has $\nu_{e'}=1$, so this agrees with $\nu_e=\deg
  P_{ii'}(0,x)$.  Similarly, if we consider the edge $d\colon i'\to i$, the
  edges are in bijection with the roots of $P_{i'i}(x,u)$, and
  $\eta_d=\deg P_{i'i}(x,0)$.  This completes the proof.
\end{proof}

\begin{example}\label{example1}
	  The most important example is the so-called ``geometric'' parameters
  for the symmetric Cartan matrix for an oriented graph, where $\mathsf{P}_{ii'}(x,y)=(x-y)^{\#
  i\to i'}$.  In this case, $a^{(k)}_{ii'}=1$ for all $k$ and $(i,u)$
is connected to $(i',u')$ by the same number of edges as $i$ and $i'$
if $u=u'$ and none otherwise.  Thus, if we choose $U_i=U$ for some
fixed set $U\subset\K$, this is a complete choice of spectra and
$\tilde{I}=I\times U$ with the obvious graph structure.

If $X$ is simply-laced, but not simply-connected, then we can obtain
non-trivial covers as $\tilde{I}$.  For example, if $X$ is an
$n$-cycle with its vertex set identified with
$\Z/n\Z$ with edges $i\to i+1$.   Fix
some $q\in \K$ and choose
$Q_{i,i+1}(x,y)=qx-y$.  If we fix $U_0\subset \K$ to be any subset closed
under multiplication by $q^n$,  then we have a complete choice of
spectra with $U_i=q^{i}U_0$.  The components of the graph with vertex set
$\tilde{I}$ correspond to the orbits of multiplication by $q^n$; these
will be cycles if $q$ is a root of unity, or $A_\infty$ graphs if $q$
is not. 
\end{example}

\begin{example}\label{example3}
The most interesting examples are when:
\begin{enumerate}
  \item The Cartan matrix is of type $B_n$.  In this case, $d_1=1$ and $d_2=\cdots =d_n=2$.  If we take $P_{12}(x,y)=x^2-y$ and $P_{i,i+1}(x,y)= x-y$ for $i=2,\ldots,n-1$, and $P_{ij}(x,y)=1$ otherwise.  The components of the graph with vertex set $\tilde{I}$ are given by $(1,\pm a)$ and $(i, a^2)$ for $i=2,\dots,n$ and $a\in \K$, giving type $D_{n+1}$ diagrams.  
  \begin{center}
  \begin{tikzpicture}[>=To,x=1.2cm,y=0.9cm,font=\footnotesize]
  \node[draw,rectangle,inner sep=2pt] (onep) at (0,0.7) {$(1,a)$};
  \node[draw,rectangle,inner sep=2pt] (onem) at (0,-0.7) {$(1,-a)$};
  \node[draw,rectangle,inner sep=2pt] (two) at (2,0) {$(2,a^2)$};
  \node[draw,rectangle,inner sep=2pt] (three) at (4,0) {$(3,a^2)$};
  \node[draw,rectangle,inner sep=2pt] (n) at (6.2,0) {$(n,a^2)$};
  \path (three) -- (n) node[midway] (dots1) {$\cdots$};
  \draw[->,thick] (onep) -- (two);
  \draw[->,thick] (onem) -- (two);
  \draw[->,thick] (two) -- (three);
  \draw[thick] (three) -- (dots1);
  \draw[->,thick] (dots1) -- (n);

  \node[draw,circle,inner sep=1.2pt] (b1) at (0,-2.2) {};
  \node[draw,circle,inner sep=1.2pt] (b2) at (2,-2.2) {};
  \node[draw,circle,inner sep=1.2pt] (b3) at (4,-2.2) {};
  \node[draw,circle,inner sep=1.2pt] (bn) at (6.2,-2.2) {};
  \node at (0,-2.65) {1};
  \node at (2,-2.65) {2};
  \node at (4,-2.65) {3};
  \node at (6.2,-2.65) {$n$};
  \path (b3) -- (bn) node[midway] (dotsB) {$\cdots$};
  \draw[double distance=1pt,->] (b2) -- (b1);
  \draw (b2) -- (b3);
  \draw (b3) -- (dotsB);
  \draw (dotsB) -- (bn);
  \end{tikzpicture}
  \end{center}
  \item The Cartan matrix is of type $C_n$.  In this case, $d_1=2$ and $d_2=\cdots =d_n=1$.  If we take $P_{12}(x,y)=x-y^2$ and $P_{i,i+1}(x,y)= x-y$ for $i=2,\ldots,n-1$, and $P_{ij}(x,y)=1$ otherwise.  The components of the graph with vertex set $\tilde{I}$ are given by $(1,a^2)$ and $(i, \pm a)$ for $i=2,\dots,n$ and $a\in \K$, giving type $A_{2n-1}$ diagrams.  
  \begin{center}
  \begin{tikzpicture}[>=To,x=1.2cm,y=0.9cm,font=\footnotesize]
  \node[draw,rectangle,inner sep=2pt] (one) at (0,0) {$(1,a^2)$};
  \node[draw,rectangle,inner sep=2pt] (twoP) at (2,0.7) {$(2,a)$};
  \node[draw,rectangle,inner sep=2pt] (twoM) at (2,-0.7) {$(2,-a)$};
  \node[draw,rectangle,inner sep=2pt] (threeP) at (4,0.7) {$(3,a)$};
  \node[draw,rectangle,inner sep=2pt] (threeM) at (4,-0.7) {$(3,-a)$};
  \node[draw,rectangle,inner sep=2pt] (nP) at (6.2,0.7) {$(n,a)$};
  \node[draw,rectangle,inner sep=2pt] (nM) at (6.2,-0.7) {$(n,-a)$};
  \path (threeP) -- (nP) node[midway] (dotsP) {$\cdots$};
  \path (threeM) -- (nM) node[midway] (dotsM) {$\cdots$};
  \draw[->,thick] (one) -- (twoP);
  \draw[->,thick] (one) -- (twoM);
  \draw[->,thick] (twoP) -- (threeP);
  \draw[->,thick] (twoM) -- (threeM);
  \draw[thick] (threeP) -- (dotsP);
  \draw[thick] (threeM) -- (dotsM);
  \draw[->,thick] (dotsP) -- (nP);
  \draw[->,thick] (dotsM) -- (nM);
  \node[draw,circle,inner sep=1.2pt] (c1) at (0,-2.2) {};
  \node[draw,circle,inner sep=1.2pt] (c2) at (2,-2.2) {};
  \node[draw,circle,inner sep=1.2pt] (c3) at (4,-2.2) {};
  \node[draw,circle,inner sep=1.2pt] (cn) at (6.2,-2.2) {};
  \node at (0,-2.65) {1};
  \node at (2,-2.65) {2};
  \node at (4,-2.65) {3};
  \node at (6.2,-2.65) {$n$};
  \path (c3) -- (cn) node[midway] (dotsC) {$\cdots$};
  \draw[double distance=1pt,->] (c1) -- (c2);
  \draw (c2) -- (c3);
  \draw (c3) -- (dotsC);
  \draw (dotsC) -- (cn);
  \end{tikzpicture}
  \end{center}
  \item The Cartan matrix is of affine type $C_n^{(1)}$.  In this case, $d_0=d_n=2$ and $d_1=\cdots =d_{n-1}=1$.  If we take $P_{01}(x,y)=x-y^2$, $P_{n-1,n}(x,y)=x^2-y$ and $P_{i,i+1}(x,y)= x-y$ for $i=1,\ldots,n-2$, and $P_{ij}(x,y)=1$ otherwise.  The components of the graph with vertex set $\tilde{I}$ are given by $(0,a^2),(n,a^2)$ and $(i, \pm a)$ for $i=1,\dots,n-1$ and $a\in \K$, giving affine type $A_{2n}$ diagrams.  
  \begin{center}
  \begin{tikzpicture}[>=To,x=1.2cm,y=0.9cm,font=\footnotesize]
  \node[draw,rectangle,inner sep=2pt] (zero) at (0,0) {$(0,a^2)$};
  \node[draw,rectangle,inner sep=2pt] (oneP) at (2,0.7) {$(1,a)$};
  \node[draw,rectangle,inner sep=2pt] (oneM) at (2,-0.7) {$(1,-a)$};
  \node[draw,rectangle,inner sep=2pt] (twoP) at (4,0.7) {$(2,a)$};
  \node[draw,rectangle,inner sep=2pt] (twoM) at (4,-0.7) {$(2,-a)$};
  \node[draw,rectangle,inner sep=2pt] (nm1P) at (6.2,0.7) {$(n-1,a)$};
  \node[draw,rectangle,inner sep=2pt] (nm1M) at (6.2,-0.7) {$(n-1,-a)$};
  \node[draw,rectangle,inner sep=2pt] (nnode) at (8.0,0) {$(n,a^2)$};
  \path (twoP) -- (nm1P) node[midway] (dots2P) {$\cdots$};
  \path (twoM) -- (nm1M) node[midway] (dots2M) {$\cdots$};
  \draw[->,thick] (zero) -- (oneP);
  \draw[->,thick] (zero) -- (oneM);
  \draw[->,thick] (oneP) -- (twoP);
  \draw[->,thick] (oneM) -- (twoM);
  \draw[thick] (twoP) -- (dots2P);
  \draw[thick] (twoM) -- (dots2M);
  \draw[->,thick] (dots2P) -- (nm1P);
  \draw[->,thick] (dots2M) -- (nm1M);
  \draw[->,thick] (nm1P) -- (nnode);
  \draw[->,thick] (nm1M) -- (nnode);
  \node[draw,circle,inner sep=1.2pt] (a0) at (0,-2.2) {};
  \node[draw,circle,inner sep=1.2pt] (a1) at (2,-2.2) {};
  \node[draw,circle,inner sep=1.2pt] (a2) at (4,-2.2) {};
  \node[draw,circle,inner sep=1.2pt] (an1) at (6.2,-2.2) {};
  \node[draw,circle,inner sep=1.2pt] (an) at (8.0,-2.2) {};
  \node at (0,-2.65) {0};
  \node at (2,-2.65) {1};
  \node at (4,-2.65) {2};
  \node at (6.2,-2.65) {$n-1$};
  \node at (8.0,-2.65) {$n$};
  \path (a2) -- (an1) node[midway] (dotsA) {$\cdots$};
  \draw[double distance=1pt,->] (a0) -- (a1);
  \draw (a1) -- (a2);
  \draw (a2) -- (dotsA);
  \draw (dotsA) -- (an1);
  \draw[double distance=1pt,->] (an) -- (an1);
  \end{tikzpicture}
  \end{center}
  \item In affine types $A_n^{(1)}$, there is a further deformation induced by deforming $P_{ij}$.  In type $A_n^{(1)}$, identifying the elements of $I$ with $\Z/n\Z$, we can take $P_{i,i+1}(x,y)=y-x-h$ for $h\in \K\setminus \{0\}$, in which case the components of $\tilde{I}$ are isomorphic to $A_{\infty}$, given by $(i, a+hk)$ where $k\equiv i \pmod n$, with the adjacencies $(i,a+hk)\to (i+1,a+h(k+1))$.
  \begin{center}
  \begin{tikzpicture}[>=To,x=1.2cm,y=0.9cm,font=\footnotesize]
  \coordinate (lcap) at (-2.0,0);
  \coordinate (rcap) at (8.0,0);
  \node[draw,rectangle,inner sep=2pt] (left) at (.5,0) {$(n-1,a-h)$};
  \node[draw,rectangle,inner sep=2pt] (mid) at (3.0,0) {$(0,a)$};
  \node[draw,rectangle,inner sep=2pt] (right) at (5.5,0) {$(1,a+h)$};
  \path (lcap) -- (left) node[midway] (dotsL) {$\cdots$};
  \path (right) -- (rcap) node[midway] (dotsR) {$\cdots$};
  \draw[->,thick] (dotsL) -- (left);
  \draw[->,thick] (left) -- (mid);
  \draw[->,thick] (mid) -- (right);
  \draw[->,thick] (right) -- (dotsR);
  \end{tikzpicture}
  \end{center}
	\item  One can combine the two deformations above by taking $P_{01}(x,y)=x-(y-h)^2, P_{n-1,n}(x,y)=x^2-y$ and $P_{i,i+1}(x,y)= x-y$ for $i=1,\ldots,n-2$, and $P_{ij}(x,y)=1$ otherwise. In this case, the component containing $(0, a^2)$ is given by $(0, (a+hk)^2)$ for $k$ even and $(n,(a+hk)^2)$ and $ (i, \pm (a+hk))$ for $k$ odd.  The map is obtained by considering the unfurling map of (3) and then taking the universal cover of the cycle as in (4).  Note the similarity of this to the content system considered in \cite[Ex. 3A.2(e)]{evseevContentSystems2024}.
  \begin{center}
  \begin{tikzpicture}[>=To,x=1.2cm,y=0.9cm,font=\footnotesize]
  \node[draw,rectangle,inner sep=2pt] (oneM) at (3,-1) {$(1,-a+h)$};
  \node[draw,rectangle,inner sep=2pt] (twoM) at (5.6,-1) {$(2,-a+h)$};
  \node[draw,rectangle,inner sep=2pt] (nm1M) at (8.4,-1) {$(n-1,-a+h)$};
  \path (twoM) -- (nm1M) node[midway] (dots3M) {$\cdots$};

  \node[draw,rectangle,inner sep=2pt] (oneB) at (3,-3) {$(1,a-h)$};
  \node[draw,rectangle,inner sep=2pt] (twoB) at (5.6,-3) {$(2,a-h)$};
  \node[draw,rectangle,inner sep=2pt] (nm1B) at (8.4,-3) {$(n-1,a-h)$};
  \node[draw,rectangle,inner sep=2pt] (nB) at (10.0,-2) {$(n,(a-h)^2)$};
  \path (twoB) -- (nm1B) node[midway] (dots3B) {$\cdots$};

  \node[draw,rectangle,inner sep=2pt] (oneTop) at (3,3) {$(1,-a-h)$};
  \node[draw,rectangle,inner sep=2pt] (twoTop) at (5.6,3) {$(2,-a-h)$};
  \node[draw,rectangle,inner sep=2pt] (nm1Top) at (8.4,3) {$(n-1,-a-h)$};
  \path (twoTop) -- (nm1Top) node[midway] (dots3Top) {$\cdots$};

  \node[draw,rectangle,inner sep=2pt] (zeroUp) at (1.5,4) {$(0,(a+2h)^2)$};
  \node[draw,rectangle,inner sep=2pt] (zeroDown) at (1.5,-4) {$(0,(a-2h)^2)$};
  \node[draw,rectangle,inner sep=2pt] (oneUp) at (3,5) {$(1,a+3h)$};
  \node[draw,rectangle,inner sep=2pt] (oneDown) at (3,-5) {$(1,-a+3h)$};
  \coordinate (upNext) at (7,5);
  \coordinate (downNext) at (7,-5);
  \path (oneUp) -- (upNext) node[midway] (dotsUp) {$\cdots$};
  \path (oneDown) -- (downNext) node[midway] (dotsDown) {$\cdots$};

  \node[draw,rectangle,inner sep=2pt] (zero) at (1.5,0) {$(0,a^2)$};
  \node[draw,rectangle,inner sep=2pt] (oneP) at (3,1) {$(1,a+h)$};
  \node[draw,rectangle,inner sep=2pt] (twoP) at (5.6,1) {$(2,a+h)$};
  \node[draw,rectangle,inner sep=2pt] (nm1P) at (8.4,1) {$(n-1,a+h)$};
  \node[draw,rectangle,inner sep=2pt] (nnode) at (10.0,2) {$(n,(a+h)^2)$};
  \path (twoP) -- (nm1P) node[midway] (dots3P) {$\cdots$};

  \draw[->,thick] (zeroUp) -- (oneUp);
  \draw[->,thick] (zeroDown) -- (oneDown);
  \draw[->,thick] (oneUp) -- (dotsUp);
  \draw[->,thick] (oneDown) -- (dotsDown);

  \draw[->,thick] (zero) -- (oneP);
  \draw[->,thick] (oneP) -- (twoP);
  \draw[thick] (twoP) -- (dots3P);
  \draw[->,thick] (dots3P) -- (nm1P);
  \draw[->,thick] (nm1P) -- (nnode);

  \draw[->,thick] (zero) -- (oneM);
  \draw[->,thick] (oneM) -- (twoM);
  \draw[thick] (twoM) -- (dots3M);
  \draw[->,thick] (dots3M) -- (nm1M);

  \draw[->,thick] (oneB) -- (twoB);
  \draw[thick] (twoB) -- (dots3B);
  \draw[->,thick] (dots3B) -- (nm1B);
  \draw[->,thick] (nm1B) -- (nB);
  \draw[->,thick] (nm1M) -- (nB);

  \draw[->,thick] (zeroDown) -- (oneB);

  \draw[->,thick] (oneTop) -- (twoTop);
  \draw[thick] (twoTop) -- (dots3Top);
  \draw[->,thick] (dots3Top) -- (nm1Top);
  \draw[->,thick] (nm1Top) -- (nnode);
  \draw[->,thick] (zeroUp) -- (oneTop);

  \node[draw,circle,inner sep=1.2pt] (a0c) at (1.5,-7.5) {};
  \node[draw,circle,inner sep=1.2pt] (a1c) at (3.0,-7.5) {};
  \node[draw,circle,inner sep=1.2pt] (a2c) at (5.6,-7.5) {};
  \node[draw,circle,inner sep=1.2pt] (an1c) at (8.4,-7.5) {};
  \node[draw,circle,inner sep=1.2pt] (anc) at (10.0,-7.5) {};
  \node at (1.5,-7.95) {0};
  \node at (3.0,-7.95) {1};
  \node at (5.6,-7.95) {2};
  \node at (8.4,-7.95) {$n-1$};
  \node at (10.0,-7.95) {$n$};
  \path (a2c) -- (an1c) node[midway] (dotsAc) {$\cdots$};
  \draw[double distance=1pt,->] (a0c) -- (a1c);
  \draw (a1c) -- (a2c);
  \draw (a2c) -- (dotsAc);
  \draw (dotsAc) -- (an1c);
  \draw[double distance=1pt,->] (anc) -- (an1c);
  \end{tikzpicture}
  \end{center}
\end{enumerate} 
\end{example}

Let $d=\operatorname{lcm}(d_i)_{i\in I}$.  Let
$U_i$ be the $d/d_i$th roots of unity; there are $d/d_i$ distinct
roots of unity since $d$ is coprime to the characteristic of $\K$ and $\K$ is algebraically closed.
Assume that each $a_{ij}^{(k)}$ is a $d$th root of unity for all
$i,j,k$.  For example, we can assume that 
\begin{equation}
Q_{ij}(x,y)=\pm (x^{h_{ij}}-y^{h_{ji}})^{g_{ij}}\label{eq:6}
\end{equation}
in which case,
$Q_{ij}(x^{d_i},1)=\pm (x^{d_i{h_{ij}}}-1)^{g_{ij}}$, so the multiset of
$a_{ij}^{(k)}$ and $b_{ij}^{(k)}$ is given by the $d_i{h_{ij}}=d_jh_{ji}=\operatorname{lcm}(d_i,d_j)$ roots
of unity each with multiplicity $g_{ij}$.   
We have that each $u\in U_i$ is connected by $c_{ji}$ edges to
elements of $U_j$, given by the $h_{ji}$th roots of $u^{h_{ij}}$ with  multiplicity $g_{ij}$.  If $Q_{ij}$ is as in \cref{eq:6},
then for each $d/d_ih_{ij}$th root of unity $\xi$, we connect each
$h_{ij}$th root of $\xi$ in $U_i$ to each $h_{ji}$th root of $\xi$
in $U_j$ with $g_{ij}$ edges (with orientation depending on
$P_{ij}$).  We obtain the $a=1$ cases of (1-3) of \cref{example3} from this construction.

Let $\zeta$ be a primitive $d$th root of unity.
\begin{proposition}
  Assuming $a_{ij}^{(k)}$ is a $d$th root of unity for all
$i,j,k$ and $U_i$ is the $d/d_i$th roots of unity, the map
$\sigma\colon (i,u)\mapsto (i,\zeta^{d_i}u)$ is an admissible automorphism of the
graph $\tilde{Y}$ with vertex set $\tilde{I}$; the map $\tilde{Y}\to Y=\tilde{Y} /\sigma$ induces
the relative valued structure on $Y$ associated to the polynomials $P_{ij}$.   
\end{proposition}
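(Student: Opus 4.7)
The plan is to verify, in order, three things: (i) $\sigma$ is a well-defined graph automorphism of $\tilde{I}$, (ii) it is admissible and its vertex orbits are exactly the fibers $\{i\}\times U_i$, so that the quotient vertex set is canonically $I$, and (iii) the induced furling $f\colon\tilde{I}\to I$ recovers the relative valued structure on $I$ associated to the polynomials $P_{ij}$. Parts (i) and (ii) are essentially organizational; the substance is in (iii).

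For (i), observe that $\zeta^{d_i}$ has order $d/d_i$ (because $\gcd(d,d_i)=d_i$) and $U_i$ is the group of $d/d_i$-th roots of unity, so multiplication by $\zeta^{d_i}$ permutes $U_i$ and $\sigma$ is a well-defined bijection on vertices. To see that $\sigma$ preserves edges, I would use the description via $\EuScript{A}_{ij}$: an edge $(i,u)\to(j,v)$ corresponds to $\alpha\in\EuScript{A}_{ij}$ with $u^{h_{ij}}=\alpha v^{h_{ji}}$, and under $\sigma$ the left side is multiplied by $\zeta^{d_ih_{ij}}$ and the right by $\zeta^{d_jh_{ji}}$, which agree since $d_ih_{ij}=d_jh_{ji}=\operatorname{lcm}(d_i,d_j)$. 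Admissibility is then automatic: $I$ has no loops, so $\tilde{I}$ has no edges within a single fiber $\{i\}\times U_i$, whereas every $\sigma$-orbit is contained in such a fiber. Finally, $\sigma$ acts on $\{i\}\times U_i$ by multiplication by an element of order $d/d_i=|U_i|$, hence simply transitively, so the vertex orbits are exactly the $\{i\}\times U_i$ and the quotient's vertex set is canonically identified with $I$.

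For (iii), since $\tilde{I}$ carries the trivial valuation, the furling identities reduce to a direct edge count: for an edge $d\colon i\to j$ in $I$ with a fixed $(i,u)\in f^{-1}(i)$, $\nu_d$ must equal the number of edges in $\tilde{I}$ from $(i,u)$ to $f^{-1}(j)$, and $\eta_d$ is obtained symmetrically by fixing a vertex over $j$. Parametrizing by $\EuScript{A}_{ij}$, for each $\alpha\in\EuScript{A}_{ij}$ I count solutions $v\in U_j$ to $v^{h_{ji}}=u^{h_{ij}}/\alpha$. Under the hypothesis that each $a_{ij}^{(k)}$ is a $d$-th root of unity, $\alpha=(a_{ij}^{(k)})^{d_ih_{ij}}$ lies in $\mu_{d/\operatorname{lcm}(d_i,d_j)}$, as does $u^{h_{ij}}$, and the homomorphism $U_j\to\K^{\times}$ given by $v\mapsto v^{h_{ji}}$ has image exactly $\mu_{d/\operatorname{lcm}(d_i,d_j)}$ with fibers of size $h_{ji}$. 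Hence each $\alpha$ contributes exactly $h_{ji}$ solutions, giving $\nu_d=h_{ji}|\EuScript{A}_{ij}|$; combined with $|A_{ij}|=d_ih_{ij}|\EuScript{A}_{ij}|$ and $d_ih_{ij}=d_jh_{ji}$, this equals $|A_{ij}|/d_j=\deg_v P_{ij}(0,v)$, which is the prescribed $\nu_d$. The calculation of $\eta_d=h_{ij}|\EuScript{A}_{ij}|=\deg_u P_{ij}(u,0)$ is entirely symmetric, and Lemma \ref{lem:Cartan} can be invoked at the end to confirm that the induced Cartan matrix on $I$ agrees with the original one.

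The main obstacle is keeping track of multiplicities in (iii): one must pass carefully between $A_{ij}$ and $\EuScript{A}_{ij}$, and exploit the identity $d_ih_{ij}=d_jh_{ji}=\operatorname{lcm}(d_i,d_j)$ both to make $\sigma$ respect edges and to control the cyclic subgroups of roots of unity that appear in the edge count. Once this identity is in hand, all the divisibility checks collapse into a single comparison of subgroups of $\mu_d$, and the numerology lines up.
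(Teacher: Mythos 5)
The paper gives no proof for this proposition, treating it as a routine verification, so there is no written argument to compare against; your blind proof is correct and fills the gap sensibly. The genuinely new content is your parts (i) and (ii): that $\sigma$ is well-defined on vertices because $\zeta^{d_i}$ has order $d/d_i$, that it preserves edges because both sides of the condition $u^{h_{ij}}=\alpha v^{h_{ji}}$ are scaled by $\zeta^{d_ih_{ij}}=\zeta^{d_jh_{ji}}=\zeta^{\operatorname{lcm}(d_i,d_j)}$, that admissibility is automatic since $I$ is loopless, and that each fiber $\{i\}\times U_i$ is a single $\sigma$-orbit. (Incidentally, your formulation of the edge condition via $\EuScript{A}_{ij}$ as $u^{h_{ij}}=\alpha v^{h_{ji}}$ silently corrects what reads as a typo in the paper, which has $(u')^{h_{ij}}$ on the right; your version is the one consistent with $d_ih_{ij}=d_jh_{ji}$ and is what makes the edge-preservation argument work.)

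For part (iii), the furling property of $\tilde{I}\to I$ was already proved in the preceding Proposition for any complete choice of spectra, with a shorter argument: the edges out of a fixed $(i,u)$ into $f^{-1}(j)$ are in bijection with the roots of $P_{ij}(u,x)$, giving $\nu_e=\deg P_{ij}(0,x)$ directly. You instead redo the count intrinsically in the roots-of-unity setting, via the identity $d_ih_{ij}=d_jh_{ji}=\operatorname{lcm}(d_i,d_j)$ and the subgroup $\mu_{d/\operatorname{lcm}(d_i,d_j)}\subset\mu_d$ together with the $h_{ji}$-to-one map $v\mapsto v^{h_{ji}}$ on $U_j$. This is correct and arguably more transparent in the present special case, but you could have shortened it by verifying that $U_i=\mu_{d/d_i}$ is a complete choice of spectra (if $u\in\mu_{d/d_i}$ and $Q_{ij}(u,u')=0$, then $(u')^{h_{ji}}\in\mu_{d/\operatorname{lcm}(d_i,d_j)}$, hence $u'\in\mu_{d/d_j}$) and then citing the earlier furling Proposition. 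One small caveat worth flagging: the identification $I=\tilde{I}/\sigma$ in the statement is slightly loose, since the quotient graph can carry several parallel edges where $I$, as constructed from the $P_{ij}$, has a single edge with a larger valuation; your argument establishes equality of the associated relative valuations and Cartan matrices, which is the intended content, and the note in the paper immediately following the Proposition (about the absolute weighting being the chosen symmetrization divided by $d$) is consistent with this reading.
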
 \ifanindex
\IndexOfNotation
\fi 
{\renewcommand{\markboth}[2]{}\printbibliography}
\end{document}

